\numberwithin{equation}{section}
\newcounter{dummy}
\newcommand\myitem[1][]{\item[#1]\refstepcounter{dummy}\def\@currentlabel{#1}}
\newtheorem{thm}{Theorem}
\numberwithin{thm}{section}
\newtheorem{lemma}[thm]{Lemma}
\newtheorem{definition}[thm]{Definition}
\newtheorem{coro}[thm]{Corollary}
\newtheorem*{thm*}{Theorem}
\newtheorem*{prop*}{Proposition}
\numberwithin{equation}{section}
\theoremstyle{remark}
\newtheorem{remark}[thm]{Remark}
\newtheoremstyle{algo}%
{-6pt}{-6pt}%
{\upshape}{}%
{\scshape}{}
{\newline}{}
\theoremstyle{algo}
\newmdtheoremenv{problem}[thm]{Problem}
\newmdtheoremenv{algo}[thm]{Algortihm}
\newcommand{\R}{\mathbb{R}}
\newcommand{\N}{\mathbb{N}}
\newcommand{\Q}{\mathscr{Q}}
\newcommand{\dx}{\,\textup{d}x}
\newcommand{\dH}{\,\textup{d}\mathcal{H}}
\newcommand{\Haus}{\mathcal{H}}
\DeclareMathOperator{\dist}{dist}
\DeclareMathOperator{\divergence}{div}
\DeclareMathOperator{\spt}{spt}
\renewcommand{\phi}{\varphi}
\DeclareMathOperator{\id}{id}
\DeclareMathOperator{\conv}{conv}
\DeclareMathOperator{\BV}{BV}
\newcommand{\diverg}{\divergence}
\newcommand{\q}{\mathrm{Q}}
\newcommand{\Dcal}{\mathcal{D}}
\DeclareMathOperator{\Id}{Id}
\newcommand{\un}{\mathbf{e}}
\newcommand{\e}{{e}}
\newcommand{\Rcal}{\mathcal{R}}
\newcommand{\Qcal}{\mathcal{Q}}
\definecolor{Gump}{rgb}{0,0.6,0.4}
\definecolor{Hanks}{rgb}{0.7,0.3,0.1}
\begin{document}
\title[On Shnirelman's inequality]{On incompressible flows in discrete networks and Shnirelman's inequality}
\author[Schiffer \& Zizza]{Stefan Schiffer and Martina Zizza}
\address{Max-Planck Institute for Mathematics in the Sciences} 
\email{stefan.schiffer@mis.mpg.de}
\email{martina.zizza@mis.mpg.de}
\subjclass[2020]{76B75,05C21,35Q31}
\keywords{fluid flows, discrete networks, group of volume-preserving diffeomorphisms, action functional} 
\begin{abstract}
Let $f$ and $g$ be two volume-preserving diffeomorphisms on the cube $Q=[0,1]^{\nu}$, $\nu \geq 3$. We show that there is a divergence-free vector field $v \in L^1((0,1);L^p(Q))$ such that $v$ connects $f$ and $g$ through the corresponding flow and 
$\Vert v \Vert_{L^1_t L^p_x} \leq  C_{p,\nu} \Vert f- g \Vert_{L^p_x}$. In particular we show Shnirelman's inequality, cf. [Shnirelman, Generalized fluid flows, their approximation and applications (1994)], for the optimal H\"older exponent $\alpha =1$,  thus proving that the metric on the group of volume-preserving diffeomorphisms of $Q$ is equivalent to the $L^2$-distance.
To achieve this, we discretise our problem, use some results on flows in discrete networks and then construct a flow in non-discrete space-time out of the discrete solution.
\end{abstract}
\maketitle
\section{Introduction}
We assume that the motion of an ideal incompressible fluid takes place in a vessel $M$, which is a open, bounded and connected subset of $\R^\nu$ (with Euclidean volume element $dx$).  A configuration of the fluid at time $t\in\R$ is a volume-preserving diffeomorphism $\phi_t:M\rightarrow M$. In more detail, for each particle $x\in M$, $\phi_t(x)\in M$ denotes the position reached by the particle  $x\in M$ at time $t$ and $t \mapsto \phi_t(x)$ is its \emph{trajectory}. Together with the operation of composition, the set of smooth volume-preserving diffeomorphism forms a group that we call $\text{SDiff}(M)$, or simply $\mathcal{D}(M)$. The group $\mathcal{D}(M)$ is a metric space, with metric given by
\begin{equation}\label{eq:distance}
    \text{dist}_{\mathcal{D}(
    M)}(f,g)=\underset{ \phi_0=f,\phi_1=g}{\inf_{\lbrace \phi_t\rbrace\subset\mathcal{D}(M), }} \ell\lbrace \phi_t\rbrace_0^1,
\end{equation}
where $\ell$ is the \emph{length functional}
\begin{equation}\label{def:length:functional:cont}
    \mathcal{\ell}\lbrace\phi_t\rbrace_{0}^{1}= \begin{cases}\int_{0}^{1} \|\dot\phi_t\|_{L^2(M)}dt & \text{if } \phi \in W^{1,1}((0,1);L^2(M)), \\
    \infty & \text{else.}
    \end{cases}
\end{equation}
Meanwhile, we also may define the \emph{action functional} for differentiable paths as 
\begin{equation}
    \label{eq:action}
    \mathcal{A}\lbrace\varphi_t\rbrace_{0}^{1}=\int_{0}^{1}\|\dot\varphi_t\|_2^2dt.
\end{equation}
The metric space $(\mathcal{D}(M),\text{dist}_{\mathcal{D}(M)})$ has been studied deeply, with a focus on questions regarding the finiteness of its diameter \cite{Shnirelman,Shnirelman2}, or the existence of the shortest path, namely a path $t\rightarrow\phi_t$ in $\Dcal(M)$ connecting two volume-preserving diffeomorphisms minimizing the length functional \cite{Shnirelman}. We remark that, in order to solve the problem of the existence of the shortest path, the notion of Generalised Flows  as minimisers for the action functional has been introduced in \cite{Brenier1} and further studied in \cite{AF}. For a detailed survey of these results, we refer to  \cite[Chapter IV]{Arnold:khesin}. 

 \medskip
 
 In what follows, we will study the case $M=[0,1]^\nu$,  where we set the space dimension $\nu$ to be larger than $2$ to avoid topological issues. We further comment on $\nu=2$ in Section \ref{sec:lowD}.
 The space $\mathcal{D}(M)$ is embedded in the space $L^2(M,\R^\nu)$ of vector functions on $M$. Therefore, for every two configurations $f,g\in\mathcal{D}(M)$ we can study the comparison of the two metrics $\text{dist}_{\mathcal{D}(M)}$ and $\|f-g\|_{L^2(M)}.$ It is straightforward to see that
\begin{equation*}
    \|f-g\|_{L^2(M)}\leq \text{dist}_{\mathcal{D}(M)}(f,g),
\end{equation*}
thus, the natural question  is if it is possible to estimate $\text{dist}_{\mathcal{D}(M)}$ with the $L^2$-distance.
In \cite{Shnirelman2} the following theorem is proved.
\begin{thm}[Shnirelman's inequality \cite{Shnirelman2}]\label{thm:shnirelman}
    Let $\nu\geq 3$ and $M=[0,1]^{\nu}$. There exists $C>0$ such that for $\alpha =\frac{2}{\nu+4}$  and for every $f,g\in\mathcal{D}(M)$,
    \begin{equation}\label{eq:ineq:shnirelman}
        \text{dist}_{\mathcal{D}(M)}(f,g)\leq C\|f-g\|_{L^2(M)}^\alpha.
    \end{equation}
\end{thm}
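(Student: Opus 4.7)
The plan is to connect $f$ to $g$ by a path in $\mathcal{D}(M)$ built through a multi-scale discretisation. By left-invariance of $\mathrm{dist}_{\mathcal{D}(M)}$ under pre-composition with $f^{-1}$, it suffices to join $\mathrm{id}$ to $h := f^{-1}\circ g$, with $\delta := \|h-\mathrm{id}\|_{L^2}$. For a parameter $\varepsilon>0$ to be optimised later, I partition $Q$ into $N=\varepsilon^{-\nu}$ cubes $\{C_i\}$ of side $\varepsilon$ with centres $m_i$. A bipartite-matching argument (essentially Hall's theorem applied to the measures $\mu_{ij}:=|h(C_i)\cap C_j|$, which form a doubly stochastic weighting because $h$ preserves volume) produces a permutation $\sigma$ of $\{1,\dots,N\}$ and a piecewise-translation $h_\sigma : C_i \mapsto C_{\sigma(i)}$ that approximates $h$ well in $L^2$, so that the residual $\widetilde h := h\circ h_\sigma^{-1}$ has $L^\infty$-amplitude $\lesssim\varepsilon$.

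\medskip

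The path from $\mathrm{id}$ to $h$ is then the concatenation of paths $\mathrm{id}\to h_\sigma$ and $h_\sigma \to h$, whose lengths I estimate separately. For the first, the geometric heart of the argument is Shnirelman's \emph{elementary exchange} lemma: in dimension $\nu\geq 3$, two $\varepsilon$-cubes at mutual distance $d$ can be transposed by a smooth divergence-free flow of action $\lesssim \varepsilon^\nu d^2$ (hence length $\lesssim \varepsilon^{\nu/2}d$), by lifting one cube into the extra coordinate direction(s) and rotating it by $180^\circ$ about a midpoint axis, thereby avoiding any collision that would obstruct a straight-line exchange in dimension $\nu=2$. Decomposing $\sigma$ into transpositions, grouping disjoint ones to run in parallel, and using the identity $\|h_\sigma-\mathrm{id}\|_{L^2}^2 = \varepsilon^\nu \sum_{i:\sigma(i)\neq i}|m_i-m_{\sigma(i)}|^2$ together with Cauchy--Schwarz, the total length of $\mathrm{id}\to h_\sigma$ is controlled schematically by $\delta\,\varepsilon^{-\nu/2}$. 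The residual $\widetilde h$, having $L^\infty$-amplitude $\varepsilon$, is treated by a localised (or once-recursive) application of the same construction and contributes a term polynomial in $\varepsilon$.

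\medskip

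Balancing the two contributions via an optimal choice of $\varepsilon$ yields the Shnirelman exponent $\alpha=2/(\nu+4)$; the extra $2$ in the denominator compared with the naive $2/(\nu+2)$ scaling reflects the cost of the second-scale correction that cannot be absorbed without finer combinatorial information. The hardest step is the elementary exchange itself: one must write down (or invoke) an explicit smooth, divergence-free, boundary-compatible velocity field realising the rotation and establish a tight $L^2_{t,x}$ action bound. A secondary, equally crucial difficulty is the combinatorial routing/scheduling of the transpositions so that enough can be executed in parallel without conflicting supports; it is this scheduling, rather than the per-swap bound, that ultimately dictates the exponent, and overcoming it via the discrete-network point of view is precisely what the remainder of the paper does in order to replace $\alpha=2/(\nu+4)$ by the optimal $\alpha=1$.
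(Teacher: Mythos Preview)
First, note that the paper does not give its own proof of this theorem: it is quoted from \cite{Shnirelman2}, and only the strategy is sketched (in Section~\ref{ss:difference:approaches}). That sketch is \emph{not} the swap-based multi-scale scheme you describe. In \cite{Shnirelman2} the exponent $\alpha=\tfrac{2}{\nu+4}$ is obtained via \emph{generalised flows}: each particle $x$ is spread into a ball $B_\varepsilon(x)$, the ball is transported to $B_\varepsilon(f(x))$, and then contracted back; the resulting generalised flow fails to be incompressible, and a correction flow supplied by Moser's theorem is superimposed. It is the cost of the Moser correction that produces the exponent $2/(\nu+4)$. Your proposal, by contrast, is the earlier discrete approach of \cite{Shnirelman} (and of \cite{Zizza24}): approximate $h$ by a permutation of $\varepsilon$-cubes and realise the permutation through elementary swaps.

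The genuine gap in your argument is the sentence ``decomposing $\sigma$ into transpositions, grouping disjoint ones to run in parallel, and using \ldots\ Cauchy--Schwarz, the total length of $\mathrm{id}\to h_\sigma$ is controlled schematically by $\delta\,\varepsilon^{-\nu/2}$''. This is precisely the step where the difficulty sits, and the bound does not follow from the ingredients you list. A decomposition of $\sigma$ into transpositions can have total displacement far exceeding $\sum_i |m_i-m_{\sigma(i)}|$ (think of long cycles), and two transpositions can be run in parallel only if their \emph{entire} supports---not just the endpoint cubes, but the tubes swept out during the rotation---are disjoint; arranging this while keeping the cumulative length under control is a nontrivial routing problem. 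The best exponents obtained via this swap strategy are strictly \emph{smaller} than $2/(\nu+4)$ (e.g.\ $\alpha=\tfrac{1}{\nu+1}$ in \cite{Zizza24}, $\alpha=\tfrac{1}{64}$ in \cite{Shnirelman} for $\nu=2$), precisely because of this scheduling bottleneck. You yourself flag the scheduling as ``ultimately dictating the exponent'', which is correct---but then the claimed balance yielding $\alpha=2/(\nu+4)$ is unsupported. To recover Shnirelman's exponent along discrete lines one would need a quantitative routing lemma that you have not supplied; the generalised-flow argument of \cite{Shnirelman2} sidesteps this issue entirely.
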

We point out that this result has been obtained via the concept of Generalised Flows (see also \cite{Brenier1}), while a discrete type result has been presented in \cite{Shnirelman} and \cite{Zizza24}, with a smaller value of $\alpha$. 

\medskip
In particular, Theorem \ref{thm:shnirelman} shows that the topological spaces induced by the metric $\dist_{\Dcal(M)}$ and the $L^2$ distance are equivalent on $\Dcal(M)$. 
The aim of this paper is to prove that the two metrics are \emph{equivalent}. 

\medskip 

Our result is built using a discrete-type approach, where the discretisation of the elements of $\text{SDiff}(M)$ is given by the permutations of subcubes of $M$ (cf. \cite{Lax}).
More precisely, consider $\mathcal{R}_N$ a tiling of the unit cube $M$ into identical subcubes of sidelength $N^{-1}$. To fix notation, we will write the small cubes as \[
 \q_v:= N^{-1}v + N^{-1} [0,1)^{\nu},
 \]
 where $v \in V = \{0,1,\ldots,N-1\}^{\nu}$ is a multi-index. Then, for $\sigma \colon V \to V$ bijective, a permutation of cubes of $\mathcal{R}_N$ is a  map $\psi_\sigma:M\rightarrow M$ with the following property:  
\begin{equation} \label{def:psisigma:intro}
    \psi_{\sigma} (x)= N^{-1} \sigma(v) + (x- N^{-1} v) \quad \text{if } x \in \q_v,~v \in V. 
\end{equation}
We denote by $\mathcal{D}_N$ the set of permutations of the tiling $\Rcal_N$.
Observe that the induced map $\psi_{\sigma}$ \emph{does not} have the property of mapping connected sets into connected sets. Therefore, the permutation can be realised as a flow map of a divergence-free velcocity field that has \emph{only} $\BV$ regularity in space.
 We remark that if $u\in L^1([0,1],\BV(M))$ is a divergence-free vector field, then there exists unique flow $\psi\in C([0,1], L^1(M,M))$ such that, for a.e. $x\in M$, the function $\psi$ is an integral solution of
 \begin{equation*}
     \begin{cases}
         \dot\psi(t,x)=u(t,\psi(t,x)), \\
         \psi(0,\cdot)=\Id_M.
     \end{cases}
 \end{equation*}
 This unique flow is also called the \emph{Regular Lagrangian flow}.  Existence, uniqueness and stability properties of Regular Lagrangian Flows for $\BV$ vector fields have been established in \cite{Ambrosio:BV}. 

 Moreover, observe that  $\psi(t,x)_\sharp\mathcal{L}^\nu=\mathcal{L}^\nu$, for every $t\in[0,1]$, where we denote by $(\cdot)_\sharp\mathcal{L}^\nu$ the push-forward of the Lebesgue measure on $Q$ (i.e. $\psi(t,\cdot)$ a \emph{measure-preserving} map for every instant of time). With abuse of notation we call $\psi(\cdot)=\psi(1,\cdot)$ the \emph{time-$1$ map} of the vector field $u$. 
Then our main result reads as follows.

\begin{thm}\label{thm:main}
    Let $\nu\geq 3$, $p\in[1,+\infty]$, $q\in[1,+\infty]$, then there exists $C_{p,q,\nu}>0$ such that, for every $\psi_\sigma\in\mathcal{D}_N$  there exists a divergence-free vector field $u\in L^q([0,1],L^p(M))\cap L^1([0,1],\BV(M))$ such that $\psi_\sigma$ is the time-$1$ map of the vector field $u$ and
    \begin{equation}\label{eq:ineq:shnirelman:sharp}
        \|u\|_{L^q_tL^p_x}\leq C_{p,q,\nu}\|\psi_\sigma-\Id_M\|_{L^p_x}.
    \end{equation}
\end{thm}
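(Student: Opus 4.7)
The strategy is to reduce the construction of $u$ to a combinatorial routing/scheduling problem on the grid of cubes, solve this discrete problem by network-flow techniques, and then lift the discrete solution to a divergence-free velocity field by gluing explicit two-cube swap fields.

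Since $\psi_\sigma-\Id_M$ equals $N^{-1}(\sigma(v)-v)$ on each $\q_v$, one has
\[
\|\psi_\sigma-\Id_M\|_{L^p_x}^p \,=\, N^{-\nu-p}\sum_{v\in V}|\sigma(v)-v|^p,
\]
so the target of \eqref{eq:ineq:shnirelman:sharp} is $C_{p,q,\nu}N^{-1-\nu/p}\bigl(\sum_v|d_v|^p\bigr)^{1/p}$ with $d_v:=\sigma(v)-v$. Next, viewing $V$ as the vertex set of the grid graph $G_N$ (edges between adjacent cubes), for each $v$ I select a path $P_v$ from $v$ to $\sigma(v)$ in $G_N$ of length at most $C_\nu|d_v|_1$. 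The routes are scheduled as a sequence of layers $\tau_1,\dots,\tau_K\in\mathcal{D}_N$, each $\tau_i$ a product of pairwise disjoint adjacent transpositions, with $\tau_K\cdots\tau_1=\sigma$. Writing $n_i$ for the number of transpositions in $\tau_i$, the core combinatorial claim is that the paths and layers, together with time weights $\Delta t_i>0$ summing to $1$, can be chosen so that
\[
\sum_{i=1}^K \Delta t_i^{1-q}\, n_i^{q/p} \,\leq\, c_{p,q,\nu}\Bigl(\sum_{v\in V}|d_v|^p\Bigr)^{q/p}.
\]
This is a low-congestion multi-commodity routing statement in $G_N$; the assumption $\nu\geq 3$ provides the spatial slack required to disperse the paths so that $\ell^p$-congestion is controlled.

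Given such a layered schedule, the lifting is straightforward. For each adjacent pair $(\q_v,\q_{v+e_j})$, take a standard divergence-free $\BV$ vector field $w_{v,j}$ supported in $\q_v\cup\q_{v+e_j}$ whose time-$1$ Regular Lagrangian flow swaps the two cubes, with $\|w_{v,j}\|_{L^p(M)}=c_{p,\nu}N^{-1-\nu/p}$. Summing these over the transpositions of $\tau_i$ (which have disjoint supports) and rescaling in time so that the swap completes in the interval $[t_{i-1},t_i]$ of length $\Delta t_i$, then concatenating over $i$, produces a divergence-free $u\in L^1_t\BV_x$ whose time-$1$ map equals $\tau_K\cdots\tau_1=\psi_\sigma$. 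A direct computation yields
\[
\|u\|_{L^q_tL^p_x}^q \,=\, c^{q}\,N^{-q(1+\nu/p)}\sum_{i=1}^K \Delta t_i^{1-q} n_i^{q/p},
\]
which combined with the previous inequality gives \eqref{eq:ineq:shnirelman:sharp}.

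The principal obstacle is the scheduling step. Shortest-path routing with greedy scheduling only controls the $\ell^1$-cost $\sum_v|d_v|_1$, which is adequate for $p=q=1$; for general exponents, the number of simultaneously active swaps must be balanced in $\ell^p$ across layers, and the time weights $\Delta t_i$ must be chosen in concert with the $n_i$. Producing this layering with the correct $L^p$-accounting is the combinatorial essence of the theorem and is where the discrete network flow analysis advertised in the introduction is genuinely needed.
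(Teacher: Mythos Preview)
Your plan is fundamentally the swap-based approach: decompose $\sigma$ into layers of disjoint adjacent transpositions and realise each swap by a local divergence-free field. This is precisely the framework the paper \emph{abandons}. The paper's construction is not a scheduling of swaps; it builds thin \emph{pipes} of variable cross-section $\rho(e,\gamma)$ along each edge, in which the fluid velocity is $\rho^{-1}$, and the freedom to make pipes thin (and velocities correspondingly large) on congested edges is exactly what makes the $L^p$ accounting close for all $p$. At no intermediate time is the configuration a permutation of cubes. The discrete problem solved in Section~3 concerns these thicknesses and weights $(\rho,\omega)$, not a layering of transpositions, so the ``discrete network flow analysis advertised in the introduction'' will not hand you the scheduling inequality you need.

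The core of your proposal is therefore the unproven claim
\[
\sum_{i=1}^K \Delta t_i^{1-q}\, n_i^{q/p} \,\leq\, c_{p,q,\nu}\Bigl(\sum_{v}|d_v|^p\Bigr)^{q/p},
\]
and you correctly flag it as the principal obstacle --- but this is the entire content of the theorem, not a technical lemma to be filled in. After optimising over $\Delta t_i$ this reduces to $\sum_i n_i^{1/p}\le C(\sum_v|d_v|^p)^{1/p}$, which already for $p=\infty$ forces $K\le C\max_v|d_v|$. It is far from obvious that an arbitrary permutation with bounded displacement can be realised in $O(\max_v|d_v|)$ rounds of disjoint adjacent swaps on the grid; for mixed displacement scales the stratification one would naturally attempt produces logarithmic losses. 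The swap-distance approach has been pushed in \cite{Shnirelman,Zizza24} and yields only $\alpha<1$, so at minimum you would need a genuinely new routing/scheduling argument here, and none is supplied.

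There is a second, more structural gap: when you route cube $v$ along $P_v$ via swaps, every intermediate cube on $P_v$ is displaced. You assert $\tau_K\cdots\tau_1=\sigma$, but ensuring that all these parasitic displacements cancel --- while keeping $\sum_i n_i$ comparable to $\sum_v|d_v|$ and the layers disjoint --- is itself nontrivial and unaddressed. The pipe mechanism sidesteps this entirely: only a subcube of $Q_v$ travels through the network, the pipes occupy a negligible fraction of each $Q_v$, and a single time-reversal of the pipe flow restores everything outside the subcubes.
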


Using the previous result of Shnirelman (Theorem \ref{thm:shnirelman}), we can extend Theorem \ref{thm:main} to the continuous setting in Section \ref{sect:from:discrete:to:continuous}, thus recovering the original formulation of \cite{Shnirelman2}. Indeed,

\begin{coro}
    [Sharp Shnirelman's Inequality]\label{coro:main}
    Let $\nu\geq 3$, then there exists $C>0$ such that, for every $f,g\in\mathcal{D}(M)$ it holds
    \begin{equation}\label{eq:ineq:shnirelman:sharp:smooth}
        \text{dist}_{\mathcal{D}(M)}(f,g)\leq C\|f-g\|_{L^2(M)}.
    \end{equation}

\end{coro}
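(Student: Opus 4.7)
The plan is to reduce the smooth statement to the discrete statement of Theorem \ref{thm:main}, by approximating smooth volume-preserving diffeomorphisms with cube permutations and using Shnirelman's original inequality (Theorem \ref{thm:shnirelman}) to absorb the approximation error at the endpoints.

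By right-invariance of both the action functional and the $L^2$ norm under composition with a fixed volume-preserving diffeomorphism, I would first reduce to the case $g = \Id_M$; setting $h = f \circ g^{-1} \in \mathcal{D}(M)$, it suffices to prove $\text{dist}_{\mathcal{D}(M)}(h,\Id_M) \leq C\,\|h - \Id_M\|_{L^2}$. Fix $\varepsilon > 0$. Standard density of permutations of subcubes in $\mathcal{D}(M)$ (Lax's theorem \cite{Lax}) gives $N$ and $\psi_\sigma \in \mathcal{D}_N$ with $\|h - \psi_\sigma\|_{L^2} < \varepsilon$, so in particular $\|\psi_\sigma - \Id_M\|_{L^2} \leq \|h - \Id_M\|_{L^2} + \varepsilon$.

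Applying Theorem \ref{thm:main} with $p = q = 2$ then produces a divergence-free vector field $u \in L^2((0,1);L^2(M)) \cap L^1((0,1);\BV(M))$ whose Regular Lagrangian flow $\Psi_t$ satisfies $\Psi_0 = \Id_M$, $\Psi_1 = \psi_\sigma$ and $\|u\|_{L^2_t L^2_x} \leq C_\nu\,(\|h-\Id_M\|_{L^2} + \varepsilon)$. Since $\psi_\sigma$ is only a permutation, $\Psi_t$ is not smooth, so I would next mollify $u$ in the spatial variable (after extending across $\partial M$ by reflection so as to preserve both the no-flux condition and the divergence-free constraint) to obtain $u_\delta$ which is smooth in $x$, divergence-free, and satisfies $\|u_\delta\|_{L^2_t L^2_x} \leq \|u\|_{L^2_t L^2_x}$. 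The flow $\Psi^\delta_t$ of $u_\delta$ is then a smooth path in $\mathcal{D}(M)$, and by the stability theorem for Regular Lagrangian flows \cite{Ambrosio:BV} one has $\Psi^\delta_1 \to \psi_\sigma$ in $L^2$ as $\delta \to 0$; I would choose $\delta$ small enough that $\|\Psi^\delta_1 - \psi_\sigma\|_{L^2} < \varepsilon$.

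Using $\dot\Psi^\delta_t = u_\delta(t,\Psi^\delta_t)$ and the fact that $\Psi^\delta_t$ is measure-preserving, Cauchy--Schwarz in time yields
\begin{equation*}
\text{dist}_{\mathcal{D}(M)}(\Id_M,\Psi^\delta_1) \leq \int_0^1 \|u_\delta(t,\cdot)\|_{L^2}\,dt \leq \|u_\delta\|_{L^2_t L^2_x} \leq C_\nu\,(\|h-\Id_M\|_{L^2}+\varepsilon),
\end{equation*}
while Theorem \ref{thm:shnirelman} applied to the pair $(h,\Psi^\delta_1)$, whose $L^2$ distance is at most $2\varepsilon$, gives $\text{dist}_{\mathcal{D}(M)}(h,\Psi^\delta_1) \leq C\,(2\varepsilon)^{2/(\nu+4)}$. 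The triangle inequality combined with $\varepsilon \downarrow 0$ then produces the sharp bound. The main obstacle I anticipate is the mollification step: producing a smooth, divergence-free, boundary-compatible $u_\delta$ whose flow converges in $L^2$ to the BV Regular Lagrangian flow of $u$ while keeping the action under control. Once this is carried out via the standard reflection/mollification together with the stability estimate of \cite{Ambrosio:BV}, the remainder is routine bookkeeping with the triangle inequality.
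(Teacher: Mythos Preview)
Your proposal is correct and follows essentially the same argument as the paper's own proof: reduce to $g=\Id_M$ by right-invariance, approximate by a cube permutation via Lax, apply Theorem~\ref{thm:main}, mollify the resulting $\BV$ vector field and invoke stability of Regular Lagrangian flows, then close with the triangle inequality and Theorem~\ref{thm:shnirelman} to absorb the endpoint error. The only cosmetic differences are that the paper works directly with the $L^1_tL^2_x$ norm (rather than passing through $L^2_tL^2_x$ and Cauchy--Schwarz) and handles the divergence-free mollification by appealing to Lemma~\ref{lem:approx:1} instead of a reflection argument; your identification of this smoothing step as the one requiring care matches the paper's treatment.
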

We remark that the same inequality holds true for the minimisation of the action functional \eqref{eq:action}.
During the proofs of the paper, we mainly focus on establishing the main results in dimension $\nu=3$. Higher dimensions can be dealt with in very similar fashion. In contrast, as we explain in Subsection \ref{sec:lowD} below, dimension $\nu=2$ is particularly different and our method of construction does not work in this dimension.
 
\subsection*{Outline}
We shortly outline the remainder of the paper. Section \ref{sec:1.5} is devoted to a thorough explanation of the key ideas of the construction. We furthermore comment on some open questions related to the problem at hand.

Section \ref{sec:3} is an essential building block of our construction, as we deal with a discretised setup and solve a flow-problem on the graph (see also Section \ref{sec:1.5}). Sections \ref{sec:4} \& \ref{sec:construction} then in detail deal with the adaptation of the discrete results into a continuous setting for fluid configurations that are independent of one coordinate (i.e. $f(x_1,x_2,x_3) = (\tilde{f}(x_1,x_2),x_3)$). The necessary adaptations for the general case are then discussed in Section \ref{sec:higherD}.

Finally, Section \ref{sect:from:discrete:to:continuous} is then devoted to Corollary \ref{coro:main}, i.e. how Corollary \ref{coro:main} follows from the results of Theorem \ref{thm:main}.

\subsection*{Acknowledgements} The authors thank Alexander Shnirelman and L\'aszl\'o Székelyhidi for proposing the problem. We are also grateful towards Laura Vargas Koch for her insights and suggestions.

\section{Key ideas \& comparison to other approaches} \label{sec:1.5}

\subsection{A discretised setting}

To prove the continuous inequality, we first discretise our setting. 

Let $N\in \N$, then denote by $\mathfrak S_{N,\nu}$ the symmetric group over the set $V=\lbrace 0,1,2,\dots, (N-1)\rbrace^\nu$, and let $\mathcal{R}_N$ be a tiling of $M=[0,1]^\nu$ into $N^\nu$ cubes of side $N^{-1}$. To fix the notation, for $v \in V$ we will write the small cubes as 
$$\q_v:= N^{-1}v + N^{-1} [0,1)^{\nu}.$$ 

Above equation gives rise to a bijective map from $V$ to $\Rcal_N$.

\begin{definition} \label{def:permutation cubes}
 A permutation of cubes of $\Rcal_N$ is a map $\psi \colon M \to M$ defined as follows: There exists $\sigma \in \mathfrak{S}_{N,\nu}$ such that for all $v \in V$
 \[
 \psi(x) = N^{-1} \sigma(v) + (x-N^{-1}v), \quad \text{if } x \in Q_v.
 \]
\end{definition}
We tacitly denote such a map $\psi$ associated to a permutation $\sigma$ by $\psi_{\sigma}$ and write $\Dcal_N$ for the set of all such $\psi_{\sigma}$. When there is no fear of ambiguity we both call $\sigma$ and $\psi_{\sigma}$ \emph{permutations}. Observe that the set $\mathcal{D}_N$ has $(N^{\nu})!$ elements and every element $\psi_{\sigma} \in \Dcal_N$ is measure-preserving, namely $(\psi_\sigma)_\sharp\mathcal{L}^\nu=\mathcal{L}^\nu$, where $\mathcal{L}^\nu$ denotes the Lebesgue measure.

  \subsection{A construction via pipes/tubes}
We start by considering a slightly simpler setting.
Fix a cube $\q_v$ of the tiling $\mathcal{R}_N$ and assume that it is mapped into the cube $\q_{\tilde v}=\psi_\sigma(\q_v)$. Assume then that $\psi_\sigma(\q_w)=\q_w$ for all $w\not=v,\tilde v$, then it is possible to find a volume-preserving flow that sends the cube $\q_v$ into $\q_{\tilde v}$ (and viceversa) without moving the intermediate cubes. 

This is possible making the mass flow through thin pipes (Figure \ref{fig:flow:cubes}). Heuristically, the cost for performing this flow, in terms of the $L^1_tL^2_x$-norm of the vector field, is given by $\sim \left({N^{-\nu}\text{dist}(\q_v,\q_{\tilde v})^2}\right)^{\frac{1}{2}}$, namely the (square-root of the) total volume of the cubes $\sim N^{-\frac{\nu}{2}}$ times the distance of the two cubes. This is precisely the quantity $\|\psi_\sigma-\Id_M\|_{L^2_x}$, up to some positive constant depending only on the dimension $\nu$. Therefore, at least in this simplified setting, the exponent $\alpha \in (0,1]$ of Shnirelman's inequality is exactly $1$. In order to keep this idea to a general permutation $\psi_\sigma\in D_N$, we need to guarantee the two following constraints:
\begin{enumerate}[label=(C\arabic*)]
    \item \label{const:1} there is enough space for all the tubes (see also volume constraint and capacity constraint in \eqref{eq:volume},\eqref{eq:capacity});
    \item \label{const:2} the flow we construct is incompressible. 
\end{enumerate}
Point \ref{const:1} justifies the introduction of a discrete network (Section \ref{sec:3}). The incompressibility constraint, satisfied by the solution in the discrete network, must then be translated back in the continuous context. This is the most technical part of the paper, and it will be developed in Sections \ref{sec:4} and \ref{sec:construction}. Before entering the details of the construction, we point out some differences with previous approaches, in order to show the novelties of this method.
\begin{figure}
    \centering
    \includegraphics[width=0.6\linewidth]{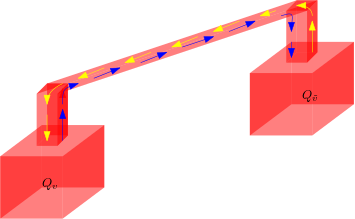}
    \caption{The cube $\q_v$ is swapped with the cube $\q_{\tilde v}$ by the permutation $\psi_\sigma$. Its flow does not influence the intermediate cubes, since the mass travels along thin pipes.}
    \label{fig:flow:cubes}
\end{figure}

\medskip

\subsection{Differences to previous approaches}\label{ss:difference:approaches}
Before coming to a more detailed description of our construction, let us outline previous approaches to the problem. 
\subsubsection*{Generalised Flows} Let us denote by $\Omega=C([0,1];M)$ the set of paths. A Generalised Flow is any probability measure on $\Omega$, that is $q\in\text{Prob}(\Omega)$ \cite{Brenier1}. A Generalised Flow is \emph{incompressible} if 
\begin{equation}\label{def:generalized:flows:incomp}
    \int_\Omega h(\gamma(t))q(d\gamma)=\int_M h(x)dx,\quad\forall h\in C(M), t\in[0,1].
\end{equation}
We denote by $\eta(dxdy)=\delta(y-f(x))dx$, a doubly stochastic probability measure on $M\times M$ where $f\in\Dcal(M)$. We say that an incompressible Generalised Flow $q$ reaches $\eta$ at time $t=1$ if
\begin{equation}\label{eq:gif:connecting:id:f}
\int_\Omega h(\gamma(0),\gamma(1))q(d\gamma)=\int_{M\times M} h(x,y)\eta(dxdy),\quad\forall h\in C(M\times M).
\end{equation}
Informally speaking, this condition replaces the connection of $f\in\Dcal(M)$ with $\Id_M$ via a classic flow.
Incompressible Generalised Flows have been introduced in \cite{Brenier1} in order to prove the existence of a minimiser for the action functional \ref{eq:action} (shortest path problem).

\smallskip

In \cite{Shnirelman2} the concept of Generalised Flows is used in a somehow similar idea to the one of Figure \ref{fig:flow:cubes}, in order achieve the exponent $\alpha = \tfrac{2}{4+\nu}$ (Theorem \ref{thm:shnirelman}). We outline the idea of the construction, without entering into details. 
Fix $f\in\Dcal(M)$, we aim to construct a generalised incompressible flow connecting $\Id_M$ to $f$ (in the sense of equation \eqref{eq:gif:connecting:id:f}). The flow is performed as follows: each particle $x\in M$ is inflated to a continuum of particles $B_{\varepsilon}(x)$ for some $\varepsilon>0$. After this operation, the flow moves the ball to $B_{\varepsilon}(f(x))$ and the ball is inflated back to the particle $f(x)$. The disadvantage here is that this Generalised Flow is not incompressible. In order to respect the incompressibility constraint, a corrective flow must be introduced, via Moser's Theorem \cite{Moser}. The cost of the correction flow affects the value of $\alpha$ that could not be increased up to $\tfrac{2}{4+\nu}$. 

\medskip

In our approach, we transfer the mass of a cube $Q$ into $\psi_\sigma(Q)$, but we do not need to introduce Generalised Flows, as the flow we construct is 'classic' in the sense of well-defined trajectories. Moreover, we can guarantee the incompressibility condition, making the mass travel along pipes.
\subsubsection*{Swap Distance and discretised context} In \cite{Shnirelman} another proof of Inequality \ref{eq:ineq:shnirelman} was given, in the framework of the discrete setting we introduced at the beginning of this section.
In \cite{Shnirelman} 
a discrete flow $\lbrace \psi_{\sigma_j}\rbrace_{j=1}^{\mathcal T}$ is a sequence of \emph{elementary movements} in \emph{duration} $\mathcal{T} \in \N$. An elementary movement 
 $\psi_\sigma:M\rightarrow M$ is a measure-preserving invertible map for which there exist $\psi_{\tau_1},\dots,\psi_{\tau_{J}}$ swaps of the adjacent subcubes $(Q_{v_{1,1}},Q_{v_{1,2}}),\dots,(Q_{v_{J,1}},Q_{v_{J,2}})\subset\mathcal{R}_N\times\mathcal{R}_N$, with 
    \begin{equation*}
        \text{int}(Q_{v_{h,1}},Q_{v_{h,2}})\cap\text{int}(Q_{v_{k,1}},Q_{v_{k,2}})=\emptyset, \quad h\not =k,
    \end{equation*}
    and
    \begin{equation*}
        \psi_\sigma=\psi_{\tau_{J}}\circ \psi_{\tau_{J-1}}\circ\dots \psi_{\tau_2}\circ \psi_{\tau_1}.
    \end{equation*}
    We call $\text{swap}(\psi_\sigma)=J$.
The discrete length functional \eqref{def:length:functional:cont} is defined as
\begin{equation}\label{eq:discr:length}
    L\lbrace\psi_{\sigma_j}\rbrace_{j=1}^{\mathcal T}=\sum_{j=1}^{\mathcal{T}} N^{-1-\frac{\nu}{2}}\sqrt{\text{swap}(\psi_{\sigma_j})}.
\end{equation}
This functional is related to the swap distance of permutations \cite{DistPermutations} (see also \cite{Zizza24} as a reference). Define then
\begin{equation*}
    \text{dist}_{\mathcal{D}_N}(\psi_\sigma,\psi_\eta)=\underset{ \psi_{\sigma_j}\circ\dots\circ\psi_{\sigma_1}\circ\psi_\sigma=\psi_\eta}{\min_{\lbrace S_j\rbrace_{j=1}^{\mathcal{T}} }} L\lbrace\psi_{\sigma_j}\rbrace_{j=1}^{\mathcal T}.
\end{equation*}

It holds
\begin{thm}[Shnirelman's discrete inequality \cite{Shnirelman}]
    Let $\nu=2$, then for $\alpha= \tfrac{1}{64}$ exists a positive constant $C>0$, such that for every $N\in\N$ and every permutations of the tiling $\psi_\sigma$ and $ \psi_\eta$ it holds
    \begin{equation}
        \text{dist}_{\mathcal{D}_N}(\psi_\sigma,\psi_\eta)\leq C\|\psi_\sigma-\psi_\eta\|_2^\alpha.
    \end{equation}
\end{thm}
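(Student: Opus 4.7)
The plan is a hierarchical (dyadic) multiscale decomposition of $\sigma$ combined with parallel sorting inside each dyadic block. After composing on the right with $\psi_\eta^{-1}$ I may assume $\psi_\eta = \Id_M$ and set $\varepsilon := \|\psi_\sigma - \Id_M\|_2$. The affine form of $\psi_\sigma$ on each subcube $\q_v$ yields the key identity
\begin{equation*}
\varepsilon^2 = N^{-\nu-2}\sum_{v\in V}|\sigma(v)-v|^2,
\end{equation*}
so $\varepsilon$ directly controls the total squared cube-displacement of $\sigma\in\mathfrak{S}_{N,2}$. In particular, $\sigma \neq \Id$ forces $\varepsilon \geq N^{-2}$ (a single displaced cube contributes at least $1$), an \emph{a priori} bound which will be used at the end to eliminate the $N$-dependence of the intermediate estimates.

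Next, for $k = 0, 1, \ldots, K = \lceil\log_2 N\rceil$ I would partition $V$ into square blocks of side $2^k$ and, for each $k$, pick a \emph{coarsened} permutation $\sigma_k$ that is correct only at the scale-$2^k$ block level: each cube is sent to the correct scale-$2^k$ block, with the within-block assignment chosen so as to minimise the pairwise transition cost. This yields a chain $\sigma = \sigma_0, \sigma_1, \dots, \sigma_K = \Id$ in $\mathcal{D}_N$, and by the triangle inequality the problem reduces to bounding $\dist_{\mathcal{D}_N}(\sigma_{k+1},\sigma_k)$ scale by scale.

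Each transition $\sigma_{k+1}\to\sigma_k$ decomposes into independent within-block permutations on disjoint scale-$(k+1)$ blocks of side $m := 2^{k+1}$. A planar grid sorting network (odd-even transposition sort, or Schnorr--Shamir for sharper constants) realises any such within-block permutation by $O(m)$ rounds of disjoint parallel adjacent swaps, with at most $O(m^2)$ swaps per round. The quantitative input is a Chebyshev-type count: a cube whose scale-$k$ sub-block inside its $(k+1)$-block is incorrect must satisfy $|\sigma(v)-v|\gtrsim 2^k$, so the number of such cubes, and hence the number of \emph{active} blocks at this scale, is bounded by a multiple of $N^4\varepsilon^2/m^2$. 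Running all active-block sorts in parallel and invoking \eqref{eq:discr:length} produces a per-scale bound that is a positive power of $\varepsilon$ times polynomial factors in $N$; summing over the $K = \log_2 N$ scales and combining with $\varepsilon\gtrsim N^{-2}$ gives a bound in a power of $\varepsilon$ alone.

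The main obstacle is that several of these estimates are simultaneously lossy: choosing the intermediate $\sigma_k$'s \emph{optimally} so that the within-block permutations at each scale are as small as possible is itself nontrivial; the sorting network absorbs an extra $\sqrt{\cdot}$ through \eqref{eq:discr:length}; the Chebyshev count of active blocks is very crude; and finally the geometric sum over the $K = \log_2 N$ scales interacts unfavourably with the a priori bound. Each of these individually costs a small exponent in $\varepsilon$, and their multiplicative compounding is the origin of the very small value $\alpha = 1/64$ in the statement; sharpening any of them would improve $\alpha$. For $\nu\geq 3$ the present paper abandons this swap-based scheme altogether in favour of the essentially lossless network-flow construction of Sections~\ref{sec:3}--\ref{sec:construction}, which reaches the sharp exponent $\alpha = 1$.
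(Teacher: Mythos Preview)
This theorem is not proved in the present paper: it is stated as background, attributed to \cite{Shnirelman}, and used only to contextualise the swap-distance approach in Subsection~\ref{ss:difference:approaches}. There is therefore no ``paper's own proof'' to compare your attempt against.

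That said, your outline is in the spirit of Shnirelman's original argument (hierarchical dyadic decomposition plus parallel sorting inside blocks), and your diagnosis of why the exponent degrades so badly is accurate. But what you have written is explicitly a plan, not a proof: you do not actually define the intermediate permutations $\sigma_k$, you do not verify that the within-block transitions $\sigma_{k+1}^{-1}\sigma_k$ are genuinely localised to the scale-$(k+1)$ blocks (this is where the careful choice of $\sigma_k$ matters and is nontrivial), and you do not carry out the arithmetic that produces the specific value $\alpha=1/64$. The Chebyshev count you sketch controls the number of \emph{cubes} that are far from home, not directly the number of active blocks nor the number of swaps needed inside each; bridging that gap requires additional combinatorial work that you have not supplied. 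As written, the proposal is a correct high-level description of the strategy but falls well short of a proof, and you yourself flag this in the final paragraph.
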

In \cite{Zizza24} an equivalent length functional was considered, resulting in an improvement of the previous result in the discretised context up to $\alpha=\tfrac{2}{7}$ in dimension $\nu=2$ and $\alpha=\tfrac{1}{1+\nu}$ in higher dimension. This is then extended to the continuous setting via the smoothing procedure of \cite[Lemma IV.7.19]{Arnold:khesin}.
One of the main weakness of this method is as follows (cf. \cite{Zizza24} for more details). It is based on volume estimates on the permutation $\psi_\sigma$, which translates into quantifying \emph{how many} cubes are moved by the permutation itself.  This gives supotimal estimates in terms of the H\"older exponent $\alpha$. In contrast, our method directly exploits geometrical features of the permutation $\psi_\sigma$.

\subsection{Incompressible flows on networks.}
In order to deal with \ref{const:1}, we describe our flow as a flow in a discrete network, i.e. in a graph. 

While we also consider a bijective map on the tiling $\Rcal_N$ to approximate the real flow, as done in \cite{Shnirelman},\cite{Zizza24}, the construction of the flow for the discretised problem will be vastly different and \emph{not} based on a swapping procedure. 

We translate our problem into a discrete problem first. Every cube in the tiling is represented by a \emph{vertex} $v \in V$ in an undirected graph $G=(V,E)$, where vertices represent cubes and edges indicate adjacent cubes (in particular the graph is a $\nu$-dimensional grid - also known as \emph{lattice graph}). A \emph{flow} in this graph is then represented by a path (or multiple paths) going from a vertex $v$ to $\sigma(v)$ (cf. Figure \ref{fig:ideas:1}).
\begin{figure}[!htb]
     \centering
     \includegraphics[width=.55\linewidth]{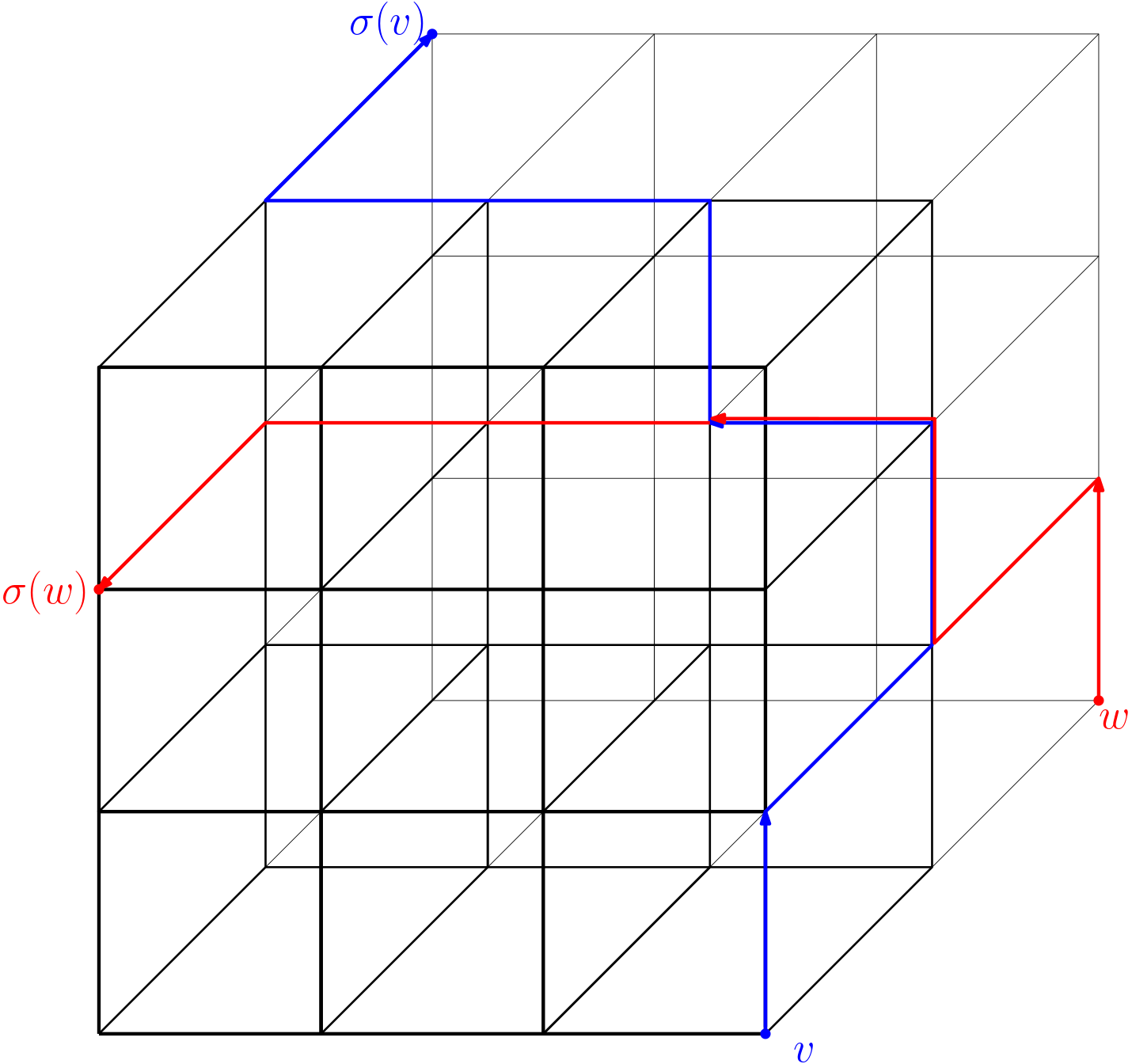}
     \caption{The discrete network, two points $v$ and $w$ and paths from $v$ and $w$ to $\sigma(v)$ and $\sigma(w)$.}
     \label{fig:ideas:1}
\end{figure}
In comparison to other discrete problems (also see Section \ref{subsec:discretecomp}), we enforce an \emph{incompressibility} condition along a path as follows: Imagine a fluid particle that flows along the discrete network with a velocity $u=u(e,\gamma)$ that may depend on the edge $e$ and on the path $\gamma$ that the particle takes. We picture this fluid as if it was flowing through a pipe of a certain \emph{thickness} $\rho(e,\gamma)$ (Figure \ref{fig:flow:cubes}). Then the incompressibility condition tells us that along edges $e$ and $e'$ of the path $\gamma$
\[
\rho(e,\gamma) \cdot u(e,\gamma) = \rho(e',\gamma) \cdot u(e',\gamma).
\]
In particular, in comparison to other discrete flows, we do not picture the particle moving from one vertex to another in a fixed time span (say $1$), but the time that the particle needs from a vertex to an adjacent is anti-proportional to the velocity $u$, i.e. proportional to the thickness $\rho$. Additionally, the $L^p$ norm of the ensuing velocity field below also is connected to the thickness $\rho$. Finally, as a drawback of this approach, our vector field will be 'almost' stationary in the sense that the velocity field will be essentially (up to some translations) constant-in-time.
\medskip

We shortly describe some cornerstones of the problem and its solution, we refer to Section \ref{sec:3} for the details. For a permutation $\sigma$ of vertices $V$ we search for paths $\gamma \in \Gamma_v \subset \Gamma$ from $v$ to $\sigma(v)$ (where $\Gamma= \bigcup_{v \in V} \Gamma_v$) together with weigths $\omega = \omega(\gamma)$ such that
\[
\sum_{\gamma \in \Gamma_v} \omega(\gamma) =1.
\]
We shall now imagine the flow as follows: A volume portion of the cube (exactly $\omega(\gamma)$) is sent through the discrete network along $\gamma$. To accommodate all flows along different paths along an edge $e$ we have the \emph{capacity constraint}
\begin{equation} \label{intro:capa}
\sum_{\gamma \colon e \in E(\gamma)} \omega(\gamma) \rho(e,\gamma) \leq 1 \quad \forall e \in E,
\end{equation}
in other words this entails that the sum of thicknesses of tubes does not exceed $1$. Furthermore, we have a constraint that we need to transport the fluid from $v$ to $\sigma(v)$ in a certain amount of time, i.e. the \emph{time constraint}
\begin{equation} \label{intro:time}
\sum_{e \colon e \in E(\gamma)}  \rho(e,\gamma) \leq 1 \quad \forall \gamma \in \Gamma.
\end{equation}
Given these constraints on the flow, we aim to minimise a functional that stems from the corresponding $L^p$ norm, which is the \emph{cost}
\begin{equation} \label{intro:cost}
\left(\sum_{\gamma \in \Gamma} \sum_{e \in E(\gamma)} \omega(\gamma) \rho(e,\gamma)^{-p+1} \right)^{1/p}.
\end{equation}

The goal Section \ref{sec:3} is to give a approximative solution to that minimising to that problem under the given constraint- i.e. an admissible instance that is only worse than the the actual minimiser by a dimensional constant. 
This approximate minimiser actually consists of a carefully chosen set of shortest paths between $v$ and $\sigma(v)$, cf. Figure \ref{fig:choose1}, together with appropriately chosen thicknesses $\rho$ and weights $\omega$.
\begin{figure}
    \centering \includegraphics[width=0.3 \textwidth]{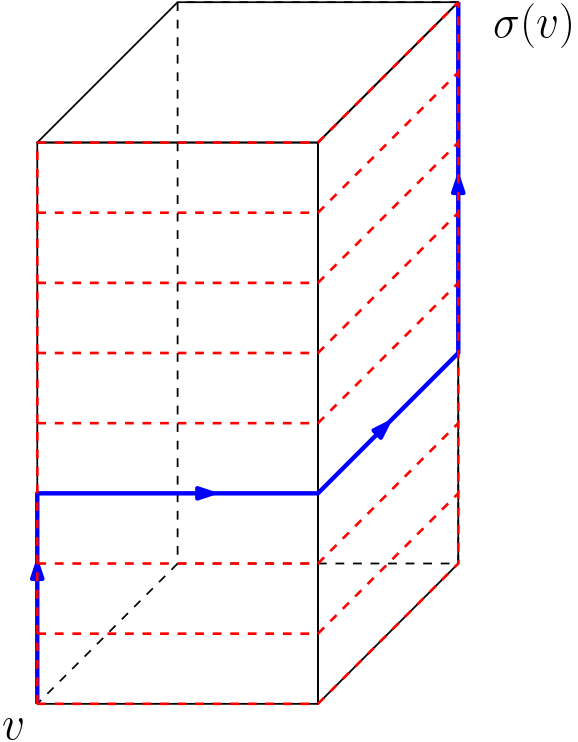}
    \caption{The set of paths from $v$ to $\sigma(v)$ in three dimensions. The paths are chosen in such a way, so that they follow the direction, where $(v-\sigma(v))_i$ is maximal. The path then follows this direction until some $v+\alpha e_i$. The path then adjusts the other two coordinates, before following the initial direction for the rest of the path. All paths are equipped with equal weight $\omega$.}
    \label{fig:choose1}
\end{figure}

\subsection{From the discrete back to the continuous setting}
\begin{figure}
      \includegraphics[width=.55\linewidth]{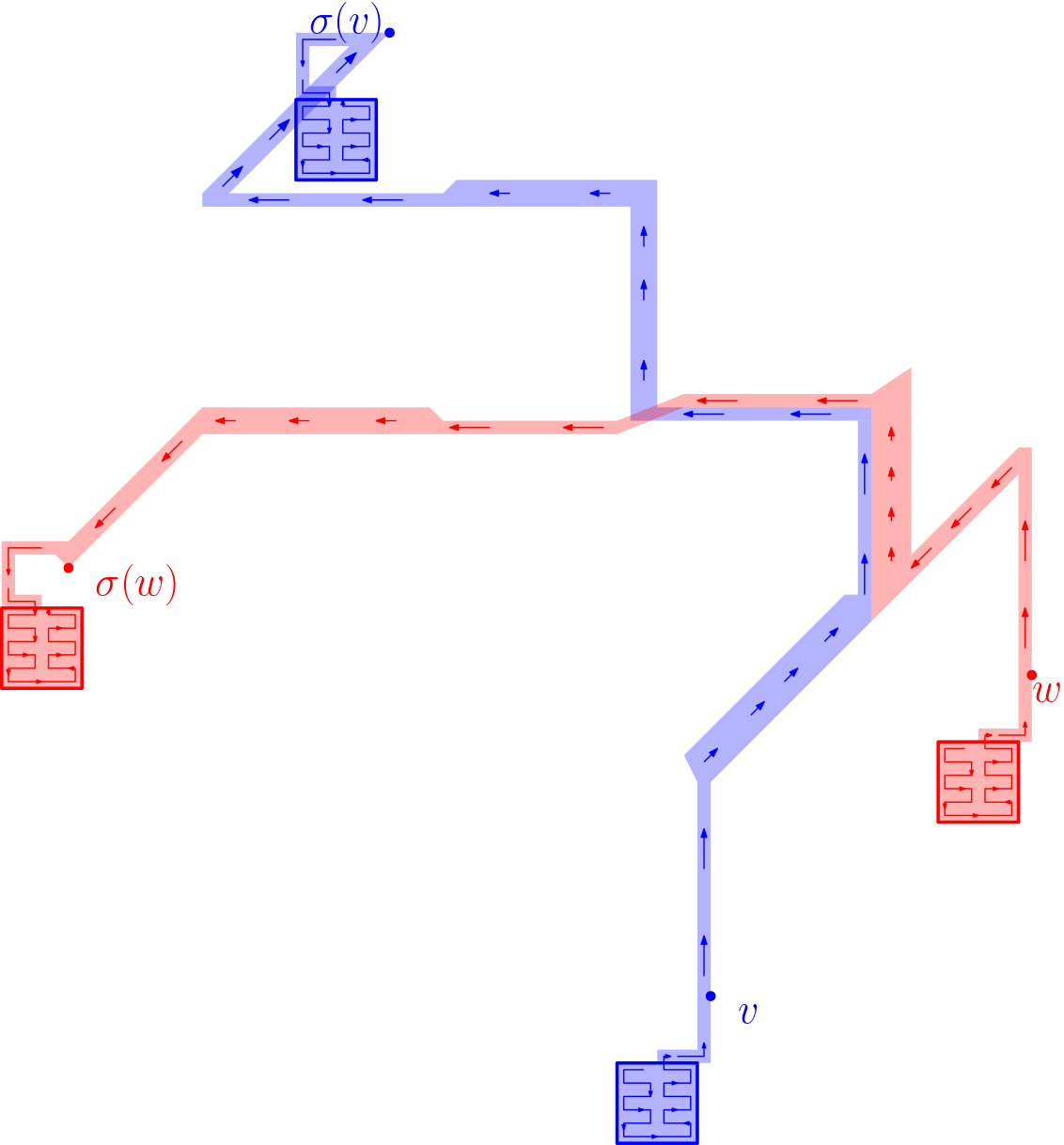}
      \caption{The realisation of the solution of the discrete problem displayed in Figure \ref{fig:ideas:1} as a flow/vector field. Observe that we roughly follow the edges of the network and use the previously attained solution. The 'pipe' that we use for every edge has some width $\rho(e,\gamma)$. The velocity of the flow then is antiproportional to this width.
      The vector field then transports the particles in a small subcube to the appropriate subcube in a fixed amount of time.
      }\label{fig:ideas:2}
\end{figure}
The solution given by the discrete framework has to be translated back into a continuous setting. For this, any small cube $\q_v$ in the tiling is subdivided into $K^{\nu}$ even smaller subcubes, where $K \in \N$ is a fixed constant we choose prior to the construction. In a single step, we only move appropriate subcubes to their correct position; we then repeat this construction $K^{\nu}$-times to move all subcubes into the right position.

The flow itself roughly looks as follows. A fluid particle starting in a small subcube will first move inside the subcube and then leave it via a \emph{pipe} that \emph{connects} the cube to a discretised network. It will then flow along pipes in the network that are constructed according to the solution of the discrete problem (Section \ref{sec:3}). Finally, it leaves the discrete network and then is connected via another pipe to its destination subcube.

The exact construction of such a flow is carefully carried out in Section \ref{sec:4} \& \ref{sec:construction}. We remark that all bounds in relevant $L^p$ spaces then are, up to dimensional constants, a direct consequence of the bounds for the discrete problem. In particular, as in the graph framework we achieve $\alpha=1$, we also achieve $\alpha=1$ for Theorem \ref{thm:main} -- only the involved constant might be quite large and depend on the choice of some construction parameters such as $K$.

\FloatBarrier
\subsection{Comparison to  classical discrete network flows} \label{subsec:discretecomp}
Before returning to some problems regarding continuous fluid flows, we pause and compare the discrete problem at hand to some well-known flow problems of discrete mathematics. Naturally, the list below is non-exhaustive and refer to the textbooks \cite{AMO,KV} for more examples.

One of the first deeply studied network problems is the simple $s$-$t$ flow in an undirected graph, famously studied in \cite{FF,FF2}, where the task it to find a maximal flow from a source $s$ to a sink $t$ that obeys a capacity constraint at any edge $e \in E$. A variant of this problem that is much closer to our problem is the \emph{multi-commodity flow} or edge-disjoint path problem (cf. \cite[Section 19]{KV}). In particular, in those problems there are multiple sinks and sources that need to be precisely liked, as in our problem. Adapted to our setting, that would mean that we need to find paths from any $v$ to $\sigma(v)$ such that a certain capacity constraint
\[
\# \{ \gamma \colon e \in E(\gamma)\} \leq C(\nu)
\]
for a purely dimensional constraint $C(\nu)$ is satisfied.

Our approach is more flexible than that: We additionally add the \emph{thickness}, such that we can make any set of paths admissible (in the sense that all capacity constraints are satisfied); but we have to pay a cost for that in terms of the $L^p$ norm of the fluid flow, i.e. \eqref{intro:cost}. Observe that this cost function is convex and nonlinear.

While we are not aware of previous study of our problem, let us note that \emph{convex} optimisation problems also have been studied for discrete network flows, for instance in \cite[Chapter 14]{AMO}. Examples of such problems are electrical networks \cite{CR7,GKMN} or problems in pipe networks \cite{collins,Cross}.

A difference to most convex optimisation problems is, however, the nature of our constraints and the target functional. In particular, our convex cost function is \emph{decreasing} while increasing the thickness $\rho$. On the contrary, we do not have tight constraints (any set of paths can be equipped with very small $\rho$ so that conditions \eqref{intro:capa} and \eqref{intro:time} are satisfied.

We also refer to the survey \cite{Bressan} for other connections between physical flow and discrete flows on networks.
\subsection{Issues for low dimensions} \label{sec:lowD}
While the case $\nu=1$ in the context of volume-preserving diffeomorphisms is trivial (being $\mathcal{D}((0,1))=\lbrace\Id\rbrace$), thus it is studied via Generalised Flows, the case $\nu=2$ is of independent interest. We recall that Shnirelman's inequality, in the formulation of Theorem \ref{thm:shnirelman}, Corollary \ref{coro:main}, is valid in $\nu\geq 3$, but no such inequality can be obtained in dimension $\nu=2$ because of topological obstructions. In particular, it holds that for every pair of positive constants $c, C$ there exists a diffeomorphism $g \in\mathcal D(M)$ such that
$dist_{\mathcal D(M)}(g,\Id_M) > C$, but $\|g-\Id_M\|_2\leq c$ (cf. \cite{Arnold:khesin}). Nevertheless, discrete-type versions of inequality \eqref{eq:ineq:shnirelman}, where instead of volume-preserving diffeomorphisms, permutations of the tiling $\mathcal{R}_N$ are considered (Subsection \ref{ss:difference:approaches}), are available also in dimension $\nu=2$ . 

\medskip
Even if the setting of our result (Theorem \ref{thm:main}) is purely discrete, the main constructions  can be realised only in dimension $\nu\geq 3$. A proof for the case $\nu=2$ with our techniques is still not available, the following problem is still left open.

\medskip
\textbf{Open Problem 1.} Let $\nu=2$, $1 \leq p,q \leq \infty$. Find the optimal exponent $\alpha$ for which there exists $C_{p,q}>0$ such that for every $N\in\N$, for every $\psi_\sigma\in\mathcal{D}_N$, there exists a vector field $u\in L^p([0,1],L^q(M))\cap L^1([0,1],\BV(M))$, with $\psi_\sigma$ the time-$1$ map of the vector field $u$, and
    \begin{equation}\label{eq:ineq:shnirelman:sharp:2}
        \|u\|_{L^p_tL^q_x}\leq C_{p,q}\|\psi_\sigma-\Id_M\|^\alpha_{L^q_x}.
    \end{equation}
To the best of our knowledge, so far the best bound for $\nu=2$ is $\alpha = \frac{2}{7}$, cf. \cite{Zizza24}. We refer also to that paper for open questions related to the discrete context.

\subsection{Absence of shortest path and the two point problem}
   In \cite{Shnirelman},\cite{Shnirelman2} it is proved that the minimum for the length functional cannot be achieved in general, thus $\text{dist}_{\mathcal{D}(M)}$ is an infimum. On the other side, we point out that, thanks to our construction, 
   \begin{equation}\label{eq:equivalence:metric}
       \|f-\Id_{M}\|_{L^2_x}\leq \text{dist}_{\mathcal D(M)} (f,\Id_M)\leq C \|f-\Id_{M}\|_{L^2_x},
 \end{equation}
   where the flow connecting $f$ to $\Id_M$ is constructed explicitly. Let us mention that our constant $C>0$ is not very explicit, and it might be further reduced considering the minimisation problem on the network, see Theorems \ref{thm:1D}, \ref{thm:2D} and \ref{thm:multiD}, as well as optimising some steps of the construction, cf. Section \ref{sec:construction}. In particular, our construction gives a minimiser of the length functional \emph{up to a constant}. 
   
   While the picture is clear in dimension $\nu\geq 3$, in dimension $\nu=2$ there are still many open questions regarding the existence of the shortest path even for isotopic states. Here, $f,g \in \text{Sdiff}(M)$ are isotopic if there exists a smooth path $t\mapsto\varphi_t$ connecting those two states with finite action.
   To pose the question more precisely, if $M\subset\R^2$ is a bounded simply connected open domain, and if $f,g\in\text{SDiff}(M)$ are two isotopic states, then (cf. \cite{Shnirelman2}) one may conjecture that 
   there exists $t\mapsto \varphi_t$ minimising the length functional \eqref{eq:action}, see also \cite{DrivasElgindi}. In particular, we expect that for isotopic states, inequality \eqref{eq:equivalence:metric} should be also valid in dimension $\nu=2$, together with an explicit construction of the fluid flow. We arrive at the following question.
   
   \textbf{Open Problem 2.} There exists a positive constant $C>0$ such that, for every $f,g\in\Dcal([0,1]^2)$ isotopic,
   \begin{equation*}
       \text{dist}_{\Dcal([0,1]^2)}\leq C\|f-g\|_{L^2([0,1]^2)}.
   \end{equation*}

\subsection*{General Notation}
Below we give a non-exhaustive overview over frequently used notation; some of it will be introduced in further detail below.
\subsubsection*{The cube and its discretisation}
\begin{itemize}
    \item $M=[0,1]^\nu$ is the reference domain;
    \item $\Dcal(M)=\text{SDiff}(M)$ is the space of volume-preserving diffeomorphisms on $M$;
    \item $\mathcal{R}_N$ is a tiling of $M$ into $N^\nu$ cubes of sidelength $N^{-1}$;
     \item $\Dcal_N$ denotes the set of permutations of $\mathcal{R}_N$;
     \item for a multiindex $v \in \{0,1,\ldots,N-1\}^{\nu}$ denote by $Q_v= N^{-1} v + N^{-1} [0,1)^{\nu}$ the cubes of $\Rcal_N$.

\end{itemize}
\subsubsection*{Discrete Mathematics}
    \begin{itemize}
    \item $G=(V,E)$ denotes an (undirected) graph, where $V$ is the set of vertices and $E$ the set of edges;
    \item for our purpose on the grid, $V=\{0,1,2,\ldots,N-1\}^{\nu}$ is the set of vertices;
    \item in that instance, $e = \{v,w\} \in E$ if and only if $\vert v- w \vert=1$;
    \item $\mathfrak S_{N,\nu}$ denotes the symmetric group over the set $V$, i.e. the set of bijective maps $V \to V$;
    \item $\sigma$ denotes the generic element of $\mathfrak S_{N,\nu}$;
    \item a path $\gamma$ is a sequence of disjoint vertices $v_1, \ldots,v_k$ such that $V_i$ and $v_{i+1}$ are connected by an edge;
    \item $\Gamma$ denotes a set of paths;
    \item for a path $\gamma$, $E(\gamma)$ denotes the set of edges belonging to a path $\gamma$ and $V(\gamma)$ the set of vertices in $\gamma$.
    \end{itemize}
\subsubsection*{The discrete problem and its continuous counterpart}
\begin{itemize}
    \item $\psi_{\sigma} (x)= N^{-1} \sigma(v) + (x- N^{-1} v) \quad \text{if } x \in \q_v,~v \in V$, and $\sigma \in \mathfrak S_{N,\nu}$;
    \item for an edge $e$ and a path $\gamma$ the 'thickness' of a pipe is denoted by  $\rho(e,\gamma)$;
    \item $u(e,\gamma)$ is the velocity of the fluid through a pipe;
    \item $\omega(\gamma)\in (0,1]$ is the weight of a path;
    \item $\mathcal{Q}_v$ is a subcube of $Q_v$ of sidelength $\ell$;
    \item $K= N^{-1} /\ell$ denotes the ratio of sidelength of cubes $Q_v$ and subcubes $\Qcal_v$;
    \item $\lambda < \ell$ is another length parameter that is needed during the construction;
    \item $\kappa = \ell/\lambda$ is another ratio we fix throughout the construction.
\end{itemize}
Another small caveat on notation: We often write $\spt(u_1) \cap \spt(u_2) = \emptyset$; with this we mean that the corresponding open sets are disjoint, i.e.
\[
(\spt(u_1))^{\circ} \cap (\spt(u_1))^{\circ} = \emptyset,
\]
or, equivalently (for the type of functions we consider) that $\mathcal{L}^n(\spt(u_1)\cap \spt(u_2)) =0$.

\section{The discrete problem and its solution} \label{sec:3}
We recall some basic notation from discrete mathematics, see for instance \cite{KV}. An undirected graph $G$ consists of a finite number of vertices $V$ and a set of edges $E \subset V^2 = \{ e \subset V \colon \vert e \vert =2 \}$ (i.e. we do not allow edges from a vertex to itself as well as parallel edges).
For an edge $e =\{v,w\}$ we say that $v$ and $w$ are the endpoints of $e$.
A \emph{path} $\gamma$ from a vertex $v_1$ to $v _{k+1}$ is a sequence $ \gamma= \lbrace v_1,...,v_{k+1}\rbrace$ with $v_i \neq v_j$ for $i \neq j$ such that $e_i := \{ v_i, v_{i+1}\} \in E$ \footnote{So that both vertices and edges do not appear twice in a path. Everything that we consider below may also be done for \emph{walks}, we stick to paths for brevity}. We can understand a path as a subgraph of $G$ and, consequently, write $V(\gamma) = \{v_1,\ldots v_{k+1}\}$ and write $e \in E(\gamma)$ if $e= \{v,v_{i+1}\}$ and $v_i,v_{i+1} \in V(\gamma)$. Usually we will denote a set of paths by $\Gamma$.

We assume that the graph $G$ is connected, i.e. that for any $v,w \in V$ we may find a path between $v$ and $w$. For a path $\gamma$ define the \emph{length} $\ell(\gamma)$ of $\gamma$ as the number of edges contained in $E(\gamma)$ and define
\[
    \dist(v,w) =\min_{\gamma \text{ path from } v \text{ to } w } \ell(\gamma).
 \]
If $\gamma_1$ is a path from $v_1$ to $v_{k+1}$ and $\gamma_2$ is a path from $v_{k+1}$ to $v_{m}$, (such that $v_i \neq v_j$ for $1 \leq i,j \leq m$, then we may define $\gamma ={\lbrace}v_1,...,v_m{\rbrace}$ as the concatenation of the paths $\gamma_1$ and $\gamma_2$. 

\subsection{Incompressible flows in graphs} 
In contrast to classical problems in network flow, where the in-flow at a point equals its outflow, in analytical problems, the incompressibility of the  flow along trajectories $\tau$ can also be ensured by the \emph{velocity} of the trajectory, i.e. if the fluid's path is very narrow, the velocity of the fluid increases. We model this as follows: we consider a decomposition of a flow (cf. also \cite[Theorem 8.8]{KV}) into paths. Imagine the flow as a fluid flow along a pipe through the discrete network. For every path $\gamma$ and every edge contained in $\gamma$ we consider a thickness $\rho \colon E \times \Gamma \to [0,\infty)$ (which we may think of as thickness of the pipe)  and a velocity $u \colon E \times \Gamma \to [0,\infty)$. The $\emph{flow}$ through the pipe per time unit then is the product 
\[
f(e,\gamma) = u(e,\gamma) \cdot \rho(e,\gamma).
\]
The flow is \emph{incompressible} if, for every $\gamma\in\Gamma$, for each neighbouring edges $e_i,e_{i+1}\in E(\gamma)$, (meaning $e_i \cap e_{i+1} \neq \emptyset$) in the path the flow is the same, i.e. the value of the flow \emph{does not} depend on $e$
\[
f(e,\gamma) = f(\gamma).
\]
Assuming that 
\[
f(\gamma) =1
\]
for all paths $\gamma$ we infer
\[
u(e,\gamma) = \rho(e,\gamma)^{-1} \quad \text{if } e \in E(\gamma).
\]
We obtain the following discrete problem. \\
\begin{problem}[Incompressible flow along one path] \label{problem:inc1}
     ~\\
     \emph{Input: } A graph $G=(V,E)$, a number $1\leq p \leq \infty$ and a bijective map $\sigma: V \to V$. \\
    \emph{Task: } Find paths $\gamma_v$ from $v$ to $\sigma(v)$, $\Gamma$ denoting the set of paths, and $\rho \colon E \times \Gamma \to [0,1]$ obeying
    \begin{enumerate} [label=(\roman*)]
        \item $\rho(e,\gamma) =0$ if $e \notin E(\gamma)$;
        \item time constraint: for each $\gamma \in \Gamma$ we have
            \begin{equation} \label{eq:volume}
                \sum_{e \in E(\gamma)} \rho(e,\gamma) \leq 1;
            \end{equation}           
        \item capacity constraint: for each $e \in E$ we have
            \begin{equation} \label{eq:capacity}
                \sum_{\gamma \colon e \in E(\gamma)} \rho(e,\gamma) \leq 1
            \end{equation}
        \end{enumerate}
    such that the $l_p$ cost functional
    \begin{equation} \label{def:cp}
         \tilde{c}_p(\rho):= \left(\sum_{\gamma \in \Gamma} \sum_{e \in E(\gamma)} \rho(e,\gamma)^{-p+1} \right)^{1/p}
    \end{equation}
    and, for $p=\infty$,
    \begin{equation} \label{def:cinfty}
        \tilde{c}_\infty(\rho):= \sup_{\gamma \in \Gamma} \sup_{e \in E(\gamma)} \rho(e,\gamma)^{-1}
    \end{equation}
    is minimised.
\end{problem}
~\\
This problem deals with the construction of a flow along \emph{one} path according to a permutation as the bijective function $\sigma:V\rightarrow V$, $\sigma\in\mathfrak{S}_{N,\nu}$. 
We shortly remind the reader of the background behind conditions \eqref{eq:volume},\eqref{eq:capacity} and for the definition of the $\ell^p$ cost functional \eqref{def:cp},\eqref{def:cinfty}.  Condition \eqref{eq:capacity} is related to the amount of pipes that go through an edge $e$: this amount has to be controlled to guarantee the construction in the network can be adapted to the continuous setting. Condition \eqref{eq:volume} refers to the fact that the fluid particle shall be transported from one vertex to another in a certain amount of time, which is in our case at most $t=1$. Finally, the $\ell_p$ cost functional is related to the $L^p$-norm of the velocity field, which is given by $$ \left(\sum_{\gamma\in\Gamma}\sum_{e\in E(\gamma)}u(e,\gamma)^p\rho(e,\gamma)\right)^{\frac{1}{p}}.$$

Especially in higher dimension it might be useful to consider a flow that is split up into several different paths. We then obtain the following problem: \\
\begin{problem}[Incompressible flow along multiple paths] \label{problem:inc2}
   ~\\
   \emph{Input: } A graph $G=(V,E)$, a number $1\leq p \leq \infty$ and a bijective map $\sigma: V \to V$. \\
    \emph{Task: } Find paths $\gamma_{v,i}$ from $v$ to $\sigma(v)$, $v \in V$ and $i \in I_v$ ($I_v$ being a finite index set), $\Gamma= \bigcup_{v \in V} \bigcup_{i \in I_v} \{\gamma\}$ be the set of all paths, $\rho \colon E \times \Gamma \to [0,1]$ and weights $\omega \colon \Gamma \to (0,1]$ obeying
    \begin{enumerate} [label=(\roman*)]
        \item $\rho(e,\gamma) =0$ if $e \notin E(\gamma)$;
        \item decomposition of flow: for each $v \in V$
                \begin{equation} \label{eq:decomp}       
                    \sum_{i \in I_v} \omega(\gamma_{v,i}) =1.
                \end{equation}
        \item time constraint: for each $\gamma \in \Gamma$ we have
            \begin{equation} \label{eq:volume2}
                \sum_{e \in E(\gamma)} \rho(e,\gamma) \leq 1;
            \end{equation}           
        \item capacity constraint: for each $e \in E$ we have
            \begin{equation} \label{eq:capacity2}
                \sum_{\gamma \colon e \in E(\gamma)} \omega(\gamma) \cdot \rho(e,\gamma) \leq 1
            \end{equation}
        \end{enumerate}
    such that the $l_p$ cost functional
    \begin{equation} \label{def:cp2}
        c_p(\rho,\omega) := \left(\sum_{\gamma \in \Gamma} \sum_{e \in E(\gamma)} \omega(\gamma) \rho(e,\gamma)^{-p+1} \right)^{1/p}
    \end{equation}
    and, for $p=\infty$,
    \begin{equation} \label{def:cinfty2}
        c_\infty(\rho,\omega) := \sup_{\gamma \in \Gamma} \sup_{e \in E(\gamma)}  \rho(e,\gamma)^{-1}
    \end{equation}
    is minimised.
\end{problem}
~ \\
Obviously, Problem \ref{problem:inc1} is a simpler version of Problem \ref{problem:inc2} as the index set $I_v$ only consists of one member each and $\omega(\gamma) =1$ for each $\gamma$. The weight $\omega$ corresponds to a \emph{volume portion} of the cube $Q$ that is transported via the path $\gamma$. Taking a more probabilistic standpoint, condition \eqref{eq:decomp} might be viewed as the probability that a fluid particle starting in a small cube follows the path $\gamma_{v,i}$, giving a connection to the concept of Generalised Flows, cf. \cite{Brenier1}.
\smallskip

While we might attempt to solve the discrete problem exactly, for our purposes it is sufficient to give a solution that attains the minimum up to a constant. We remark that an \emph{exact solution} of the discrete problem might be connected to obtaining minimisers of the action functional. Let us call paths $\gamma$, thicknesses $\rho$ and weights $\omega$ \emph{admissible} if they satisfy the constraints given by Problems \ref{problem:inc1} and Problem \ref{problem:inc2}. \\
\begin{problem} \label{realproblem}  ~\\
  \emph{Input: } A graph $G=(V,E)$, a number $1\leq p \leq \infty$ and a bijective map $\sigma: V \to V$. \\
  \emph{Task: } Find admissible $\gamma'$, $\rho'$ and $\omega'$ such that
  \[
  c_p(\rho',\omega') \leq C(p,G) \inf_{\gamma,\rho,\omega \text{ admissible}} c_p(\rho,\omega).
  \]
\end{problem}
Here, $C(p,G)$ is a constant that only depends on $p$ and the graph $G$ (in our application to grids, the constant only depends on $p$ and the dimension $\nu$).
While it is very hard to give an explicit value to the infimum on the right hand side, we can give a quite effective lower bound. 
\begin{lemma}
Let $G=(V,E)$, $1 \leq p \leq \infty$.
Then, for $1 \leq p <\infty$
\begin{equation} \label{lowerbound:p}
\inf_{\gamma, \rho,\omega \text{ admissible}} c_p(\rho,\omega) \geq \left( \sum_{v \in V} \dist(v,\sigma(v))^p \right)^{1/p}
\end{equation}
and
\begin{equation} \label{lowerbound:infty}
\inf_{\gamma,\rho,\omega \text{ admissible}} c_\infty(\rho,\omega) \geq \sup_{v \in V} \dist(v,\sigma(v)).
\end{equation}
\end{lemma}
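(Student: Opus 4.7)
The plan is to reduce both inequalities to a per-path estimate and then sum over $\Gamma$ using the decomposition constraint \eqref{eq:decomp}. The fundamental observation is that every admissible path $\gamma_{v,i}$ goes from $v$ to $\sigma(v)$, hence has at least $\dist(v,\sigma(v))$ edges. The work is therefore to convert the time constraint $\sum_{e \in E(\gamma)}\rho(e,\gamma)\leq 1$ from \eqref{eq:volume2} into a lower bound on $\sum_{e \in E(\gamma)} \rho(e,\gamma)^{-(p-1)}$, which is exactly the per-path contribution to $c_p(\rho,\omega)^p$; notice that the capacity constraint \eqref{eq:capacity2} will play no role for the lower bound.

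For $1<p<\infty$ I would apply Jensen's inequality along each path $\gamma\in\Gamma$ to the convex function $x\mapsto x^{-(p-1)}$ (which is convex and decreasing on $(0,\infty)$ since $p-1>0$). Writing $\ell(\gamma)=\#E(\gamma)$, this gives
\[
\sum_{e\in E(\gamma)} \rho(e,\gamma)^{-(p-1)} \;\geq\; \ell(\gamma)^p \left(\sum_{e\in E(\gamma)} \rho(e,\gamma)\right)^{-(p-1)} \;\geq\; \ell(\gamma)^p \;\geq\; \dist(v,\sigma(v))^p,
\]
where the second inequality uses \eqref{eq:volume2} together with monotonicity of $x\mapsto x^{-(p-1)}$. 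Inserting this into the definition \eqref{def:cp2} of $c_p(\rho,\omega)^p$, rearranging the double sum as $\sum_{v\in V}\sum_{i\in I_v}$, and using \eqref{eq:decomp} to evaluate $\sum_{i\in I_v}\omega(\gamma_{v,i})=1$, I obtain
\[
c_p(\rho,\omega)^p \;\geq\; \sum_{v\in V} \dist(v,\sigma(v))^p \sum_{i\in I_v}\omega(\gamma_{v,i}) \;=\; \sum_{v\in V} \dist(v,\sigma(v))^p,
\]
which is \eqref{lowerbound:p}.

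The endpoints require no Jensen. For $p=1$, $\rho^{-(p-1)}=1$ on each edge of $\gamma$, so $c_1(\rho,\omega)=\sum_\gamma \omega(\gamma)\ell(\gamma)$, and the same resummation argument with $\ell(\gamma_{v,i})\geq\dist(v,\sigma(v))$ yields \eqref{lowerbound:p} at $p=1$. For $p=\infty$, definition \eqref{def:cinfty2} forces $\rho(e,\gamma)\geq c_\infty(\rho,\omega)^{-1}$ on every $e\in E(\gamma)$, and combining this with \eqref{eq:volume2} gives $1\geq \ell(\gamma)\,c_\infty(\rho,\omega)^{-1}$, i.e. $c_\infty(\rho,\omega)\geq \ell(\gamma_{v,i})\geq \dist(v,\sigma(v))$ for every $v\in V$; taking the supremum over $v$ gives \eqref{lowerbound:infty}. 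The only non-mechanical step in the whole argument is the Jensen inequality, in particular getting the direction right in view of the negative exponent; once that is verified the rest is purely bookkeeping.
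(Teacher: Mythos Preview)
Your proof is correct and follows essentially the same approach as the paper: both use Jensen's inequality for the convex function $x\mapsto x^{-(p-1)}$ together with the time constraint \eqref{eq:volume2} to get the per-path bound $\sum_{e\in E(\gamma)}\rho(e,\gamma)^{-(p-1)}\geq \ell(\gamma)^p\geq\dist(v,\sigma(v))^p$, and then sum using \eqref{eq:decomp}. Your treatment of the endpoints $p=1$ and $p=\infty$ is also the same as the paper's, just phrased slightly differently.
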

The same inequality obviously also holds for $\tilde{c}_p$ and $\tilde{c}_{\infty}$ as they are larger than $c_p$ and $c_{\infty}$, respectively.
\begin{proof}
Fix $v \in V$. Any path between $\gamma$ between $v$ and $\sigma(v)$ has length at least $\dist(v,\sigma(v))$.
If $p= \infty$ we infer from the time constraint that there is an edge $e \in E(\gamma)$ such that $\rho(e,\gamma) \leq \dist(v,\sigma(v))^{-1}$, i.e. $\rho(e,\gamma)^{-1} \geq \dist(v,\sigma(v))$, i.e. \eqref{lowerbound:infty} holds.

If $p<\infty$, we recall the time constraint and further observe that $x \mapsto x^{-p+1}$ is convex. As we have $\ell(\gamma)$ summands we infer
\begin{equation}\label{eq:final:section}
\sum_{e \in E(\gamma)} \rho(e,\gamma)^{-p+1} \geq \ell(\gamma) \cdot (\ell(\gamma)^{-1})^{1-p} = \ell(\gamma)^p \geq \dist(v,\sigma(v))^p.
\end{equation}
Summing over all possible $\gamma$, and using \eqref{eq:decomp} yields the statement.
\end{proof}
So, we might reformulate Problem \eqref{realproblem} in a fashion such that  instead of the infimum we have the distance on the right hand side.
\begin{problem} \label{realproblem2} ~ \\
  \emph{Input: } A graph $G=(V,E)$, a number $1\leq p \leq \infty$ and a bijective map $\sigma: V \to V$. \\
  \emph{Task: } Find admissible $\gamma'$, $\rho'$ and $\omega'$ such that
  \[
  c_p(\rho',\omega') \leq C(p,G) \Vert \dist(v,\sigma(v)) \Vert_{l_p(V)}.  
  \]
\end{problem}
\smallskip

The rest of this section is devoted to this task in the special case where $G$ is the graph coming from a $\nu$-dimensional grid.

\subsection{The one-dimensional problem} \label{sec:discrete:1D}
Let us first discuss the one-dimensional problem as it is quite straightforward.
Suppose that we are given $N$ vertices in a line, i.e. $V= \{0,....,N-1\}$ and $E = \{ \{k,k+1\} \colon k=0,...,N-2\}$. Then $\dist(k,k') = \vert k- k' \vert$. We now consider a permutation of vertices $\sigma \colon V \to V$ . We propose to solve Problem \ref{problem:inc1} approximately as follows: for each pair of vertices $k$ and $\sigma(k)$ consider the direct path from $k$ to $\sigma(k)$ and let
\begin{equation} \label{def:flow:1D}
    \rho(e,\gamma) = \begin{cases}
        \Bigl(\max \left( F(e), \ell(\gamma) \right)\Bigr)^{-1} \text{ if } e\in E(\gamma), \\
        0\quad\text{ otherwise.}
    \end{cases}
\end{equation}
where $F(e)$ is the number of paths $\gamma$ such that $e \in E(\gamma)$.
\begin{lemma} \label{lemma:1D:1}
Consider the previous setup of $G=(V,E)$, paths $\gamma$ and $\rho$. Then 
\begin{enumerate} [label=(\roman*)]
    \item \label{lemma:1D:1:a} If $e = \{k, k+1\}$ then $F(e) = \#\{j > k \colon \sigma(j) \leq k \} + \#\{ j\leq k \colon \sigma(j)>k\}$;
    \item \label{lemma:1D:1:b} $(\gamma,\rho)$ is admissible (for Problem \ref{problem:inc1}).
\end{enumerate}
\end{lemma}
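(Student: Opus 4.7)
Both parts are essentially bookkeeping, using the fact that the paths $\gamma_j$ chosen are the direct ones from $j$ to $\sigma(j)$ on the line graph.

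For part \ref{lemma:1D:1:a}, the key observation is that the direct path from $j$ to $\sigma(j)$ consists of all edges $\{m,m+1\}$ with $\min(j,\sigma(j)) \leq m < \max(j,\sigma(j))$. Hence $e = \{k,k+1\}$ belongs to $E(\gamma_j)$ if and only if $\min(j,\sigma(j)) \leq k$ and $\max(j,\sigma(j)) \geq k+1$, i.e.\ $j$ and $\sigma(j)$ lie on opposite sides of the cut between $k$ and $k+1$. I would split this condition into the two disjoint cases $\{j \leq k,\, \sigma(j) > k\}$ and $\{j > k,\, \sigma(j) \leq k\}$ and observe that their sizes sum to $F(e)$, which is exactly the claimed identity.

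For part \ref{lemma:1D:1:b}, I would verify the three conditions of Problem \ref{problem:inc1} one at a time. The first, $\rho(e,\gamma) = 0$ for $e \notin E(\gamma)$, is immediate from the definition \eqref{def:flow:1D}. For the time constraint \eqref{eq:volume}, I would use the bound
\[
\rho(e,\gamma) \;=\; \bigl(\max(F(e),\ell(\gamma))\bigr)^{-1} \;\leq\; \ell(\gamma)^{-1}
\]
valid for every $e \in E(\gamma)$, so that
\[
\sum_{e \in E(\gamma)} \rho(e,\gamma) \;\leq\; \ell(\gamma) \cdot \ell(\gamma)^{-1} \;=\; 1.
\]
For the capacity constraint \eqref{eq:capacity}, I would use the symmetric estimate $\rho(e,\gamma) \leq F(e)^{-1}$ for every path $\gamma$ with $e \in E(\gamma)$, and then observe that by the very definition of $F(e)$ there are exactly $F(e)$ such paths, giving
\[
\sum_{\gamma \colon e \in E(\gamma)} \rho(e,\gamma) \;\leq\; F(e) \cdot F(e)^{-1} \;=\; 1.
\]

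There is no real obstacle here; the only thing to be careful about is that the $\max$ in the definition of $\rho$ is crucial for both the time and the capacity bound to hold simultaneously, and the two bounds are dual in the sense that one uses the $\ell(\gamma)^{-1}$ side of the $\max$ while the other uses the $F(e)^{-1}$ side.
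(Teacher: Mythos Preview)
Your proposal is correct and follows essentially the same approach as the paper: part \ref{lemma:1D:1:a} is the observation that $e=\{k,k+1\}$ lies on the direct path $\gamma_j$ precisely when $j$ and $\sigma(j)$ straddle the cut at $k$, and part \ref{lemma:1D:1:b} is exactly the pair of one-line estimates $\rho(e,\gamma)\le \ell(\gamma)^{-1}$ and $\rho(e,\gamma)\le F(e)^{-1}$ coming from the two sides of the $\max$. The paper's proof is terser but identical in substance.
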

\begin{proof}
   
    Point  \ref{lemma:1D:1:a} follows immediately by the definition of $F(e)$. For point\ref{lemma:1D:1:b} we first check the time constraint: fix $\gamma$ and $e\in E(\gamma)$. Then
    \begin{equation*}
        \sum_{e\in E(\gamma)}\rho(e,\gamma)=\sum_{e\in E(\gamma)} \Bigl(\max \left( F(e), \ell(\gamma) \right)\Bigr)^{-1}\leq \sum_{e\in E(\gamma)}\ell(\gamma)^{-1}=1.
    \end{equation*}
    The capacity constraint follows by the same computations using the trivial inequality $\max \left( F(e), \ell(\gamma) \right)\geq F(e)$. 
\end{proof}
Suppose now that $X \subset V$ and let 
\[
F_X(e) = \# \{ \gamma_v \in \Gamma \colon v \in X, e \in E(\gamma_v). \}
\]
\begin{lemma} \label{lemma:1D:2}
    Let $G=(V,E)$, $\gamma$ and $\rho$ be as before. Then we have
    \begin{enumerate} [label=(\roman*)]
        \item \label{lemma:1D:2a} $\Vert F_X(\cdot) \Vert_{l^1(E)} = \Vert \dist(\cdot,\sigma(\cdot)) \Vert_{l^1(X)}$;
        \item  \label{lemma:1D:2b} $\Vert F_X(\cdot) \Vert_{l^{\infty}(E)} \leq 2 \Vert \dist(\cdot,\sigma(\cdot))\Vert_{l^{\infty}(X)} $;
        \item for any $1 \leq p \leq \infty$ we have $ \Vert F(\cdot) \Vert_{l^{p}(E)} \leq C_p \Vert \dist(\cdot,\sigma(\cdot)) \Vert_{l^{p}(V)}$.
    \end{enumerate}
\end{lemma}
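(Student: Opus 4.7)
My plan for Lemma \ref{lemma:1D:2} is to prove the three parts in order, with (iii) being the main challenge.

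For part (i), I would apply Fubini to switch the order of summation:
\[
\sum_{e \in E} F_X(e) = \sum_{v \in X} \#\{e : e \in E(\gamma_v)\} = \sum_{v \in X} \ell(\gamma_v) = \sum_{v \in X} \dist(v, \sigma(v)),
\]
using that in the 1D graph $\gamma_v$ is the unique (direct) path from $v$ to $\sigma(v)$. This gives the identity (i) on the nose.

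For part (ii), I would adapt the explicit formula in Lemma \ref{lemma:1D:1}\ref{lemma:1D:1:a} by restricting the counts to $v \in X$: for $e = \{k, k+1\}$,
\[
F_X(e) = \#\{v \in X : v > k,\, \sigma(v) \leq k\} + \#\{v \in X : v \leq k,\, \sigma(v) > k\}.
\]
Any $v$ in the first set satisfies $\dist(v,\sigma(v)) = v - \sigma(v) \geq v - k \geq 1$, and if $D := \Vert \dist(\cdot, \sigma(\cdot))\Vert_{l^\infty(X)}$ this forces $v \in \{k+1, \ldots, k + D\}$, so each of the two counts is at most $D$. Hence $F_X(e) \leq 2D$.

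The main step is (iii), which I would handle by a Marcinkiewicz-type real interpolation between (i) and (ii). The key idea is to split $V$ at every level $\lambda > 0$ according to the size of $d(v) = \dist(v, \sigma(v))$: writing $X_\lambda = \{v : d(v) > \lambda\}$ and decomposing $F = F_{X_\lambda} + F_{V \setminus X_\lambda}$, estimate (ii) applied to the second summand gives $\Vert F_{V\setminus X_\lambda}\Vert_{l^\infty} \leq 2\lambda$, whence $\{F > 3\lambda\} \subseteq \{F_{X_\lambda} > \lambda\}$. Combining this containment with Markov's inequality and identity (i), I would bound
\[
|\{e : F(e) > 3\lambda\}| \leq \lambda^{-1} \sum_{v : d(v) > \lambda} d(v).
\]
Plugging this into the layer-cake identity $\Vert F \Vert_{l^p(E)}^p = p \int_0^\infty t^{p-1} |\{F > t\}|\, dt$ (with the substitution $t = 3\lambda$) and exchanging the $\lambda$-integral with the sum over $v$ via Fubini delivers $\Vert F \Vert_{l^p}^p \leq \tfrac{3^p p}{p-1} \Vert \dist(\cdot, \sigma(\cdot))\Vert_{l^p(V)}^p$ for $1 < p < \infty$.

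The main delicacy will be that the constant $C_p$ produced by this layer-cake argument blows up as $p \searrow 1$, so one cannot reach $p = 1$ by a limiting procedure. This is harmless, however, since the endpoints are already covered directly: part (i) applied with $X = V$ gives $C_1 = 1$ (in fact with equality), and part (ii) applied with $X = V$ gives $C_\infty = 2$; only the intermediate range $1 < p < \infty$ genuinely requires the interpolation step.
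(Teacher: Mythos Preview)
Your proposal is correct and follows essentially the same route as the paper: parts (i) and (ii) are proved by the same double-counting and pigeonhole arguments, and part (iii) is obtained via the same Marcinkiewicz-type interpolation, splitting $V$ at each level into large- and small-displacement vertices, controlling the small part in $l^\infty$ via (ii), and the large part in weak-$l^1$ via (i) and Markov, then integrating with the layer-cake formula. The only differences are cosmetic (you use the continuous layer-cake and threshold $\lambda$ where the paper uses a discrete sum and threshold $s/2$), and your observation that the endpoints $p=1,\infty$ must be read off directly from (i) and (ii) is exactly right.
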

\begin{proof}
    The $l^1$-equality is an argument of counting the same quantity twice. Let $\gamma_v$ be the path from $v$ to $\sigma(v)$.
    \begin{align*}
        \Vert F_X(\cdot) \Vert_{l^1(E)} =& \sum_{e \in E} \# \{ \gamma_v, v \in X \colon e \in E(\gamma) \} = \sum_{v \in V} \# \{ e \colon e \in E(\gamma_v), v \in X \} = \Vert \dist(\cdot,\sigma(\cdot)) \Vert_{l^1(X)}.
    \end{align*}
    The $l^{\infty}$-inequality builds on the following observation: Let $K= \Vert \dist(\cdot,\sigma(\cdot)) \Vert_{l^{\infty}(X)}$ and $e = \{ k, k+1\}$. Then if $e \in E(\gamma_v)$ for some $v \in X$ we infer $k-K < v < k+1+K$. Hence, $F_X(e) \leq 2K$ and therefore we get the desired inequality. \\
    Observe that setting $X=V$ in both \ref{lemma:1D:2a} and \ref{lemma:1D:2b} yields the bound for $p=1$ and $p=\infty$. 
    The $l^p$-inequality for $1<p<\infty$ then follows by an interpolation argument. To be more precise, we roughly follow ideas from the proof of Marcinkiewicz interpolation theorem (e.g. \cite[Theorem 1.3.2]{Grafakos}).
    In particular, for a positive number $s >0$ set 
    \[
   X_s^1 = \{ v \in V \colon \vert v - \sigma(v) \vert \geq s/2\}  \quad \text {and } \quad X_s^{\infty} = \{ v \in V \colon \vert v - \sigma(v) \vert < s/2\},
    \]
    such that $V= X_s^1 \cup X_s^{\infty}$.
    Accordingly, we obtain a decomposition $\Gamma = \Gamma^1_s \cup \Gamma^\infty_s$ of paths with lengths larger than $s/2$ and strictly less than $s/2$, respectively. Let $F^1_s(e)$ be the number of paths in $\Gamma^1_s$ that go through $e$ and similarly define $F^{\infty}_s(e)$.
 Observe that 
 \[
 F(e) = F^1_s(e) + F_s^{\infty}(e)
 \]
 and that 
 \[
 \Vert \sigma - \id \Vert_{l^1(X_s^1)} \leq (s/2)^{1-p} \Vert \sigma - \id \Vert_{l^p(X_s^1)}^p, \quad \Vert \sigma - \id \Vert_{l^{\infty}(X_s^{\infty})} < s/2.
 \]
Then, due to \ref{lemma:1D:2b}, $F_s^{\infty}(e) <s/2$. 
Using the discretised version of the layer-cake formula and Chebychev's inequality we obtain
\begin{align*}
    \Vert F(\cdot) \Vert_{l^p(E)}^p &= \sum_{s=1}^{\infty} (s^p -(s-1)^p) \# \{e \colon F(e) \geq s \} 
    \\
    & \leq  \sum_{s=1}^{\infty} (s^p -(s-1)^p) \# \{e \colon F^1_s(e) \geq (s/2) \} \\
    & \leq \sum_{s=1}^{\infty} (s^p -(s-1)^p) \tfrac{2}{s} \Vert F^1_s(e) \Vert_{l^1(E)} 
     \leq \sum_{s=1}^{\infty} (s^p -(s-1)^p) \tfrac{2}{s}  \Vert \sigma -\id \Vert_{l^1(X^1_s)}
    \\
    &\leq \sum_{s=1}^{\infty} (s^p-(s-1)^p) \tfrac{2}{s} \sum_{v \colon \vert v-\sigma(v) \vert \geq s/2} \vert \sigma(v) - v \vert \\
    & \leq \sum_{v \in V} \vert \sigma(v) - v\vert \sum_{s=1}^{2\vert \sigma(v) - v \vert}  (s^p-(s-1)^p) \tfrac{2}{s} 
     \\
     &\leq C_p \sum_{v \in V} \vert \sigma(v) - v \vert  \Bigl(\vert \sigma(v) - v \vert^{p-1} \Bigr) \\
    & = C_p \Vert \sigma - \id \Vert_{l^p(V)}^p.
\end{align*}

Taking the $p$-th root of the equation yields the desired interpolation.
\end{proof}
We shortly remark that one may optimise the constant in Lemma \ref{lemma:1D:2} to be $C_p\leq 2$.
We finally arrive at the following result regarding the 1D problem:
\begin{thm} \label{thm:1D}
    Let $G=(V,E)$ be the line, let $\gamma_v$ be the direct path from $v$ to $\sigma(v)$ and let $\rho$ be as in \eqref{def:flow:1D}. Then the set of paths $\gamma$ and $\rho$ is admissible to the Problem \ref{problem:inc1} and there is a constant $C>0$ such that for any $1 \leq p \leq \infty$
    \[
    \tilde{c}_p(\rho) \leq C \Vert \dist(\cdot ,\sigma(\cdot))\Vert_{l^p(V)}.
    \]
\end{thm}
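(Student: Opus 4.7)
The plan is direct: admissibility has already been verified in Lemma \ref{lemma:1D:1}, so the only task is to estimate the cost $\tilde{c}_p(\rho)$. The fundamental observation is that since $\rho(e,\gamma)^{-1} = \max(F(e),\ell(\gamma))$ for $e\in E(\gamma)$, the cost is
\[
\tilde{c}_p(\rho)^p = \sum_{\gamma\in\Gamma}\sum_{e\in E(\gamma)} \max\bigl(F(e),\ell(\gamma)\bigr)^{p-1}.
\]
The idea is to separate the two contributions: the ``congestion'' coming from $F(e)$ and the ``length'' coming from $\ell(\gamma)$. I would use the elementary bound $\max(a,b)^{p-1}\leq (a+b)^{p-1}\leq 2^{p-1}(a^{p-1}+b^{p-1})$ for $a,b\geq 0$ and $p\geq 1$, which splits the sum into two pieces.

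For the length piece, I would use $\ell(\gamma_v) = |v-\sigma(v)| = \dist(v,\sigma(v))$ so that
\[
\sum_{\gamma\in\Gamma}\sum_{e\in E(\gamma)}\ell(\gamma)^{p-1} = \sum_{v\in V}\ell(\gamma_v)^p = \Vert\dist(\cdot,\sigma(\cdot))\Vert_{l^p(V)}^p.
\]
For the congestion piece, a double-counting argument (switching the order of summation and recalling that $F(e)$ counts exactly how many paths use $e$) gives
\[
\sum_{\gamma\in\Gamma}\sum_{e\in E(\gamma)} F(e)^{p-1} = \sum_{e\in E} F(e)\cdot F(e)^{p-1} = \Vert F\Vert_{l^p(E)}^p,
\]
and this is controlled by $C_p^p\Vert\dist(\cdot,\sigma(\cdot))\Vert_{l^p(V)}^p$ via Lemma \ref{lemma:1D:2}. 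Combining the two contributions and taking $p$-th roots yields the desired inequality with a constant depending only on $p$.

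The case $p=\infty$ I would handle separately but it is even simpler: by definition $\tilde{c}_\infty(\rho) = \sup_{\gamma}\sup_{e\in E(\gamma)}\max(F(e),\ell(\gamma))$, and both $\ell(\gamma_v)\leq \Vert\dist\Vert_{l^\infty(V)}$ and $F(e)\leq 2\Vert\dist\Vert_{l^\infty(V)}$ (the latter being part \ref{lemma:1D:2b} of Lemma \ref{lemma:1D:2}), so the bound follows immediately with $C=2$.

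I do not expect any serious obstacle here: all the technical work has been packaged into Lemma \ref{lemma:1D:1} (admissibility) and Lemma \ref{lemma:1D:2} (the interpolation estimate on $\Vert F\Vert_{l^p(E)}$). The only mildly delicate point is making sure the $p$-dependence of the constant is uniform — this is handled by the explicit bound $C_p\leq 2$ remarked after Lemma \ref{lemma:1D:2}, so that the final constant $C = 2^{(p-1)/p}(C_p^p+1)^{1/p}$ can actually be taken independent of $p$.
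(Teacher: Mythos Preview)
Your proposal is correct and follows essentially the same approach as the paper: split $\rho(e,\gamma)^{-1}=\max(F(e),\ell(\gamma))$ into a congestion term controlled by Lemma \ref{lemma:1D:2} and a length term that sums directly to $\Vert\dist(\cdot,\sigma(\cdot))\Vert_{l^p(V)}^p$, with the $p=\infty$ case handled separately. The only cosmetic difference is that the paper uses the sharper bound $\max(a,b)^{p-1}\leq a^{p-1}+b^{p-1}$ directly (since $\max(a,b)$ equals one of $a,b$), avoiding your detour through $(a+b)^{p-1}$ and the extra factor $2^{p-1}$.
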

\begin{proof}
    We already showed in Lemma \ref{lemma:1D:1} that $\gamma$ and $\rho$ are admissible. It remains to show the bound.
    For this observe that 
    \begin{align*}
        \rho(e,\gamma)^{-1} \leq F(e) + \ell(\gamma).
    \end{align*}
    Thus, due to Lemma \ref{lemma:1D:2}
    \[
    \tilde{c}_{\infty}(\rho) = \sup_{\gamma \in P} \sup_{e \in E(\gamma)} \rho(e,\gamma)^{-1} \leq \sup_{e \in E} F(e) + \sup_{\gamma \in P} \ell(\gamma) \leq 3 \Vert \cdot - \sigma(\cdot) \Vert_{l^{\infty}(V)}.
    \]
    Moreover, for $p<\infty$ we have
    \begin{align*}
        \tilde{c}_p(\rho)^p &= \sum_{\gamma \in \Gamma} \sum_{e \in E(\gamma)} \left(\rho(e,\gamma)^{-1}\right)^{p-1} 
        \leq  \sum_{\gamma \in \Gamma} \sum_{e \in E(\gamma)} F(e)^{p-1} + \ell(\gamma)^{p-1} \\
        &= \sum_{e \in E} \sum_{\gamma \in \Gamma \colon e \in E(\gamma)} F(e)^{p-1} + \sum_{\gamma \in \Gamma} \sum_{e \in E(\gamma)} \ell(\gamma)^{p-1}
        = \sum_{e \in E} F(e)^p + \sum_{v \in V} \ell(\gamma_v)^{p} \\
        &= \Vert F(\cdot) \Vert_{l^p(E)}^p + \Vert \ell(\gamma_\cdot) \Vert _{l^p(V)}^p \leq (C_p+1) \Vert \dist(\cdot, \sigma(\cdot)) \Vert_{l^p(V)}^p
    \end{align*}
    and therefore
    \[
    \tilde{c}_p(\rho) \leq \tilde{C}_p \Vert \dist(\cdot,\sigma(\cdot))\Vert_{l^p(V)}.
    \]
\end{proof}

\subsection{The two-dimensional problem} 
\label{sec:discrete:2D}
We now consider the two-dimensional problem, which is not as direct as the one-dimensional problem, but still much easier than higher dimensions. Suppose that we are given $N^2$ vertices, $V= \{v=(i,j) \colon i,j =0,\ldots N-1\}$ and $E$ is the set of edges along coordinate lines, i.e.
\[
E=\{ \{v,w\} \colon v,w \in V \text{ and }\vert v - w \vert =1 \}
\]
where $\vert v- w \vert = \vert v_1 - w_1 \vert + \vert v_2 -w_2 \vert$ is the usual Manhattan distance.
A permutation $\sigma$ is a map from $V$ to $V$ and we measure $\dist(\cdot,\sigma(\cdot))$ also via the Manhattan metric. We denote by $\sigma_1$ the first coordinate of $\sigma$ and by $\sigma_2$ the second.

In contrast to the one-dimensional problem, there is not only one shortest path $\gamma$ from $v$ to $\sigma(v)$ and thus we have a choice.
Indeed, for $v=(v_1,v_2)$ to $w=(w_1,w_2) = \sigma(v)$ we consider the path $\gamma= \gamma_v$ defined by the following procedure. Consider $z=(w_1,v_2)$. There is a unique shortest path from $v$ to $z$ and a unique shortest path from $z$ to $w$. Those paths are edge-disjoint and define $\gamma$ as the concatenation of those paths. In other words, we choose the paths between $v$ and $w$ in such a way, that we first adjust the first coordinate and then the second.

Define $\Gamma$ to be the set of all those paths $\gamma_v$ and for an edge $e$ let 
\[ 
F(e) = \# \{ \gamma \in \Gamma \colon e \in E(\gamma) \}.
\]
As in \eqref{def:flow:1D} we define
\begin{equation} \label{def:flow:2D}
    \rho(e,\gamma) = \begin{cases}
        \Bigl(\max \left( F(e), \ell(\gamma) \right)\Bigr)^{-1} &\text{ if } e\in E(\gamma), \\
        0 & \text{ otherwise.}\end{cases}
\end{equation}
We need to show the counterpart of Lemma \ref{lemma:1D:1} first.
\begin{lemma} \label{lemma:2D:1}
   Consider the previous $2D$ setup of $G=(V,E)$, paths $\gamma$ and $\rho$. Then
   \begin{enumerate} [label=(\roman*)]
       \item \label{lemma:2D:1:a} If $e=\{(i,k),(i+1,k)\}$ then $F(e) = \# \{ j >i \colon \sigma_1(j,k) \leq i \} + \# \{ j \leq i \colon \sigma_1(j,k) > i\}$;
       \item \label{lemma:2D:1:b} If $e=\{(i,k),(i,k+1)\}$ then $F(e) = \# \{ j>k \colon (i,j) = \sigma(v) \text{ and } v_2 \leq k \} +  \# \{ j<k+1 \colon (i,j) = \sigma(v) \text{ and } v_2>k \};$
       \item \label{lemma:2D:1:c} $(\gamma,\rho)$ is admissible (for Problem \ref{problem:inc1}).
   \end{enumerate}
\end{lemma}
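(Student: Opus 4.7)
The proof plan is to separately analyze which horizontal edges and which vertical edges a given path $\gamma_v$ uses, exploiting the fact that each path is the concatenation of a purely horizontal segment followed by a purely vertical segment. Parts \ref{lemma:2D:1:a} and \ref{lemma:2D:1:b} are then counting arguments, while part \ref{lemma:2D:1:c} follows essentially as in Lemma \ref{lemma:1D:1}.

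For \ref{lemma:2D:1:a}, fix a horizontal edge $e = \{(i,k),(i+1,k)\}$. By the construction, the vertical segment of any path $\gamma_v$ contains no horizontal edges, so $e \in E(\gamma_v)$ only if $e$ lies on the horizontal segment of $\gamma_v$. That segment goes from $(v_1,v_2)$ to $(\sigma_1(v),v_2)$, so we must have $v_2 = k$, i.e. $v=(j,k)$ for some $j$. The edge $e$ then lies on this segment exactly when either $j \leq i$ and $\sigma_1(j,k) \geq i+1$, or $j \geq i+1$ and $\sigma_1(j,k) \leq i$. Counting these two disjoint cases yields the claimed formula.

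For \ref{lemma:2D:1:b}, an analogous argument applies. Fix a vertical edge $e=\{(i,k),(i,k+1)\}$. Since the horizontal segment of $\gamma_v$ contains no vertical edges, $e \in E(\gamma_v)$ forces $e$ to lie on the vertical segment, which goes from $(\sigma_1(v),v_2)$ to $\sigma(v)$. In particular, $\sigma_1(v) = i$. Writing $\sigma(v)=(i,j)$, the edge $e$ lies on this segment exactly when either $v_2 \leq k$ and $j \geq k+1$, or $v_2 \geq k+1$ and $j \leq k$. Counting these two disjoint cases gives the formula. (Here the bijectivity of $\sigma$ makes the counts well-defined, but is not otherwise needed.)

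For \ref{lemma:2D:1:c}, admissibility is an immediate consequence of the definition $\rho(e,\gamma) = (\max(F(e),\ell(\gamma)))^{-1}$: the time constraint follows from $\sum_{e \in E(\gamma)} \rho(e,\gamma) \leq \sum_{e \in E(\gamma)} \ell(\gamma)^{-1} = 1$, and the capacity constraint from $\sum_{\gamma \colon e \in E(\gamma)} \rho(e,\gamma) \leq F(e) \cdot F(e)^{-1} = 1$, exactly as in Lemma \ref{lemma:1D:1}. No real obstacle is expected; the only mild subtlety is making sure that the two cases in \ref{lemma:2D:1:a} and \ref{lemma:2D:1:b} exhaust and do not overlap, which follows from the split of each path into a single horizontal then a single vertical segment.
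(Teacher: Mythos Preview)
Your proposal is correct and follows essentially the same approach as the paper: the paper also argues that \ref{lemma:2D:1:a} and \ref{lemma:2D:1:b} are simple counting arguments based on the horizontal-then-vertical structure of each $\gamma_v$, and derives \ref{lemma:2D:1:c} from $\rho(e,\gamma)\le \ell(\gamma)^{-1}$ and $\rho(e,\gamma)\le F(e)^{-1}$ exactly as in Lemma~\ref{lemma:1D:1}. Your write-up is in fact more detailed than the paper's own proof.
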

\begin{proof} 
    Item \ref{lemma:2D:1:a} and \ref{lemma:2D:1:b} follow by a simple counting argument. In \ref{lemma:2D:1:a}, $e \in E(\Gamma_v)$ if and only if $v=(j,k)$ and either $j \leq i$ and $\sigma_1(v) > i$ \emph{or} $j \geq i+1$ and $\sigma_1(v) \leq i$. A similar argument works for \ref{lemma:2D:1:b}.

    Admissibility follows by the careful definition of $\rho$, cf. \eqref{def:flow:2D}. In particular,  as in the proof of Lemma \ref{lemma:1D:1} the time constraint follows by $\rho(e,\gamma) \leq \ell(\gamma)^{-1} = \# E(\gamma)^{-1}$ for any path $\gamma$ and any $e \in E(\gamma)$. The capacity constraint follows by $\rho(e,\gamma) \leq F(e)^{-1}$ for each $\gamma$ such that $\gamma \in F(e)$.
\end{proof}
The next result is the counterpart of Lemma \ref{lemma:1D:2}.
\begin{lemma}
    Let $G=(V,E)$, $\gamma$ and $\rho$ be as before. Then:
    \begin{enumerate}  [label=(\roman*)]
        \item We have $\Vert F(\cdot) \Vert_{l^1(E)} = \Vert\dist(\cdot,\sigma(\cdot)) \Vert_{l^1(V)}$;
        \item $\Vert F(\cdot) \Vert_{l^{\infty}(E)} \leq 2 \Vert \dist(\cdot,\sigma(\cdot)) \Vert_{l^{\infty}(V)}$;
        \item for any $1 \leq p \leq \infty$ we have $\Vert F(\cdot) \Vert_{l^p(E)} \leq C_p \Vert \dist(\cdot, \sigma) \Vert_{l^p(V)}$.
    \end{enumerate}
\end{lemma}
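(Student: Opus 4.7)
The plan is to mirror the strategy used in Lemma \ref{lemma:1D:2}, replacing the one-dimensional counting arguments by the two-dimensional versions given in Lemma \ref{lemma:2D:1}. The three parts will be proved in order: first an exact $l^1$-identity via double counting, then an $l^\infty$-bound via a case distinction on horizontal vs.\ vertical edges, and finally an interpolation between these two extreme cases.

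For part (i), I would simply swap the order of summation: writing
\[
\Vert F(\cdot) \Vert_{l^1(E)} = \sum_{e \in E} \#\{\gamma \in \Gamma \colon e \in E(\gamma)\} = \sum_{v \in V} \#\{ e \in E(\gamma_v)\} = \sum_{v \in V} \ell(\gamma_v),
\]
and since each $\gamma_v$ is a concatenation of two shortest paths adjusting coordinates one at a time, $\ell(\gamma_v) = \dist(v,\sigma(v))$ in the Manhattan metric. This gives exactly $\Vert \dist(\cdot,\sigma(\cdot)) \Vert_{l^1(V)}$.

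For part (ii), let $K = \Vert \dist(\cdot,\sigma(\cdot)) \Vert_{l^{\infty}(V)}$. I would distinguish two cases using the formulas in Lemma \ref{lemma:2D:1}. For a horizontal edge $e = \{(i,k),(i+1,k)\}$, Lemma \ref{lemma:2D:1}\ref{lemma:2D:1:a} expresses $F(e)$ as a sum over vertices $v = (j,k)$ whose first coordinate must cross $i$ under $\sigma_1$; any such $v$ satisfies $\vert j - \sigma_1(v) \vert \leq K$, so $j$ lies in one of two integer intervals of length $K$, giving $F(e) \leq 2K$. For a vertical edge $e = \{(i,k),(i,k+1)\}$, Lemma \ref{lemma:2D:1}\ref{lemma:2D:1:b} counts vertices $v$ mapped by $\sigma$ to $(i,j)$ with $j$ crossing $k$; since $\sigma$ is a bijection and each such target $(i,j)$ has at most one preimage, and since $\vert v_2 - j\vert \leq K$ constrains $j$ to an interval of length $K$, we again obtain $F(e) \leq 2K$. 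Taking the supremum over $e$ yields the claim.

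For part (iii), I would carry over verbatim the Marcinkiewicz-style interpolation from the proof of Lemma \ref{lemma:1D:2}. Specifically, for each integer $s \geq 1$ I split $V = X_s^1 \cup X_s^\infty$ according to whether $\dist(v,\sigma(v)) \geq s/2$, decompose $\Gamma = \Gamma^1_s \cup \Gamma^\infty_s$ accordingly, and write $F(e) = F^1_s(e) + F^\infty_s(e)$. Part (ii), applied to the subfamily $\Gamma^\infty_s$, gives $F^\infty_s(e) < s/2$, and part (i), applied to the subfamily $\Gamma^1_s$, controls $\Vert F^1_s \Vert_{l^1(E)}$ by $\Vert \dist(\cdot,\sigma(\cdot))\Vert_{l^1(X_s^1)}$. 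Combining the discrete layer-cake identity
\[
\Vert F(\cdot) \Vert_{l^p(E)}^p = \sum_{s=1}^{\infty} (s^p - (s-1)^p) \#\{e \colon F(e) \geq s\}
\]
with Chebyshev's inequality applied to $F^1_s$ and exchanging the order of summation reproduces the estimate
\[
\Vert F(\cdot)\Vert_{l^p(E)}^p \leq C_p \sum_{v \in V} \vert v - \sigma(v) \vert^p,
\]
as in the 1D proof.

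I expect the main subtlety (rather than a genuine obstacle) to be verifying part (ii) for vertical edges, since the counting expression in Lemma \ref{lemma:2D:1}\ref{lemma:2D:1:b} is phrased in terms of the image of $\sigma$ rather than its domain; the bijectivity of $\sigma$ is precisely what saves the argument. Once both edge orientations are handled, the $l^p$ bound follows by a direct transcription of the one-dimensional interpolation, with no new ideas required.
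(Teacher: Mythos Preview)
Your proposal is correct and follows essentially the same route as the paper: the $l^1$-identity by double counting, the $l^\infty$-bound by the case distinction on horizontal versus vertical edges (using bijectivity of $\sigma$ for the latter), and the $l^p$-bound by the same Marcinkiewicz-type interpolation as in Lemma~\ref{lemma:1D:2}. The paper's proof is terser but identical in substance; your remark that (i) and (ii) must be applied to subfamilies $X_s^1,X_s^\infty$ for the interpolation is exactly what the 1D argument does and carries over without change.
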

\begin{proof}
    The $l^1$-equality again is shown by counting the same number in different ways:
     \begin{align*}
        \Vert F(\cdot) \Vert_{l^1(E)} =& \sum_{e \in E} \# \{ \gamma \colon e \in E(\gamma) \} = \sum_{v \in V} \# \{ e \colon e \in E(\gamma_v) \} = \Vert \dist(\cdot,\sigma(\cdot)) \Vert_{l^1(V)}.
    \end{align*}
    The $l^{\infty}$-inequality is slightly more intricate. Let $K=\Vert \dist(\cdot, \sigma(\cdot)) \Vert_{l^{\infty}}$. If $e= \{(i,k),(i+1,k)\}$, we infer that $e \in E(\gamma_v)$ implies $i-K < v_1 < i+K+1$ and $v_2=k$ therefore $F(e) \leq 2K$. On the other hand, if $e=\{(i,k),(i,k+1)$, we infer that $e \in E(\gamma_v)$ implies that $k-K < (\sigma(v))_2 < k+K-1$ and $(\sigma(v))_1 =i$, such that $F(e) \leq 2K$. We therefore get
    \[
    \Vert F(\cdot) \Vert_{l^{\infty}(E)} = \sup_{e \in E} F(e) \leq 2K = 2 \Vert \dist(\cdot,\sigma(\cdot) \Vert_{l^{\infty}(V)}.
    \]
    The $l^p$-inequality again follows by interpolation, see the proof of Lemma \ref{lemma:1D:2}.
\end{proof}
Combining those as before we get with exactly the same proof as in the 1D case the following theorem.
\begin{thm} \label{thm:2D}
    Let $G=(V,E)$ be the $2$-dimensional grid, let $\gamma_v$ be the path adjusting the first coordinate and then the second, $v$ to $\sigma(v)$ and let $\rho$ be as in \eqref{def:flow:2D}. Then the set of paths $\gamma$ and $\rho$ is admissible for the Problem \ref{problem:inc1} and there is a constant $C>0$ such that for any $1 \leq p \leq \infty$
    \[
    \tilde{c}_p(\rho) \leq C \Vert \dist(\cdot ,\sigma(\cdot))\Vert_{l^p(V)}.
    \]
\end{thm}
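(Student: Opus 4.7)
The plan is to mimic the proof of Theorem \ref{thm:1D} essentially verbatim, since all the structural work specific to the 2D grid has already been carried out in the preceding lemmas. Admissibility of $(\gamma,\rho)$ for Problem \ref{problem:inc1} is exactly Lemma \ref{lemma:2D:1}\ref{lemma:2D:1:c}, so nothing further needs checking there. What remains is the quantitative bound on $\tilde{c}_p(\rho)$.

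The elementary inequality I would use is that, by the definition \eqref{def:flow:2D},
\[
\rho(e,\gamma)^{-1} = \max\bigl(F(e),\ell(\gamma)\bigr),
\]
and hence $\rho(e,\gamma)^{-(p-1)} \leq F(e)^{p-1} + \ell(\gamma)^{p-1}$ for every $p \geq 1$, since the maximum is realised by one of the two summands. For $p=\infty$, this directly yields
\[
\tilde{c}_\infty(\rho) \leq \sup_{e \in E} F(e) + \sup_{\gamma \in \Gamma} \ell(\gamma),
\]
and both terms are bounded by $\|\dist(\cdot,\sigma(\cdot))\|_{l^\infty(V)}$ up to a constant: the first by the $l^\infty$-estimate of the previous lemma, the second trivially because $\ell(\gamma_v) = \dist(v,\sigma(v))$.

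For $1 \leq p < \infty$, the core computation is
\begin{align*}
\tilde{c}_p(\rho)^p &= \sum_{\gamma \in \Gamma} \sum_{e \in E(\gamma)} \rho(e,\gamma)^{-(p-1)} \leq \sum_{\gamma \in \Gamma} \sum_{e \in E(\gamma)} \Bigl( F(e)^{p-1} + \ell(\gamma)^{p-1} \Bigr) \\
&= \sum_{e \in E} F(e)^p + \sum_{v \in V} \ell(\gamma_v)^p = \|F(\cdot)\|_{l^p(E)}^p + \|\dist(\cdot,\sigma(\cdot))\|_{l^p(V)}^p,
\end{align*}
where I swap the order of summation using $\#E(\gamma) = \ell(\gamma)$ and $\#\{\gamma : e \in E(\gamma)\} = F(e)$. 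Invoking the $l^p$-bound $\|F(\cdot)\|_{l^p(E)} \leq C_p \|\dist(\cdot,\sigma(\cdot))\|_{l^p(V)}$ from the previous lemma and taking $p$-th roots yields the claim with some constant $\tilde{C}_p$.

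There is no real obstacle: all the genuinely 2D content (the specific choice of path that first equalises the first coordinate and then the second, the counting identities \ref{lemma:2D:1:a}--\ref{lemma:2D:1:b} for $F(e)$, and the interpolation argument on $F$) has already been handled. The proof of Theorem \ref{thm:2D} is thus a purely mechanical transcription of the one-dimensional argument, with the 2D lemmas substituted for their 1D counterparts.
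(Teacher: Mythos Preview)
Your proposal is correct and is essentially identical to the paper's argument: the paper itself states that Theorem \ref{thm:2D} follows ``with exactly the same proof as in the 1D case,'' and your write-up is precisely that transcription, citing Lemma \ref{lemma:2D:1}\ref{lemma:2D:1:c} for admissibility and the 2D counterpart of Lemma \ref{lemma:1D:2} for the $l^p$-bound on $F$.
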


\subsection{Higher dimensions} \label{sec:discrete:multiD}

The situation in higher dimensions is different from the one- and two-dimensional results. Fixing notation, let $\nu >2$ and suppose that we are given $N^{\nu}$ vertices $V=\{v=(v_1,...,v_\nu) \in \{0,\ldots,N-1\}^{\nu}\}$ and let $E$ be the set of edges along coordinate lines, i.e.
\[
E = \{\{v,w\} \colon v,w \in V \text{ and } \vert v - w \vert=1\}.
\]

If we were to follow the same strategy as for the two-dimensional problem, we directly encounter nasty examples that prevent a result \`{a} la Theorem \ref{thm:1D} \& \ref{thm:2D}.
\begin{thm} \label{thm:counterexample} 
    Suppose that $G=(V,E)$ is the $\nu$-dimensional grid, $\nu>2$. For $v = (v_1,...,v_{\nu})$ and $w= (w_1,...,w_{\nu})$ define the paths $\gamma_{vw}$ from $v$ to $w$ as follows. Set $v^0=v$, $v^{\nu}=w$, $v^j = (w_1,...,w_j,v_{j+1},...,v^{\nu})$, such that there is a unique shortest path (along coordinate lines) from $v^j$ to $v^{j+1}$. Define $\gamma_{vw}$ as the concatenation of those paths.

    Then there exists a permutation $\sigma \colon V \to V$ such that for paths $\gamma_{v\sigma(v)}$ among all admissible $\rho$  we have
    \[
    \tilde{c}_p(\rho) > C(p) N^{1-1/p} \Vert \dist(\cdot,\sigma(\cdot)) \Vert_{l_p(V)}.
    \]
\end{thm}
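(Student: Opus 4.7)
The plan is to exhibit a concrete adversarial permutation $\sigma$ for which the capacity constraint \eqref{eq:capacity} alone forces $\tilde c_p(\rho)$ to be a factor $N^{1-1/p}$ above the trivial distance lower bound $\|\dist(\cdot,\sigma(\cdot))\|_{l^p(V)}$.

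I would take $\sigma$ to be the cyclic coordinate shift
\[
\sigma(v_1,\dots,v_\nu) \;=\; (v_\nu,v_1,v_2,\dots,v_{\nu-1}).
\]
Writing $w=\sigma(v)$, the fixed-order path $\gamma_v$ has phase $j$ changing $v_j \to w_j$; in particular, phase $2$ moves $v_2 \to v_1$ while pinning coordinate $1$ to $v_\nu$ and coordinates $3,\dots,\nu$ to $v_3,\dots,v_\nu$. This is where the damage happens: phase $2$ keeps two coordinates ($v_1$ and $v_2$) fully alive while the anchoring coordinates are rigidly constrained, funnelling many paths through the same coordinate-$2$ edges. Before counting, I would record the universal lower bound
\[
\tilde c_p(\rho)^p \;=\; \sum_{e \in E}\sum_{\gamma\colon e \in E(\gamma)} \rho(e,\gamma)^{-(p-1)} \;\geq\; \sum_{e \in E} F(e)^p, \qquad F(e):=\#\{\gamma\colon e \in E(\gamma)\},
\]
which follows from \eqref{eq:capacity} together with the convexity of $x\mapsto x^{-(p-1)}$ (Jensen's inequality applied to each inner sum, attained by $\rho(e,\gamma)=1/F(e)$). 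The $p=\infty$ analogue $\tilde c_\infty(\rho) \geq \|F\|_{l^\infty(E)}$ is immediate from pigeonhole on capacity.

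Next I would count $F(e)$ for an edge of the form $e=\{(A_1,B,A_3,\dots,A_\nu),(A_1,B+1,A_3,\dots,A_\nu)\}$ along coordinate $2$. Such $e$ is traversed by $\gamma_v$ precisely when $v_\nu = A_1$, $v_j = A_j$ for $3 \leq j \leq \nu$, and $\min(v_1,v_2) \leq B < \max(v_1,v_2)$. Matching $v_\nu = A_\nu$ against $v_\nu = A_1$ forces $A_1 = A_\nu$, after which $v_1,v_2$ are the only free variables and yield
\[
F(e) \;=\; 2(B+1)(N-1-B) \quad\text{if } A_1 = A_\nu, \qquad F(e)=0 \quad\text{otherwise.}
\]
For $B$ in the central band $[N/4,3N/4]$ this is $\gtrsim N^2$, and there are $\gtrsim N^{\nu-1}$ such edges (ranging $A_1=A_\nu \in [N]$, $A_3,\dots,A_{\nu-1}\in[N]$ and $B$ in the band), whence
\[
\|F\|_{l^p(E)}^p \;\gtrsim\; N^{\nu-2}\cdot N \cdot N^{2p} \;=\; N^{\nu-1+2p}, \qquad \|F\|_{l^p(E)}\;\gtrsim\; N^{2+(\nu-1)/p}.
\]

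Finally, the triangle inequality gives $\dist(v,\sigma(v)) = |v_1-v_\nu| + \sum_{j=2}^{\nu}|v_j-v_{j-1}| \leq \nu N$, so $\|\dist(\cdot,\sigma(\cdot))\|_{l^p(V)} \leq \nu N^{1+\nu/p}$. Combining with the universal lower bound yields
\[
\tilde c_p(\rho) \;\geq\; \|F\|_{l^p(E)} \;\geq\; C(p,\nu)\,N^{1-1/p}\,\|\dist(\cdot,\sigma(\cdot))\|_{l^p(V)}
\]
for every admissible $\rho$, and the $p=\infty$ case follows identically from $\|F\|_{l^\infty(E)}\gtrsim N^2$ against $\|\dist(\cdot,\sigma(\cdot))\|_{l^\infty(V)}\sim N$. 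The main obstacle is the structural step: finding a permutation for which $F$ is forced to blow up quadratically in one coordinate direction. Once $\sigma$ is fixed, the counting is a direct enumeration and the comparison is routine arithmetic on $l^p$ exponents.
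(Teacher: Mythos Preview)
Your proof is correct and follows essentially the same strategy as the paper: exhibit a permutation that forces $F(e)\gtrsim N^2$ on $\gtrsim N^{\nu-1}$ coordinate-$2$ edges, then combine the capacity constraint with Jensen's inequality to bound $\tilde c_p(\rho)^p\geq\sum_e F(e)^p$. The paper works in $\nu=3$ with $\sigma(v_1,v_2,v_3)=(v_3,N-1-v_2,v_1)$ rather than your cyclic shift, but the counting and the final exponent comparison are the same; your write-up has the advantage of treating general $\nu$ directly and of stating the Jensen step explicitly.
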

We give the proof for $\nu=3$ here, the extension to higher dimension is straightforward. Further observe that in the endpoint case $p=1$ such an estimate is fine, but for any other $p$ we get a scaling with the grid size, which is not desirable.
 \begin{figure}
     \centering \includegraphics[width=0.5 \textwidth]{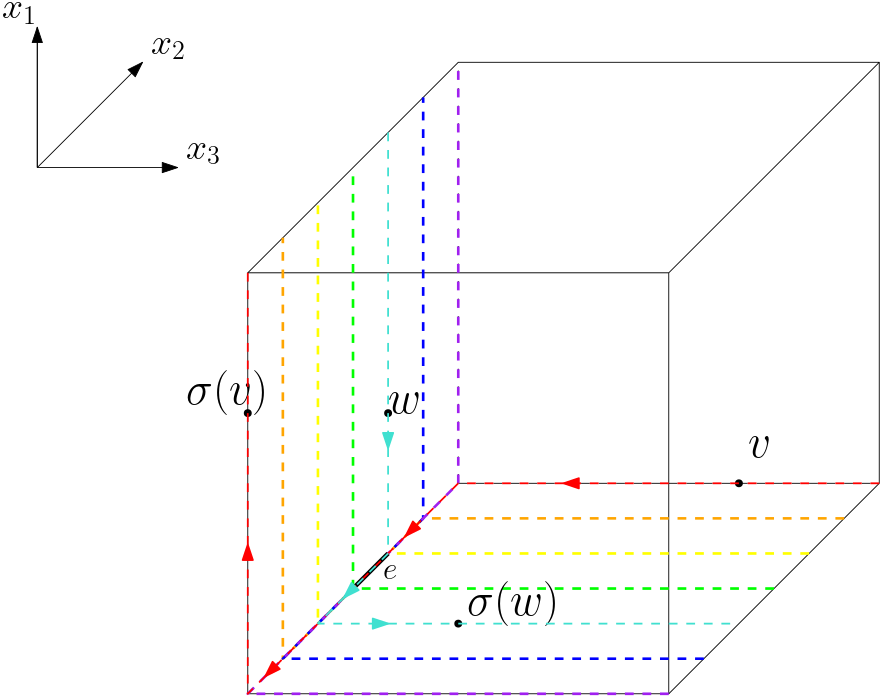}
     \caption{Visualisation of Theorem \ref{thm:counterexample}. The permutation $\sigma$ and the paths from $v$ to $\sigma(v)$ in different colors. Note that the number of paths passing through an edge like $e$ is $\sim N^2$. }
    \end{figure}
\begin{proof}
   We routinely write a vertex $v$ as $(v_1,v_2,v_3) \in \{0,...,N-1\}^3$  and define the permutation $\sigma$ as follows: 
   
    \[
    \sigma((v_1,v_2,v_3)) = (v_3,N-1-v_2,v_1).
    \]
    Then, a path from $v$ to $\sigma$ of $v$ consists of a path from $v$ to $(v_3,v_2,v_3)$, a path from $(v_3,v_2,v_3)$ to $(v_3,N-1-v_2,v_3)$ and a path from $(v_3,N-1-v_2,v_3)$ to $\sigma(v)$.
    Consider an edge $e=\{(i,j,i),(i,j+1,i)\}$. The number $F(e)$ of paths $\gamma_{v,\sigma(v)}$ with $e \in E(\gamma_{v,\sigma(v)})$ is
    \[
    2N \cdot \min \{ j,N-1-j\}.
    \]
    In particular,
    \begin{equation} \label{def:Etilde}
    \# \{e \in E(G) \colon F(e) \geq N^2 \} \geq N^2/2.
    \end{equation}
    Let us call this set of edges in \eqref{def:Etilde} $\tilde{E}$. 
    
    Now recall the capacity constraint \eqref{eq:capacity} for any admissible $\rho$. We have for any  edge in $e \in E$
    \[ 
    \sum_{\gamma \colon e \in E(\gamma)} \rho(e,\gamma) \leq 1.
    \]
    and therefore, for any edge $e \in \tilde{E}$
    \[
    \inf_{\gamma \colon e \in E{\gamma}} \rho(e,\gamma) \leq N^{-2}.
    \]
    We conclude that for any admissible $\rho$ with those chosen paths
    \[
    \tilde{c}_{\infty} (\rho) \geq  N^{2} \geq N \Vert \cdot -\sigma(\cdot) \Vert_{l^{\infty}(V)}. 
    \]
    For $1 \leq p< \infty$ observe that
    \begin{align*}
        \tilde{c}_p(\rho)^p &\geq  \sum_{e \in \tilde E} \sum_{\gamma \colon e \in E(\gamma)} (\rho(e,\gamma)^{-1})^{p-1} 
        \\ & \geq  \sum_{e \in \tilde E} F(e) \cdot (F(e))^{p-1} \geq \sum_{e \in \tilde{E}} N^{2p} \geq \tfrac{1}{2} N^{2p+2}
    \end{align*}
    and that 
    $\Vert \cdot - \sigma(\cdot) \Vert{^p}_{l^p(V)} \leq C  N^3 N^p$. Therefore,
    \[
    \tilde{c}_p(\rho) \geq N^{2+\tfrac{2}{p}} \geq N^{1-1/p} N^{1+3/p} \geq C \Vert \cdot - \sigma(\cdot) \Vert_{l^p(V)}.
    \]
\end{proof}
Instead of considering Problem \ref{problem:inc1} where we have to make the choice of a specific path, we consider Problem \ref{problem:inc2}, where we assume that each particle can choose a path of minimum length from a set of minimal paths chosen below. Denoting by $\Gamma_v$ the set of those paths, we assume that every paths in $\Gamma_v$ has the same weight, i.e.
\begin{equation}\label{assumpt:weights}
    \omega(\gamma_v)=\frac{1}{\# \Gamma_v},
\end{equation}
in particular, the decomposition condition \eqref{eq:decomp} immediately holds true. 
\begin{equation}\label{eq:F:e:multi:d}
    F(e)=\sum_{\gamma\in\Gamma_e}\omega(\gamma),
\end{equation}
where we recall that $\Gamma_e=\lbrace \gamma\in\Gamma: e\in E(\gamma)\rbrace.$ Then, similarly as done before, we define
\begin{equation} \label{def:flow:multi:D}
    \rho(e,\gamma) = \begin{cases}
        \Bigl(\max \left( F(e), \ell(\gamma) \right)\Bigr)^{-1} & \text{if } e\in E(\gamma), \\
        0 & \text{otherwise.}
    \end{cases}
\end{equation}
We state now the equivalent of Lemmas \ref{lemma:1D:1} and \ref{lemma:1D:2}:
\begin{lemma}\label{lemma:multi:D:1}
    Let $G=(V,E)$ be the $\nu$-dimensional grid. Suppose that, for each point, $\Gamma_v$ is a set of shortest paths from $v$ to $\Gamma_v$. Let $\rho$ and $\omega$ be as in \eqref{def:flow:multi:D} and \eqref{assumpt:weights}, respectively.
    Then $(\gamma,\rho,\omega)$ is admissible (for Problem \ref{problem:inc2}).
\end{lemma}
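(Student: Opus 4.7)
The goal is to verify the four admissibility conditions for Problem \ref{problem:inc2} directly from the definitions, following the template of Lemmas \ref{lemma:1D:1} and \ref{lemma:2D:1}. Condition (i) is built into the very definition \eqref{def:flow:multi:D}. Condition (ii), the decomposition of flow \eqref{eq:decomp}, follows immediately from \eqref{assumpt:weights}: summing the uniform weight $(\#\Gamma_v)^{-1}$ over the $\#\Gamma_v$ elements of $\Gamma_v$ gives $1$.

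The remaining two conditions are handled by exploiting the two different entries of the maximum in \eqref{def:flow:multi:D}. For the time constraint \eqref{eq:volume2} I would fix $\gamma\in\Gamma$ and use
$\rho(e,\gamma)=\bigl(\max(F(e),\ell(\gamma))\bigr)^{-1}\leq\ell(\gamma)^{-1}$
for every $e\in E(\gamma)$; since $\gamma$ consists of exactly $\ell(\gamma)$ edges, summation immediately yields $\sum_{e\in E(\gamma)}\rho(e,\gamma)\leq 1$.

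For the capacity constraint \eqref{eq:capacity2} I would instead fix $e\in E$ (assuming $F(e)>0$, otherwise the sum is empty) and use the complementary bound $\rho(e,\gamma)\leq F(e)^{-1}$ valid for every $\gamma\in\Gamma_e$. Combined with the weighted definition \eqref{eq:F:e:multi:d} of $F(e)$ this gives
$\sum_{\gamma\colon e\in E(\gamma)}\omega(\gamma)\rho(e,\gamma)\leq F(e)^{-1}\sum_{\gamma\in\Gamma_e}\omega(\gamma)=F(e)^{-1}\cdot F(e)=1$.

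I do not foresee any serious obstacle: the lemma is really a compatibility check on the chosen formula for $\rho$. The one structural point worth emphasising is that the symmetric roles of $F(e)$ and $\ell(\gamma)$ inside the max in \eqref{def:flow:multi:D} are tailored so that each entry handles exactly one of the two constraints, and the switch from a plain count (as in Problem \ref{problem:inc1}) to the weighted count in \eqref{eq:F:e:multi:d} is precisely what absorbs the extra factor $\omega(\gamma)$ appearing in \eqref{eq:capacity2} but not in \eqref{eq:capacity}. Consequently the argument works uniformly in the dimension $\nu$, and it will be in the \emph{quality} of the bound $c_p(\rho,\omega)\leq C_p\Vert\dist(\cdot,\sigma(\cdot))\Vert_{l^p(V)}$, rather than in admissibility itself, that the careful choice of the set $\Gamma_v$ of shortest paths (in contrast to the rigid rule of Theorem \ref{thm:counterexample}) will matter.
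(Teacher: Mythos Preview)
Your proof is correct and follows essentially the same route as the paper's own proof: verify condition (i) from the definition, condition (ii) from the uniform weights, and then use $\rho(e,\gamma)\le\ell(\gamma)^{-1}$ for the time constraint and $\rho(e,\gamma)\le F(e)^{-1}$ together with \eqref{eq:F:e:multi:d} for the capacity constraint. Your write-up is in fact slightly more careful (you handle $F(e)=0$ explicitly and spell out condition (i)), but there is no substantive difference.
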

\begin{proof}
    Observe that the weights are set up in a way that they sum up to 1. Therefore it remains to check that the time and capacity constraint. The time constraint follows by
    \[
    \sum_{e \in E(\gamma)} \rho(e,\gamma) \leq \sum_{e \in E(\gamma)} \ell(\gamma) =1,
    \]
    and, likewise, the capacity constraint follows through
    \[
    \sum_{\gamma \colon e \in E(\gamma)} \omega(\gamma) \rho(e,\gamma) \leq \sum_{\gamma \colon e \in E(\gamma)} \omega(\gamma) F(e)^{-1} = 1.
    \]
\end{proof}
Without any clever choice of the set of paths we can further state the $l^1$-bound. Again, for $X \subset V$ define
\[
F_X(e) = \sum_{v \in X} \sum_{\{\gamma \in \Gamma_e \cap \Gamma_v\}} \omega(\gamma).
\]
\begin{lemma} \label{lemma:multi:D:2}
    Let $G=(V,E)$, $\gamma$, $\rho$, $\omega$ be as before. Suppose that $\Gamma_v$ only consists of shortest paths between $v$ and $\sigma(v)$. Let $X \subset V$. Then
   we have $$\Vert F_X(\cdot) \Vert_{l^1(E)} = \Vert \dist(\cdot,\sigma(\cdot)) \Vert_{l^1(X)}.$$
\end{lemma}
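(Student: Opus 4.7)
The plan is to prove this by a direct double counting argument, exchanging the order of summation in the definition of $F_X$. Starting from the definition
\[
\sum_{e \in E} F_X(e) = \sum_{e \in E} \sum_{v \in X} \sum_{\gamma \in \Gamma_e \cap \Gamma_v} \omega(\gamma),
\]
I would swap the summation order so that the sum over edges is moved inside:
\[
\sum_{e \in E} F_X(e) = \sum_{v \in X} \sum_{\gamma \in \Gamma_v} \omega(\gamma) \cdot \#\{e \in E \colon e \in E(\gamma)\} = \sum_{v \in X} \sum_{\gamma \in \Gamma_v} \omega(\gamma) \, \ell(\gamma).
\]

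Next I would use the crucial assumption that $\Gamma_v$ consists \emph{only} of shortest paths between $v$ and $\sigma(v)$, so that $\ell(\gamma) = \dist(v,\sigma(v))$ for every $\gamma \in \Gamma_v$. This factors out the length from the inner sum, leaving
\[
\sum_{e \in E} F_X(e) = \sum_{v \in X} \dist(v,\sigma(v)) \sum_{\gamma \in \Gamma_v} \omega(\gamma).
\]
Finally, invoking the decomposition condition \eqref{eq:decomp}, namely $\sum_{\gamma \in \Gamma_v}\omega(\gamma) = \sum_{i \in I_v}\omega(\gamma_{v,i}) = 1$, collapses the inner sum to $1$, yielding
\[
\Vert F_X(\cdot)\Vert_{l^1(E)} = \sum_{v \in X} \dist(v,\sigma(v)) = \Vert \dist(\cdot,\sigma(\cdot))\Vert_{l^1(X)}.
\]

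There is no real obstacle here; the entire content is a Fubini-type swap combined with the two structural inputs (shortest paths and unit-mass weight decomposition). The only thing to be mildly careful about is making sure that the inner summation $\#\{e \in E \colon e \in E(\gamma)\}$ is unambiguously $\ell(\gamma)$, which follows immediately from the definition of a path as a sequence of distinct vertices with edges between consecutive ones. The rest is bookkeeping.
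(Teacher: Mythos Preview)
Your proof is correct and essentially identical to the paper's own argument: both proceed by swapping the order of summation in $\sum_{e\in E} F_X(e)$, then use that $\ell(\gamma)=\dist(v,\sigma(v))$ for $\gamma\in\Gamma_v$ together with the decomposition condition \eqref{eq:decomp} to collapse the inner sum. The paper presents this slightly more tersely (it leaves the use of \eqref{eq:decomp} implicit), but the content is the same.
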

\begin{proof}
   This is, again, a matter of counting twice and using that $\Gamma_v$ only consists of shortest paths of lengths $\ell(\gamma) = \dist(v,\sigma(v))$. We therefore have
   \begin{align*}
       \Vert F_X(e) \Vert_{l^1(E)} &=  \sum_{e \in E} \sum_{v \in X} \sum_{\{\gamma \in \Gamma_e \cap \Gamma_v\}} \omega(\gamma)  \\
       &= \sum_{v \in X} \sum_{e \in E}  \sum_{\{\gamma \in \Gamma_e \cap \Gamma_v\}} \omega(\gamma) = \sum_{v \in X} \sum_{\gamma \in \Gamma_v} \omega(\gamma) \ell(\gamma) \\
       & = \sum_{v \in X} \dist(v,\sigma (v)) = \Vert \dist(\cdot,\sigma(\cdot))\Vert_{l^1(X)}.
   \end{align*} 
\end{proof}
This settles the $l^1$-bound. As shown in Theorem \ref{thm:counterexample}, not every choice of $\Gamma_v$ may give an $l^{\infty}$ (or even an $l^p$-bound). It can be proved that such an $l^{\infty}$-bound also \emph{does not} hold is the choice $\Gamma_v = \{ \text{all shortest paths from } v \text{ to } \sigma(v)\}$. Instead we need to come up with a more elaborate definition of $\Gamma_v$.
\begin{definition} \label{def:gammav}
    Let $v$ and $w = \sigma(v)$ be elements of $V = \{0,...,N-1\}^\nu$ . After suitable relabeling of coordinates and reflections we might assume 
    \begin{enumerate} [label=(\roman*)]
        \item $v_i \leq w_i$, $i =1,\ldots,\nu$;
        \item $w_1-v_1 \geq w_2-v_2 \geq \ldots \geq w_{\nu}-v_{\nu}$.
    \end{enumerate}
    Let $a_i = w_i-v_i$ and the index set $A = \{ \alpha \in \N^{\nu-2} \colon 0 \leq \alpha_i \leq a_i$\}.
    For each $\alpha \in A$ construct the path $\gamma^{\alpha}$ as the concatenation of the following paths
    \begin{enumerate}
        \item The shortest path from $v$ to $v^{1}_{\alpha}= v+\alpha_1 \un_1$;
        \item The shortest path from $v^{i-1}_{\alpha}$ to $v^i_{\alpha} := v + \sum_{j=1}^{i} \alpha_j \un_j$ for $i=2,...,\nu-2$;
        \item The shortest path from $v^{\nu-2}_{\alpha}$ to $v_{\mathrm{inter}} = v+ \sum_{j=1}^{\nu-2} \alpha_j \un_j + (w_{\nu-1}-v_{\nu-1}) \un_{\nu-1}$;
        \item The shortest path from $v_{\mathrm{inter}}$ to $w^{\nu-2}_{\alpha}$, where
        \[
        w^{\nu-2}_{\alpha} := v+ \sum_{j=1}^{\nu-2} \alpha_j \un_j + (w_{\nu-1}-v_{\nu-1}) \un_{\nu-1} + (w_{\nu}-v_{\nu}) \un_{\nu} = w - \sum_{j=1}^{\nu-2} (w_j-\alpha_j-v_j) \un_j;
        \]
        \item The shortest path from $w^{i}_{\alpha} = w- \sum_{j=1}^i (w_j -\alpha_j-v_j) \un_j$ to $w^{i-1}_{\alpha}$ for $i=\nu-2,\ldots,1$. Observe that $w^{0}_{\alpha}=w$.
    \end{enumerate}
\end{definition}
Let us reformulate what this definition means in dimension $3$. First, we pick the coordinate direction, where the distance between $v_i$ and $\sigma(v)_i$ is the largest. Then the path follows this direction for some distance $\alpha \in \{0,\ldots, \vert \sigma(v)_i - v_i \vert\}$ with probability $\tfrac{1}{\vert \sigma(v_i) + v_i \vert +1}$. Then the path corrects the second and the third coordinate and, finally, the path treavels the remaining $(\sigma(v_i)-v_i)-\alpha$ steps in the direction chosen at the beginning.
\begin{figure}
    \centering \includegraphics[width=0.4 \textwidth]{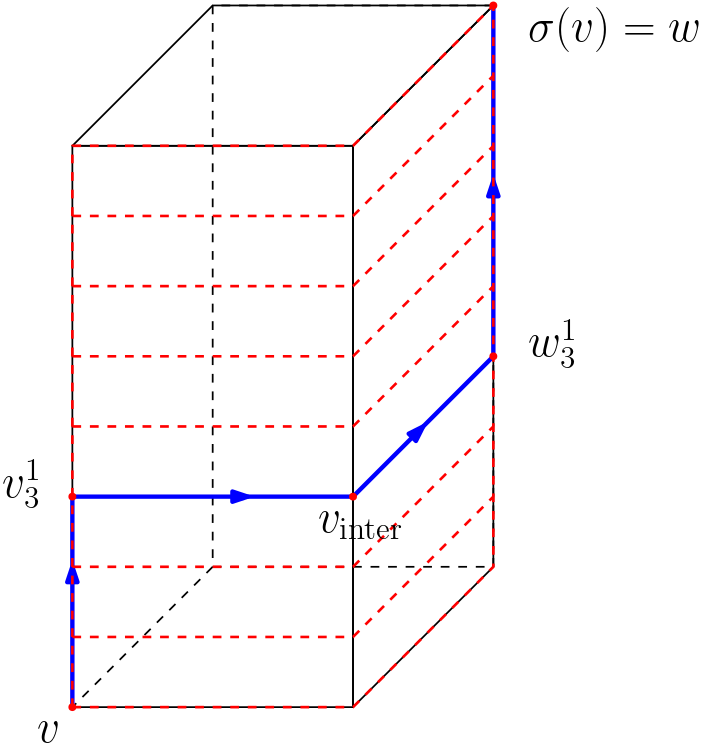}
    \caption{The set of paths from $v$ to $\sigma(v)$ in three dimensions. In blue one may see the path with $\alpha=3$.}
    \label{fig:Whitney}
\end{figure}

Recall that all those paths are unique, as these paths are along coordinate lines and that they are (apart from end-points) vertex-disjoint, such that the concatenation is well-defined.

We then have the following bounds:
\begin{lemma} \label{bounds:numberofpaths}
Suppose that $v$ and $w= \sigma(v)$ are as above such that $ w_1 - v_1 \geq \ldots \geq w_\nu - v_\nu$, $a_i = w_i - v_i$ and $v_i \leq w_i$. Then
\begin{enumerate} [label=(\roman*)]
    \item \label{bnop:1} The number of paths in $\Gamma_v$ is $\prod_{j=1}^{\nu-2} (a_j+1)$;
    \item \label{bnop:2} Suppose that $z$ is such that $v_i \leq z_i \leq w_i$ for all $i=1,\ldots,\nu$.
    Let $M(z) := \max \{ \# \{ i \colon z_i=v_i \}, \# \{ i \colon z_i=w_i\}\}$. Then
    \begin{enumerate}[label=(\alph*)]
        \item \label{bnop:2a}  If $M(z) =0$, then 
        \[
        \frac{\# \{ \gamma \in \Gamma_v \colon z \in V(\gamma)\}}{\# \Gamma_v} =0;
        \]
        \item  \label{bnop:2b} If $M(z) = \nu -1$, then
          \[
            \frac{\# \{ \gamma \in \Gamma_v \colon z \in V(\gamma)\}}{\# \Gamma_v} \leq 1;
         \]
         \item \label{bnop:2c} If $M(z) = i$, $i=1,\ldots,\nu-2$, then
         \[
             \frac{\# \{ \gamma \in \Gamma_v \colon z \in V(\gamma)\rbrace}{\# \Gamma_v} \leq \prod_{j=1}^{\nu-i-1} \frac{1}{a_{j}+1};
         \]
        \end{enumerate}
    \item \label{bnop:3} If $z \in V$ such that $z_i> w_i$ or $z_i < v_i$, then the number of paths in $\Gamma_v$ through $w$ is $0$.
\end{enumerate}
\end{lemma}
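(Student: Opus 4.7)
The plan is to decompose each path $\gamma^\alpha$ into \emph{phases} matching the five items of Definition \ref{def:gammav}: outbound phases $A_k$ (incrementing coordinate $k$ from $v_k$ to $v_k+\alpha_k$, for $k=1,\ldots,\nu-2$), middle phases $B_{\nu-1}$ and $B_\nu$ (completing coordinates $\nu-1$ and $\nu$), and inbound phases $C_k$ (finishing coordinate $k$ from $v_k+\alpha_k$ to $w_k$, for $k=\nu-2,\ldots,1$). By direct inspection, a vertex $z$ lying in phase $A_k$ must satisfy $z_j=v_j+\alpha_j$ for $j<k$, $z_j=v_j$ for $k<j\leq\nu-2$, and $z_{\nu-1}=v_{\nu-1}$, $z_\nu=v_\nu$; the description of $C_k$ is dual with $v$ replaced by $w$; in phases $B_{\nu-1}$ and $B_\nu$ all components $\alpha_j$ for $j\leq\nu-2$ are rigidly determined by $z$.

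Parts (i) and (iii) are immediate: the former counts $\alpha\in A=\prod_{j=1}^{\nu-2}\{0,\ldots,a_j\}$, while the latter uses that each step of each path is in a positive coordinate direction, so all vertices lie in $[v,w]$. Part (a) of (ii) follows from the phase inspection: at every vertex of every path, at least one of $z_{\nu-1}\in\{v_{\nu-1},w_{\nu-1}\}$ or $z_\nu\in\{v_\nu,w_\nu\}$ holds, so $M(z)\geq 1$; conversely $M(z)=0$ rules out all paths. Part (b) is the trivial bound, ratio $\leq 1$.

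The substantive content is (c), which I handle by a case split on $(z_{\nu-1},z_\nu)$. The main \emph{outbound} case is $z_{\nu-1}=v_{\nu-1}$, $z_\nu=v_\nu$: set $k_0=\max\{j\leq\nu-2 : z_j\neq v_j\}$, or $k_0=0$ if $z=v$. Using the phase description, I verify that $z\in V(\gamma^\alpha)$ holds precisely for those $\alpha$'s with $\alpha_j=z_j-v_j$ for $j<k_0$, $\alpha_{k_0}\geq z_{k_0}-v_{k_0}$, and $\alpha_j$ free for $j>k_0$; the key point is that the $\alpha$-sets arising from the various phases $A_k$ with $k\geq k_0$ and from $B_{\nu-1}$ are all nested inside this set, so there is no overcount. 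Consequently the count is at most $\prod_{j=k_0}^{\nu-2}(a_j+1)$, giving ratio $\prod_{j=1}^{k_0-1}\tfrac{1}{a_j+1}$. Since $z_j=v_j$ for every $j\in\{k_0+1,\ldots,\nu-2\}\cup\{\nu-1,\nu\}$, we have $\#S_v\geq\nu-k_0$ and hence $k_0\geq\nu-i$; the required bound then follows from the monotonicity of $\prod_{j=1}^{K}\tfrac{1}{a_j+1}$ in $K$ (each factor is $\leq 1$). The \emph{inbound} case $z_{\nu-1}=w_{\nu-1}$, $z_\nu=w_\nu$ is symmetric via $k_1=\max\{j\leq\nu-2 : z_j\neq w_j\}$. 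All remaining configurations force $z$ strictly inside $B_{\nu-1}$, $B_\nu$, or at the transition vertex $v_{\mathrm{inter}}$; in each, $\alpha_1,\ldots,\alpha_{\nu-2}$ are rigidly determined by $z$, so the count is at most $1$, and $\prod_{j=1}^{\nu-2}\tfrac{1}{a_j+1}\leq\prod_{j=1}^{\nu-i-1}\tfrac{1}{a_j+1}$ holds since $i\geq 1$.

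The main technical obstacle is the nesting claim in the outbound case: at boundary vertices between consecutive phases, $z$ could naively be counted in several phases simultaneously. The fix is the observation that an $\alpha$ with $\alpha_{k_0}>z_{k_0}-v_{k_0}$ places $z$ strictly inside phase $A_{k_0}$, whereas $\alpha_{k_0}=z_{k_0}-v_{k_0}$ puts $z$ at the common boundary of phases $A_{k_0},A_{k_0+1},\ldots$ yet on a \emph{single} path, so the compact description above captures all contributions exactly once.
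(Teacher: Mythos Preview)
Your proof is correct and follows essentially the same strategy as the paper: decompose each $\gamma^\alpha$ into phases, identify which phases can contain $z$, and bound the number of admissible $\alpha$'s by fixing the first several components. Your case split on $(z_{\nu-1},z_\nu)$ and the resulting index $k_0$ correspond exactly to the paper's symmetry reduction and its suffix index $\nu-i'$; your exact ``iff'' characterization of the admissible $\alpha$'s in the outbound case is in fact slightly sharper than the paper's one-sided implication, but the final bounds coincide.
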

\begin{proof}
    The first assertion is clear, as this is the number of multiindices in Definition \ref{def:gammav}. By construction (and also by the fact that these paths are shortest), also \ref{bnop:3} is clear from the previous definition. It remains to show \ref{bnop:2}.

    For \ref{bnop:2a} observe that for any point on the paths from $v^{i-1}_{\alpha}$ to $v^{i}_{\alpha}$ and $w^{i}_{\alpha}$ to $w^{i-1}_{\alpha}$ at least one coordinate coincides with the same coordinate of $v$ and $w$ respectively. For the path from $v^{\nu-2}_{\alpha}$ to $v_{\mathrm{inter}}$ the last coordinate coincides with $v$, whereas on the path from $v_{\mathrm{inter}}$ to $w_{\alpha}^{\nu-2}$ the $(\nu-1)$th coordinate coincides with $w$. Therefore, if $z \in V(\gamma)$ we have $M(z) \geq 1$.

    \ref{bnop:2b} is obviously true.

    Suppose that $M(z) =i$. By a symmetric argumentation we may assume w.l.o.g. that $\#\{i \colon z_i = v_i \}=i$. Denote by $i'\leq i$ the largest number such that 
    \[
    z_j = v_j \quad \text{for all } j >\nu-i'.
    \]
    Then we can show that $z \in V(\gamma^{\alpha})$ only if $\alpha_j = z_j-v_j$ for all $j <\nu-i'$.  The number of those multiindices $\alpha$ is exactly
    \[
        \prod_{j=\nu-i'}^{\nu-2} (a_j+1),
    \]
    and therefore,
    \[
    \#\{ \gamma \in \Gamma_v \colon z \in V(\gamma) \} \leq \prod_{j=\nu-i'}^{\nu-2} (a_j+1) \leq \prod_{j=\nu-i}^{\nu-2} (a_j+1).
    \]
    Dividing this by the result of \ref{bnop:1} we get the desired
    \[
   \frac{ \#\{ \gamma \in \Gamma_v \colon z \in V(\gamma) \}}{\# \Gamma_v } \leq \prod_{j=1}^{\nu-i-1} \frac{1}{a_j+1}.
    \]
\end{proof}
We are now ready to prove the following:
\begin{lemma}
Let $X \subset V$ and let $F_X(e)$ be as before. Let $\Gamma_v$ be as in Definition \ref{def:gammav} and $\omega$ as in \eqref{assumpt:weights}. There is a dimensional constant $C(\nu)$ such that
\[
\Vert F_X (\cdot) \Vert_{l^{\infty}(E)} \leq C(\nu) \Vert \sigma-\id \Vert_{l^{\infty}(X)}.
\]
\end{lemma}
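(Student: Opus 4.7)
Fix the edge $e=\{z,z+\un_k\}$ and let $K=\Vert \sigma-\id\Vert_{l^\infty(X)}$. Any $v\in X$ contributing to $F_X(e)$ must satisfy $M(z)\geq 1$, i.e. $z$ shares at least one coordinate with $v$ or with $w:=\sigma(v)$, and $z$ lies coordinatewise between $v$ and $w$; in particular $\dist(v,z)\leq \dist(v,w)\leq K$. Since $\#\{\gamma\colon e\in E(\gamma)\}\leq \#\{\gamma\colon z\in V(\gamma)\}$, one has $F_X(e)\leq \sum_{v\in X} P_v(z)$, where $P_v(z)=\#\{\gamma\in\Gamma_v\colon z\in V(\gamma)\}/\#\Gamma_v$.

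The plan is to partition $X\cap V_e$ by $M(z)=m\in\{1,\ldots,\nu\}$ and, within each value, by which of $\#\{j\colon v_j=z_j\}$ or $\#\{j\colon w_j=z_j\}$ realises the max (call these cases A and B; the two cases are handled symmetrically by interchanging the roles of $v$ and $w$ via the bijection $\sigma$). In Case A, parameterise $v$ by a subset $S\subset\{1,\ldots,\nu\}$ of size $m$ (the coordinates where $v_j=z_j$), by signs of $(v_j-z_j)_{j\notin S}$, and by magnitudes $r_j=|v_j-z_j|\geq 1$ for $j\notin S$. Since $z$ lies between $v$ and $w$, one has $|v_j-w_j|\geq r_j$ for every $j\notin S$, so the sorted displacements satisfy $a_k\geq r^{(k)}$ for $k=1,\ldots,\nu-m$ (monotonicity of ordered statistics). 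Combining with Lemma \ref{bounds:numberofpaths}(ii)(c) yields, for $1\leq m\leq \nu-2$,
\[
P_v(z)\leq \prod_{k=1}^{\nu-m-1}\frac{1}{a_k+1}\leq \prod_{k=1}^{\nu-m-1}\frac{1}{r^{(k)}+1},
\]
while for $m\in\{\nu-1,\nu\}$ we simply use $P_v(z)\leq 1$ from Lemma \ref{bounds:numberofpaths}(ii)(b).

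The core combinatorial estimate is the following: for $N\geq 1$,
\[
T_N:=\sum_{\substack{r_1\geq r_2\geq\ldots\geq r_N\geq 1\\ r_1+\ldots+r_N\leq K}}\;\prod_{k=1}^{N-1}\frac{1}{r_k+1}\;\leq\; K.
\]
This is proved by induction on $N$: clearly $T_1=K$; for $N\geq 2$, summing first over the innermost $r_N\in[1,r_{N-1}]$ contributes at most $r_{N-1}$ terms, which, paired with the factor $\frac{1}{r_{N-1}+1}$, gives $\frac{r_{N-1}}{r_{N-1}+1}\leq 1$. Thus $T_N\leq T_{N-1}$, closing the induction. After desymmetrising and accounting for the $\binom{\nu}{m}$ choices of $S$ and $2^{\nu-m}$ choices of signs (plus the simple $O(\nu K)$ bounds for $m=\nu-1,\nu$), we obtain
\[
\sum_{v\in X,\, i_v=m} P_v(z)\leq \binom{\nu}{m}\,2^{\nu-m}\,(\nu-m)!\,T_{\nu-m}\leq C(\nu,m)\, K,
\]
and summing over $m=1,\ldots,\nu$ together with the symmetric Case B yields $F_X(e)\leq C(\nu) K$, as claimed.

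The main obstacle is the combinatorial bound $T_N\leq K$: one has to avoid spurious logarithmic factors that naively appear when summing $\frac{1}{r+1}$-type expressions. The telescoping observation $T_N\leq T_{N-1}$ circumvents this; alternatively, one could estimate directly via layer-cake as in Lemma \ref{lemma:1D:2}. The remaining steps (relating $a_k$ to $r^{(k)}$, decomposing the sum by $M(z)$) are bookkeeping once the correct parameterisation is set up, and rely crucially on Lemma \ref{bounds:numberofpaths}.
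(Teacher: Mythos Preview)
Your proof is correct and follows essentially the same route as the paper's: reduce $F_X(e)$ to a vertex-sum $\sum_v P_v(z)$, invoke Lemma~\ref{bounds:numberofpaths} to bound $P_v(z)$ by $\prod_{k=1}^{\nu-m-1}\frac{1}{a_k+1}$, replace the sorted displacements $a_k(v,\sigma(v))$ by the smaller sorted distances to $z$ (your $r^{(k)}$, the paper's $a_k(v,z)$), split into the two symmetric cases according to whether $v$ or $\sigma(v)$ realises $M(z)$, and finish with the telescoping estimate $\sum_{i_N\leq i_{N-1}}\frac{1}{i_{N-1}+1}\leq 1$. Your inductive formulation $T_N\leq T_{N-1}\leq\ldots\leq T_1=K$ is exactly the paper's nested-sum bound in disguise.

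The only organisational difference is that you stratify explicitly by $m=M(z)$ and by the subset $S$ of matching coordinates (paying $\binom{\nu}{m}2^{\nu-m}(\nu-m)!$), whereas the paper sums over all $v$ in the cube $Q_K$ with one coordinate pinned to $z$ (paying factors $\nu$ and $2^{\nu}$) and lets the zeros among the $i_j$ implicitly account for the various $m$. Your tighter constraint $\sum r_j\leq K$ (coming from $\dist(v,z)\leq K$) versus the paper's cruder $\max r_j\leq K$ makes no difference, since the same telescoping bound works for either.
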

\begin{proof}
Let $e = \{z_1,z_2\}$ for some $z_1,z_2 \in V$. For $z \in V$ consider
\[
\tilde{F}(z) = \sum_{\gamma \in \Gamma \colon v \in V(\gamma)} \omega(\gamma).
\]
Observe that $F(e) \leq \min \{ \tilde{F}(z_1), \tilde{F}(z_2)\}$, hence it suffices to show an $l^{\infty}$ bound on $\tilde{F}$ instead. Let $K = \Vert \sigma - \id \Vert_{l^{\infty}(X)}$.

Fix now $z \in V$. Let $v \in X$, such that more than zero paths from $v$ to $\sigma(v)$ touch $z$. We conclude that $\vert v_i - z_i \vert \leq K$ and $\vert v_i - \sigma(v)_i \vert \leq K$ for all $i=1,\ldots,\nu-2$. Let $Q_K$ be the cube of all $v \in V$ such that $\vert v_i - z_i \vert \leq K$ for all $v \in K$.
For each $v \in V$ define $\tilde{a}_i(v,z) =\vert v_i -z_i \vert $ and let 
$a_i$ be the resorting of those numbers such that $a_1(v,z) \geq a_2(v,z) \geq \ldots \geq a_{\nu}(v,z)$. Again, let $M(v,z)$ be the largest number $i$, such that $a_{\nu+1-i} =0$. Observe that if $z \in V(\gamma^{\alpha}_v)$ for some $\gamma^{\alpha}_v \in \Gamma_v$ then $M(v,z) \geq 1$ or $M(\sigma(v),z) \geq 1$ (cf. Lemma \ref{bounds:numberofpaths} \ref{bnop:2a}).
Then we have for a single paths,
\begin{align*}
\sum_{ \gamma \in \Gamma_v \colon z \in V(\gamma) } \omega(\gamma) & \overset{\eqref{assumpt:weights}}{=} \frac{ \# \{ \gamma \in \Gamma_v \colon z \in V(\gamma)\rbrace}{\# \Gamma_v} \\
&\overset{\text{\ref{bounds:numberofpaths} \ref{bnop:2c}}}{\leq} \prod_{j=1}^{\nu-1-\max\{M(v,z),M(\sigma(v),z)} \tfrac{1}{a_j(\sigma(v),v)} \\
&\leq \prod_{j=1}^{\nu-1-M(v,z)} \tfrac{1}{a_j(v,z)+1} +  \prod_{j=1}^{\nu-1-M(v,\sigma(v))} \tfrac{1}{a_j(\sigma(v),z)+1}.
\end{align*}

We conclude 
\begin{align*}
    \tilde{F}(z) & \leq \sum_{v \in Q_K} \sum_{\gamma \in \Gamma_v \colon z \in V(\gamma)} \omega(\gamma) \\
    & \leq \sum_{v \in Q_K} \prod_{j=1}^{\nu-1} \tfrac{1}{a_j(v,z)+1} +  \prod_{j=1}^{\nu-1} \tfrac{1}{a_j(\sigma(v),z)+1} \\
    & \leq 2 \nu \sum_{v \in Q_K \colon v_\nu = z_{\nu}}  \prod_{j=1}^{\nu-1} \tfrac{1}{a_j(v,z)+1} 
     \leq 2^{\nu}\nu \sum_{v \in z + \{0,...,K\}^{\nu-1}} \prod_{j=1}^{\nu-1} \tfrac{1}{a_j(v,z)+1} \\
    &\overset{(\ast)}{=} 2^{\nu}\nu \sum_{i_1=0}^K \sum_{i_2=0}^{i_1} \ldots  \sum_{i_{\nu-1}=0}^{i_{\nu-2}}  \prod_{j=1}^{\nu-2} \tfrac{1}{i_j+1}
    \\
  & \leq 2^{\nu} \nu \sum_{i_1=0}^K \sum_{i_2=0}^{i_1} \tfrac{1}{i_1+1} \sum_{i_3=0}^{i_2} \tfrac{1}{i_2+1} \ldots \sum_{i_{\nu-1}=0}^{i_{\nu-2}} \tfrac{1}{i_{\nu-2}+1} \\
  &\leq 2^{\nu} \nu (K+1) \leq 2^{\nu+1} \nu K.
\end{align*}
We emphasise that the equality in $(\ast)$ comes from the careful definition and sorting of the coordinate directions in Definition \ref{def:gammav}: We sorted the coordinate directions such that $a_1 \geq a_2 \geq \ldots \geq a_{\nu-1}$.
\smallskip

Therefore,
\[
\Vert F_X \Vert_{l^{\infty}(E)} \leq \Vert \tilde{F} \Vert_{{l^{\infty}(V)}} \leq 2^{\nu+1} \nu K = 2^{\nu+1} \nu \Vert \sigma - \id \Vert_{l^{\infty}(X)}.
\]
\end{proof}
By the same interpolation proof as in Lemma \ref{lemma:1D:2} (which does \emph{not} use dimension or any weights) we get the following result.
\begin{thm} \label{thm:multiD}
    Suppose that $G=(V,E)$ is the $\nu$-dimensional grid, let $\sigma \colon V \to V$ be bijective. Define $\Gamma_v$ as in Definition \ref{def:gammav}, weights $\omega$ as in \eqref{assumpt:weights} and $\rho$ as in \eqref{def:flow:multi:D}. Then
    \begin{enumerate} [label=(\roman*)]
        \item \label{thm:multiD:1} for $F$ as in \eqref{eq:F:e:multi:d} we have for any $p \in [1,\infty]$ 
        \[
        \Vert F \Vert_{l^{p}(E)} \leq 2^{\nu} \nu \Vert \sigma - \id \Vert_{l^p(V)};
        \]
        \item  \label{thm:multiD:2} $(\gamma,\rho,\omega)$ is admissible for Problem \ref{problem:inc2};
        \item  \label{thm:multiD:3} we have the inequality
        \[
        c_p(\rho,\omega) \leq  (1+2^{\nu} \nu) \Vert \dist(v,\sigma(v)) \Vert_{l^p(V)}.
        \]
    \end{enumerate}
\end{thm}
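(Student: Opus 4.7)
The plan is to split the proof of Theorem \ref{thm:multiD} into its three parts, and most of the effort is concentrated in part \ref{thm:multiD:1}. Parts \ref{thm:multiD:2} and \ref{thm:multiD:3} follow rather mechanically from \ref{thm:multiD:1} once the correct manipulations are set up.

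For part \ref{thm:multiD:1}, my plan is to adapt the Marcinkiewicz-type interpolation argument from the proof of Lemma \ref{lemma:1D:2}. The two endpoint bounds---the $l^1$ equality from Lemma \ref{lemma:multi:D:2} and the $l^\infty$ bound from the preceding unnamed lemma---are both formulated for the restricted quantity $F_X$, which is crucial. For $p = \infty$ the bound is immediate by taking $X = V$. For $1 \leq p < \infty$, I would decompose $V = X_s^1 \cup X_s^\infty$ according to whether $\dist(v,\sigma(v))$ exceeds a suitable multiple of $s$, with the multiple chosen so that the $l^\infty$ bound applied to $X_s^\infty$ yields $F^\infty_s(e) < s/2$. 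The discrete layer-cake identity combined with Chebyshev's inequality then gives
\begin{equation*}
\|F\|_{l^p(E)}^p = \sum_{s \geq 1} (s^p - (s-1)^p) \#\{e : F(e) \geq s\} \leq \sum_{s \geq 1}(s^p - (s-1)^p)\tfrac{2}{s}\|F^1_s\|_{l^1(E)},
\end{equation*}
and after applying the $l^1$ bound on $F^1_s$ in terms of $\|\sigma - \id\|_{l^1(X_s^1)}$ and exchanging the order of summation over $s$ and over $v$, one recovers the desired bound up to a dimensional constant.

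Part \ref{thm:multiD:2} is precisely the content of Lemma \ref{lemma:multi:D:1}, so nothing further is required. For part \ref{thm:multiD:3}, I would exploit the explicit formula \eqref{def:flow:multi:D}, which implies $\rho(e,\gamma)^{-1} \leq F(e) + \ell(\gamma)$ whenever $e \in E(\gamma)$. The sub-additivity $(a+b)^{p-1} \leq a^{p-1} + b^{p-1}$ (with a harmless dimensional factor if $p > 2$) yields
\begin{equation*}
c_p(\rho,\omega)^p \leq \sum_{\gamma \in \Gamma} \sum_{e \in E(\gamma)} \omega(\gamma)\bigl(F(e)^{p-1} + \ell(\gamma)^{p-1}\bigr).
\end{equation*}
Swapping the order of summation in the first term and using \eqref{eq:F:e:multi:d} converts it to $\sum_e F(e)^p$. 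In the second term, summing over $e \in E(\gamma)$ produces a factor $\ell(\gamma)$, and then using that each $\gamma \in \Gamma_v$ has length exactly $\dist(v,\sigma(v))$ together with $\sum_{\gamma \in \Gamma_v} \omega(\gamma)=1$ yields $\sum_v \dist(v,\sigma(v))^p$. Thus $c_p(\rho,\omega)^p \leq \|F\|_{l^p(E)}^p + \|\dist(\cdot,\sigma(\cdot))\|_{l^p(V)}^p$; applying part \ref{thm:multiD:1} and taking $p$-th roots completes the proof, with the case $p=\infty$ handled analogously using $\sup$ in place of $\sum$.

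The main obstacle, and the reason for the formulation of the endpoint lemmas for a general subset $X$, is the bookkeeping in the interpolation step: it is precisely the restriction to $X_s^\infty$ that gives the crucial smallness of $F^\infty_s(e)$ in terms of $s$; applying the $l^\infty$ bound to all of $V$ would give no useful information. The dimensional constant obtained through interpolation may be slightly worse than the $2^\nu \nu$ appearing in the statement, but any purely dimensional bound is harmless for the subsequent use of this result in the continuous construction.
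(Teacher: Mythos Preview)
Your proposal is correct and follows essentially the same route as the paper: interpolation \`a la Lemma \ref{lemma:1D:2} for \ref{thm:multiD:1}, Lemma \ref{lemma:multi:D:1} for \ref{thm:multiD:2}, and the splitting of $\rho^{-(p-1)}$ for \ref{thm:multiD:3}. One small simplification the paper exploits: since $\rho(e,\gamma)^{-1} = \max(F(e),\ell(\gamma))$ exactly, one has $\rho(e,\gamma)^{-(p-1)} = \max(F(e),\ell(\gamma))^{p-1} \leq F(e)^{p-1} + \ell(\gamma)^{p-1}$ with no extra factor for any $p\geq 1$, so your detour through $(a+b)^{p-1}$ and the caveat for $p>2$ are unnecessary.
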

\begin{proof}
    As advertised, \ref{thm:multiD:1} follows with the same interpolation proof as Lemma \ref{lemma:1D:2}. We already showed the second assertion \ref{thm:multiD:2} in Lemma \ref{lemma:multi:D:1}. It remains to show the last one.

    \noindent For $p=\infty$ observe that for any $e$ and $\gamma$ we have
    \[
    \rho(e,\gamma)^{-1} \leq F(e) + \ell(\gamma) \leq \Vert F \Vert_{l^{\infty}(E)} + \Vert \sigma - \id \Vert_{l^{\infty}(V)} \leq (2^{\nu} \nu +1) \Vert \sigma - \id \Vert_{l^{\infty}(V)}.
    \]
    For $1 \leq p <\infty$ we have
    \begin{align*}
        c_p(\rho,\omega) &= \left(\sum_{\gamma \in \Gamma} \sum_{e \in E(\gamma)} \omega(\gamma) \rho(e,\gamma)^{-p+1} \right)^{1/p} \\
        & \leq  \left(\sum_{\gamma \in \Gamma} \sum_{e \in E(\gamma)} \omega(\gamma) (F(e)^{p-1} + \ell(\gamma)^{p-1}) \right)^{1/p} \\
        &\leq \left( \sum_{v \in V} \sum_{\gamma \in \Gamma_v} 
 \sum_{e \in V(\gamma)} \omega(\gamma) \vert v- \sigma(v) \vert^{p-1} + \sum_{e \in E} \sum_{\gamma \colon e \in E(\gamma)} \omega(\gamma) F(e)^{p-1} \right)^{1/p} \\
 &\leq \left( \sum_{v \in V} \vert v -\sigma(v) \vert^p + \sum_{e \in E} F(e)^p \right)^{1/p} \\
 &\leq \Vert \sigma-\id \Vert_{l^p(V)} + \Vert F \Vert_{l^p(E)} \\
 &\leq (2^{\nu}\nu+1) \Vert \sigma-\id \Vert_{l^p(V)},
    \end{align*}
finishing the proof.
    
\end{proof}
\section{Flow for discrete configurations \& Setup} \label{sec:4}

We have solved a discretised version of the flow problem in Section \ref{sec:3}. In this section we translate the solution of the discrete problem back into a flow defined on the cube 
\[M=[0,1]^{\nu}.
\]
We recall the notation. We take $N \in \N$ and consider $N^{\nu}$ small cubes $$ \q_v:= N^{-1}v + N^{-1} [0,1)^{\nu},$$ where $v \in \{{0},\ldots,N-1\}^\nu$ is a multi-index.

For the construction we use pick parameters $\kappa \in \N$ and $\ell>0$, such that $\ell$ obeys \[
10^3 \ell < N^{-1}
\]
thus if we set $K=N^{-1}/{\ell} \in \N$ we require $K>10^3$ and $\kappa$ is chosen such that $\kappa > 16 N^{-1} \ell^{-1}$.
Further define 
\[
\lambda := \kappa^{-1} \ell.
\]
We shall mention that our choice of parameters is not optimal by any means. One may improve the constant of Theorem \ref{thm:main} by choosing constants more carefully, but, in contrast to the H\"older exponent $\alpha$, we do not focus on the optimal constant. We moreover refer to Figure \ref{fig:Martina} for a visualisation of all the choices.

We identify $V= \{0,...,N-1\}^\nu$ as the vertex set and $E$ as in Section \ref{sec:discrete:multiD} as the edge set. A \emph{permutation} $\sigma$ is a bijective map $V \to V$. For given $\sigma$, we associate a map $\psi_{\sigma} \colon M \to M$ as follows:
\begin{equation} \label{def:psisigma}
    \psi_{\sigma} (x)= N^{-1} \sigma(v) + (x- N^{-1} v) \quad \text{if } x \in \q_v,~v \in V. 
\end{equation}

Observe that we have the following equivalence of norms:
\begin{equation} \label{def:equiv}
    \Vert \psi_{\sigma} - \Id_M \Vert_{L^p(M)} \sim N^{-1} N^{-\nu/p} \Vert \sigma - \Id_V \Vert_{l^p(V)}.
\end{equation}
The goal of this and the following sections is to prove the following theorem.
\begin{thm} \label{thm:goal}
    Let $\sigma$ be a permutation on $V$, $N \in \N$ and $\nu \geq 3$. There exists a divergence free vector field $u \in L^\infty([0,1];L^\infty(M;\R^\nu)) \cap L^1([0,1];BV(M))$ and a map $\psi \in W^{1,\infty}([0,1];L^\infty(M;M))$ such that 
    \begin{enumerate} [label=(\roman*)]
        \item \label{flow:1} $\partial_t \psi(t,x) = u(t,\psi(x))$;
        \item \label{flow:2} $\psi(0,\cdot) = \Id_{M}$;
        \item \label{flow:3} $\psi(1,\cdot)$ = $\psi_{\sigma}$;
        \item \label{flow:4} $\diverg_x u=0$ in the sense of distributions for a.e. time $t>0$.
    \end{enumerate}
    and such that the following bound holds for any $p,q \in [1,\infty]$:
   \begin{equation} \label{eq:Lpbound}
       \Vert u \Vert_{L^q([0,1];L^p(M))} \leq C(p,q,\nu) \Vert \psi_{\sigma} - \Id_{M} \Vert_{L^p(M)}.
   \end{equation}
\end{thm}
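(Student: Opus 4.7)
\medskip
\noindent\textbf{Proof proposal.}
The plan is to translate the discrete flow produced by Theorem \ref{thm:multiD} into a continuous divergence-free vector field on $[0,1]\times M$. First I would exploit the sub-division of each cube $\q_v$ into $K^\nu$ subcubes $\Qcal_{v,j}$ of sidelength $\ell$: I split $[0,1]$ into $K^\nu$ equal time intervals $I_j$, and in each $I_j$ I only move one designated subcube from each $\q_v$ to the appropriate subcube of $\q_{\sigma(v)}$, leaving the remaining $K^\nu-1$ subcubes of $\q_v$ fixed. Since the combinatorics of each round is the same, I only have to construct the vector field for a single round, and then obtain the flow on $[0,1]$ by time-reparametrisation and repetition (with translated target subcubes).

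For one round I would apply Theorem \ref{thm:multiD} to the permutation $\sigma$ to obtain the paths $\Gamma_v$, weights $\omega(\gamma)$ and thicknesses $\rho(e,\gamma)$. For each edge $e\in E(\gamma)$ I would construct a ``pipe'' following the direction of $e$: a thin rectangular channel inside $M$ with cross-sectional area proportional to $N^{-(\nu-1)}\omega(\gamma)\rho(e,\gamma)$, inside which the vector field is constant, parallel to $e$ and of magnitude proportional to $\rho(e,\gamma)^{-1}$. Thus the flux through each pipe is of order $\omega(\gamma)$ and is the same along the whole path, which is precisely the incompressibility condition of the discrete problem. The capacity constraint \eqref{eq:capacity2} is then exactly what guarantees that all pipes associated with paths through a common edge $e$ fit (without overlap) inside the $N^{-(\nu-1)}$ cross-section of $\q_v$, and the time constraint \eqref{eq:volume2} guarantees that the transport along $\gamma$ takes time at most $1$.

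The hardest step is the design of the \emph{junctions} inside each $\q_v$ where the pipes coming from different edges meet, together with the short connectors that compress the moving subcube $\Qcal_{v,j}$ at time $t=0$ of the round into the union of outgoing pipes, and symmetrically expand the incoming pipes into the target subcube at time $t=|I_j|$. Here the second length scale $\lambda=\kappa^{-1}\ell$ comes in: since the relevant pipes can have very different thicknesses $\rho(e,\gamma)$, I would foliate each junction by parallel slabs of width of order $\lambda$ and build a piecewise-affine, divergence-free vector field routing each incoming slab of each pipe into the correct outgoing slab, in the spirit of measure-preserving rearrangements on a rectangle. This is the step that produces only $BV$ regularity in space, and the bound $\kappa>16N^{-1}\ell^{-1}$ is used to ensure that there is enough room to realise all required routings inside each junction.

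Once the vector field $u$ is built on a single round, the $L^p_x$ norm decomposes as a sum over pipes and junctions, each contributing a volume of order $N^{-(\nu-1)}\omega(\gamma)\rho(e,\gamma)\cdot \ell$ times a velocity of order $\rho(e,\gamma)^{-1}$, so that
\[
\|u(t,\cdot)\|_{L^p(M)}^p \lesssim N^{-(\nu-1)}\ell \cdot K^{-\nu} \cdot c_p(\rho,\omega)^p.
\]
Combining Theorem \ref{thm:multiD} with the equivalence \eqref{def:equiv} and the fact that each of the $K^\nu$ rounds is identical up to translation and lives on a time interval of length $K^{-\nu}$ gives the claimed bound \eqref{eq:Lpbound} for every $p,q\in[1,\infty]$. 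Properties \ref{flow:1}--\ref{flow:4} follow by construction: incompressibility of each pipe and each junction is ensured pointwise, and $\psi(1,\cdot)=\psi_\sigma$ because after $K^\nu$ rounds every subcube has been transported to its prescribed image.
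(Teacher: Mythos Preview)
Your outline matches the paper's high-level strategy (subdivide each $\q_v$ into $K^\nu$ subcubes, move one subcube per round using pipes prescribed by the discrete solution of Theorem~\ref{thm:multiD}, then iterate $K^\nu$ times), and your $L^p$ accounting is essentially the right one. But there is a genuine gap that you do not address.

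A constant-in-time divergence-free vector field supported in a tube connecting $\Qcal_{v,j}$ to $\Qcal_{\sigma(v),j}$ does \emph{not} leave the fluid outside $\Qcal_{v,j}\cup\Qcal_{\sigma(v),j}$ fixed: every particle that starts inside the tube (and these tubes cross many other cubes $\q_w$) is pushed along by the flow. So at the end of a round the time-$|I_j|$ map is not the identity on the complement of the moving subcubes, and the next round cannot simply start from a clean configuration. The paper deals with this by a two-phase construction: in the first half of the round the subcube is transported through the pipe network as you describe; in the second half one \emph{reverses} the vector field in the pipes while short-circuiting the cubes via a ``gate'' so that the pipe fluid flows back to its initial position without re-entering $\Qcal_{\sigma(v),j}$ (Section~\ref{sec:phase2}). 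Your proposal contains no analogue of this step, and without it the claimed property $\psi(|I_j|,\cdot)=\id$ outside the moved subcubes, on which the whole iteration rests, is false.

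A second, smaller gap is synchronisation: the time constraint \eqref{eq:volume2} only gives $\sum_{e\in E(\gamma)}\rho(e,\gamma)\le 1$, so different paths take different times to traverse, and with a single time-independent vector field the subcubes do not arrive simultaneously. The paper inserts a ``reservoir'' (Section~\ref{sec:reservoir}) of adjustable delay on each path so that every subcube reaches its destination at exactly the same instant. You only assert ``time at most $1$'', which is not enough to make the concatenation of rounds well-defined.
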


Here, we require the $\BV$ regularity of the vector field in order to guarantee existence and uniqueness of the flow $\psi$ \cite{Ambrosio:BV}. In order to recover Corollary \ref{coro:main}, we will perform a smoothing procedure in Section \ref{sect:from:discrete:to:continuous}. We also point out that all flows constructed can be possibly made smooth in the interior of cubes and pipes, but we keep the $\BV$ regularity in order to give explicit constructions.

Let us remark that the constant in the construction will depend on the artificial choices made for the parameters $\kappa$ and $\lambda$, which do not depend on the permutation $\sigma$.

Nevertheless, the majority of this section and the following is devoted to a slightly simplified setup of two-dimensional flows in $3D$. In particular, consider 
\[
\sigma \colon V= \{0,\ldots,N-1\}^3 \to V
\]
with $\sigma_3(v_1,v_2,v_3) =v_3$. Let us call this set of permutations $S^2$.

The key intermediate step is as follows:
\begin{lemma}[Partial construction of flows] \label{keylemma}
Let $\sigma \in S^2$. There exists a divergence-free vector field $u \in L^\infty([0,1];L^\infty(M;\R^3))$ and a map $\psi \in W^{1,\infty}([0,1];L^\infty(M;M))$ such that 
    \begin{enumerate} [label=(\roman*)]
        \item \label{flow:1a} $\partial_t \psi(t,x) = u(t,\psi(x))$;
        \item \label{flow:2a} $\psi(0,\cdot) = \Id_{M}$;
        \item \label{flow:3a} $\psi(1,\cdot)$ = $\psi_{\sigma,\kappa}$;
        \item \label{flow:4a} $\diverg_x u=0$ in the sense of distributions for a.e. time $t>0$,
    \end{enumerate}
with 
\begin{equation} \label{eq:Lpbound2}
       \Vert u \Vert_{L^1([0,1];L^p(M))} \leq C(p) \Vert \psi_{\sigma} - \Id_{M} \Vert_{L^p(M)}
   \end{equation}
and, for $\mathcal{Q}_v = N^{-1} v + [0,\ell)^3$, we have
\begin{equation} \label{eq:Qvkappa}
\psi_{\sigma,\kappa} = \begin{cases}
    \id  & \text{if } x \in  \q_v \setminus \mathcal{Q}_{v}, \\
    \psi_{\sigma} & \text{if } x \in \mathcal{Q}_{v}.
\end{cases}
\end{equation}
\end{lemma}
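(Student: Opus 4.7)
My plan is to lift the discrete network flow from Section~\ref{sec:3} to a continuous, stationary, divergence-free vector field supported in thin three-dimensional pipes. Because $\sigma_3(v)=v_3$ for every $v$, the permutation $\sigma$ splits into $N$ independent two-dimensional permutations $\sigma^{(h)}$, one per horizontal slab $\{v_3=h\}$, and all the transport can be organised inside the thin plates $\R^2 \times [hN^{-1}, hN^{-1}+\ell]$. First, I would apply Theorem~\ref{thm:2D} to each slab to obtain, for every vertex $v$ with $v_3=h$, a path $\gamma_v$ from $v$ to $\sigma(v)$ and admissible thicknesses $\rho(e,\gamma_v)$ satisfying
\[
\tilde c_p(\rho_h)\leq C \Vert \sigma^{(h)}-\id\Vert_{l^p}.
\]

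Next, I would realise each path $\gamma$ as a tubular region $P_\gamma$ of $x_3$-height $\ell$ that follows $\gamma$ inside its slab. On the segment corresponding to an edge $e$ (say parallel to $\un_1$), the tube has $2D$ transverse width $\ell^2 N\, \rho(e,\gamma)$ and hence cross section $\ell^3 N\,\rho(e,\gamma)$. The capacity constraint $\sum_\gamma \rho(e,\gamma)\leq 1$ forces the sum of these widths across a common face of $\q_v$ to be at most $\ell^2 N\leq N^{-1}$, so all tubes can be packed side by side without overlap inside the strip of dimensions $N^{-1}\times\ell$ available in $\q_v$. At interior vertices of $\gamma$ I would insert small L-shaped turning connectors that preserve flux, and at the two endpoints short connectors linking the tube mouth to the boundary of $\mathcal{Q}_v$ and, symmetrically, of $\mathcal{Q}_{\sigma(v)}$; the largeness of $\kappa$ (hence smallness of $\lambda=\kappa^{-1}\ell$) guarantees enough room in $\q_v\setminus\mathcal{Q}_v$ to host all these connectors disjointly for every path passing through $\q_v$.

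Inside each straight piece of tube I would set $u$ stationary, parallel to the edge, with magnitude $N^{-1}/\rho(e,\gamma)$ (up to a harmless time rescaling ensuring that every particle reaches its target by $t=1$). This makes the flux $u\cdot A\equiv \ell^3$ constant along $\gamma$ and equal to the volume of $\mathcal{Q}_v$ transported per unit time. In the connectors and inside the two subcubes $\mathcal{Q}_v,\mathcal{Q}_{\sigma(v)}$ I would choose any stationary divergence-free field with prescribed constant flux on the relevant boundary pieces, for instance via explicit shear maps or Moser's trick; $u=0$ elsewhere. Flux matching at every interface yields $\diverg u=0$ globally, and the associated regular Lagrangian flow $\psi$ fixes points outside the tubes and the subcubes and satisfies $\psi(1,\cdot)=\psi_{\sigma,\kappa}$, giving \ref{flow:1a}--\ref{flow:4a} and \eqref{eq:Qvkappa}. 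For the quantitative bound, stationarity gives $\Vert u\Vert_{L^1_tL^p_x}=\Vert u\Vert_{L^p(M)}$, and summing over pipe pieces,
\[
\Vert u\Vert_{L^p(M)}^p \lesssim \sum_h \sum_{\gamma,\, e\in E(\gamma)} \Bigl(\tfrac{N^{-1}}{\rho(e,\gamma)}\Bigr)^p \cdot \ell^3\rho(e,\gamma) = \ell^3 N^{-p}\sum_h \tilde c_p(\rho_h)^p \lesssim \ell^3 N^{-p}\Vert \sigma-\id\Vert_{l^p(V)}^p,
\]
which matches $\Vert \psi_{\sigma,\kappa}-\id\Vert_{L^p(M)}^p \sim \ell^3 N^{-p}\Vert \sigma-\id\Vert_{l^p(V)}^p$, since the support has volume $\sim N^3\ell^3$ and the pointwise displacement there is $\sim N^{-1}|\sigma(v)-v|$.

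The hardest part will not be the estimate but the geometric bookkeeping in the second step: one must route tubes of distinct paths to stay disjoint, arrange the turning connectors inside a shared cube $\q_v$ so that they pairwise avoid each other for every path through $v$, and design the interior draining of each $\mathcal{Q}_v$ into the first tube mouth with $L^p$ cost absorbed into the main pipe contribution. It is precisely to afford all this geometric room with margin to spare that $\kappa$, and with it $K$, is taken large compared to any purely dimensional constant.
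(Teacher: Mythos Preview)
Your outline captures the right skeleton---solve the discrete problem slab by slab via Theorem~\ref{thm:2D}, route tubes whose widths are governed by~$\rho$, and read off the $L^p$ bound from~$\tilde c_p$---and your $L^p$ bookkeeping is essentially correct. But there is a genuine gap at the step where you assert $\psi(1,\cdot)=\psi_{\sigma,\kappa}$.

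You take $u$ \emph{stationary} and nonzero on the tubes. A particle that starts inside a tube $T_v$ at time $0$ is then carried into $\mathcal{Q}_{\sigma(v)}$, flushed through that cube by the internal drain, expelled into $T_{\sigma(v)}$, and so on around a cycle of $\sigma$. At time~$1$ it lands somewhere in the network that depends on the cycle length and on the individual $\rho(e,\gamma)$'s; in general it is \emph{not} back at its starting point. Yet $\psi_{\sigma,\kappa}$ must equal the identity on all of $\q_v\setminus \mathcal{Q}_v$, and that set contains every tube. So a single time-independent field cannot realise \eqref{eq:Qvkappa}. The phrase ``up to a harmless time rescaling'' does not help here: different cycles and different pipe segments would need \emph{different} rescalings, which a single vector field cannot provide; and even a perfect global rescaling would only rotate pipe fluid forward, never return it.

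The paper resolves this with a two-phase construction (Section~\ref{sec:phase2}): a forward phase on $[0,\tfrac12]$ that transports each $\mathcal{Q}_v$ to $\mathcal{Q}_{\sigma(v)}$ while simultaneously displacing the pipe contents, followed by a reversed phase in which the sign of $u$ is flipped on the entire pipe network but the gates at $\mathcal{Q}_v$ are short-circuited so that cube fluid stays put; this sends every pipe particle back to its initial position. The reservoir flow (Section~\ref{sec:reservoir}) is what makes the transit time \emph{exactly} $\tfrac14$ for every $v$ rather than merely $\le 1$, which is the synchronisation your rescaling comment is trying to finesse.

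A smaller issue: you place all tubes at a fixed $x_3$-height~$\ell$. At an interior vertex where one path goes straight through while another turns, their tubes must cross in the $x_1x_2$-plane, so they cannot live at the same height. This is precisely why the paper exploits the third dimension through the level map (Definition~\ref{def:level}) and allocates $14c_{th}\ell$ of vertical room in $R_v$: distinct local behaviours of paths are stacked on separate $x_3$-levels so that the interchange pieces of Corollaries~\ref{coro:source:sink:interchange} and~\ref{coro:source:sink:interchange:corner} are disjoint. Your final paragraph correctly flags this as the hardest geometric step, but the fixed-height ansatz is incompatible with it.
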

\begin{figure}
    \centering
    \includegraphics[width=0.5 \textwidth]{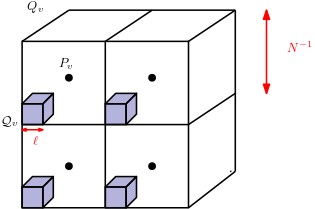}
    \caption{Each cube $Q_v$ of the tiling is subdivided into $K^3$ cubes $\mathcal{Q}_v$, each one of sidelength $\ell$. Each cube connects to the point $P_v$ of the network.}
    \label{fig:Martina}
\end{figure}

In other words, \eqref{eq:Qvkappa} entails that only parts of the cubes are moved to the right direction and (after time $t=1$) every other particle is at its original position. The main result follows by rescaling in time and applying this result $K^3$ times to all subcubes of a single cube $\q_v$.

In order to prove Lemma \ref{keylemma} we first define some basic terminology for later.

\subsection{Sources, Sinks \& Checkpoints} We aim to define the vector field $u$ in two phases (i.e. time intervals), in which $u$ will be constant-in-time. In the first phase, we connect small subcubes by pipes and construct a flow that transports cube $\q_{v}$ to the cube $\q_{\sigma(v),\kappa}$. In the second time step we then correct the flow in such a way that it is the identity on the pipes again.

In order to define $u$, we use piecewise smooth functions. Instead of giving a full direct construction of a flow from one cube to another, where it is difficult to verify that flows do not intersect, we use a step-by-step construction on smaller time- and space-frames.

\begin{definition} \label{def:sourcesink}
Suppose that $A, B \subset M$ are two open disjoint $2$-dimensional rectangles with normals $\nu_A,\nu_B \in \R^3$. Let $u_A$ and $u_B$ be two constant vectors such that $u_A = \mu_A \nu_A$ and $u_B = \mu_B \nu_B$, $\mu_A, \mu_B>0$ and 
\[
\int_A \nu_A \cdot u_A \dH^2 = \int_B \nu_B \cdot u_B \dH^2 \quad \Longleftrightarrow \quad \mu_A \Haus^{2}(A) = \mu_B \Haus^2(B).
\]
We say that a piecewise smooth $u \colon M \to \R^3$ induces a flow from $A$ to $B$ if 
\begin{enumerate} [label=(\roman*)]
    \item \label{def:sourcesink:1}$\diverg(u) = - \Haus^{2} \llcorner A \cdot u_A+ \Haus^2 \llcorner B \cdot u_B$ in the sense of distributions.
    \item There is a time $t_{A,B}$ such that we have: for any $z \in A$ the solution to the ordinary differential equation
    \begin{equation} \label{eq:ODE}
        \begin{cases}
            \partial_t \psi^z(t) = u(\psi^z(t)) & t \in [0,t_{A,B}),
            \\
            \psi^z(0) = z,
        \end{cases}
    \end{equation}
    satisfies $\psi^z(t_{A,B}) \in B$ and the map $\psi \colon z \mapsto \psi^z(t_{A,B})$ is affine.
    \item $\spt(u) = \psi([0,t_{A,B}]\times A)$.
\end{enumerate}
We call $\psi$ the \textbf{source-sink flow} from \textbf{source} A and \textbf{sink} B and, correspondingly, $u$ the \textbf{source-sink vector field.}
\end{definition}
We gather the following simple observations into the following lemma.
\begin{lemma}[Union and concatenation of source-sink-flows]
\begin{enumerate} [label=(\roman*)]
    \item Suppose that\\ $A_1,\ldots,A_k$ and $B_1,\ldots,B_k$ are disjoint 2-dimensional rectangles and that there are source-sink vector fields $u_j$ from $A_j$ to $B_j$. If the supports of $u_j$ are pairwise disjoint then we may define
    \[
    u \colon M \to \R^n, \quad u(x)=u_j(x) \quad x \in \spt(u_j)
    \]
    and we still have
    \begin{enumerate} [label=(\alph*)]
        \item $\diverg(u)  = -\sum_{j=1}^k \Haus^2 \llcorner A \cdot u_A + \sum_{j=1}^k \Haus^2 \llcorner B \cdot u_B$ in the sense of distributions,
        \item For any $z \in A_k$ the solution to the ordinary differential equation
          \[  \begin{cases}
            \partial_t \psi^z(t) = u(\psi^z(t)) & t \in [0,t_{A_k,B_k}),
            \\
            \psi^z(0) = z,
        \end{cases}
        \]
        is the same as for the source-sink flow $u_k$.
    \end{enumerate}
    \item Suppose that $A_1,\ldots,A_k$ are disjoint 2-dimensional rectangles and that there are source-sink flows $u_j$ from $A_j$ to $A_{j+1}$ with normals $\nu_{j,A_j}$ and $\nu_{j,A_{j+1}}$ and velocities $u_{j,A}$ and $u_{j,A_{j+1}}$. Suppose that
    \begin{equation*}
         \nu_{j-1,A_j} = - \nu_{j,A_j}, \quad   u_{j-1,A_j} = - u_{j,A_{j}}, \quad j=2,...,n-1
    \end{equation*}
    and that $\spt(u_j) \cap \spt(u_{j+1}) = A_{j+1}$, $\spt(u_i) \cap \spt(u_j) = \emptyset$ if $\vert i- j \vert \geq 2$.
    Then we may define
    \[
    u \colon M \to \R^n, \quad u(x)=u_j(x) \quad x \in \spt(u_j)
    \]
   and $u$ is a source-sink flow from $A_1$ to $A_k$. We then call $A_2,...,A_{k-1}$ \textbf{checkpoints} of the source-sink flow.
\end{enumerate}
\end{lemma}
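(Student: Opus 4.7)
The plan is to verify, for both items, that $u$ is well-defined a.e., that its distributional divergence decomposes into the contributions from the individual $u_j$, and that the ODE flow of $u$ agrees with the expected concatenation or restriction of the $u_j$-flows.

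For (i), pairwise disjointness of the open supports $\spt(u_j)$ makes the piecewise definition $u(x) = u_j(x)$ on $\spt(u_j)$ (and zero elsewhere) unambiguous up to a null set. Testing against $\phi \in C_c^\infty(M)$ then yields
\[
-\int_M u \cdot \nabla \phi \dx = -\sum_{j=1}^k \int_{\spt(u_j)} u_j \cdot \nabla \phi \dx = \sum_{j=1}^k \langle \diverg(u_j), \phi \rangle,
\]
so the distributional divergences add. For the ODE, I would fix $z \in A_k$ and argue by uniqueness of the Carathéodory solution: since $u \equiv u_k$ on $\spt(u_k)$ and the $u_k$-trajectory starting at $z$ stays in $\spt(u_k)$ until time $t_{A_k,B_k}$ by the source-sink property, pairwise disjointness of the supports prevents the $u$-trajectory from slipping into any other $\spt(u_i)$ beforehand, so the two flows coincide on $[0,t_{A_k,B_k}]$.

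For (ii), the only overlaps $\spt(u_j)\cap\spt(u_{j+1}) = A_{j+1}$ are two-dimensional null sets in $\R^3$, so $u$ is defined a.e. Summing the distributional divergences $\diverg(u_j)$, each interior checkpoint $A_j$ for $j=2,\ldots,k-1$ receives a sink contribution from $u_{j-1}$ and a source contribution from $u_j$. The compatibility hypotheses $\nu_{j-1,A_j} = -\nu_{j,A_j}$ and $u_{j-1,A_j} = -u_{j,A_j}$ encode, given the outward-normal convention of Definition \ref{def:sourcesink}, that these two boundary terms cancel pointwise on $A_j$: the normals are opposite because the supports of $u_{j-1}$ and $u_j$ lie on opposite sides of $A_j$, and the cancellation of the velocity terms is then a consequence of matching mass fluxes $\mu_{j-1,A_j} = \mu_{j,A_j}$. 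What survives is a source distribution on $A_1$ and a sink distribution on $A_k$, exactly as in the definition of a source-sink flow from $A_1$ to $A_k$.

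To obtain the flow map, I would construct $\psi^z$ inductively on intervals $[\tau_{j-1},\tau_j]$, where $\tau_0=0$ and $\tau_j = \sum_{i=1}^{j} t_{A_i,A_{i+1}}$: on each such interval, follow the $u_j$-flow starting from the point in $A_j$ produced by the previous step. The same sign-matching that yields divergence cancellation ensures that the physical fluid velocity is continuous through each $A_j$, so the piecewise trajectory satisfies $\partial_t \psi^z = u(\psi^z)$ for a.e.\ $t$, and the full map $\psi$ from $A_1$ to $A_k$ is affine as a composition of the affine step maps. The main (and only) subtle point is to confirm that at each checkpoint $A_{j+1}$ the trajectory proceeds into $\spt(u_{j+1})$ rather than re-entering $\spt(u_j)$; this is enforced by the matching flow directions at $A_{j+1}$ together with the hypothesis $\spt(u_i) \cap \spt(u_j) = \emptyset$ for $\vert i-j\vert \geq 2$, which rules out any interference from non-adjacent flows. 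The support of $u$ is then $\bigcup_j \spt(u_j) = \psi([0,\tau_k]\times A_1)$, as required.
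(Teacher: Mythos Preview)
Your argument is correct and is precisely the routine verification the paper has in mind; the paper in fact omits the proof entirely, introducing the lemma as ``simple observations.'' Your handling of the divergence cancellation at checkpoints and the inductive construction of the concatenated trajectory are the right points to check.
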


In the following section, we engineer a source-sink flow that connects a cube $\Qcal_v$ to a cube $\Qcal_{\sigma(v)}$ by finding suitable checkpoints on the way and constructing the flow in a piece-wise fashion. Observe that due to the first part of the lemma, if we can ensure that all those flows have disjoint support, then we may globally define the flow.

Apart from a few other checkpoints to have a nice immediate in- and out-flow of the cube, we of course use the paths given by the discrete problem of Section 3 to get good intermediate checkpoints and construct flows along 'edges' of a graph. We recall the exact terminology when it is needed.

\section{Construction of flows} \label{sec:construction}
In this section, we give a number of source-sink-flows that achieve the goal for the first-phase: After time $t=1/2$ the flow has transported cubes $\Qcal_{v}$ to $\Qcal_{\sigma(v)}$.
The problem is that when transporting from cube to cube, we have 'en passant' also transported some fluid from one pipe to another. This is then rectified between time $t=1/2$ and $t=1$, by letting the fluid 'flow backwards' inside of the pipes.

The construction is roughly divided into two major parts: First, we need to construct a flow that connects our cube to a discrete network (the '\emph{connector flow}'). Second, we construct the flow along the discrete network (which we will call '\emph{highway flow}') to get from one vertex $v$ to $\sigma(v)$. This is connected to the cube $\Qcal_{\sigma(v)}$ via the connector flow.
\subsection{Special Source-Sink flows}
During the construction we need basically three types of  source-sink flows: Two flows that are between two sources and sinks that are parallel and flows that are between perpendicular source and sink. Before coming to the application, we construct these in a rather abstract setting.
\begin{lemma}[Parallel source-sink flow: Transport] \label{lem:parallel}
    Suppose that $A=(0,a_1) \times (a_2,a^2) \times \{0\}$ and that $B=(D_1,D_1+a_1) \times (a_2,a^2) \times \{D_3\}$
     Suppose that $(A,\nu_A,u_A)$ and $(B,\nu_B,u_B)$ obey the properties of Definition \ref{def:sourcesink}, i.e. $\nu_A=\nu_B= \un_3$ and, thus, $u_A=u_B$. Suppose further that
    \[
    \Psi \colon A \to B, \quad \Psi(x_1,x_2,0) = \left(x_1+D_1,x_2,D_3 \right).
    \]
    Then there is $u \colon [0,t_{A,B}] \times \R^3 \to \R^3$ constant-in-time inducing a source-sink flow $\psi$ from $A$ to $B$ with the following properties:
    \begin{enumerate} [label=(\roman*)]
        \item $u(t,y)=0$ if $y \notin \conv(A,B)$;
        \item $u(t,\cdot)$ is constant on $\conv(A,B)$;
        \item $\Vert u \Vert_{L^{\infty}} \leq \mu_A \cdot (1+\tfrac{D_1}{D_3}) $;
        \item the flow $\psi$ satisfies
            \[
            \psi(t_{A,B},x) = \Psi(x)
            \]
        and 
        \begin{equation*}
            t_{A,B} =   D_3 \mu_A^{-1}.
           \end{equation*}
    \end{enumerate}
\end{lemma}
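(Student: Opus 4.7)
\medskip

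\noindent\textbf{Proof plan.} The construction is essentially dictated by the requirements: since $u$ must be constant on $\conv(A,B)$ and vanish outside, the only freedom is the choice of this single constant vector. The plan is to set
\[
u(x) := \begin{cases} \mu_A \left( \dfrac{D_1}{D_3},\, 0,\, 1 \right) & \text{if } x \in \conv(A,B),\\ 0 & \text{otherwise,}\end{cases}
\]
which is the unique constant vector that (a) has $\un_3$-component equal to $\mu_A$ (matching the prescribed flux density through $A$ and $B$) and (b) points along the axis joining the centres of $A$ and $B$. Properties (i) and (ii) are then immediate, and (iii) follows from the elementary inequality $\sqrt{1+(D_1/D_3)^2}\le 1+D_1/D_3$.

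For (iv), I would solve the ODE explicitly: since $u$ is constant on $\conv(A,B)$, for $z=(z_1,z_2,0)\in A$ the solution of \eqref{eq:ODE} is the affine curve
\[
\psi^z(t) = \left( z_1 + \tfrac{\mu_A D_1}{D_3} t,\; z_2,\; \mu_A t \right),
\]
which stays in $\conv(A,B)$ for all $t\in[0,D_3/\mu_A]$ (this is the precise statement that $u$ points along the axis of the parallelepiped). Setting $t_{A,B}:=D_3/\mu_A$ gives $\psi^z(t_{A,B})=(z_1+D_1, z_2, D_3)=\Psi(z)$, as required, and the identity $\spt(u)=\overline{\conv(A,B)}=\psi([0,t_{A,B}]\times A)$ is then a tautology.

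The one thing that still needs a small argument is the distributional divergence condition in Definition~\ref{def:sourcesink}\,\ref{def:sourcesink:1}. Since $u$ is constant on $\conv(A,B)$, integration by parts reduces the distribution $\diverg u$ to the boundary jump of $u\cdot n$ across $\partial\conv(A,B)$. On the two horizontal faces $A$ and $B$ the outer normals are $\mp\un_3$ and the jump produces exactly the desired $\mp\mu_A$ times $\Haus^2\llcorner A$ (resp.\ $\Haus^2\llcorner B$). The key point, which I expect to be the only non-routine step, is to verify that the four lateral faces contribute nothing: each lateral face is ruled by lines parallel to $(D_1,0,D_3)$, so its unit normal is orthogonal to $(D_1,0,D_3)$ and hence orthogonal to $u$. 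Therefore $u\cdot n\equiv 0$ on these faces and no spurious singular part of the divergence is produced. Combining these boundary computations yields the distributional identity required by the definition, concluding the proof.
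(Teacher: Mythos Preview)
Your proposal is correct and takes essentially the same approach as the paper: both define the constant vector field $u=\mu_A(D_1/D_3,0,1)$ on $\conv(A,B)$ and zero outside. The paper's own proof is in fact terser than yours---it simply writes down this vector field and states that ``a short calculation shows that this vector field has all the claimed properties''---so your explicit ODE solution and your verification of the distributional divergence condition on the lateral faces fill in details the paper leaves to the reader.
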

\begin{proof}
    Consider the vector field defined by
    \[
    u(x_1,x_2,x_3)= \begin{cases}
    \left( \tfrac{D_1}{D_3}\mu_A,0, \mu_A \right)^T & x \in \conv(A,B), \\
    0 & x \notin \conv(A,B).
    \end{cases}
    \]
    A short calculation shows that this vector field has all the claimed properties.
\end{proof}

\begin{lemma}[Parallel source-sink flow: Change of pipe width] \label{lemma:pipewidth}
Suppose that $A=(0,a_1) \times (a_2,a^2) \times \{0\}$ and that $B=(0,b_1) \times (a_2,a^2) \times \{D_3\}$. Suppose that $(A,\nu_A,u_A)$ and $(B,\nu_B,u_B)$ obey the properties of Definition \ref{def:sourcesink}, i.e. $\nu_A=\nu_B= \un_3$ and, thus, $\mu_A a_1=\mu_B b_1$. Suppose further that
 \[
    \Psi \colon A \to B, \quad \Psi(x_1,x_2,0) = \left(\tfrac{b_1}{a_1}x_1,x_2,D_3 \right).
    \]
Then there is $u \colon [0,t_{A,B}] \times \R^3 \to \R^3$ constant-in-time inducing a source-sink flow $\psi$ from $A$ to $B$ with the following properties:
    \begin{enumerate} [label=(\roman*)]
        \item \label{par1}$\spt(u) = \conv(A,B)$;
        \item \label{par2} $\Vert u \Vert_{L^{\infty}} \leq \max\{ \mu_A,\mu_B \} \cdot (1+ D_3^{-1} \max\{a_1,b_1\}) $;
        \item \label{par25}
        We have
        $\Vert u(t, \cdot) \Vert_{L^p}^p \leq (1+ D_3^{-1} \max\{a_1,b_1\})^p
        \max\{ \mu_A^p \cdot a_1, \mu_B^p \cdot b_1 \} (a^2-a_2) D_3
        $;
         \item \label{par3} The flow $\psi$ satisfies
            \[
            \psi(t_{A,B},x) = \Psi(x)
            \]
        and the time $t_{A,B}$ satisfies
        \begin{equation*}
           \min\{ D_3/\mu_A, D_3/\mu_B\}
           \leq 
           t_{A,B}
           \leq 
           \max\{ D_3/\mu_A, D_3/\mu_B\}.
           \end{equation*}
    \end{enumerate}   
\end{lemma}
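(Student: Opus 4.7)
The plan is to construct $u$ explicitly as a constant-in-time incompressible vector field on the trapezoidal prism $\conv(A,B)$. Because $A$ and $B$ share the same normal $\un_3$ but have different $x_1$-widths, no constant vector field will do, in contrast to Lemma~\ref{lem:parallel}; the field has to continuously dilate (or contract) the horizontal cross-section as material rises in $x_3$. Concretely I look for $u$ of the form
\[
u(x_1, x_2, x_3) = \bigl(U_1(x_1, x_3),\, 0,\, U_3(x_3)\bigr) \qquad \text{on } \conv(A,B),
\]
extended by zero outside, and write $w(z) := a_1 + (b_1-a_1) z/D_3$ for the width of the horizontal slice at height $z$.

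Flux conservation across every horizontal slice forces $w(z) U_3(z) = \mu_A a_1 = \mu_B b_1$, hence $U_3(z) = \mu_A a_1/w(z)$. For the target map $\Psi$ to be realised, every trajectory starting at $(x_1, x_2, 0) \in A$ must preserve the ratio $x_1/w(x_3)$; coupled with $\dot x_3 = U_3$ this forces
\[
U_1(x_1, x_3) = \frac{\mu_A a_1\,(b_1-a_1)}{D_3\, w(x_3)^2}\, x_1.
\]
A direct computation then shows $\partial_{x_1}U_1 + \partial_{x_3}U_3 = 0$ inside the prism and that $u$ is tangent to the slanted sides $\{x_1 = 0\}$ and $\{x_1 = w(x_3)\}$, so $u$ satisfies the distributional identity required by Definition~\ref{def:sourcesink}\ref{def:sourcesink:1}.

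Properties~\ref{par1} and~\ref{par3} are then immediate: the support is $\conv(A,B)$ by construction, and integrating $\dot x_3 = U_3(x_3)$ yields
\[
t_{A,B} = \int_0^{D_3} \frac{w(z)}{\mu_A a_1}\, dz = \frac{D_3}{2}\left(\frac{1}{\mu_A}+\frac{1}{\mu_B}\right),
\]
which lies between $D_3/\mu_A$ and $D_3/\mu_B$, with the time-$t_{A,B}$ map equal to $\Psi$ by the choice of trajectories. For~\ref{par2}, the estimates $w(z) \geq \min(a_1,b_1)$ and $\mu_A a_1 = \mu_B b_1$ give $|U_3| \leq \max(\mu_A,\mu_B)$; the bounds $|x_1| \leq w(x_3)$ and $|b_1-a_1| \leq \max(a_1,b_1)$ give $|U_1| \leq \max(\mu_A,\mu_B)\max(a_1,b_1)/D_3$; combining via $|u| \leq |U_1|+|U_3|$ yields the claimed pointwise bound.

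The main obstacle is the sharp $L^p$ bound~\ref{par25}, since the naive estimate $\|u\|_\infty^p \cdot |\conv(A,B)|$ gives $\max(\mu_A,\mu_B)^p \max(a_1,b_1)$ rather than the required $\max(\mu_A^p a_1, \mu_B^p b_1)$, losing a factor $\max(a_1,b_1)/\min(a_1,b_1)$. I therefore compute $\|U_1\|_{L^p}^p$ and $\|U_3\|_{L^p}^p$ separately. Using $\int_0^{D_3} w(z)^{1-p}\, dz \leq \min(a_1,b_1)^{1-p} D_3$ together with the algebraic identity $(\mu_A a_1)^p \min(a_1,b_1)^{1-p} = \max(\mu_A^p a_1, \mu_B^p b_1)$, which follows from $\mu_A a_1 = \mu_B b_1$ by splitting into the cases $a_1 \leq b_1$ and $a_1 \geq b_1$, I obtain
\[
\|U_3\|_{L^p}^p \leq (a^2-a_2)\, D_3\, \max(\mu_A^p a_1, \mu_B^p b_1),
\]
\[
\|U_1\|_{L^p}^p \leq \tfrac{1}{p+1}\bigl(\max(a_1,b_1)/D_3\bigr)^p (a^2-a_2)\, D_3\, \max(\mu_A^p a_1, \mu_B^p b_1).
\]
Applying Minkowski's inequality to $|u| \leq |U_1| + |U_3|$ then delivers the stated bound with constant $(1 + D_3^{-1}\max(a_1,b_1))^p$.
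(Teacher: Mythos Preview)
Your proof is correct and follows essentially the same route as the paper: the vector field you write down coincides (after noting $(D_3-x_3)a_1+x_3 b_1 = D_3\, w(x_3)$) with the one the paper uses, and the verifications of \ref{par1}--\ref{par3} are the same in substance. Two minor differences: you compute $t_{A,B}=\tfrac{D_3}{2}(\mu_A^{-1}+\mu_B^{-1})$ explicitly rather than just bounding it, and for the $L^p$ estimate you combine $\|U_1\|_{L^p}$ and $\|U_3\|_{L^p}$ via Minkowski, whereas the paper uses the pointwise inequality $|u_1|\leq D_3^{-1}\max(a_1,b_1)\,|u_3|$ and then bounds $\|u_3\|_{L^p}$ --- both arrive at the same constant.
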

\begin{proof}
    We may assume $u_A \neq u_B$, i.e. $a_1\neq b_1$, otherwise we are in the case of the previous lemma. W.l.o.g we also assume that $a_1  > b_1$ and therefore $\mu_A < \mu_B$.

    Define $\mu_1 = \mu_A a_1 = \mu_B b_1$. Consider the following vector field initially
    defined on $\R^3$:
    \[
    \tilde{u}(x_1,x_2,x_3) = \mu_1 \left(\begin{array}{c}
    D_3 x_1 \cdot \tfrac{(b_1-a_1)}{[(D_3-x_3)a_1+x_3 b_1]^2}
    \\
    0 \\
    D_3 \cdot \tfrac{1}{[(D_3-x_3)a_1+x_3 b_1]}
     \end{array}\right)
    \]
    and further define
    \begin{equation} \label{def:urescaling}
        u(x) = \begin{cases}
            \tilde{u}(x) & x \in \conv(A,B), \\
            0 & \text{else.}
        \end{cases}
    \end{equation}
    Observe that $\diverg \tilde{u}=0$ and that $u \in BV(\R^3)$. Considering the jumps of $u$ at $\partial \conv(A,B)$ one obtains \ref{def:sourcesink:1} of Definition \ref{def:sourcesink}, i.e. $u$ becomes a proper source-sink vector field. Moreover items \ref{par1} and \ref{par2} from the present lemma are immediate.

    We proceed by showing the $L^p$ estimate. Observe that 
    \begin{equation} \label{est:u1}
    \vert u_1 \vert \leq D_3^{-1} \max 
  \{a_1,b_1\} u_3
    \end{equation}
    and we may therefore continue to estimate the norm of $u_3$. Define 
    \[
    h(x_3)= \tfrac{D_3-x_3}{D_3} a_1 + \tfrac{x_3}{D_3} b_1.
    \]
    Then
    \begin{align*}
        \Vert u_3\Vert_{L^p}^p& = \int_0^{D_3} \int_{a_2}^{a^2} \int_0^{h(x_3)} \vert u_3 \vert^p \dx_1 \dx_2 \dx_3 \\
        &= (a^2-a_2) \int_0^{D_3} \int_0^{h(x_3)} \mu_1^p h(x_3)^{-p} \dx_1 \dx_3 \\
        & \leq (a^2-a_2) D_3 \mu_1^p \max\{a_1^{-p+1}, b_1^{-p+1}\} = (a^2-a_2) D_3 \max\{ \mu_A^p a_1, \mu_B^p b_1\}.
    \end{align*}
    Combining this with \eqref{est:u1} yields the desired $L^p$ estimate.

    It remains to show \ref{par3}. To this end, consider the solution $\phi$ to the ordinary differential equation
    \[
    \begin{cases} \phi'(t) = \mu_1 \tfrac{D_3}{(D_3-\phi(t))a_1 + \phi(t) b_1}, \\
    \phi(0) = 0.
    \end{cases}
    \]
    For $(x_1,x_2) \in A$, the solution to the ODE
    \[
    \begin{cases}\partial_t \psi(t,x_1,x_2) = \tilde u(\psi(t,x_1,x_2)) & t \geq 0,\\
    \psi(0,x_1,x_2)= (x_1~ x_2 ~0)^T, & 
    \end{cases}    
    \]
    is given by
    \[
    \psi(t,x_1,x_2) = \left(\begin{array}{c}
        \tfrac{x_1}{a_1} h(\phi(t))   \\
        x_2 \\
        \phi(t)
    \end{array}
    \right).
    \]
    This can be verified by explicit calculation.
    Therefore, $\psi$ is a source-sink-flow from $A$ to $B$ with time $t_{A,B}$ such that $\psi(t_{A,B}) = D_3$. By the estimate
    \[
    \mu_A \leq \partial_t \phi(t) \leq \mu_B
    \]
    we also find the upper and lower bound for $t_{A,B}$ from \ref{par3}. 
\end{proof}

\begin{remark} \label{rem:continuity}
While we do not need the exact value of $t_{A,B}$, observe that, fixing $A$ and $\mu_A$, $t_{A,B}$ depends continuously on the choice of $b_1$.
\end{remark}
\begin{figure}[!htb]
   \begin{minipage}{0.48\textwidth}
     \centering
     \includegraphics[width=.9\linewidth]{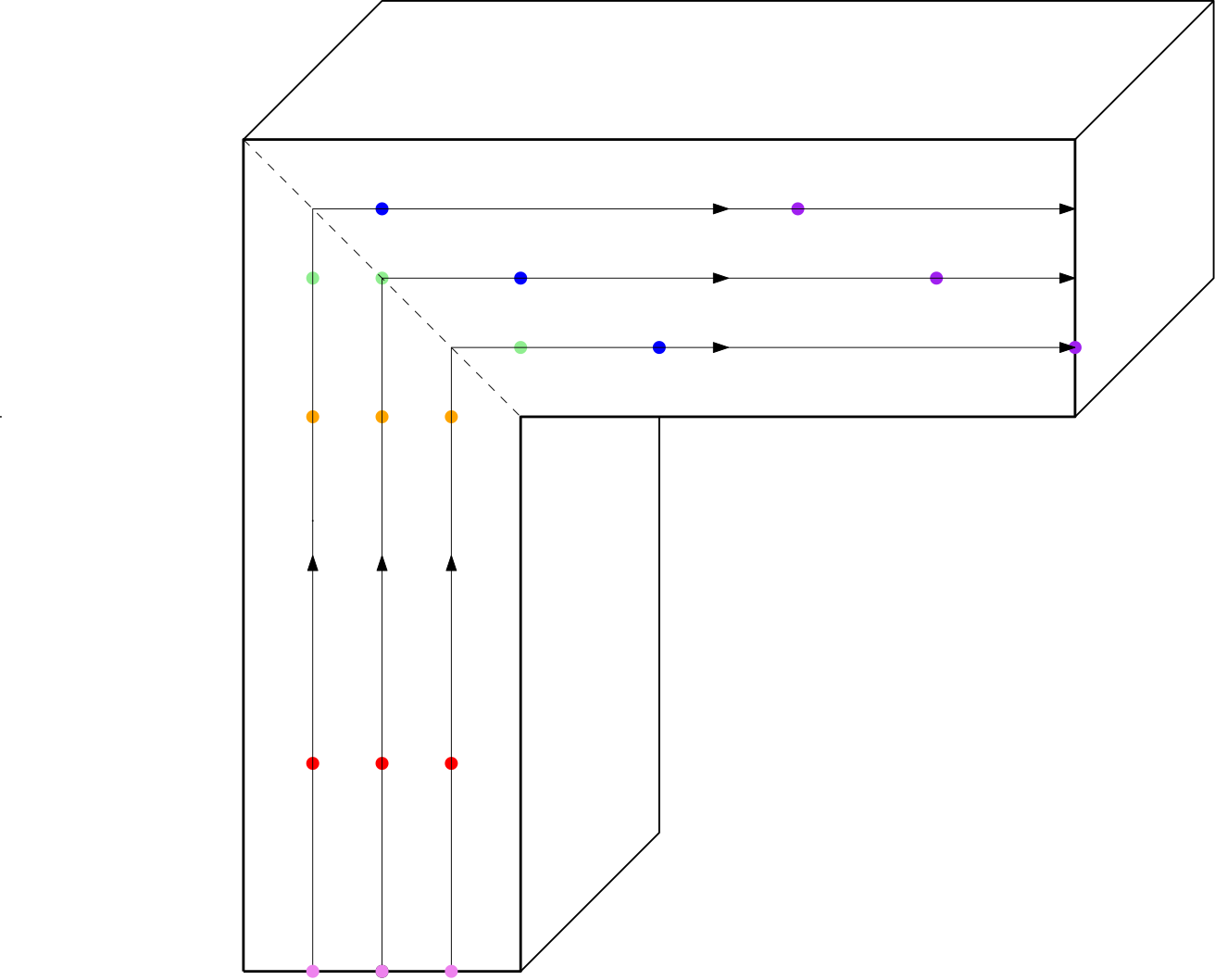}
     \caption{Corner Flow I: If we keep the absolute value of the velocity constant, then particles starting simultaneously at the source will not arrive at the sink at the same time. In both pictures, points of the same color indicate the position of particles at a specific time.}\label{corner1}
   \end{minipage}\hfill
    \begin{minipage}{0.48\textwidth}
      \centering
      \includegraphics[width=.9\linewidth]{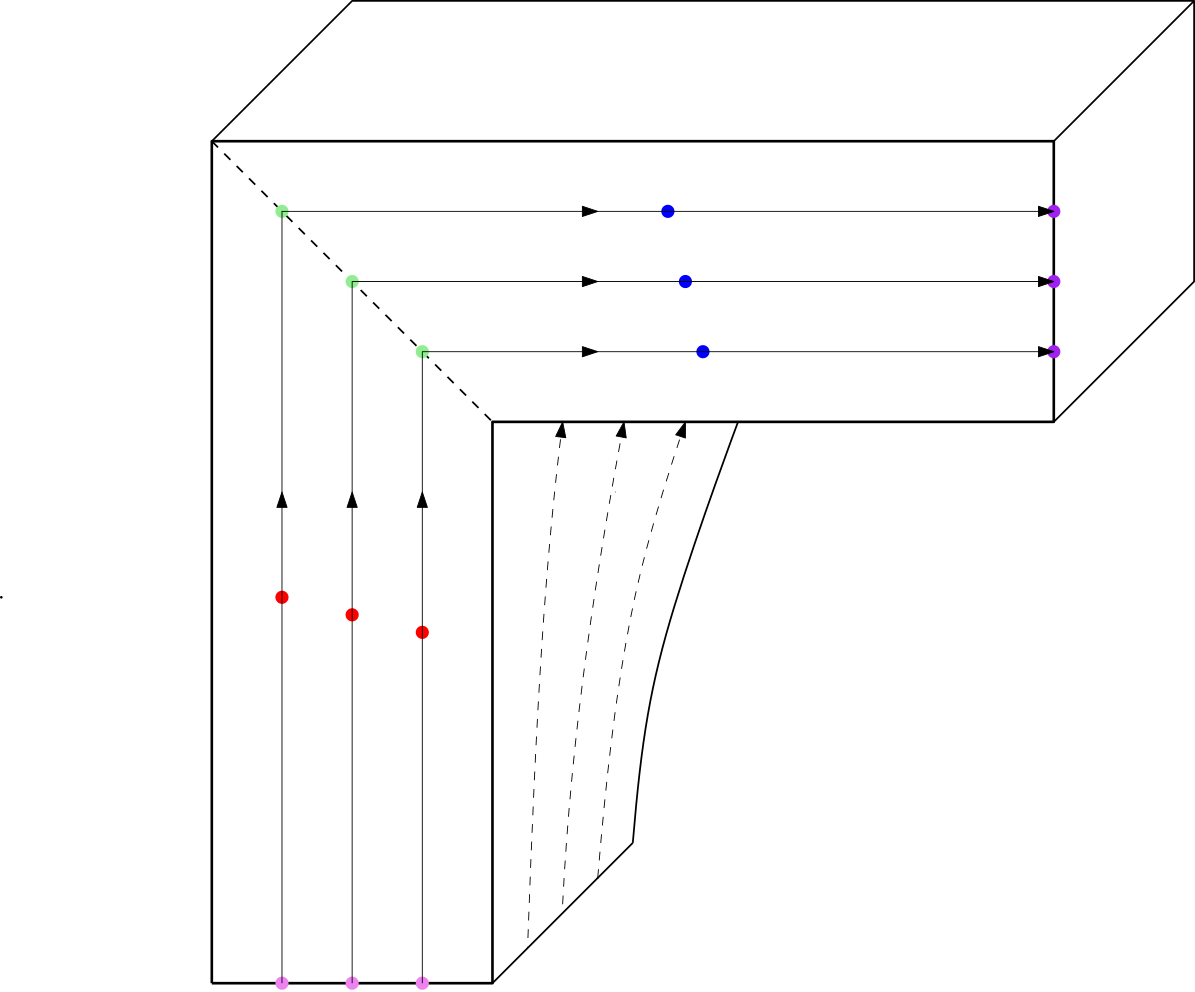}
      \caption{Corner Flow II: Instead of the flow in Figure \ref{corner1}, we decelerate the fluid particle on the 'inner lane' in the relevant direction, so that the fluid particles reach their destination at the same time. The price we have to pay to keep incompressibility is an acceleration into the third coordinate direction (see the dashed trajectories).
      }\label{corner2}
    \end{minipage}
\end{figure}

\begin{lemma} [Banked corner flow] \label{lemma:corner}
   Suppose that $A=(0,a_1) \times (0,a_2) \times \{0\}$ and that $B=\{L\} \times (0,a_2) \times (L-a_1,L)$,  $L> a_1$. Suppose that $(A,\nu_A,u_A)$ and $(B,\nu_B,u_B)$ obey the properties of Definition \ref{def:sourcesink}, i.e. $\nu_A=\un_3$, $\nu_B=\un_1$ and $u_A = \mu \un_3$, $u_B = \mu \un_1$ for some $\mu>0$. Suppose further that
   \[
   \Psi \colon A \to B, \quad \Psi(x_1,x_2,0) = (L,x_2,L-x_1).
   \]
   Then there is $u \colon [0,t_{A,B}] \times \R^3 \to \R^3$ constant-in-time inducing a source sink flow $\psi$ from $A$ to $B$ with the following properties:
   \begin{enumerate} [label=(\roman*)]
       \item \label{corner:1} $\spt(u) \subset (0,a_1) \times (0,\e^{L/(L-a_1)} a_2) \times (0,L) \cup (0,L) \times (0,\e^{L/(L-a_1)} a_2) \times (L-a_1,L)$;
       \item \label{corner:2} $u$ is piecewise smooth and $\Vert u \Vert_{L^{\infty}} \leq \mu (1 +\tfrac{a_2}{L} e^{(2L)/(L-a_1)})$;
        \item \label{corner:3} The flow $\psi$ satisfies
            \[
            \psi(t_{A,B},x) = \Psi(x)
            \]
        and 
        \begin{equation*}
            t_{A,B} = 2 \frac{L}{\mu}.
           \end{equation*}
   \end{enumerate}
\end{lemma}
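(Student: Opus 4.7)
The plan is to construct $u$ explicitly as a stationary, divergence-free vector field supported on an L-shaped tube with a widened corner, following the same philosophy as Lemmas \ref{lem:parallel} and \ref{lemma:pipewidth}. The tube has three pieces: a vertical leg (roughly $(0,a_1)\times(0,a_2)\times(0,L-a_1)$), a corner region (contained in $(0,a_1)\times(0,\e^{L/(L-a_1)}a_2)\times(L-a_1,L)$), and a horizontal leg (roughly $(a_1,L)\times(0,a_2)\times(L-a_1,L)$). In the vertical leg I set $u=\mu\un_3$ and in the horizontal leg $u=\mu\un_1$; both are trivially divergence-free, match the source/sink boundary data of Definition \ref{def:sourcesink}, and transport particles in time $(L-a_1)/\mu$. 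Hence the corner must be traversed in the remaining time $2a_1/\mu$.

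The corner is the interesting part. Without modification, particles would arrive at different times because the L-shaped path of a particle starting at $x_1$ has length $2(L-x_1)$; in particular the inner particle at $x_1=a_1$ would be done immediately. To fix this I plan to slow the inner particles by allowing the pipe to widen in the $x_2$ direction inside the corner. By incompressibility, if the cross-section grows by a factor, then the flow speed in the principal direction shrinks by the same factor. Concretely, I will define $u$ in the corner in analogy with the stream-function construction of Lemma \ref{lemma:pipewidth}, but with an $x_2$-width profile $h(x_1,x_3)$ chosen so that: (a) $h=a_2$ on the two interface faces $\{x_3=L-a_1\}$ and $\{x_1=a_1\}$ so the corner flow matches the straight legs; (b) the flux through each cross-section is constant, forcing $h\cdot v=\text{const}$; and (c) a particle entering the corner at $x_1$ exits at $(a_1,\cdot,L-x_1)$ after corner-time $2a_1/\mu$. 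Condition (c) pins down an ODE for $h$ along each streamline whose solution yields the claimed $\e^{L/(L-a_1)}$ growth for the innermost streamline.

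With the construction in place, verifying the four properties is routine: the support bound \ref{corner:1} is by design; the $L^{\infty}$ bound \ref{corner:2} follows from $\|u\|_\infty \leq \mu\cdot \max h/\min h$ plus the transversal $x_2$-compensation contributing the $\e^{2L/(L-a_1)}a_2/L$ factor; and \ref{corner:3} holds because the outermost particle travels the full L-path at constant speed $\mu$ (so $t_{A,B}=2L/\mu$) while the ODE design enforces $\psi(t_{A,B},\cdot)=\Psi$. The main obstacle is orchestrating $h$ so that conditions (a)--(c) hold simultaneously while keeping $u$ divergence-free in the corner; I expect this requires a careful taper of $h$ back to $a_2$ near each interface face so that the merged piecewise vector field lies in $L^{\infty}\cap BV$ and its distributional divergence picks up only the source/sink measures from Definition \ref{def:sourcesink}, with no spurious jumps across the corner boundaries.
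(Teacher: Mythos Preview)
Your decomposition into two straight legs of length $L-a_1$ plus a corner of side $a_1$ has a genuine obstacle. Once you set $u=\mu\un_3$ on the vertical leg and $u=\mu\un_1$ on the horizontal leg, every particle spends exactly $(L-a_1)/\mu$ in each leg, so every particle must cross the corner in the same time $2a_1/\mu$. But the particle entering the corner at $(x_1,x_2,L-a_1)$ must exit at $(a_1,x_2,L-x_1)$, and as $x_1\uparrow a_1$ these two points coincide: the $(x_1,x_3)$-distance it has to cover tends to $0$ while the allotted corner time stays fixed at $2a_1/\mu>0$. Running the $x_2$-widening ODE in this geometry does \emph{not} yield the factor $\e^{L/(L-a_1)}$ you claim. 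In a corner of side $a_1$ the paper-type field $(0,F(x_1)x_2,\mu-F(x_1)x_3)$ reaches the diagonal after time $a_1/\mu$ only if $(1-e^{-g})/g=1-x_1/a_1$ with $g=F(x_1)a_1/\mu$, and this forces $g\to\infty$ (hence width factor $e^{g}\to\infty$) as $x_1\to a_1$. So neither the support bound \ref{corner:1} nor the $L^\infty$ bound \ref{corner:2} can hold with your decomposition.

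The paper avoids this by taking \emph{no} straight legs: it splits the whole L-tube along the diagonal $\{x_1+x_3=L\}$ into two symmetric halves. On the first half it uses $u=(0,F(x_1)x_2,\mu-F(x_1)x_3)$, with $F$ pinned down via the Lambert-$W$ function so that every particle (keeping $x_1$ fixed) reaches $x_3=L-x_1$ at time exactly $L/\mu$; the second half is the reflection across the diagonal. The point is that even the innermost particle at $x_1=a_1$ now has to cover a positive $x_3$-distance $L-a_1$, so $F(x_1)\leq \mu/(L-x_1)\leq \mu/(L-a_1)$ and the $x_2$-widening $G(x_1)=\exp(F(x_1)L/\mu)$ is bounded by $\e^{L/(L-a_1)}$. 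The stated bounds thus rely on spreading the deceleration over the full length $L$, not just over a corner of size $a_1$.
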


\begin{proof}
    We construct the flow in two steps. First of all, consider the bijective map
    \[
    \Psi^1 \colon A \to C, \quad \Psi(x_1,x_2,0)=(x_1,G(x_1)x_2,L-x_1),
    \]
    for some $G(x_1)$ defined below, where $C= \{x_1+x_3 = L,~0<x_1<a_1,~0<x_2<a_2G(x_1)\}$.
    Set $\Psi^2 \colon C \to B$ as $\Psi^2 = \Psi \circ (\Psi^1)^{-1}$. 
    We first construct a flow from $A$ to $C$ and, after that, we may define the flow from $C$ to $B$ via a reflection along $\{x_1+x_3=L\}$.

    To this end, first define $\tilde{u}$ as follows
    \[
    \tilde{u}(x_1,x_2,x_3)=  \left(\begin{array}{c}
         0 \\
         F(x_1) x_2 \\
         \mu - F(x_1) x_3
    \end{array} \right) 
    \]
    where $F(\cdot)$ is defined via setting $F(0) =0$ and
    \[
        F(z) = \tfrac{\mu}{L} \cdot \left( W\left[\exp\left(\tfrac{L}{z-L}\right) \tfrac{L}{z-L}\right] + \tfrac{L}{L-z} \right).
    \]
    Here, for an argument $y \in (-\e^{-1},0)$, $W(y)$ denotes the Lambert-$W$-function, i.e. the solution to
    \[
    w \e^w =y
    \]
    that satisfies $0>w >-1$.

    Thus, $F$ is smooth and positive for $z \in (0,a_1)$ and $\diverg \tilde{u}=0$ pointwise almost everywhere. Further observe that $F(z) \leq \mu/(L-z)$. On the other hand, observe that if $x_1+x_3 \leq L$  then $F(x_1) x_3 < (L-x_1) \tfrac{\mu}{L} \tfrac{L}{L-x_1} \leq \mu$, i.e.
    \[
    \tilde{u}_3 (0) > 0.
    \]
    Now consider, for $z = (x_1,x_2,0)^T$, the ordinary differential equation
    \begin{equation} \label{eq:ODE2}
    \begin{cases}\partial_t \Psi(t,z) = \tilde u(\psi(t,z)) & t \geq 0,\\
    \psi(0,z)= z. & 
    \end{cases}    
    \end{equation}
    The solution to this equation is as follows
    \[
    \psi(t,x_1,x_2) = \left(\begin{array}{c}
         x_1 \\
         x_2 \exp{(F(x_1)t)}\\
         \mu (F(x_1))^{-1} (1- \exp{(-F(x_1)t)})
    \end{array} \right).
    \]
       Plugging in $t^{\ast}= L \mu^{-1}$ gives 
    \[   
    \psi(t^{\ast},x_1,x_2) = \left(\begin{array}{c}
         x_1 \\
         x_2 \exp{(F(x_1)t^{\ast})}\\
         L-x_1
    \end{array} \right),
    \]
    and setting $G(x_1) = \exp{(F(x_1)t^{\ast})}$ gives a flow from $A$ to $C$.
    Setting 
    \[
    \bar{u}(x_1,x_2,x_3) = \begin{cases}
        \tilde{u}(x_1,x_2,x_3) & x \in \psi([0,t^{\ast}] \times (0,a_1) \times (0,a_2)), \\
        0 & \text{else,}
    \end{cases}
    \]
   gives that  \begin{enumerate} [label=($\bar{u}$\arabic*)]
       \item \label{baru1} $\diverg(\bar{u})= -\mu \un_3 \mathcal{H}^{n-1} \llcorner A + (\un_1+\un_3) h(x) \mathcal{H}^{n-1} \llcorner C$, where $h(x) = \tilde{u}(x) \cdot \tfrac{1}{2} (\un_1+\un_3)$;
       \item following the trajectories we find 
       \[
        \spt(\bar{u}) \subset (0,a_1) \times (0,\e^{L/(L-a_1)} a_2) \times (0,L), 
       \]
       furthermore all trajectories are disjoint and do not intersect with itself as $u_3>0$;
       \item $\bar{u}$ is piecewise smooth and 
       \[\Vert \bar{u} \Vert_{L^{\infty}} \leq \mu + \sup_{(t,x) \in \spt(\bar{u})}(x_2 F(x_1)) \leq \mu (1+ \e^{L/(L-a)}/(L-a));
       \]
       \item \label{baru4}  The solution $\psi$ to the differential equation \eqref{eq:ODE2} obeys $\psi(L\mu^{-1}) = \Psi^1 \colon A \to C$.
   \end{enumerate}
    Now define the reflected
    \[
    \bar{u}^2(x_1,x_2,x_3) = \left( \begin{array}{r}
        \bar{u}_3(L-x_3,x_2,L-x_1) \\
        -\bar{u}_2(L-x_3,x_2,L-x_1)
        \\
        \bar{u}_1(L-x_3,x_2,L-x_1)
    \end{array} \right)
    \]
    and define the vector field
    \begin{equation}
    u = \bar{u} + \bar{u}^2.
    \end{equation}
    One may show similar properties to \ref{baru1}-\ref{baru4} for $\bar{u}^2$ (it defines a vector field inducing a flow from $C$ to $B$) and, combining those properties in the sum of the vector fields $u$ we find that
    \begin{enumerate} [label=($u$\arabic*)]
        \item $\diverg(u)= -\mu \un_3 \mathcal{H}^{n-1} \llcorner A +  \mu \un_1 \mathcal{H}^{n-1} \llcorner B$;
        \item we find \ref{corner:1} if we combine the supports of $\bar{u}$ and $\bar{u}^2$. Moreover, the trajectories are disjoint and do not intersect with itself.
        \item $u$ is piecewise smooth and 
        \[
        \Vert u \Vert_{L^{\infty}} \leq  \mu (1+ \e^{L/(L-a)}/(L-a));
        \]
        \item $u$ induces a source-sink flow from $A$ to $B$ with $\psi(2L/\mu) = \Psi$.
    \end{enumerate}
  \end{proof} 

\subsection{Overview over the different parts of the construction}

We give a short overview of what comes in the next subsections (cf. Figure \ref{fig:network:1} and also Figure \ref{figure:levels}) before constructing the flow.
\begin{itemize}
    \item In this section, we construct the flow \emph{inside} a small cube $\Qcal_v$ (the \emph{snake flow}) and in Section \ref{sec:gate} we directly connect this flow to predefined \emph{gates} (the \emph{gate flow});
    \item in Section \ref{sec:reservoir} we construct a flow that keeps track of the time constraint from the discretised problem (the \emph{reservoir flow});
    \item in Sections \ref{sec:highway} and \ref{sec:interchange} we construct the flows that correspond to the discretised problem in the network (the \emph{highway flow} and the \emph{interchange flow}, respectively);
    \item finally, we connect the cube to the network in Sections \ref{sec:entrexit} (the (highway) \emph{entrance} and \emph{exit} flows).
\end{itemize}

 \begin{figure}
      \includegraphics[width=.75\linewidth]{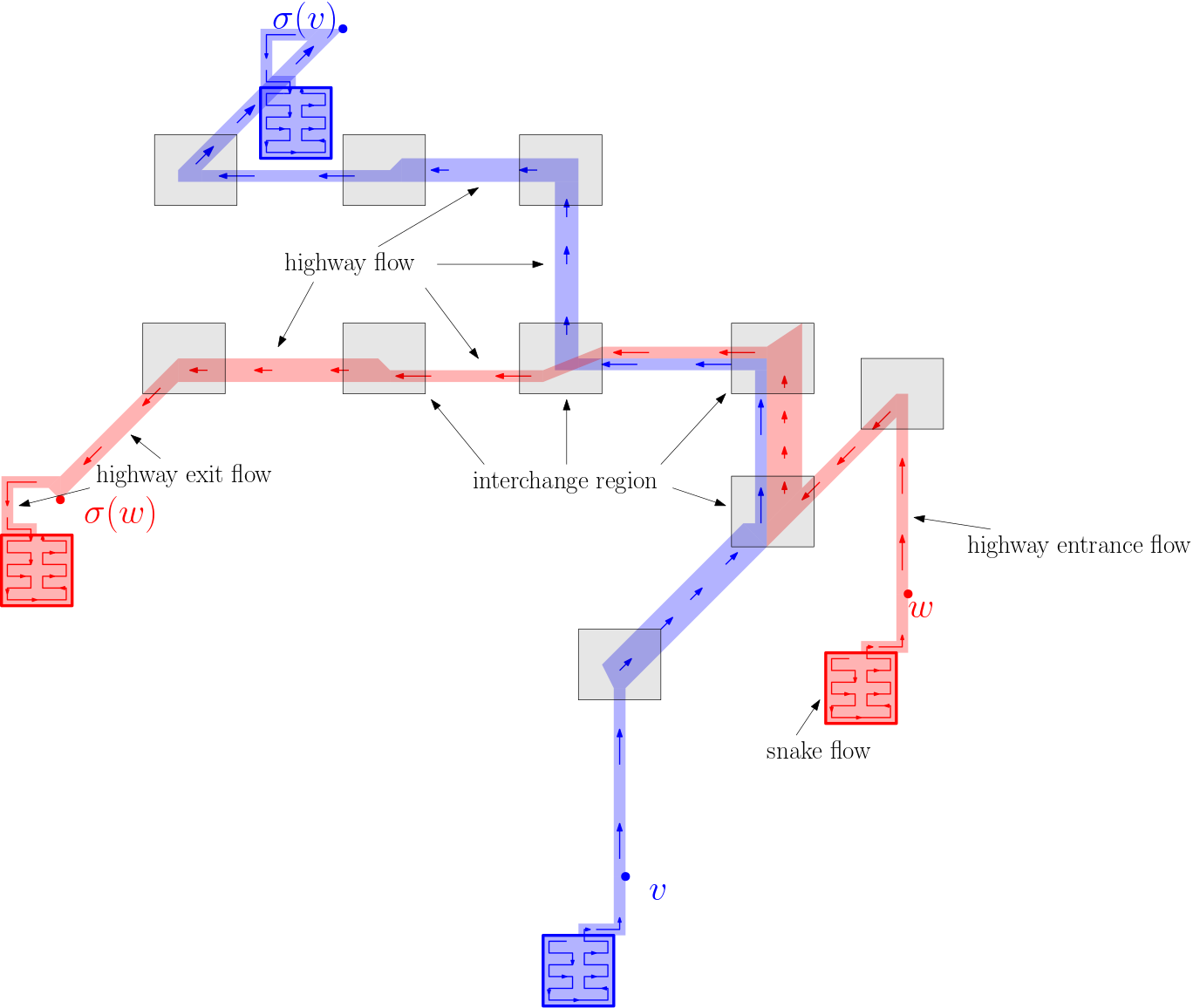}
      \caption{Overview over the different construction steps I: One may see the flow that arises through the solution of the discrete problem. We refer to Figure \ref{figure:levels} for a clearer picture of the construction of the flow locally around $\Qcal_v$, i.e. the snake flow, the gate flow, the reservoir flow and the entrance/exit flow.}
      \label{fig:network:1}
\end{figure}

\FloatBarrier
\subsection{Snake Flow} \label{sec:snake}
\FloatBarrier
 \begin{figure}[!htbp]
     \centering \includegraphics[width=0.7 \textwidth]{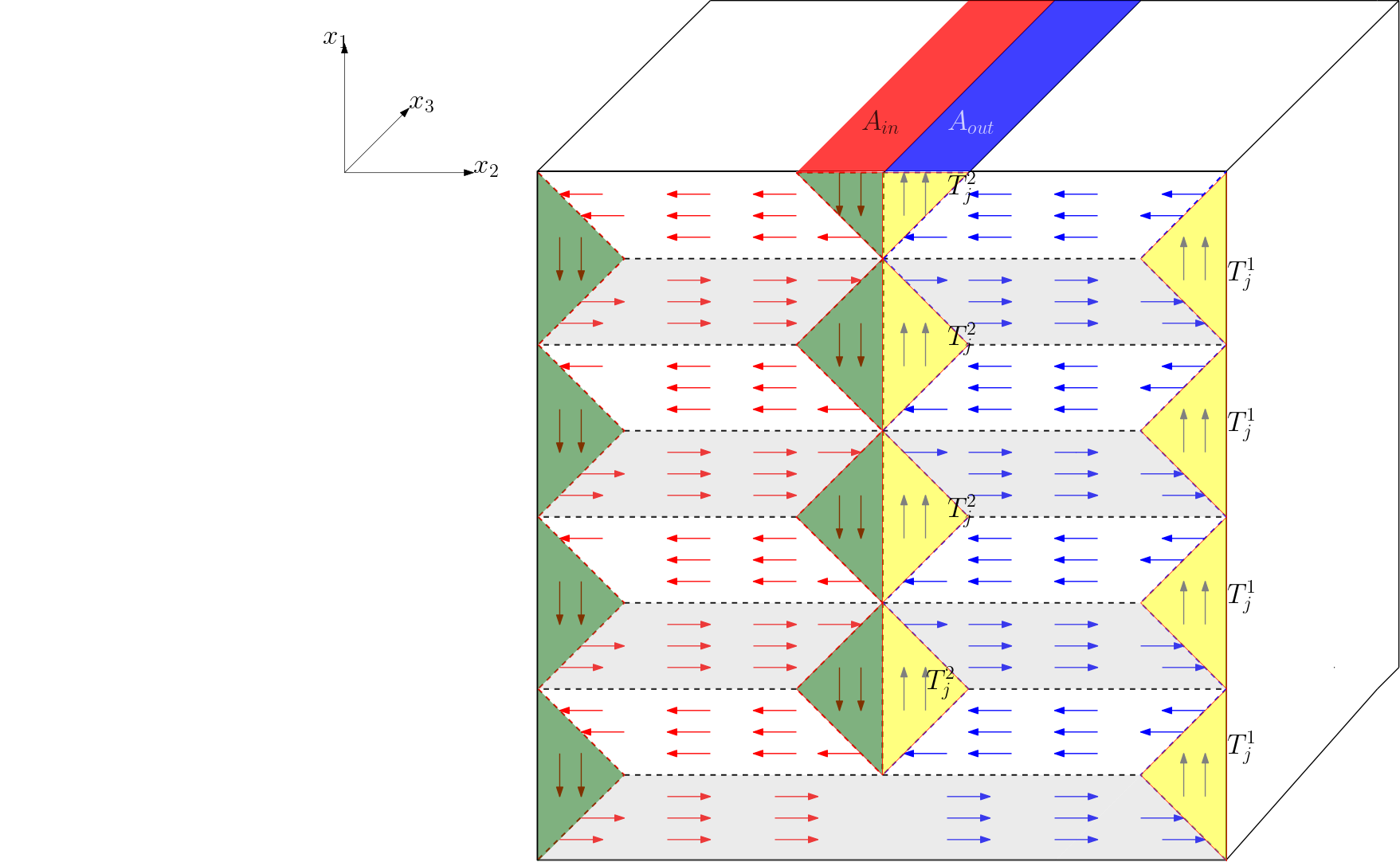}
     \caption{ Visualisation of the snake flow inside a cube $\Qcal_v$. The absolute value of the velocity is constant, only the direction changes. The yellow region is $B_1$, the green $B_2$, the gray $B_3$ and the white one is $B_4$.} \label{figure:snake}
    \end{figure}
\FloatBarrier
We first construct the vector field $u$ inside a small cube that is moved by the permutation $\sigma$. If $\sigma(v)=v$, then we keep the small cube at rest. Upon translation, we may assume that we are given the small cube
\[
\Qcal_v = [0, \ell]^3
\]
and, for simplicity write $\kappa^{-1} \ell = \lambda$.

\begin{lemma}[Snake flow] \label{lemma:snake}
Let $\Qcal= [0,\ell]^3$ and let
$$A_{in} = \{ \ell \} \times ((\ell-\lambda)/2, \ell/2) \times (0,\ell) \quad A_{out} =  \{ \ell \} \times (\ell/2,(\ell+\lambda)/2) \times (0,\ell). $$
Let ${\mu_0} \in \R_+$ be given. Then there exists a constant-in-time vector field $u =u_{\mathrm{Sn}} \in BV(\R^3)$ with the following properties:
\begin{enumerate} [label=(Sn\arabic*)]
    \item \label{sn:1} $\spt(u) = \Qcal$;
    \item \label{sn:2} $\vert u \vert = {\mu_0}$ almost everywhere in $\Qcal$;
    \item \label{sn:3} $\diverg ( u) = -{\mu_0} \un_3 \mathcal{H}^{n-1} \llcorner A_{in} + {\mu_0} \un_3 \mathcal{H}^{n-1} \llcorner A_{out}$ in the sense of distributions;
    \item \label{sn:4} The flow $\psi$ induced by $u$
        \[
        \begin{cases}
            \partial_t \psi(t,z) = u(\psi(t,z)) & t \geq 0{,} \\
            \psi(0,z) = z & z \in A_{in}{,}
        \end{cases}
        \]
    satisfies
    \begin{enumerate} [label=(\roman*)]
        \item For $z=(z_1,z_2,z_3) \in A_{in}$
        \[
        t_z := \tfrac{1}{{\mu_0}} \cdot ( 2 \ell \kappa - \lambda + 4(\ell/2-z_2)).
        \]
   
    Then $\Psi_z \colon z \mapsto \psi(t_z,z)$ is a homeomorphism from $A_{in}$ to $A_{out}$ and, furthermore, 
    \[
        \Psi_z(z_1,z_2,z_3) = (z_1, \ell -z_2, z_3).
    \]
    \item The union of trajectories
    \[
    T_\psi = \{ \psi(t,z) \colon z \in A_{in}, 0 \leq t \leq t_z \}
    \]
    obeys $\mathcal{L}^{n}(\Qcal \setminus T_\psi) =0$.
     \end{enumerate}    
    \end{enumerate}
\end{lemma}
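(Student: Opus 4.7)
I would reduce the problem to a two-dimensional construction and extend trivially in the third direction. Let $y_3$ be the coordinate in which $A_{in}$ and $A_{out}$ extend over $(0,\ell)$, and $y_1, y_2$ the remaining two coordinates (with $A_{in}$ a point in $y_1$ and of width $\lambda/2$ in $y_2$). Setting $u(y)=(v(y_1,y_2),0)$ for a 2D vector field $v\colon[0,\ell]^2\to\R^2$ makes $y_3$ invariant along trajectories and reduces all four properties (Sn1)-(Sn4) to their 2D analogues for $v$ flowing from $a_{in}=\{\ell\}\times((\ell-\lambda)/2,\ell/2)$ to $a_{out}=\{\ell\}\times(\ell/2,(\ell+\lambda)/2)$, with the endpoint map becoming the 2D reflection $(y_1,y_2)\mapsto(y_1,\ell-y_2)$ trivially extended.

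For $v$ I would use a piecewise-constant construction taking values in the four cardinal directions $\pm\mu_0\un_{y_1}$, $\pm\mu_0\un_{y_2}$, corresponding to the four regions $B_1,\ldots,B_4$ of the figure. The bulk of $[0,\ell]^2$ is tiled by $2\kappa$ horizontal channels of width $\lambda/2$ in $y_2$ and length $\ell$ in $y_1$, alternately carrying flow $\pm\mu_0\un_{y_1}$. The inlet $a_{in}$ and outlet $a_{out}$ sit at the right ends of the two adjacent central channels (indices $\kappa-1$ and $\kappa$). The channels are linked by U-turn regions at their left and right ends, each a small assembly of triangular and rectangular pieces carrying flow $\pm\mu_0\un_{y_2}$; together with a single vertical ``crossover'' on the far face $y_1=0$ joining the lower block $\{0,\ldots,\kappa-1\}$ to the upper block $\{\kappa,\ldots,2\kappa-1\}$, they produce a continuous path from $a_{in}$ to $a_{out}$ that fills $[0,\ell]^2$, with mirror symmetry about $y_2=\ell/2$.

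The technical heart is divergence-freeness (Sn3). For a piecewise-constant field, $\diverg v$ is a sum of surface measures on the interfaces weighted by the jump of the normal velocity, so I need matching normal velocity on every internal interface. I would only admit two interface types: cardinal interfaces between two parallel-flow regions (both sides have zero normal component automatically), and $45^\circ$ interfaces between a horizontal-flow and a vertical-flow region, with slope chosen so that the normal components $\pm\mu_0/\sqrt{2}$ on both sides carry matching signs. Every corner in the snake, including each U-turn and the crossover, is accordingly built as two or three triangular pieces separated by $45^\circ$ diagonals with signs arranged consistently. The bulk divergence then vanishes inside $[0,\ell]^2$ and only the prescribed boundary contributions on $a_{in}$ and $a_{out}$ survive, giving (Sn3). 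Pulled back through the trivial $y_3$-extension, this yields the stated formula for $\diverg u$.

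For (Sn4), piecewise-constancy with $|v|=\mu_0$ makes each trajectory piecewise-linear at speed $\mu_0$, so $t_z$ is arc length divided by $\mu_0$: the leading term $2\kappa\ell$ counts the $2\kappa$ full channel traversals of length $\ell$, the correction $-\lambda$ reflects that the initial and final channels are traversed only partially (we enter and leave on the right face rather than the left), and $4(\ell/2-z_2)$ encodes the entry and exit offsets within the initial and final channels, which by mirror symmetry contribute symmetrically at the two ends. The reflection $y_2\mapsto\ell-y_2$ is realised by the mirror-symmetric arrangement of the channels about the midline, and (Sn4ii) follows because the channels, U-turns, and crossover tile $[0,\ell]^2$ up to their one-dimensional interfaces. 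The main obstacle I anticipate is engineering the explicit topology so that (i) every interface is of the allowed cardinal or $45^\circ$ type with matching signs, (ii) the path visits all $2\kappa$ channels despite the underlying channel-adjacency graph being a path graph in which a Hamiltonian path cannot start at $\kappa-1$ and end at $\kappa$ using adjacent-channel U-turns alone, and (iii) the crossover on $y_1=0$ is arc-length balanced so that the stated formula for $t_z$ is reproduced exactly rather than acquiring an extra $O(\ell)$ term; a mild parity choice for $\kappa$ and a symmetric placement of the crossover are what make the bookkeeping close.
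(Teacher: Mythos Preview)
Your proposal is correct and follows essentially the same approach as the paper: the paper's proof also reduces to two dimensions (the field depends only on $(x_1,x_2)$), defines $u$ as a piecewise-constant field taking the four values $\pm\mu_0\un_1,\pm\mu_0\un_2$ on an explicit partition $B_1,B_2,B_3,B_4$ of $[0,\ell]^2$, with the turn regions bounded by $45^\circ$ lines $x_1\pm x_2=\mathrm{const}$ so that normal components match across every internal interface, and then simply refers to the figure for (Sn4). The one point worth noting is that the paper orients the channels in the $x_2$-direction with U-turns both at the outer walls \emph{and} at the midline $x_2=\ell/2$, and builds in the reflection $x_2\mapsto\ell-x_2$ exactly (it sets $B_2$ to be the mirror image of $B_1$); with this arrangement the inlet and outlet sit at the two natural endpoints of the snake path, so your Hamiltonian-path obstruction and the ad hoc crossover do not arise, and the exact $t_z$ formula drops out of the symmetry without the residual $O(\ell)$ bookkeeping you anticipate.
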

\begin{remark}
    We choose later ${\mu_0}$ in such a way that $t_z$ is comparable to $1$, i.e. ${\mu_0} \sim \ell \kappa \sim N^{-1}$, i.e. the flow through the cube is quite slow.
    Moreover, observe that $u$ has $L^{\infty}$-norm ${\mu_0}$.
    The flow is chosen in the present \emph{snake} pattern to achieve the following goals: First of all, all of the fluid present in the cube at the beginning shall leave the cube at some time. Second, we have 'pipes' also through the cube $\Q_v$ with thickness $\lambda/2$, which is a constant we need to carefully choose during the construction. Last, the flow needs to be setup with $A_{in}$ and $A_{out}$ as in the present configuration to allow for a nice gate and reversed gate flow (cf. Sections \ref{sec:gate} and \ref{sec:phase2}).
\end{remark}

\begin{proof}
We define $u$ as follows:
\begin{equation} \label{def:usnake}
u(x) = \begin{cases}{\mu_0} \un_1 & x \in B_1, \\
-{\mu_0} \un_1 & x \in B_2, \\
{\mu_0} \un_2 & x \in B_3, \\
-{\mu_0} \un_2 & x \in B_4 ,
\end{cases}
\end{equation}
where the sets $B_1$,...,$B_4$ are given by 
\begin{align*}
   & B_1 = \{x_2 > \ell/2\} \cap \bigcup_{j=0}^{\kappa-1} T_j^1 \cap \bigcup_{j=1}^{\kappa-1} T_j^2; \\
   &T_j^1:= \{x_2 < \ell\} \cap  \{x_1+x_2 > \ell + j \lambda\} \cap \{x_2-x_1 > \ell-(j+1)\lambda\} \cap \{0 < x_3 < \ell\}; \\
   &T_j^2 :=  \{x_2 > \ell/2\} \cap  \{x_1+x_2 < \ell/2 + \lambda/2 + j \lambda\} \cap \{x_2-x_1 < \ell/2 - \lambda/2 -(j-1)\lambda\} \\
   & \hspace{1cm} \cap \{0 < x_3 < \ell \}; \\
   &B_2 := \{(x_1,x_2,x_3) \in Q \colon (x_1,\ell-x_2,x_3) \in B_1 \}; \\
   &B_3 := \bigcup_{j=0}^{\kappa-1} \{j \lambda < x_1 < (j+1/2)\lambda\} \setminus (B_1 \cup B_2); \\
   &B_4:= Q \setminus \{ B_1 \cup B_2 \cup B_3\}.
\end{align*}
Properties \ref{sn:1} \& \ref{sn:2} follow immediately by definition. For \ref{sn:3} one may check that all of the jumps (except for those at $A_{in}$ and $A_{out}$) are divergence-free. For \ref{sn:4} one may explicitly calculate the trajectories and their lengths (cf. Figure \ref{figure:snake}).
\end{proof}
\subsection{Gate Flow } \label{sec:gate}
For a visualisation of the following flow we refer to Figure \ref{figure:gate}. Define the following rectangles, that we also shall call \emph{gates}
\begin{align}
    G_{in} = (\ell,\ell+\lambda/2) \times \{ \ell/2-\lambda/2 \} \times (0,\ell),\quad G_{out} = (\ell,\ell+\lambda/2) \times \{ \ell/2+\lambda/2 \} \times (0,\ell). 
\end{align}
We define a velocity field $u_G$ in $\conv(G_{in},G_{out})$ by defining it piecewisely in different regions.
\begin{align*}
    u_G(x)= \begin{cases}
        {\mu_0} \un_1 & x \in B_5= \{x_1 > \ell, x_2 > \ell/2, x_1+x_2 < 3\ell/2+\lambda/2 \} \cap \{0 < x_3 < \ell \}, \\
        -{\mu_0} \un_1 & x \in B_6 = \{x_1 > \ell, x_2 < \ell/2, x_1-x_2 < \ell/2 +\lambda/2 \} \cap \{0 < x_3 < \ell \},  \\
        {\mu_0} \un_2 & x \in \conv(G_{in},G_{out}) \setminus (B_5 \cup B_6)
    \end{cases}
    \end{align*}
Together with the Snake flow we can show the following result:
\begin{coro} \label{lucatoni}
Let $G_{in}$ and $G_{out}$ be as above and define $u$ to be $u_{\mathrm{Sn}}$ (cf. Lemma \ref{lemma:snake}) inside $Q$ and the gate flow $u_G$ inside $\conv(G_{in},G_{out})$. Then $u$ is a source-sink-flow from $G_{in}$ to $G_{out}$ with velocity ${\mu_0}$ and
\[
t_{G_{in},G_{out}} = \tfrac{1}{{\mu_0}} \cdot (2 \ell \kappa + \lambda).
\]
\end{coro}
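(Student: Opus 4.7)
\medskip

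\noindent\emph{Proof proposal for Corollary \ref{lucatoni}.}

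The strategy is to verify the three defining properties of a source-sink flow from $G_{in}$ to $G_{out}$ for the combined field $u$ obtained by gluing $u_{\mathrm{Sn}}$ on $\Qcal$ with $u_G$ on $\conv(G_{in},G_{out})$. First I will address the distributional divergence. Inside the open sets $\Qcal$ and $\conv(G_{in},G_{out})$, both $u_{\mathrm{Sn}}$ and $u_G$ are piecewise constant, and a direct inspection of the jump across each interior interface (between the regions $B_1,\ldots,B_4$ for the snake and between $B_5,B_6$ and the ``rest'' region for the gate) shows that the jump $\llbracket u\rrbracket$ is tangential to the interface, so the distributional divergence vanishes there. On the two common interfaces $A_{in}$ and $A_{out}$, the snake field has normal component $\pm\mu_0$ (out of $Q$) and the gate field matches this value with the same sign on the other side, because the region of $u_G$ adjacent to $A_{in}$ is $B_6$ (velocity $-\mu_0\un_1$, pointing into $Q$) and the region adjacent to $A_{out}$ is $B_5$ (velocity $\mu_0\un_1$, pointing out of $Q$). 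Hence the $\mathcal H^{n-1}$-singular parts of $\diverg u_{\mathrm{Sn}}$ and $\diverg u_G$ on $A_{in}\cup A_{out}$ cancel exactly, and the only surviving contributions are the deltas at $G_{in}$ (source) and $G_{out}$ (sink), each with the prescribed magnitude $\mu_0$. By construction $|u|\equiv\mu_0$ on $\spt(u)$.

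Next I will trace the trajectory of a particle starting at $(\ell+\alpha,\,\ell/2-\lambda/2,\,z_3)\in G_{in}$, with $\alpha\in(0,\lambda/2)$. Using the explicit partition of $\conv(G_{in},G_{out})$: the particle first sits in the ``rest'' region, where the velocity is $\mu_0\un_2$, and it moves up a distance $\alpha$ before crossing into $B_6$ at $x_2=\ell/2-\lambda/2+\alpha$; in $B_6$ the velocity is $-\mu_0\un_1$, so it moves left a distance $\alpha$ and arrives at $(\ell,\,\ell/2-\lambda/2+\alpha,\,z_3)\in A_{in}$. The snake flow of Lemma \ref{lemma:snake} then carries it from this point to $(\ell,\,\ell/2+\lambda/2-\alpha,\,z_3)\in A_{out}$ in time $t_z=\tfrac{1}{\mu_0}(2\ell\kappa-\lambda+4(\lambda/2-\alpha))=\tfrac{1}{\mu_0}(2\ell\kappa+\lambda-4\alpha)$. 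Upon re-entering $\conv(G_{in},G_{out})$ the particle is in $B_5$ (velocity $\mu_0\un_1$), travels right a distance $\alpha$ to the boundary $x_1+x_2=3\ell/2+\lambda/2$, and then enters the upper ``rest'' region (velocity $\mu_0\un_2$), moving up another distance $\alpha$ before exiting through $G_{out}$ at $(\ell+\alpha,\,\ell/2+\lambda/2,\,z_3)$.

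Summing the three contributions gives the total travel time
\[
t_{G_{in},G_{out}}=\frac{2\alpha}{\mu_0}+\frac{1}{\mu_0}(2\ell\kappa+\lambda-4\alpha)+\frac{2\alpha}{\mu_0}=\frac{1}{\mu_0}(2\ell\kappa+\lambda),
\]
which is \emph{independent} of $\alpha$ and $z_3$. The resulting map $\Psi\colon G_{in}\to G_{out}$ is the translation $(x_1,x_2,x_3)\mapsto(x_1,x_2+\lambda,x_3)$, which is affine, verifying Definition \ref{def:sourcesink}. The crucial cancellation, which I expect to be the main point requiring care, is that the $\alpha$-dependence of the snake traversal time is exactly compensated by the $\alpha$-dependence of the two gate traversals; this is why $A_{in},A_{out},G_{in},G_{out}$ were chosen with matching widths $\lambda$ and why the snake pattern was set up so that $t_z$ is affine in $z_2$. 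Finally, the support identity $\spt(u)=\psi([0,t_{G_{in},G_{out}}]\times G_{in})$ follows from item \ref{sn:4}(ii) of Lemma \ref{lemma:snake} (the snake fills $\Qcal$ up to a null set) together with the fact that the trajectories just described sweep out all of $\conv(G_{in},G_{out})$ as $\alpha$ and $z_3$ vary, so the third clause of Definition \ref{def:sourcesink} is satisfied as well.
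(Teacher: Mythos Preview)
The paper does not give a proof of this corollary; it is stated as an immediate consequence of the snake construction and the explicit piecewise definition of $u_G$. Your argument supplies exactly the details the paper omits, and it is correct: the trajectory trace is accurate (starting in the rest region, bending through $B_6$ into $A_{in}$, traversing the snake, exiting through $B_5$, and finishing in the rest region), the snake time $t_z=\tfrac{1}{\mu_0}(2\ell\kappa+\lambda-4\alpha)$ is computed correctly from $z_2=\ell/2-\lambda/2+\alpha$, and the four gate legs of length $\alpha$ each give the required cancellation. The divergence check at the internal interfaces and the matching at $A_{in},A_{out}$ are also right (the paper's \ref{sn:3} has a typographical $\un_3$ where $\un_1$ is meant, which you implicitly corrected). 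So your proof is correct and is precisely the intended verification.
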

We now choose ${\mu_0}$ such that $t_{G_{in},G_{out}} = \tfrac{1}{4}$, i.e.
\begin{equation} \label{choice:u0}
    {\mu_0} = (8 \ell \kappa + 4 \lambda) \in (8 \ell \kappa, 9 \ell \kappa).
\end{equation}
\FloatBarrier
\begin{figure}[!htbp]
     \centering \includegraphics[width=0.7 \textwidth]{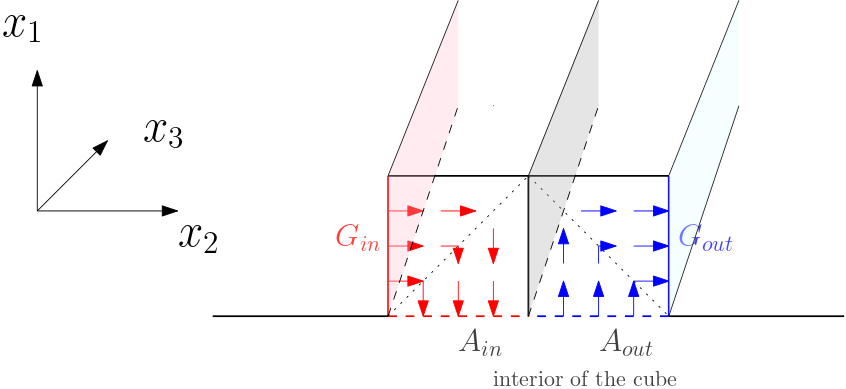}
     \caption{Flow in and out of the cube through the 'gates'.}
     \label{figure:gate}
    \end{figure}

We shortly comment on \emph{why} we need to do the construction of the gate flow: In the first part of the construction we move the fluid from cube $\Qcal_v$ to $\Qcal_{\sigma(v)}$. In the second part (cf. Section \ref{sec:phase2}) we take care of the fluid that was initially in our pipe: For this we reverse the flow and 'shut' the gates, so that the flow directly goes from $G_{out}$ to $G_{in}$ and \emph{not} through the cube $\mathcal{Q}_v$. With the present construction (the gates are quite adjacent, cf. Figure \ref{figure:gate}), the reversal of the flow can be achieved quite easily.

\FloatBarrier
\subsection{Reservoir Flow} \label{sec:reservoir}

Before connecting to the highway flow, we need to ensure, that the fluid starting in a cube $\Qcal_v$ ends up into $\Qcal_{\sigma(v)}$ at exactly the right time. The construction on the 'highway' (i.e. the counterpart to the discrete network) gives a lower bound on this time, but it might be different for each cube $v$. To compensate for this fact, we build in a reservoir, that holds the fluid for a certain amount of time, cf. Figure \ref{figure:reservoir}.
\begin{lemma} \label{lemma:reservoir}
For any time $\tau \in [\tfrac{\ell/2-\lambda/2}{{\mu_0}},1/4]$ there is a source-sink-flow from $G_{out}$ to $R_{out} := (\ell,\ell+\lambda/2) \times \{\ell\} \times (0,\ell)$ induced by a constant-in-time vector field $u =u_{\mathrm{Re}} \in BV(\R^3)$ with the following properties:
\begin{enumerate} [label=(Re\arabic*)]
    \item \label{Re:1} $u$ induces a source sink flow from $G_{out}$ to $R_{out}$ with $\nu = \un_2$ and $u_{G_{out}} = u_{R_{out}} = {\mu_0} \un_2$;
    \item \label{Re:2} $\diverg ( u) = -{\mu_0} \un_2 \mathcal{H}^{n-1} \llcorner G_{out} + {\mu_0} \un_2 \mathcal{H}^{n-1} \llcorner R_{out}$ in the sense of distributions;
    \item \label{Re:3} $\spt(u) \subset (\ell,8 \ell) \times (\ell/2+\lambda/2,\lambda) \times (0,\ell)$;
    \item \label{Re:35} $\Vert u \Vert_{L^{\infty}} \leq 42 {\mu_0} \leq 400 \kappa \ell $;
    \item \label{Re:4} The flow $\psi$ induced by $u$
        \[
        \begin{cases}
            \partial_t \psi(t,z) = u(\psi(t,z)) & t \geq 0, \\
            \psi(0,z) = z, & 
        \end{cases}
        \]
    for $z \in G_{out}$ satisfies the following. $\Psi_z \colon z \mapsto \psi(\tau,z)$ is a homeomorphism from $G_{out}$ to $R_{out}$ and, furthermore, 
    \[
        \Psi_z(z_1,\ell+\lambda/2,z_3) = (z_1, \ell, z_3).
    \]

     \end{enumerate}
\end{lemma}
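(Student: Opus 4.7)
My plan is to exploit that the source $G_{out}$ and sink $R_{out}$ differ only in their $x_2$-coordinate, both span the full $x_3$-range $(0,\ell)$, and the required map $\Psi_z$ in \ref{Re:4} preserves $x_1$ and $x_3$. I shall therefore construct an $x_3$-independent vector field of the form $u_{\mathrm{Re}}(x_1,x_2,x_3) = (w_1(x_1,x_2), w_2(x_1,x_2), 0)$, so that the problem reduces to designing a planar source-sink flow inside the rectangle $W := (\ell, 8\ell) \times (\ell/2+\lambda/2, \ell)$ which connects the segment $S := (\ell,\ell+\lambda/2) \times \{\ell/2+\lambda/2\}$ to $T := (\ell,\ell+\lambda/2) \times \{\ell\}$ in time exactly $\tau$ along every streamline and which preserves the $x_1$-coordinate.

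The core of the construction is a serpentine pipe inside $W$ of uniform width $\lambda/2$ and total arc length $L = \mu_0 \tau$. For the minimum admissible $\tau = (\ell/2-\lambda/2)/\mu_0$ the pipe is simply the vertical slab $\conv(S,T)$ on which we place the constant flow $\mu_0 \un_2$. For larger $\tau$ the pipe is extended by stacking horizontal detour rows in the vertical budget $\ell/2 - \lambda/2$: up to $\kappa/2$ rows of height $\lambda/2$, separated by gaps of $\lambda/2$ reserved for the corner regions, each of horizontal length $L_h \in (0, 7\ell)$, alternating between $+\un_1$ and $-\un_1$ so that the pipe eventually exits in the same column $x_1 \in (\ell, \ell+\lambda/2)$ as it entered. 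With an even number of rows the entry and exit columns coincide, guaranteeing every streamline returns to its original $x_1$-coordinate. The achievable total length ranges continuously up to $(\kappa/2) \cdot 7\ell \gg \mu_0/4 \sim 2\ell\kappa$, covering the required interval for $\tau$ by discrete choice of the number of rows combined with continuous tuning of one row length.

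In each straight section the vector field is piecewise constant of magnitude $\mu_0$, in the spirit of \eqref{def:usnake}, so every streamline traverses each straight section in equal time. The essentially new technical point, compared with the snake flow of Lemma \ref{lemma:snake}, is that the source-sink property demands equal transit time for \emph{all} streamlines through the entire pipe, and in particular through each $90^\circ$ corner --- whereas the sharp-corner snake of Lemma \ref{lemma:snake} yields corner times proportional to the distance from the inside vertex. I intend to resolve this by replacing each corner with a two-dimensional analog of the banked corner of Lemma \ref{lemma:corner}, built from the stream function $\phi(x_1,x_2) = c\bra{(x_1-x_1^\ast)^2 + (x_2-x_2^\ast)^2}/2$ in a region around the inside corner $(x_1^\ast,x_2^\ast)$. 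This gives the tangential flow $u = \nabla^\perp \phi$ of magnitude $cr$ circulating about $(x_1^\ast,x_2^\ast)$ with \emph{constant} angular velocity $c$, so every streamline traverses the quarter arc in equal time $\pi/(2c)$. The matching with the adjacent uniform-speed straight sections is handled via short transition regions in the spirit of Lemma \ref{lemma:pipewidth}, whose times can likewise be equalised across streamlines and absorbed by a small adjustment of one continuously varying row length.

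All remaining items are verifications. Item \ref{Re:3} is built in, since every piece of the pipe lies inside $W \times (0,\ell)$. Items \ref{Re:1} and \ref{Re:2} follow from the local source-sink structure of the first and last straight sections. The explicit form of $\Psi_z$ in \ref{Re:4} holds because the pipe returns to the entry column and every section maps inputs to outputs bijectively in equal time. Divergence-freeness is preserved in each piece and across interfaces by matching tangential fluxes. Finally, the $L^\infty$ bound \ref{Re:35} reduces to bounding the peak corner speed $c \cdot r_{\max}$ by a dimensional multiple of $\mu_0$, which holds with substantial margin given the constant $42$. The main obstacle is the precise design of the corner and its matching to the straight sections, so that equal-time property, divergence-freeness and flux matching all hold simultaneously; everything else is bookkeeping analogous to the snake and gate constructions.
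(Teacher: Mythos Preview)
Your approach is genuinely different from the paper's, and it has a real gap at exactly the point you flag as ``the main obstacle.''

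The paper does not build a serpentine pipe at all. It simply concatenates three pipe-width-change flows from Lemma \ref{lemma:pipewidth}: from $G_{out}$ (width $\lambda/2$) to an intermediate rectangle $R_1$ of width $h\in(\lambda/2,7\ell)$, then $R_1\to R_2$ at constant width $h$, then $R_2\to R_{out}$ back to width $\lambda/2$. Because a wider pipe carries the same flux at lower speed, the transit time $t(h)$ increases with $h$; it depends continuously on $h$ (Remark \ref{rem:continuity}), equals $(\ell/2-\lambda/2)/\mu_0$ at $h=\lambda/2$, and exceeds $1/4$ for $h=6\ell$. The intermediate value theorem then produces the required $\tau$. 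No corners are needed, and the equal-transit-time property is inherited directly from Lemma \ref{lemma:pipewidth} at every step. This is also what is drawn in Figure \ref{figure:reservoir}.

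Your serpentine construction, by contrast, hinges on a planar $90^\circ$ corner with equal transit time that matches uniform straight sections on both sides, and this is where the argument breaks. The rotational field $u=\nabla^\perp\phi$ with $\phi=c\,r^2/2$ does give constant angular velocity, but at the junction with a straight section carrying uniform normal speed $\mu_0$, the normal component of the rotational field is $cr$, which is \emph{linear} in the distance to the inner corner. The normal traces do not agree pointwise, so the distributional divergence picks up a singular term on the junction line --- the concatenation is not divergence-free. Your proposed remedy, ``transition regions in the spirit of Lemma \ref{lemma:pipewidth},'' does not apply: that lemma changes the \emph{width} of a pipe while keeping the normal velocity uniform across every cross-section; it does not convert a uniform profile into a linear one. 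Designing a divergence-free 2D transition that simultaneously matches both profiles \emph{and} equalises transit times across streamlines is a nontrivial construction you have not supplied (and note that the paper's banked corner, Lemma \ref{lemma:corner}, achieves equal transit precisely by spilling into the third coordinate, an option you have deliberately forfeited by making $u$ independent of $x_3$).

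In short: the paper's width-change reservoir is both simpler and complete, while your corner matching is currently a gap rather than bookkeeping.
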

\begin{proof}
    Consider \[
    R_{1}= (\ell, \ell+h) \times \{2/3 \ell + \lambda/4\} \times (0,\ell)\text{ and }R_2= (\ell, \ell+h) \times \{5/6 \ell + \lambda/4\} \times (0,\ell)
    \] for some $h \in (\lambda/2,7 \ell)$ and take the concatenation of source-sink flow from Lemma \ref{lemma:pipewidth} from $G_{out}$ to $R_1$, from $R_1$ to $R_2$  and from $R_{2}$ to $R_{out}$. Using Lemma \ref{lemma:pipewidth}, items \ref{Re:1}--\ref{Re:3} are immediate. Moreover, using \ref{par2} of Lemma \ref{lemma:pipewidth} for above choice of $h$ yields \ref{Re:35}.
    Finally, the time $t=t(h)$ that the flow needs to pass from $G_{out}$ to $T_{out}$ depends continuously on the height $h$ (cf. Remark \ref{rem:continuity}) and satisfies $t(0) = \tfrac{\ell/2-\lambda/2}{{\mu_0}}$
   and that for $h = 6 \ell$ we have  $t(6 \ell) \geq 1/4$. By intermediate value theorem we can therefore choose an $h$ in this interval such that $t(h) = \tau$.    
\end{proof}

\begin{figure}[!htbp]
     \centering \includegraphics[width=0.7 \textwidth]{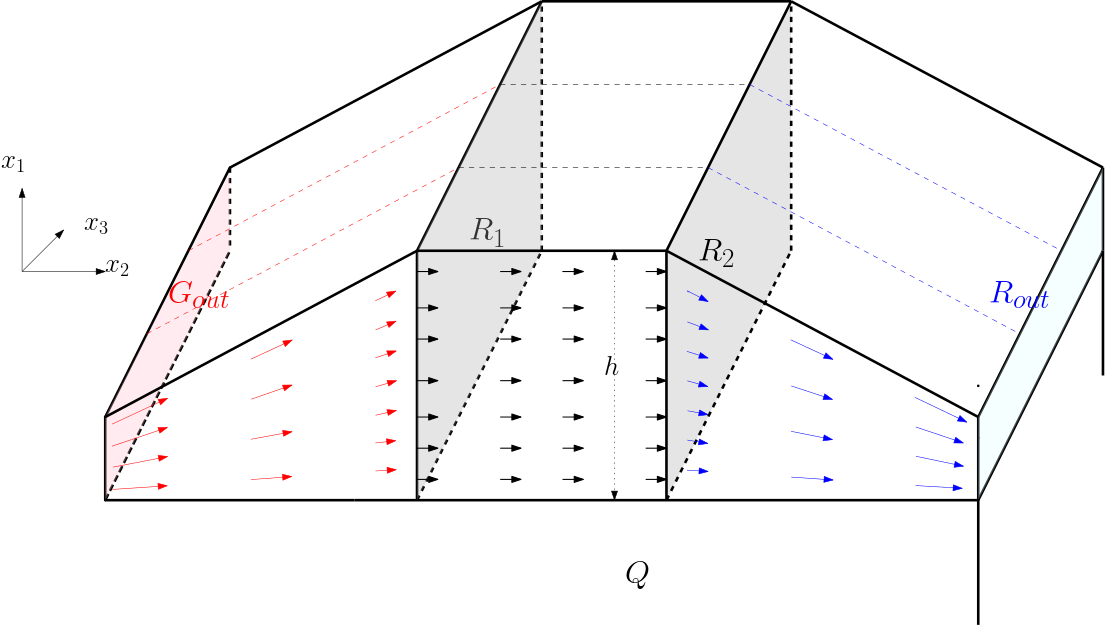}
     \caption{Flow in and out of a water reservoir. Note that the absolute value of the velocity decreases, when the fluid gets to the thicker part of the reservoir. The flow is a concatenation of three flows that change the pipe width.}
     \label{figure:reservoir}
    \end{figure}

\subsection{Highway flow} \label{sec:highway}
In this subsection we will give explicit constructions of the vector field connecting two adjacent vertices on the network. For this, we first recall some notation for the discrete problem.

As before, let $G=(V,E)$ be the $3D$ grid. As we are only dealing with two dimensional permutations we denote by $G^2=(V^2,E^2)$ the graph with $V^2=V$ and $E^2$ to be the set of edges in the horizontal direction, i.e. 
$E^2 = E \setminus \{ e= \{v,w\} \colon \vert v_3- w_3 \vert=1 \}$.
By solving the discretised problem in two dimensions we obtain paths $\gamma$ in $G^2$ from $v$ to $\sigma(v)$. For an edge $e \in E$ denote by $\Gamma(e)$ the set of paths such that $e \in E(\gamma)$.

We give an order to each path $\gamma$ as follows. Consider the \emph{lexicographic order} on $V^2$ which is as follows.
\[
v \prec w \text{ if } v_1 < w_1\text{ or } v_1 = w_1,\,v_2 <w_2 \text{ or } v_1=w_1\,v_2=w_2,\,v_3<w_3.
\] 
For each path $\gamma$ we can associate $v \in V^2$ such that $\gamma = \gamma_v$ is the path from $v$ to $\sigma(v)$. We write
\[
\gamma_v \prec \gamma_w \text{ if and only if } v \prec w.
\]
We give a direction to the edge as follows: For an edge $e = \{v,w\}$, $v \prec w$ we say that $\gamma \in \Gamma^+(e)$ if the path $\gamma$ goes through $v$ before $w$ and $\gamma \in \Gamma^-(e)$ otherwise.
We say that paths $\gamma_1,...,\gamma_k$ are in \emph{lexicographical order} if $\gamma_1 \prec \gamma_2 \prec \ldots \prec \gamma_k$ and in \emph{reverse lexicographic order} if $\gamma_k \prec \gamma_{k-1} \prec \ldots \prec \gamma_1$.

For each edge $e$ we define an order $\prec_e$ with respect to $e$ as follows.
 If $e=\{v,v+\un_2\}$ we define
 \[
 \gamma \prec_e \gamma' \quad \text{if and only if } \begin{cases}
     \gamma \in \Gamma^+(e), \gamma' \in \Gamma^-(e){,} \\
     \gamma, \gamma' \in \Gamma^+(e) \text{ and } \gamma \prec \gamma' {,} \\
     \gamma, \gamma' \in \Gamma^-(e) \text{ and } \gamma' \prec \gamma {.}
 \end{cases}
 \]
such that paths are first ordered by their direction, 'forward' paths are sorted in lexicographical order and backward paths are sorted in reverse lexicographic order. If $e= \{v,v+\un_1\}$ we likewise define
 \[
 \gamma \prec_e \gamma' \quad \text{if and only if } \begin{cases}
     \gamma \in \Gamma^-(e), \gamma' \in \Gamma^+(e){,} \\
     \gamma, \gamma' \in \Gamma^+(e) \text{ and } \gamma' \prec \gamma{,} \\
     \gamma, \gamma' \in \Gamma^-(e) \text{ and } \gamma \prec \gamma{.}
 \end{cases}
 \]
such that paths are first ordered by their direction, backward paths are sorted in lexicographic order and forward paths are sorted in reverse lexicographic order.

Finally, recall that the path $\gamma$ has a width $\rho(e,\gamma)$ along $e$.

We now fix a point $P\in[17c_{th}\ell,N^{-1}-17 c_{th}\ell]^3$. For a vertex $v \in \{0,...,N-1\}^3$ we write 
\[
    P_v = P + N^{-1} v.
\]

From the point $P_v$ we construct the following parallelepiped 
\begin{equation} \label{def:Rv}
    R_v=P_v+[0,\ell]\times[0,\ell]\times[0,14 c_{th}\ell],
\end{equation}

where the constant $c_{th}=2(1+e^2)$ is the thickness of the level; the reason for this choice is explained through Corollary \ref{coro:source:sink:interchange:corner}. Observe that if $\ell N < \tfrac{1}{10^3}$ we have $R_v \subset P + N^{-1} [1/4,3/4]$.
The choice of the constant $14$ in \eqref{def:Rv} becomes apparent in the following Definition \ref{def:level} and the subsequent discussion.

\medskip

Consider the paths $\gamma\in V(v_P)$ given by the discrete problem of Section \ref{sec:3}. 
\begin{definition}[Level]\label{def:level}
    A \emph{level} is the value of the  map $\text{Lev} \colon \{(\gamma,v) \colon v \in V(\gamma) \} \rightarrow \lbrace1,2,\dots,14\rbrace$ defined as follows
    \begin{equation*}
        \text{Lev}(\gamma,v)= \begin{cases}
        14 &\text{ if $\gamma$ ends in $v$,}\\
        13 & \text{ if $\gamma$ starts in $v$,}\\
            12&\text{ if $\gamma$ goes through $v-\un_1$,$v$,$v+\un_1$ (in this order),}\\
            \dots, \\
             1 &\text{ if $\gamma$ goes through $v-\un_2$,$v$,$v+\un_2$ (in this order).}\\
             \end{cases}
    \end{equation*}
    the definition for the whole $12$ cases is straightforward.
\end{definition}
\begin{remark}
    The first $12$ levels are labeled through the vertex that is visited by $\gamma$ before $v$ ($4$ possibilities) and the vertex that comes \emph{after} $v$ (subsequently 3 possibilities), so that we have $12= 4 \cdot 3$ configurations in total.
\end{remark}

We consider now a vertex $w$ that is adjacent to $w$ in the graph. Without loss of generality we may assume $w= v + \un_2$ and therefore $P_w = P_v + N^{-1} \un_2$. We aim at defining the system of source-sink flows starting in the parallelepiped $R_v$ and ending in the parallelepiped $R_w$. We call $e$ the edge $\lbrace v,w\rbrace$ and we fix $\gamma\in \Gamma(e)$.  
 We define
\begin{equation*}
    \tilde\rho(\gamma,e)=\rho(\gamma,e)\frac{N^{-1}}{4K^2},
\end{equation*}
where $\rho(\gamma,e)$ is given by the discrete formulation of Section \ref{sec:3}. Here, $\tilde\rho(\gamma,e)$ is an appropriate rescaling of $\rho$ such that
\[
\sum_{\gamma' \in \Gamma(e)}  \tilde{\rho}(\gamma',e) \leq \ell/4 K^{-1} \leq \ell/4.
\]
Further define 
\begin{equation} \label{def:height}
    h(\gamma,e) = 3/8 \ell + \sum_{\gamma' \in \Gamma(e) \colon \gamma' \prec_e \gamma } \tilde\rho(\gamma',e)
\end{equation}
and, for $v \in V$ and $\gamma$ with $v \in V(\gamma)$, the interval
\begin{equation} \label{def:Iv3}
  I^{\gamma,v}_{3} :=  (c_{th}\ell\left(\text{Lev}(\gamma,v)-\tfrac{1}{2}\right), c_{th}\ell\left(\text{Lev}(\gamma,v)-\tfrac{1}{2}\right)+\ell).
\end{equation}  
 \begin{definition}\label{Def:source:sink:highway}
     Define the following sets:
     \begin{align*}
         A^{e,\gamma}_{\text{in}}&=P_v +(h(\gamma,e),h(\gamma,e)+\tilde\rho(\gamma,e))\times \lbrace \ell\rbrace\times I^{\gamma,v}_{3}, \\
         A^{e,\gamma}_{\text{out}}&=P_w +(h(\gamma,e),h(\gamma,e)+\tilde\rho(\gamma,e))\times \lbrace 0\rbrace\times I^{\gamma,w}_{3}.
     \end{align*}
     They define a source-sink system in the interchange flow from the vertex $v$ to the vertex $w$. Then define
     \begin{equation*}
         \nu_{ A^{e,\gamma}_{\text{in}}}=\nu_{ A^{e,\gamma}_{\text{out}}}=\un_2,
     \end{equation*}
     and
     \begin{equation*}
         \mu_{ A^{e,\gamma}_{\text{in}}}=\mu_{ A^{e,\gamma}_{\text{out}}}=\frac{{\mu_0}\lambda}{2\tilde\rho(\gamma,e)}=\frac{2{\mu_0}K}{\kappa\rho(\gamma,e)}.
     \end{equation*}
     We now define $u(\gamma,e)$ the vector field given by Lemma \ref{lem:parallel} with $A=A^{e,\gamma}_{\text{in}}$, and $B=A^{e,\gamma}_{\text{out}}$.
 \end{definition}
 \begin{coro}\label{cor:source:sink:highway}
     Let $u(\gamma,e)$  be the vector field given in Definition \ref{Def:source:sink:highway}. Then it satisfies the following properties
     \begin{enumerate}[label=(\roman*)]
         \item $\spt(u(\gamma,e))\subset\conv (R_v,R_w)$;
         \item $\gamma\not=\gamma'\implies \spt(u(\gamma,e))\cap\spt(u(\gamma',e))=\emptyset$;
         \item $\mathscr{L}^3(\spt(u(\gamma,e)))=(N^{-1}-\ell)\ell\tilde\rho(\gamma,e)=(N^{-1}-\ell)\ell^2\frac{\rho(\gamma,e)}{4}\leq N^{-3}\rho(\gamma,e)$;
         \item $\|u(\gamma,e)\|_{L^\infty}\leq c(\kappa,K){\mu_0}\rho(\gamma,e)^{-1}$;
          \item $\|u(\gamma,e)\|_{L^p}^p\leq N^{-3} c(\kappa,K)^p ({\mu_0})^p \rho(\gamma,e)^{-p+1}$;
         \item the time $t_e=t_{A^{e,\gamma}_{\text{in}},A^{e,\gamma}_{\text{out}}}$ along the edge is
         \begin{equation*}
             t_e=\frac{(N^{-1}-\ell)\kappa \rho(\gamma,e)}{2K{\mu_0}}.
         \end{equation*}
     \end{enumerate}
 \end{coro}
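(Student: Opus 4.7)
The proof is a direct application of Lemma \ref{lem:parallel}, up to a relabeling of coordinates (here $\un_2$ plays the role of $\un_3$ in the lemma, and the translation $D_1$ happens in the $x_3$ direction rather than $x_1$, but the construction of the vector field in the proof of Lemma \ref{lem:parallel} is symmetric under such a permutation). The sets $A^{e,\gamma}_{\mathrm{in}}$ and $A^{e,\gamma}_{\mathrm{out}}$ are parallel rectangles in the planes $\{x_2 = P_v^{(2)} + \ell\}$ and $\{x_2 = P_w^{(2)}\}$ separated by distance $D = N^{-1}-\ell$, sharing the same $x_1$-interval $(h(\gamma,e),h(\gamma,e)+\tilde\rho(\gamma,e))$ but possibly translated in $x_3$ by $|\Delta x_3| \leq 14 c_{th}\ell$. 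The resulting source-sink vector field is constant in both time and space on its support, pointing in the direction $\un_2 + (\Delta x_3/D)\un_3$, with magnitude equal to $\mu_A = \mu_0 \lambda / (2\tilde\rho(\gamma,e)) = 2\mu_0 K/(\kappa \rho(\gamma,e))$.

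Given this identification, items (vi) and (iv) are immediate from the quantitative parts of Lemma \ref{lem:parallel}: the transport time is $t_e = D/\mu_A = (N^{-1}-\ell)\kappa\rho(\gamma,e)/(2K\mu_0)$, and $\Vert u \Vert_{L^\infty} \leq \mu_A (1 + |\Delta x_3|/D) \leq c(\kappa,K)\mu_0 \rho(\gamma,e)^{-1}$, where the prefactor $1 + 14 c_{th}\ell/(N^{-1}-\ell)$ is absorbed into $c(\kappa,K)$ using $\ell = N^{-1}/K$. For (iii), the support is exactly $\conv(A^{e,\gamma}_{\mathrm{in}}, A^{e,\gamma}_{\mathrm{out}})$, which is a parallelepiped with cross-sectional area $\ell \cdot \tilde\rho(\gamma,e)$ and height $D$ in the $\un_2$ direction, so its Lebesgue measure equals $(N^{-1}-\ell)\ell\,\tilde\rho(\gamma,e)$; the explicit form in terms of $\rho(\gamma,e)$ follows by substituting $\tilde\rho(\gamma,e) = \rho(\gamma,e) N^{-1}/(4K^2)$, and the final bound uses $(N^{-1}-\ell)\leq N^{-1}$. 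Item (v) is a consequence of $u$ being constant on its support, giving $\Vert u \Vert_{L^p}^p = \Vert u \Vert_{L^\infty}^p \cdot \mathcal{L}^3(\spt(u))$, and combining the bounds from (iii) and (iv).

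Property (i) amounts to verifying that $A^{e,\gamma}_{\mathrm{in}} \subset R_v$ (resp. $A^{e,\gamma}_{\mathrm{out}} \subset R_w$) and that the straight-line interpolation between them is contained in $\conv(R_v,R_w)$. The $x_1$-condition $(h(\gamma,e),h(\gamma,e)+\tilde\rho(\gamma,e)) \subset [3\ell/8, 5\ell/8] \subset [0,\ell]$ follows from the capacity constraint \eqref{eq:capacity2} after the rescaling defining $\tilde\rho$, which ensures $\sum_{\gamma' \in \Gamma(e)} \tilde\rho(\gamma',e) \leq \ell/4$; the $x_3$-condition $I^{\gamma,v}_3 \subset (0,14 c_{th}\ell)$ follows from $\mathrm{Lev}(\gamma,v) \in \{1,\ldots,14\}$ together with the choice of $R_v$ in \eqref{def:Rv}. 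The key step is (ii): for two distinct paths $\gamma,\gamma' \in \Gamma(e)$, the definition \eqref{def:height} orders them by $\prec_e$ and stacks their $x_1$-intervals contiguously, so these intervals are pairwise disjoint (in fact sharing at most an endpoint). Since the vector field has no $u_1$ component, each trajectory preserves its $x_1$-coordinate, and hence the supports, whose $x_1$-projections are precisely these disjoint intervals, cannot overlap in their interiors.

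The main bookkeeping I expect to be delicate is the verification of (ii) combined with (i): I must make sure that both the $x_1$-stacking of pipes and the $x_3$-partition into levels are consistent with all paths through $e$ fitting inside $R_v$ and $R_w$, and that no cross-path collision occurs. Since all paths through a fixed $e$ share the same $\un_2$-flow direction and have disjoint $x_1$-slabs by construction, no additional compatibility between the $x_3$-shifts $\Delta x_3$ of different paths is needed; the disjointness is automatic, and the proof reduces to the routine verifications outlined above.
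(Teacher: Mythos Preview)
Your proposal is correct and follows the same approach as the paper: apply Lemma \ref{lem:parallel} (after the obvious coordinate relabeling) and read off the bounds, with (iii) coming from the parallelepiped volume (the paper phrases this as Cavalieri's formula) and (ii) from the disjointness of the $x_1$-slabs guaranteed by the stacking in \eqref{def:height}. The paper's own proof is a one-line sketch; you have simply filled in the routine verifications that the paper leaves implicit.
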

\begin{proof}
    First and second property are trivially obtained by the definition. The third one is a consequence of Cavalieri's formula. The last point follows by Lemma \ref{lem:parallel}.
\end{proof}

\begin{figure}[!htbp]
     \centering \includegraphics[width=0.8 \textwidth]{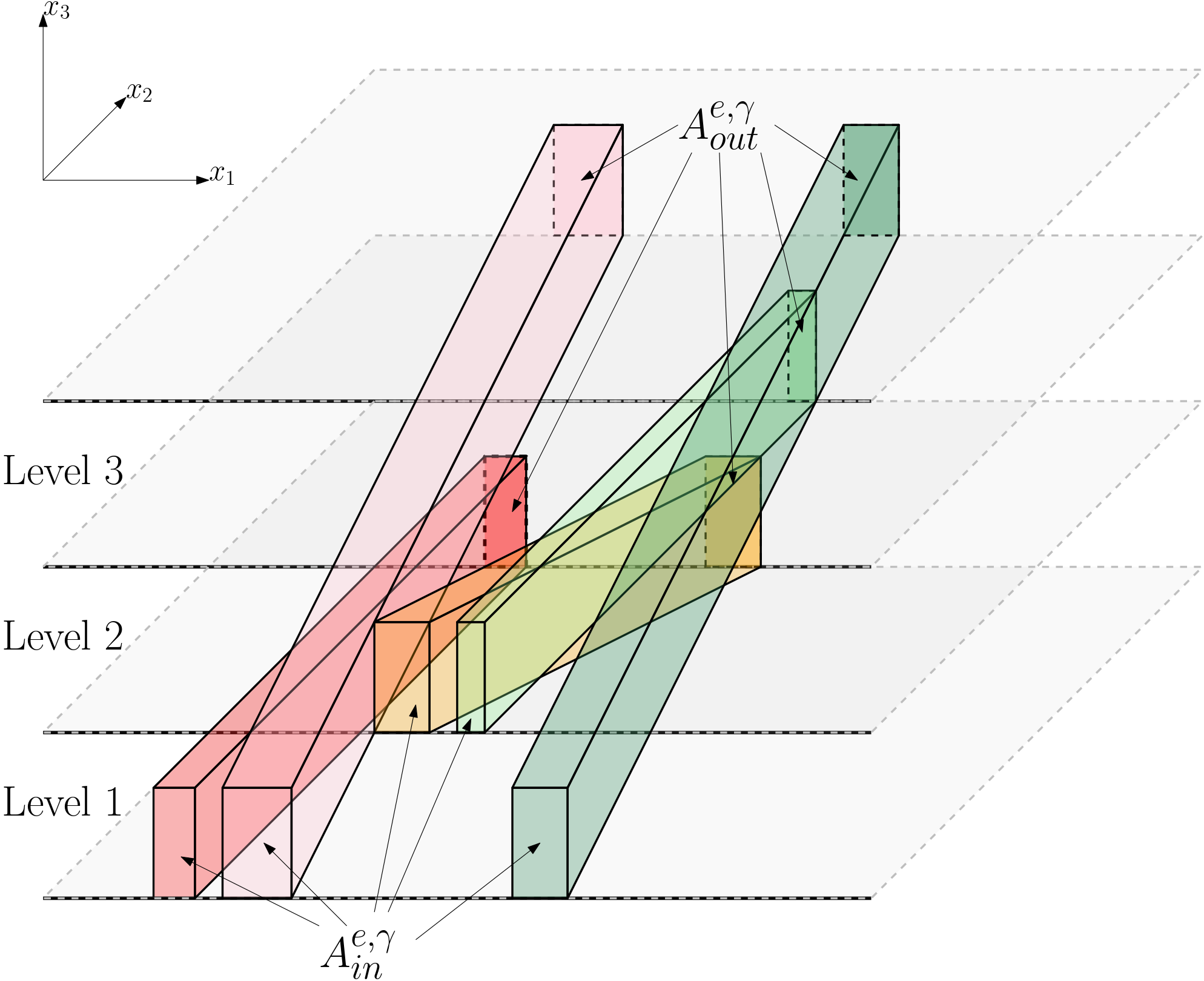}
     \caption{Highway flow along an edge $e=\{v, v+ \un_2\}$ corresponding to a discrete network. The flows are all parallel. The velocity of the vector field, however, \emph{does} depend on the thickness of the pipe: The thinner the pipe is, the faster the velocity of the vector field, so that the total amount of fluid that passes through $A^{e,\gamma}_{in}$ in a fixed time span is constant. $A^{e,\gamma}_{in}$ and $A^{e,\gamma}_{out}$ might be in different levels, which will play a role in the interchange construction.}
    \end{figure}

\subsection{Highway Interchanges} \label{sec:interchange}
We are ready to define the interchange inside a level of the rectangle $R_{v}$. There are two possible situations occurring: Either the path $\gamma$ goes straight ahead or makes a turn. We are dealing with those two cases separately.

\medskip In the first one fix a vertex $v$ and the two following edges $e=\lbrace v-\un_2,v\rbrace$, $e'=\lbrace v,v+\un_2\rbrace$ and assume $\gamma$ visits $e$ and $e'$ in this order. Consider $$A^{e,\gamma}_{\text{out}}= P_v + (h(\gamma,e),h(\gamma,e)+\tilde\rho(\gamma,e))\times \lbrace 0 \rbrace\times I^{\gamma,v}_{3}$$ and
similarily $A^{e',\gamma}_{\text{in}}$, cf. Figure \ref{fig:interchange} for a schematic display.
\begin{definition}\label{def:source:sink:interchange}
Let $v$, $e$, $e'$ and $\gamma$ be as previously specified. Define the following 2D surfaces/checkpoints as follows.
\begin{enumerate}[label=(\roman*)]
\item $A_0=A^{e,\gamma}_{\text{out}}$;
    \item $A_1=P_v + \left(h(\gamma,e)+h(\gamma,e')-\tfrac{\ell}{2},(P_v)_1+h(\gamma,e)+h(\gamma,e')-\tfrac{\ell}{2}+\tilde\rho(\gamma,e)\right)\times \lbrace\frac{\ell}{3}\rbrace\times I^{\gamma,v}_{3}$;
    \item $A_2=(P_v) +\left(h(\gamma,e)+h(\gamma,e')-\tfrac{\ell}{2},(P_v)_1+h(\gamma,e)+h(\gamma,e')-\tfrac{\ell}{2}+\tilde\rho(\gamma,e'),\right)\times \lbrace\tfrac{2\ell}{3}\rbrace\times I^{\gamma,v}_{3}$;
    \item $A_3=A^{e',\gamma}_{\text{in}}$.
\end{enumerate}

    Consider the sets $A_i$ with $i=0,1,2,3$ defined above. Then define $\nu_{A_i}=\un_2$ for all $i$ and 
    \begin{itemize}
        \item $\mu_{A_0}=\mu_{A_1}=\frac{2K{\mu_0}}{\kappa\rho(\gamma,e)}$;
        \item $\mu_{A_2}=\mu_{A_3}=\frac{2K{\mu_0}}{\kappa\rho(\gamma,e')}$.
    \end{itemize}
    We define $u(\gamma,v)$ to be the concatenation of source-sink vector fields from $A_i$ to $A_{i+1}$ with $i=0,1,2$ (see Lemma \ref{lem:parallel} and \ref{lemma:pipewidth}). 
\end{definition}
\begin{coro}\label{coro:source:sink:interchange}
 Let $u(\gamma,v)$ be the vector field given in Definition \ref{def:source:sink:interchange}. Then it satisfies the following properties
     \begin{enumerate} [label=(\text{I1.}\alph*)]
         \item $\spt(u(\gamma,v))=\bigcup_{i=0}^2\conv(A_i,A_{i+1})$;
         \medskip
         
         \item $\gamma\not=\gamma'\implies \spt(u(\gamma,v))\cap\spt(u(\gamma',v))=\emptyset$;

         \medskip
         
         \item $\frac{\ell^3}{4}\min\lbrace\rho(\gamma,e),\rho(\gamma,e')\rbrace\mathscr\leq \mathcal{L}^3(\spt(u(\gamma,v)))\leq\frac{\ell^3}{4}\max\lbrace\rho(\gamma,e),\rho(\gamma,e')\rbrace$;

         \medskip
         \item
         $\|u(\gamma,v)\|_{L^\infty}\leq 4\max_{i}\lbrace \mu_{A_i},\mu_{A_{i+1}}\rbrace\leq \max\left\lbrace\frac{8K{\mu_0}}{\kappa\rho(\gamma,e)},\frac{8K{\mu_0}}{\kappa\rho(\gamma,e')}\right\rbrace K$;
         
         \medskip
         
         \item $\|u(\gamma,v)\|^p_{L^p}\leq C^p \left(\rho(\gamma,e)^{1-p}+ \rho(\gamma,e')^{1-p}\right)
         \ell^3\frac{(K{\mu_0})^p}{\kappa^p}$;
         
         \medskip
         
         \item the time $t$  such that a particle in $A_0$ flows to $A_1$ is bounded from above and below and 
         \begin{equation*}
             t\in\left(\frac{\ell\kappa}{2K{\mu_0}}\min\lbrace \rho(\gamma,e),\rho(\gamma,e')\rbrace,\frac{\ell\kappa}{2K{\mu_0}}\max\lbrace \rho(\gamma,e),\rho(\gamma,e')\rbrace\right).
         \end{equation*}
     \end{enumerate}
    
\end{coro}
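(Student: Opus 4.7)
My plan is to assemble $u(\gamma,v)$ out of three source-sink pieces $u^{(0)}, u^{(1)}, u^{(2)}$, one for each consecutive pair $(A_i, A_{i+1})$, and then to invoke the concatenation part of the Union and concatenation of source-sink-flows lemma. Each individual piece is one of the two special source-sink flows already constructed, and the estimates (I1.c)–(I1.f) will be read off directly from the corresponding estimates in Lemmas~\ref{lem:parallel} and~\ref{lemma:pipewidth}.

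Concretely, I would first verify the flux-matching hypothesis: all four surfaces share the common normal $\un_2$, and one checks $\mu_{A_0}\Haus^2(A_0)=\mu_{A_1}\Haus^2(A_1)=\mu_{A_2}\Haus^2(A_2)=\mu_{A_3}\Haus^2(A_3)$, with the middle equality following from $\mu_{A_1}\tilde\rho(\gamma,e)=\mu_{A_2}\tilde\rho(\gamma,e') = {\mu_0}\lambda/2$. The pieces $u^{(0)}$ and $u^{(2)}$ are then supplied by Lemma~\ref{lem:parallel} (pure parallel transport, widths unchanged), while $u^{(1)}$ is supplied by Lemma~\ref{lemma:pipewidth} (same starting $x_1$-coordinate $h(\gamma,e)+h(\gamma,e')-\ell/2$, pipe widths varying from $\tilde\rho(\gamma,e)$ to $\tilde\rho(\gamma,e')$). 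After gluing, (I1.a) is the combined support identity, (I1.c) is a direct volume computation of the three parallelograms/trapezoids $\conv(A_i,A_{i+1})$ whose bases are of size at most $\max\{\tilde\rho(\gamma,e), \tilde\rho(\gamma,e')\} \cdot \ell$ and whose heights add up to $\ell$, and (I1.d), (I1.e), (I1.f) follow by combining the corresponding bounds of Lemma~\ref{lem:parallel} and Lemma~\ref{lemma:pipewidth} applied piecewise (taking the max for $L^\infty$ and summing for $L^p$ and for time).

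The real obstacle is (I1.b): the disjointness of the supports for distinct paths $\gamma,\gamma'$ passing through the same vertex $v$ via the same pair of edges $e,e'$. Here the key observation is that in the present straight-through case, $\gamma,\gamma'\in\Gamma^+(e)\cap\Gamma^+(e')$, and therefore both orderings $\prec_e$ and $\prec_{e'}$ reduce to the same lexicographic order on starting vertices. Hence if $\gamma$ and $\gamma'$ are $\prec$-consecutive, then simultaneously $h(\gamma',e)-h(\gamma,e)=\tilde\rho(\gamma,e)$ and $h(\gamma',e')-h(\gamma,e')=\tilde\rho(\gamma,e')$. Consequently, the $x_1$-footprints of $A_0$ for $\gamma$ and $\gamma'$ are adjacent, those of $A_3$ are adjacent, and those of $A_1$ and $A_2$ (whose starting coordinate is $h(\cdot,e)+h(\cdot,e')-\ell/2$) are separated by gaps of size $\tilde\rho(\gamma,e')$ and $\tilde\rho(\gamma,e)$ respectively. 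Each piece $\conv(A_i,A_{i+1})$ is a parallelogram whose $x_1$-footprint shifts \emph{linearly} in $x_2$; hence if disjointness holds at both endpoints and the shift is monotone, it holds throughout, which is precisely the case here. The general (non-consecutive) case follows by applying the consecutive argument to the intermediate pipes lying between $\gamma$ and $\gamma'$.
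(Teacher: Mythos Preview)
Your proposal is correct and follows essentially the same approach as the paper: apply Lemmas~\ref{lem:parallel} and~\ref{lemma:pipewidth} piecewise for the quantitative items, and for (I1.b) use that on a fixed level the edge-orders $\prec_e$ and $\prec_{e'}$ agree, so that $h(\gamma,e)<h(\gamma',e)\Leftrightarrow h(\gamma,e')<h(\gamma',e')$. Your explicit parallelogram/trapezoid argument simply unpacks the one-line remark in the paper's proof; the only additional point worth noting is that disjointness for paths on \emph{different} levels (which your restriction to $\Gamma^+(e)\cap\Gamma^+(e')$ tacitly excludes) is automatic from the disjointness of the intervals $I_3^{\gamma,v}$.
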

\begin{proof}
    Apply Lemma \ref{lem:parallel}, \ref{lemma:pipewidth} using Definition \ref{def:source:sink:interchange}. Observe that the flows do not intersect due to the choice of the lexicographic order, i.e $\gamma \prec_e \gamma' \Leftrightarrow \gamma \prec_{e'} \gamma'$ and therefore $h(\gamma,e) < h(\gamma',e) \Leftrightarrow h(\gamma,e') < h (\gamma',e')$.
\end{proof}

\medskip

In the second case fix a vertex $v$ and consider without loss of generality the two following edges $e=\lbrace v-\un_2,v\rbrace$, $e'=\lbrace v,v+\un_1\rbrace$ and assume $\gamma$ visits $e$ and $e'$ in this order. Consider $A^{e,\gamma}_{\text{out}}=P_v +(h(\gamma,e),h(\gamma,e)+\tilde\rho(\gamma,e))\times \lbrace 0\rbrace\times I^{\gamma,v}_{3}$ and $A^{e',\gamma}_{\text{in}}$ (cf. Figure \ref{fig:interchange}).
We consider the following 
\begin{definition}\label{def:source:sink:interchange:corner}
Let $v$, $e$, $e'$ and $\gamma$ be as previously specified. Define the following 2D surfaces/checkpoints as follows.
\begin{enumerate} [label=(\roman*)]
\item $A_0=A^{e,\gamma}_{\text{out}}$;
    \item $A_1=P_v + (h(\gamma,e),h(\gamma,e)+\tilde\rho(\gamma,e))\times \left\lbrace \tfrac{\ell}{4}\right\rbrace\times I^{\gamma,v}_{3}$;
    \item $A_2=P_v + \left\lbrace \ell \right\rbrace\times(\ell-h(\gamma,e)-\tilde\rho(\gamma,e),\ell-h(\gamma,e))\times I^{\gamma,v}_{3}$;
    \item $A_3=P_v + \left\lbrace\tfrac{5\ell}{6}\right\rbrace \times(\tfrac{3\ell}{4}-h(\gamma,e)-\tilde\rho(\gamma,e)+h(\gamma,e'),\tfrac{3\ell}{4}-h(\gamma,e)+h(\gamma,e'))\times I^{\gamma,v}_{3}$;
    \item $A_4=P_v + \left\lbrace\tfrac{11\ell}{12}\right\rbrace \times(\tfrac{3\ell}{4}-h(\gamma,e)-\tilde\rho(\gamma,e' )+h(\gamma,e'),\tfrac{3\ell}{4}-h(\gamma,e)+h(\gamma,e'))\times I^{\gamma,v}_{3}$;
    \item $A_5=A^{e',\gamma}_\text{in}=P_v + \left\lbrace\ell\right\rbrace \times(h(\gamma,e'),h(\gamma,e')+\rho(\gamma,e'))\times I^{\gamma,v}_{3}$.
\end{enumerate}
We further define $\nu_{A_0}=\nu_{A_1}= \un_2$, $\nu_{A_2}=\ldots= \nu_{A_5}=\un_1$ and
\[
\mu_{A_1}= \ldots = \mu_{A_3} =\frac{2K {\mu_0}}{\kappa \rho(\gamma,e)}, \quad \mu_{A_4} = \mu_{A_5} = \frac{2K {\mu_0}}{\kappa \rho(\gamma,e')}.
\]
We define $u(\gamma,v)$ to be the concatenation of source-sink vector fields from $A_i$ to $A_{i+1}$ with $i=0,\ldots,5$ (see Lemma \ref{lem:parallel}, \ref{lemma:pipewidth} and \ref{lemma:corner}).
\end{definition}

\begin{coro}\label{coro:source:sink:interchange:corner}
 Let $u_{v,\gamma}$ be the vector field given in Definition \ref{def:source:sink:interchange:corner}. Then it satisfies the following properties
     \begin{enumerate}  [label=(\text{I2.}\alph*)]
         \item $\spt(u(v,\gamma))=\bigcup_{i=0,i\not=1}^4\conv(A_i,A_{i+1})\cup \spt(u_{\mathrm{corner}})$, where we indicated by $\spt(u_{\mathrm{corner}})$ the support of the vector field of Lemma \ref{lemma:corner}.  In particular let $\mathbb{P} \colon \R^3 \to \R^2$ be the projection onto the first two coordinates and let $\tilde{A}_i = \mathbb P A_i$, $i=1,2$. Define $B \subset \R^2$ by  \begin{equation*}
             B=\left\lbrace (x,y): x-(P_v)_1+ y-(P_v)_2=\ell, x\in ((P_v)_1+h(\gamma,e),(P_v)_1+h(\gamma,e)+\tilde\rho(\gamma,e))\right\rbrace.
         \end{equation*}
         Then 
           \begin{equation*}
            \spt(u_{\mathrm{corner}})\subset \left[\conv(\tilde A_1,B)\cup \conv(B,\tilde A_2) \right] \times \tilde I^{\gamma,v}_{3}, 
         \end{equation*}
         where we have denoted by 
         \begin{equation*}
         \tilde I^{\gamma,v}_{3}=(P_v)_3 \lbrace c_{th}\ell(\text{Lev}(\gamma,v)-1),c_{th}\ell\text{Lev}(\gamma,v)\rbrace.
         \end{equation*}

         \item $\gamma\not=\gamma'\implies \spt(u(v,\gamma))\cap\spt(u(v,\gamma'))=\emptyset$;

         \medskip
         
         \item $c_{th}\frac{\ell^3}{4}\min\lbrace\rho(\gamma,e),\rho(\gamma,e')\rbrace\lesssim \mathcal{L}^3(\spt(u_{v,\gamma}))\lesssim c_{th}\frac{\ell^3}{4}\max\lbrace\rho(\gamma,e),\rho(\gamma,e')\rbrace$;

         \medskip
         \item
         $\|u(v,\gamma)\|_{L^\infty}\leq \max\lbrace \mu_{A_0},\mu_{A_5}\rbrace (1+8c_{th}^2)\leq 2 \cdot C \max \left\lbrace\left(\frac{2K{\mu_0}}{\kappa\rho(\gamma,e)}\right),\left(\frac{2K{\mu_0}}{\kappa\rho(\gamma,e')}\right)\right\rbrace$;
         
         \medskip
         
         \item $\|u(v,\gamma)\|^p_{L^p}\leq C_p \ell^3\max \lbrace \rho (\gamma,e)^{1-p}, \rho (\gamma,e')^{1-p} \rbrace \frac{(2{K\mu_0})^p}{\kappa^p}$;
         
         \medskip
         
         \item the time $t=t_{A_0A_5}$ (in the sense of the time of a source-sink flow)  is
         \begin{equation*}
             t\in\left(\frac{3\ell\kappa}{8K{\mu_0}}\min\lbrace \rho(\gamma,e),\rho(\gamma,e')\rbrace,\frac{5\ell\kappa}{8K{\mu_0}}\max\lbrace \rho(\gamma,e),\rho(\gamma,e')\rbrace\right).
         \end{equation*}
     \end{enumerate}
    
\end{coro}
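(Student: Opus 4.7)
The plan is to apply Lemmas \ref{lem:parallel}, \ref{lemma:pipewidth}, and \ref{lemma:corner} piecewise to each leg of the concatenation $A_0 \to A_1 \to A_2 \to A_3 \to A_4 \to A_5$, verify admissibility of each application (matching normals, velocities, and flux), and then assemble the six properties by summing or taking maxima of the single-leg estimates. The legs split as follows: the leg $A_0\to A_1$ is a pure translation in the $\un_2$-direction with constant pipe width $\tilde\rho(\gamma,e)$, covered by Lemma \ref{lem:parallel}; the leg $A_1\to A_2$ is a banked corner turning the flow from direction $\un_2$ into direction $\un_1$, covered by Lemma \ref{lemma:corner} (after the obvious translation/reflection bringing the source and sink into the normalized position of that lemma); the legs $A_2\to A_3$ and $A_4\to A_5$ are again translations in the $\un_1$-direction at constant width, via Lemma \ref{lem:parallel}; finally $A_3\to A_4$ is a change of pipe width from $\tilde\rho(\gamma,e)$ to $\tilde\rho(\gamma,e')$ in the $x_2$-coordinate at fixed $x_1$-range, covered by Lemma \ref{lemma:pipewidth} (after swapping the roles of the first and third coordinate directions). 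In each case the flux $\mu\cdot\mathcal{H}^2(\text{source})=\mu\cdot\mathcal{H}^2(\text{sink})$ is guaranteed by the way we picked $\mu_{A_i}=2K{\mu_0}/(\kappa\rho(\gamma,e))$ on the first three surfaces and $2K{\mu_0}/(\kappa\rho(\gamma,e'))$ on the last two, since the areas of the rectangles $A_i$ are proportional to $\tilde\rho(\gamma,\cdot)=\rho(\gamma,\cdot)N^{-1}/(4K^2)$ and $\tilde\rho\cdot\mu=\tfrac{N^{-1}{\mu_0}\lambda}{2}$ at every level.

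For item (I2.a) I would simply read off the support of each leg from the relevant lemma. For the corner piece, Lemma \ref{lemma:corner}\ref{corner:1} places the support in the union of two thin slabs meeting along the diagonal $\{x_1+x_2=\ell\}$; the set $B$ in the statement is precisely the image of this diagonal under translation by $P_v$, and the factor $e^{L/(L-a_1)}$ in the $x_2$-thickness grows at most by $e^2$ for the relevant choice $L=\ell$, $a_1=\tilde\rho(\gamma,e)\le \ell/4$, which is what motivates the constant $c_{th}=2(1+e^2)$ in the definition of $R_v$. For (I2.b), I would exploit the lexicographic order $\prec_e$: if $\gamma\ne\gamma'$ then either their paths occupy distinct levels (so the slabs $\tilde I^{\gamma,v}_3$ and $\tilde I^{\gamma',v}_3$ are disjoint) or they lie on the same level, in which case $\gamma\prec_e\gamma'$ iff $\gamma\prec_{e'}\gamma'$, and therefore $h(\gamma,e)<h(\gamma',e)$ and $h(\gamma,e')<h(\gamma',e')$ simultaneously. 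This monotonicity makes the $x_1$-intervals and $x_2$-intervals on each checkpoint nested consistently, so the supports on each leg are horizontally ordered and hence disjoint; the corner leg preserves this since Lemma \ref{lemma:corner} produces a monotone (even affine) map from source to sink, so no interleaving is introduced.

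For the volume bound (I2.c), I would use the layering $\mathcal{L}^3(\spt(u(\gamma,v)))=\sum_{\text{legs}}\mathcal{L}^3(\spt(\text{leg}))$, each of which is at most of order $\ell\cdot\tilde\rho\cdot\ell$ up to the corner, which contributes an extra factor $c_{th}$ from its $e^{L/(L-a_1)}$-inflated $x_2$-extent. Since $\tilde\rho(\gamma,e)=\ell^2\rho(\gamma,e)/(4K^2)\cdot K=\ell^2\rho(\gamma,e)/(4K)$, and only a finite (dimensional) number of legs contribute, the claim follows within the stated $\min/\max$ window. The $L^\infty$ bound (I2.d) and the $L^p$ bound (I2.e) likewise decompose over legs: on the parallel legs Lemma \ref{lem:parallel} yields $\|u\|_{L^\infty}\le \mu\cdot(1+D_1/D_3)$ which is $O(\mu)$ because $D_1/D_3=O(1)$ here, on the width-change leg Lemma \ref{lemma:pipewidth}\ref{par2}\ref{par25} yields the factor $(1+D_3^{-1}\max\{a_1,b_1\})$ which again is $O(1)$, and on the corner leg Lemma \ref{lemma:corner}\ref{corner:2} yields the factor $(1+\tfrac{a_2}{L}e^{2L/(L-a_1)})$ which is bounded by an absolute constant of the same kind as $c_{th}$. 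Summing $\|u\|_{L^p}^p$ over the (constantly many) legs, each of which has volume $\lesssim\ell^3\max\{\rho(\gamma,e),\rho(\gamma,e')\}/K$ and pointwise bound $\lesssim K{\mu_0}/(\kappa\min\{\rho(\gamma,e),\rho(\gamma,e')\})$, gives (I2.e) after rearranging $\rho^{-p+1}$ and absorbing $K$-factors into $C_p$. Finally, for (I2.f) I would sum the six leg-times $t_{A_i,A_{i+1}}$: each parallel and width-change leg contributes of order $(\ell/\mu_{A_i})\in[\ell\kappa/(2K{\mu_0})\rho(\gamma,\cdot),\,\ell\kappa/(2K{\mu_0})\rho(\gamma,\cdot)]$, and the corner contributes $2L/\mu$ with $L=\ell$; summing yields the stated bracket with the constants $3/8$ and $5/8$.

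The main technical hurdle is the corner leg: one must check that the inflated support produced by Lemma \ref{lemma:corner} (with its exponential $e^{L/(L-a_1)}$-factor) stays inside the level $\tilde I^{\gamma,v}_3\subset (P_v)_3+[c_{th}\ell(\mathrm{Lev}(\gamma,v)-1),c_{th}\ell\,\mathrm{Lev}(\gamma,v)]$, and does not collide with the corner piece of any other path on the \emph{same} level. The former is exactly what fixes the thickness constant $c_{th}=2(1+e^2)$: the worst-case inflation $e^{L/(L-a_1)}\le e^2$ (since $a_1=\tilde\rho\le\ell/4$ so $L/(L-a_1)\le 4/3\le 2$) multiplied by the original slab height $\le 2\ell$ stays under $c_{th}\ell$. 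The latter follows from the same $\prec_e$-monotonicity as in (I2.b), since the corner flow of Lemma \ref{lemma:corner} is constructed so that trajectories fan out monotonically in $x_2$ without crossing. Once this geometric compatibility is established, the remaining items are routine applications of the three building-block lemmas together with a disjoint-support sum.
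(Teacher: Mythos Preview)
Your overall plan—decompose into five legs, invoke Lemmas \ref{lem:parallel}, \ref{lemma:pipewidth}, \ref{lemma:corner} on the appropriate pieces, and sum—is exactly what the paper does, and items (I2.a), (I2.c)–(I2.f) go through essentially as you say.

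There is, however, a genuine error in your treatment of (I2.b). You claim that for two paths on the same level one has $\gamma\prec_e\gamma'\iff\gamma\prec_{e'}\gamma'$, and hence $h(\gamma,e)<h(\gamma',e)$ and $h(\gamma,e')<h(\gamma',e')$ \emph{simultaneously}. That is the relation for the \emph{straight} interchange (Corollary \ref{coro:source:sink:interchange}), not the corner. For the corner, with $e=\{v-\un_2,v\}$ and $e'=\{v,v+\un_1\}$, the definitions of $\prec_e$ and $\prec_{e'}$ in Section \ref{sec:highway} give the \emph{opposite} relation: $\gamma\prec_e\gamma'\iff\gamma'\prec_{e'}\gamma$, so that $h(\gamma,e)<h(\gamma',e)\iff h(\gamma',e')<h(\gamma,e')$. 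This reversal is not cosmetic. The corner map of Lemma \ref{lemma:corner} sends $x_1\mapsto L-x_1$ and is therefore order-\emph{reversing} in the transverse coordinate; after the turn the pipe that was ``inner'' becomes ``outer''. The checkpoints $A_3,A_4$ carry the combination $-h(\gamma,e)+h(\gamma,e')$ in their $x_2$-range, and under your (incorrect) same-direction monotonicity these two contributions can cancel, so the intervals for $\gamma$ and $\gamma'$ need not be disjoint. With the correct reversed relation both contributions have the same sign and disjointness follows. So your argument for (I2.b) needs precisely this sign flip; once corrected, the rest of your proof matches the paper's.
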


\begin{proof}
    Apply Lemma \ref{lemma:corner} for the source-sink flow between $A_1$ and $A_2$. Furthermore,  bounds for the flow from $A_0$ and $A_1$ follow from Lemma \ref{lem:parallel}, while the flow from $A_2$ to $A_5$ via $A_3$ and $A_4$ is a rescaled version of the flow of Corollary \ref{coro:source:sink:interchange}. 
    The $L^{\infty}$ and $L^p$ bound then follow from the bounds given by \ref{lem:parallel}, \ref{lemma:corner} and \ref{lemma:pipewidth}; for the $L^p$ bound of the vector field connecting $A_1$ and $A_2$ we use the trivial bound
    \[
    \Vert u_{\mathrm{corner}} \Vert_{L^p}^p \leq \mathcal{L}^3(\spt(u_{\mathrm{corner}})) \cdot \Vert u_{\mathrm{corner}} \Vert_{L^{\infty}}^p \leq (c_{\mathrm{th}} + 8 c_{\mathrm{th}}^3)^p \ell^3 \max\{ \mu_A,\mu_B\}.
    \]
        Further observe that due to the choice of lexicographic order for edges flows of different $\gamma$ do not intersect. In particular, $\gamma \prec_e \gamma'$ if and only if $\gamma' \prec_{e'} \gamma$ and therefore $h(\gamma,e) < h(\gamma',e)$ if and only if $h(\gamma',e') < h(\gamma,e')$.
\end{proof}

\begin{figure}
    \centering
    \includegraphics[width=0.7 \textwidth]{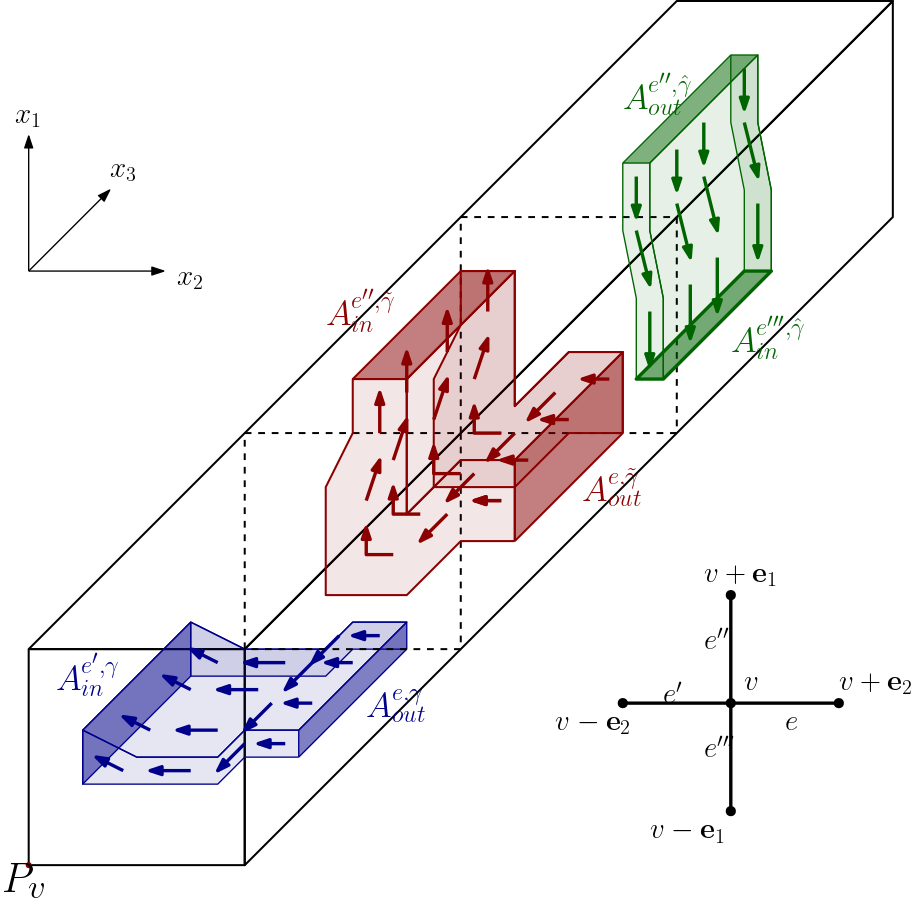}
    \caption{A schematic (not up to scale) display of the construction undertaken for the interchange in different levels. Flows in different levels of the interchange do not intersect by construction. The fact that 'parallel' flows inside one level do not touch is taken care of by the lexicographic order of paths.}
    \label{fig:interchange}
\end{figure}
\subsection{Highway entrances and exits} \label{sec:entrexit}
We have now constructed a flow inside the cubes with some in-flow (at $G_{in}$) and out-flow (at $R_{out}$). Moreover, we constructed the vector field $u$ inside the discrete network. It remains to connect the flow in and out of the cube to the discrete network.

For this we again construct some source- sink flows by concatenation of the special flows from Lemma \ref{lem:parallel}--\ref{lemma:corner}. In particular, if $v \in V$ and $\gamma$ is either starting or ending in $V$, define vector fiels as follows.
\begin{itemize}
\item If $\gamma$ is starting in $v$ we construct a vector field that gives a source-sink flow from $R_{out}$ to some surface $A^v_{entr} \subset \partial R_v$ and, after this, construct a flow from $A^v_{entr}$ to $A^{e,\gamma}_{in}$ (if $e \in E(\gamma)$ is the first edge in the path);
\item If $\gamma$ is ending in $v$ we construct a vector field that gives a source-sink flow from $A^{e,\gamma}_{out}$ to some surface $A^v_{exit} \subset \partial R_v$ and, after this, construct a flow from $A^v_{exit}$ to $G_{out}$.
\end{itemize}
We start with the definition of $A^v_{entr}$ and the flow from $G_{out}$ to $A^v_{entr}$.

\begin{definition} \label{def:entrance:1}
Let $v \in V$, $\gamma$ be a path that starts in $v$ such that $\mathrm{Lev}(\gamma,e) =13$. Define the surface $A^v_{entr}$
\begin{equation}\label{def:entrance:eq1}
A^v_{entr} = P_v+ ( \ell/4,\ell/4 +\lambda/2) \times \{0\} \times I^{\gamma,v}_3.
\end{equation}
Let now
\begin{align*}
    A_0 &= R_{out}, \\
    A_1&= (N^{-1}v_1 +\ell, N^{-1} v_1+\ell+ \lambda/2) \times \{(P_v)_2-2\ell\} \times (N^{-1} v_3, N^{-1} v_3+\ell), \\
    A_2&= (N^{-1}v_1 +\ell, N^{-1} v_1+\ell+ \lambda/2) \times \{(P_v)_2-\ell\} \times I^{\gamma,v}_3, \\
    A_3&= A^v_{entr}.
\end{align*}
Define $u_{entr}$ as the concatenation of parallel source-sink flows from $A_{i}$ to $A_{i+1}$, $i=0,1,2$ from Lemma \ref{lem:parallel} with
\[
\nu_{A_i} = \un_2 \quad \text{and} \quad \mu_{A_i} = {\mu_0} \quad \text{for } i=0,1,2,3.
\]  
\end{definition}
\begin{coro} \label{coro:entrance:0}
Let $u_{entr}$ be the vector field given by Definition \ref{def:entrance:1}. Then it satisifies the following properties:
\begin{enumerate} [label=(E1.\alph*)]
    \item $\spt(u_{entr}) = \bigcup_{i=0}^2 \conv(A_i,A_{i+1})$ and, in particular, $\spt(u_{entr})$ is disjoint from the support of any previously constructed flow;
    \item $\mathscr{L}^3(\spt(u_{entr})) \leq N^{-1} \ell^2 /\kappa$;
    \item $\Vert u_{entr} \Vert_{L^{\infty}} \leq (1+K) {\mu_0}$;
    \item $\Vert u_{entr} \Vert_{L^p}^p \leq N^{-1} \ell^2 \kappa^{-1} (1+K)^p {\mu_0}^p$;
    \item The time $t$ that the vector flow takes from $A_0$ to $A_3$ is
    \[
    t_{entr} = \tfrac{(P_2 -\ell)}{{\mu_0}} \leq \frac{K}{8 \kappa}.
    \]
\end{enumerate}  
\end{coro}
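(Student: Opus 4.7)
The plan is to apply Lemma \ref{lem:parallel} to each of the three consecutive pairs $(A_0,A_1)$, $(A_1,A_2)$, $(A_2,A_3)$ and concatenate the resulting vector fields into a single source--sink flow from $R_{out}$ to $A^v_{entr}$. All four rectangles share the same normal $\un_2$, the same area $(\lambda/2)\cdot\ell = \ell^2/(2\kappa)$, and the common velocity ${\mu_0}$, so the matching conditions at each checkpoint hold and the concatenation is divergence-free away from $A_0$ and $A_3$. Consecutive pairs differ only by a normal translation in $x_2$ together with a tangential shift in $x_1$ and/or $x_3$; after the obvious relabelling of coordinates each piece is covered by Lemma \ref{lem:parallel}, and a further splitting of $A_1\to A_2$ into a pure $x_3$-adjustment followed by a pure $x_2$-translation can be inserted if one wants to apply Lemma \ref{lem:parallel} in its stated form verbatim.

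The quantitative estimates (b)--(e) then follow piecewise. For (b), Cavalieri's principle gives $\mathcal{L}^3(\conv(A_i,A_{i+1})) = (\text{area of }A_i)\cdot (\text{normal distance})$, and summing the three normal distances yields the total $(P_v)_2 - (N^{-1}v_2+\ell) = P_2-\ell \leq N^{-1}$, hence $\mathcal{L}^3(\spt(u_{entr})) \leq N^{-1}\ell^2/\kappa$. For (c), the only piece contributing the $(1+K)$ factor is the last, $A_2\to A_3$: the tangential shift in $x_1$ is at most $P_1 \leq N^{-1}$ against normal distance $\ell$, so Lemma \ref{lem:parallel}(iii) gives $\Vert u\Vert_{L^\infty}\leq {\mu_0}(1+N^{-1}/\ell)=(1+K){\mu_0}$; the earlier pieces have strictly smaller ratios. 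Item (d) follows from (b) and (c) via the trivial bound $\Vert u\Vert_{L^p}^p\leq \mathcal{L}^3(\spt(u))\cdot\Vert u\Vert_{L^\infty}^p$. For (e), summing the times $t_{A_i,A_{i+1}} = (\text{normal distance})/{\mu_0}$ gives $t_{entr}=(P_2-\ell)/{\mu_0}$, and using ${\mu_0}\geq 8\ell\kappa$ from \eqref{choice:u0} together with $P_2\leq N^{-1}$ yields the claimed bound $t_{entr}\leq K/(8\kappa)$.

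The main obstacle is the disjointness claim in (a) from previously constructed flows. The snake and gate flows are confined to the region with $x_1\leq N^{-1}v_1+\ell+\lambda/2$ and $x_2\leq N^{-1}v_2+\ell/2+\lambda/2$; the entrance flow shares the $x_1$-strip only near $A_0$ and $A_1$, where $x_2 \geq N^{-1}v_2+\ell$, so no overlap occurs. The reservoir flow lives in the $x_2$-slab $(N^{-1}v_2+\ell/2+\lambda/2,N^{-1}v_2+\ell)$ by \ref{Re:3}, whereas $u_{entr}$ strictly satisfies $x_2\geq N^{-1}v_2+\ell$ beyond $A_0$, so intersection occurs only along the shared boundary $A_0=R_{out}$ as required. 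For the highway and interchange flows inside $R_v$ the key observation is that the entry surface $A^v_{entr}$ is placed in the level $I^{\gamma,v}_3$ reserved by Definition \ref{def:level} for paths starting at $v$ (Level $13$); this level is disjoint from the levels $1,\ldots,12$ used by highway and interchange flows. Finally, entrance flows centred at different vertices $v$ are disjoint because each lives in its own local template around $\Qcal_v$ and $R_v$, and the cubes $\Qcal_v$ together with the tubes connecting them to $R_v$ are pairwise separated by a distance of at least $\lambda/2$.
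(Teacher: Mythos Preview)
Your proof is correct and follows precisely the approach the paper takes: the paper's own proof consists of the single sentence ``All these properties are a direct consequence of the results of Lemma \ref{lem:parallel},'' and you have simply unpacked that sentence, computing each item (b)--(e) explicitly from the transport lemma and supplying the geometric checks for the disjointness claim in (a). No different idea is involved.
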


\begin{proof}
All these properties are a direct consequence of the results of Lemma \ref{lem:parallel}.
\end{proof}

We now have to define the flow from $A_{entr}$ to $A^{e,\gamma}_{in}$. We have four different possibilities for the edge $e$, also see Figure \ref{figure:cases}.
\begin{enumerate} [label=(\roman*)]
    \item If $e= \{v,v+\un_2\}$, we can use the construction of the straight interchange (cf. Definition \ref{def:source:sink:interchange} and Corollary \ref{coro:source:sink:interchange});
    \item if $e= \{v, v+ \un_1\}$ or $e = \{v,v-\un_1\}$ we can use the construction of the corner interchange (cf. Definition \ref{def:source:sink:interchange:corner} and Corollary \ref{coro:source:sink:interchange:corner});
    \item the only case not covered by previous construction is the case $e= \{v, v-\un_2\}$, where we more or less construct two corners.
\end{enumerate}
We formulate the first two cases in a corollary before we continue with the last one.
\begin{coro} \label{coro:entrance:1}
Let $v \in V$, $\gamma$ be a path starting in $v$ and let $e \neq \{v, v-\un_2\}$ be its first edge. Then there exists a source-sink flow from $A_{entr}$ to $A^{e,\gamma}_{in}$ induced by a vector field $u_{entr}^2$ with the following properties:
\begin{enumerate} [label=(E2.\alph*)]
    \item $\spt(u_{entr}^2) \subset P_v+  (0,\ell) \times (0,\ell)  \times I^{v,\gamma}_3$ and, in particular $\spt(u_{entr}^2)$ does not intersect with the support of any previously defined vector field;
    \item $\Vert u_{entr}^2 \Vert_{L^{\infty}} \leq (1+8c_{th}^2) \max \{ \mu_{A_{entr}}, \mu_{A^{e,\gamma}_{in}} \} \leq 2 \cdot CK {\mu_0} \cdot (1 + \tfrac{2}{\kappa \rho(\gamma,e)})$;
    \item $\Vert u_{entr}^2 \Vert_{L^{p}}^p \leq C_p \ell^3 K^p{\mu_0}^p \cdot (1 + 2^p\kappa^{-p})$;
    \item The time $t= t_{A_{entr},A^{e,\gamma}_{in}}$ for the flow is bounded from above by
    \[
    t= \ell \kappa \max\{ \rho(\gamma,e)K^{-1},2/\kappa\}.
    \]
\end{enumerate}
\end{coro}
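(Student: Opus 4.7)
The plan is to split into cases according to the direction of the first edge $e \in E(\gamma)$ issued from $v$ and, in each case, to piggyback on the interchange constructions of Section \ref{sec:interchange} by merely replacing the initial checkpoint $A_0 = A^{\bar e, \gamma}_{out}$ (which ordinarily plays the role of a predecessor edge) by the entrance surface $A_{entr}$ from Definition \ref{def:entrance:1}. Crucially, $A_{entr}$ already sits in the correct third-coordinate interval $I^{v,\gamma}_3$ at level $\mathrm{Lev}(\gamma,v) = 13$, so the whole construction will lie inside the strip $P_v + [0,\ell]^2 \times I^{v,\gamma}_3 \subset R_v$, and disjointness from every other flow previously constructed at $v$ is automatic because level $13$ is reserved exclusively for paths starting at $v$.

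In the \emph{straight case} $e = \{v, v+\un_2\}$ I would set $A_0 = A_{entr}$, $A_3 = A^{e,\gamma}_{in}$ and introduce two intermediate $\un_2$-oriented checkpoints at $x_2 = \ell/3$ and $x_2 = 2\ell/3$ that smoothly transition the first-coordinate interval from $(\ell/4, \ell/4 + \lambda/2)$ to $(h(\gamma,e), h(\gamma,e) + \tilde\rho(\gamma,e))$; the vector field is then the concatenation of three parallel source-sink flows supplied by Lemmas \ref{lem:parallel}--\ref{lemma:pipewidth}, with $\mu_{A_0} = \mu_{A_1} = {\mu_0}$ and $\mu_{A_2} = \mu_{A_3} = 2K{\mu_0}/(\kappa\rho(\gamma,e))$, exactly mirroring Definition \ref{def:source:sink:interchange}. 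In the \emph{corner cases} $e = \{v, v \pm \un_1\}$, the flow must turn by $90^{\circ}$, so I would instead adapt Definition \ref{def:source:sink:interchange:corner}: insert a banked corner flow (Lemma \ref{lemma:corner}) to reorient from $\un_2$ to $\pm \un_1$ at level $13$, and close with the parallel pipe-width-change sequence analogous to $A_3 \to A_4 \to A_5$ of that definition to match the target position and width of $A^{e,\gamma}_{in}$.

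The quantitative estimates (E2.a)--(E2.d) would then be read off the constituent flows: the support containment (E2.a) follows because each checkpoint lies in $P_v + [0,\ell]^2 \times I^{v,\gamma}_3$; the $L^{\infty}$ bound (E2.b) extracts the factor $(1+8c_{th}^2)$ from Lemma \ref{lemma:corner} and the maximum of the source velocities ${\mu_0}$ and $2K{\mu_0}/(\kappa\rho(\gamma,e))$; the $L^p$ bound (E2.c) follows by multiplying the pointwise $L^{\infty}$ bound by the $O(\ell^3)$ measure of the support, collecting the entrance-leg contribution of order $\ell^3 {\mu_0}^p$ with the highway-leg contribution of order $\ell^3 (K{\mu_0}/\kappa)^p \rho(\gamma,e)^{1-p}$; and the transit time (E2.d) is additive over the segments, with $\ell/{\mu_0} \sim 1/\kappa$ producing the $2\ell/\kappa$ contribution from the entrance leg and $\ell\kappa\rho(\gamma,e)/(K{\mu_0}) \sim \rho(\gamma,e)/K$ producing the contribution from the highway-like leg.

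The main obstacle will be the same one already dealt with in Corollary \ref{coro:source:sink:interchange:corner}: one must check in the corner cases that the transverse spreading of the banked corner, which scales like $e^{L/(L-a_1)}$ in Lemma \ref{lemma:corner} \ref{corner:1}, stays within the assigned level strip of thickness $c_{th}\ell$ and does not collide with any entrance flow at $v$ belonging to a different path and a different level. This is precisely the compatibility condition that motivated the choice $c_{th} = 2(1+e^2)$ in the definition of $R_v$ in \eqref{def:Rv}, so once the checkpoints are laid out as above the verification is a routine adaptation of the corresponding step for the corner interchange.
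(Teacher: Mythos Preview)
Your proposal is correct and follows essentially the same approach as the paper: the paper's proof is a one-liner stating that, with minor adjustments to the checkpoints $A_0,\ldots,A_3$ (resp.\ $A_0,\ldots,A_5$), one simply reuses the straight and corner interchange constructions of Definitions \ref{def:source:sink:interchange} and \ref{def:source:sink:interchange:corner}, which is precisely what you outline in more detail. Your additional remarks on why the level-$13$ placement guarantees disjointness and why the $c_{th}$ choice controls the corner spreading are accurate elaborations of points the paper leaves implicit.
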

\begin{proof}
    With few adjustments in the construction of $A_0,\ldots A_3$ and $A_0,...,A_5$, respectively, one may take the constructions undertaken in Definitions \ref{def:source:sink:interchange} \& \ref{def:source:sink:interchange:corner}.
\end{proof}

\FloatBarrier
\begin{figure}[!htbp]
     \centering \includegraphics[width=0.6\textwidth]{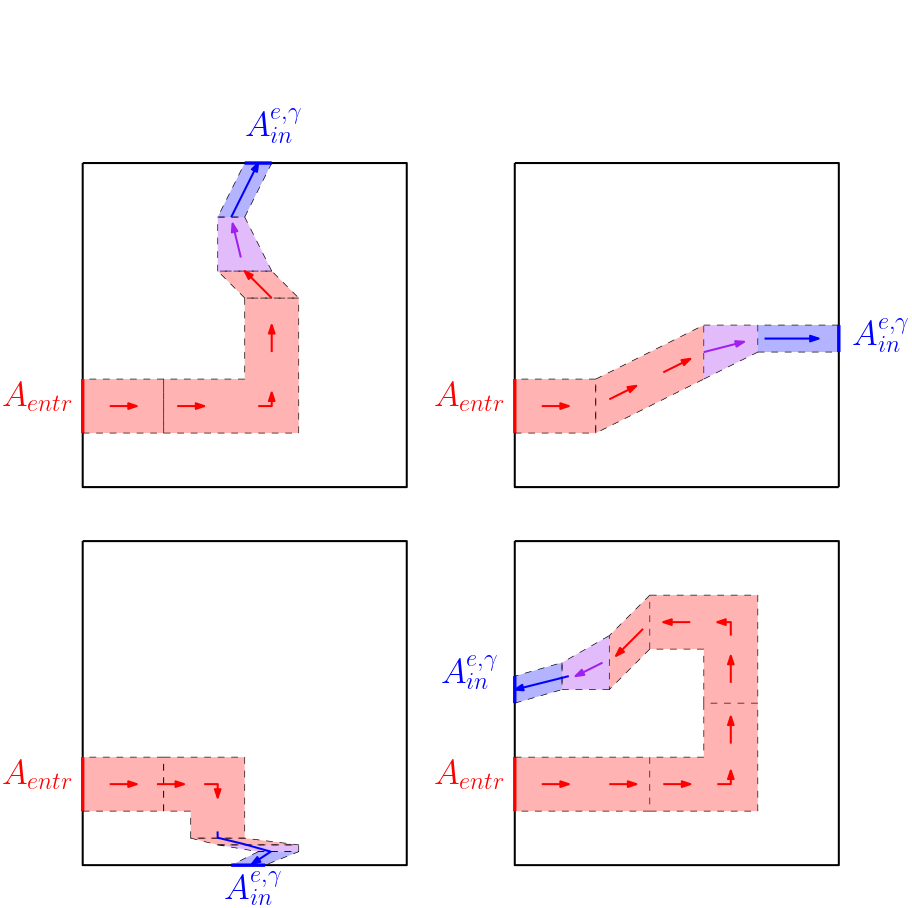}
     \caption{Top-down view (the $x_3$-coordinate is missing) of the four different cases of the entrance flow according to the position of the first edge $e$ in the path $\gamma$.}
     \label{figure:cases}
    \end{figure}
\FloatBarrier

Let now $e = \{v,v-e_2\}$. Recall the definition of $A_{entr}$ and $A^{\gamma,e}_{in}$ define the following flow:
\begin{definition} \label{def:entrance:2}
    Let $v \in V$, $\gamma$ be a path that starts in $v$. Define
    \begin{align*}
        A_0 &= A_{entr}, \\
        A_1 &= P_v + (\ell/4, \ell/4 +\lambda/2) \times \{\ell/2\}  \times I^{\gamma,v}_3, \\
        A_2 &= P_v +\{(\ell/2\} \times (3\ell/4-\lambda/2,3\ell/4) \times I^{\gamma,v}_3, \\
        A_3 &= P_v+ (3\ell/4 - \lambda/2), 3\ell/4 ) \times \{ \ell/2\}  \times I^{\gamma,v}_3, \\
        A_4 &= P_v + (h(\gamma,e),\lambda/2) \times \{\ell/2\}  \times I^{\gamma,v}_3,\\
        A_5 &= A^{\gamma,e}_{in}.
    \end{align*}
    Further set
    \[
    \nu_{A_0} = \nu_{A_1} = - \nu_{A_3} = - \nu_{A_4} = - \nu_{A_5} = \un_2, \quad \nu_{A_2} = \un_1
    \]
    and 
    \[
    \mu_{A_0} = \ldots \mu_{A_4} = {\mu_0}, \quad \mu_{A_5} = \frac{2K {\mu_0}}{\kappa \rho(\gamma,e)}.
    \]
    Define $u_{entr}^2$ to be the vector field to the concatenation of source-sink vector fields from $A_i$ to $A_{i+1}$, $i=0,\ldots,4$ that are constructed by Lemma \ref{lem:parallel} (i.e from $A_0$ to $A_1$, $A_3$ to $A_4$), Lemma \ref{lemma:pipewidth} (i.e from $A_4$ to $A_5$) and two corners, Lemma \ref{lemma:corner} (i.e. $A_1$ to $A_2$ and $A_2$ to $A_3$).
\end{definition}

\begin{coro} \label{coro:entrance2}
    Let $v \in V$, $\gamma$ be a path starting in $v$ and let $e = \{v, v-\un_2\}$ be its first edge. Then there exists a source-sink-flow from $A_{entr}$ to $A^{e,\gamma}_{in}$ induced by a vector field $u_{entr}^2$ with the following properties:
\begin{enumerate} [label=(E2.\alph*)]
    \item $\spt(u_{entr}^2) \subset P_v + (0,\ell) \times (0,\ell) \times  I^{v,\gamma}_3$ and, in particular $\spt(u_{entr}^2)$ does not intersect with the support of any previously defined vector field;
    \item $\Vert u_{entr}^2 \Vert_{L^{\infty}} \leq C \max \{ \mu_{A_{entr}}, \mu_{A^{e,\gamma}_{in}} \} \leq 2 \cdot C K{\mu_0} \cdot (1 + \tfrac{2}{\kappa \rho(\gamma,e)})$;
    \item $\Vert u_{entr}^2 \Vert_{L^{p}}^p \leq C_pK^p \ell^3 {\mu_0}^p \cdot (1 + 2^p\kappa^{-p})$;
    \item The time $t= t_{A_{entr},A^{e,\gamma}_{in}}$ for the flow is bounded from above by
    \[
    t= 2\ell \kappa \max\{ \rho(\gamma,e)K^{-1},2/\kappa\}.
    \]
\end{enumerate}
\end{coro}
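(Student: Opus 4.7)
The strategy is to mirror the proof indicated for Corollary \ref{coro:entrance:1}, the only new feature being a second corner turn needed to reverse the direction along $\un_2$. By Definition \ref{def:entrance:2}, $u_{entr}^2$ is the concatenation of five source-sink vector fields: a parallel flow from $A_0$ to $A_1$, a corner flow from $A_1$ to $A_2$, a second corner flow from $A_2$ to $A_3$, a parallel flow from $A_3$ to $A_4$, and a change of pipe width from $A_4$ to $A_5 = A^{\gamma,e}_{in}$; each of these is furnished by Lemma \ref{lem:parallel}, \ref{lemma:pipewidth} or \ref{lemma:corner}. For each stage I would quote the corresponding lemma to read off the support, the $L^\infty$ and $L^p$ bounds and the elapsed time, and then combine the pieces via the concatenation rule for source-sink flows stated at the beginning of Section \ref{sec:4}.

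For the support assertion (a), every checkpoint $A_0,\ldots,A_5$ is placed inside the slab $P_v + (0,\ell)\times(0,\ell) \times I^{\gamma,v}_3$, and by the three constituent lemmas each stage has support inside a mild enlargement of the convex hull of its source and sink. Since $\lambda = \ell/\kappa$ with $\kappa$ large, the corner thickening factor $\e^{L/(L-a_1)}$ from Lemma \ref{lemma:corner} stays bounded by a dimensional constant and the resulting supports remain within the claimed slab. Disjointness from previously constructed flows is automatic because every other entrance, highway or interchange flow associated with a different vertex or level lives in a different $I^{\gamma',v'}_3$, while the snake, gate and reservoir flows live in $\mathcal{Q}_v$ and its adjacent reservoir region, both separated from $R_v$ by the choice of the anchor point $P$.

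For (b), (c) and (d) I would take the maximum, respectively the $p$-th power sum, respectively the sum, of the estimates coming from the five stages. The three parallel and pipe-width stages yield $L^\infty$ norm bounded by a constant multiple of $\max_i \mu_{A_i}$, the two corners contribute the additional factor $1+8c_{th}^2$ from Lemma \ref{lemma:corner}, and the final stage to $A_5$ is the dominant one with $\mu_{A_5} = 2K\mu_0/(\kappa\rho(\gamma,e))$; this yields the stated $L^\infty$ bound. The $L^p$ bound then follows from item \ref{par25} of Lemma \ref{lemma:pipewidth} applied to the last stage, combined with the crude bound $\|u\|_{L^p}^p \leq \mathscr{L}^3(\spt(u))\|u\|_{L^\infty}^p$ on the other four stages, each of which has support of measure at most $C\ell^3$. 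Adding the five elapsed times, each of order $\ell/\mu_0$ except for the final pipe-width stage which is of order $\ell\kappa\rho(\gamma,e)/(K\mu_0)$, yields the bound in (d) with an extra factor of $2$ compared to Corollary \ref{coro:entrance:1} that corresponds precisely to the extra corner. The main obstacle is the bookkeeping in (a): one must verify that the two corner supports are interior-disjoint from each other and from the adjacent parallel stages, which relies crucially on the placement of $A_1,\ldots,A_4$ well inside $\q_v$ together with $\lambda \ll \ell$.
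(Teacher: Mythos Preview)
Your proposal is correct and matches the paper's intended approach. The paper does not supply an explicit proof of this corollary; it simply records Definition \ref{def:entrance:2} and leaves the verification of (E2.a)--(E2.d) to the reader, in the same spirit as the short proof of Corollary \ref{coro:entrance:1}. Your argument---concatenating the five source-sink stages from Definition \ref{def:entrance:2} and invoking Lemmas \ref{lem:parallel}, \ref{lemma:pipewidth} and \ref{lemma:corner} stage by stage for the support, $L^\infty$, $L^p$ and time bounds---is exactly that verification, with the correct observation that the only new feature over Corollary \ref{coro:entrance:1} is the second corner (accounting for the factor $2$ in the time bound).

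One small refinement worth noting for (E2.a): the banked corner of Lemma \ref{lemma:corner} thickens the support in the third coordinate direction by the factor $\e^{L/(L-a_1)}$, so strictly speaking the support sits in the full level slab $\tilde I^{\gamma,v}_3$ of width $c_{th}\ell$ rather than in $I^{\gamma,v}_3$ itself (cf.\ Corollary \ref{coro:source:sink:interchange:corner}, item (I2.a)). This is precisely why $c_{th}=2(1+\e^2)$ was chosen, and it does not affect the disjointness claim since different levels are still separated; but it is the one place where ``remain within the claimed slab'' needs the level thickness rather than just $I^{\gamma,v}_3$.
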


We now come to the definition of the flow that connects a path ending in $v$ to the cube $\Qcal_v$. To this end, by a symmetry argument, we may construct a flow $A^{e,\gamma}_{out}$ to $A^v_{exit}$ with the same properties as the flows in Corollaries \ref{coro:entrance:1} and \ref{coro:entrance2} (except that now the direction is reversed, i.e. from $A^{e,\gamma}_{out}$ to $A^v_{exit}$ instead of from $A^v_{entr}$ to $A^{e,\gamma}_{in}$) . It remains to construct a vector field that connects $A^{v}_{exit}$ to $G_{in}$. We again refer to Figure \ref{figure:levels}.
\begin{definition} \label{def:exit:1}
Let $v \in V$, $\gamma$ be a path that ends in $v$ such that $\mathrm{Lev}(\gamma,e) =14$. Define the surface $A^v_{exit}$
\begin{equation}\label{def:exit:eq1}
A^v_{exit} = P_v +(\ell/4, \ell/4 +\lambda/2) \times \{0\} \times I^{\gamma,v}_3.
\end{equation}
Let now
\begin{align*}
    A_0 &= A^v_{exit}, \\
    A_1&= (N^{-1}v_1 +8\ell, N^{-1} v_1+8\ell+ \lambda/2) \times \{(P_v)_2-\ell\} \times I^{\gamma,v}_3, \\
    A_2&= (N^{-1}v_1 +8\ell, N^{-1} v_1+\ell+ \lambda/2) \times \{(P_v)_2-2\ell\} \times (N^{-1}v_3,N^{-1}v_3+\ell), \\
    A_3&= (N^{-1}v_1 +8\ell, N^{-1} v_1+\ell+ \lambda/2)\times \lbrace N^{-1}v_2+\frac{\ell}{2}-\frac{\lambda}{2}\rbrace \times (N^{-1}v_3,N^{-1}v_3+\ell), \\
    A_4&=\lbrace N^{-1}v_1+7.5\ell+\lambda\rbrace\times\left(N^{-1}v_2,N^{-1}v_2+\frac{\lambda}{2}\right)\times(N^{-1}v_3,N^{-1}v_3+\ell), \\
    A_5&=\lbrace N^{-1}v_1+0.5\ell-\frac{\lambda}{2}\rbrace\times\left(N^{-1}v_2,N^{-1}v_2+\frac{\lambda}{2}\right)\times (N^{-1}v_3,N^{-1}v_3+\ell), \\
    A_6&= G_{\text{in}}.
\end{align*}
Define $u_{\text{exit}}$ as the concatenation of parallel source-sink flows from $A_{i}$ to $A_{i+1}$, $i=0,1,2,\dots,5$ from Lemma \ref{lem:parallel} with
\[
\nu_{A_0} =\dots=\nu_{a_4}=-\nu_{A_6}=- \un_2 \quad,\nu_{a_5}=-\un_1\quad \text{and} \quad \mu_{A_i} = {\mu_0} \quad \text{for } i=0,1,2,\dots,6.
\]  
\end{definition}

\FloatBarrier
\begin{figure}[!htbp]
     \centering \includegraphics[width=0.8 \textwidth]{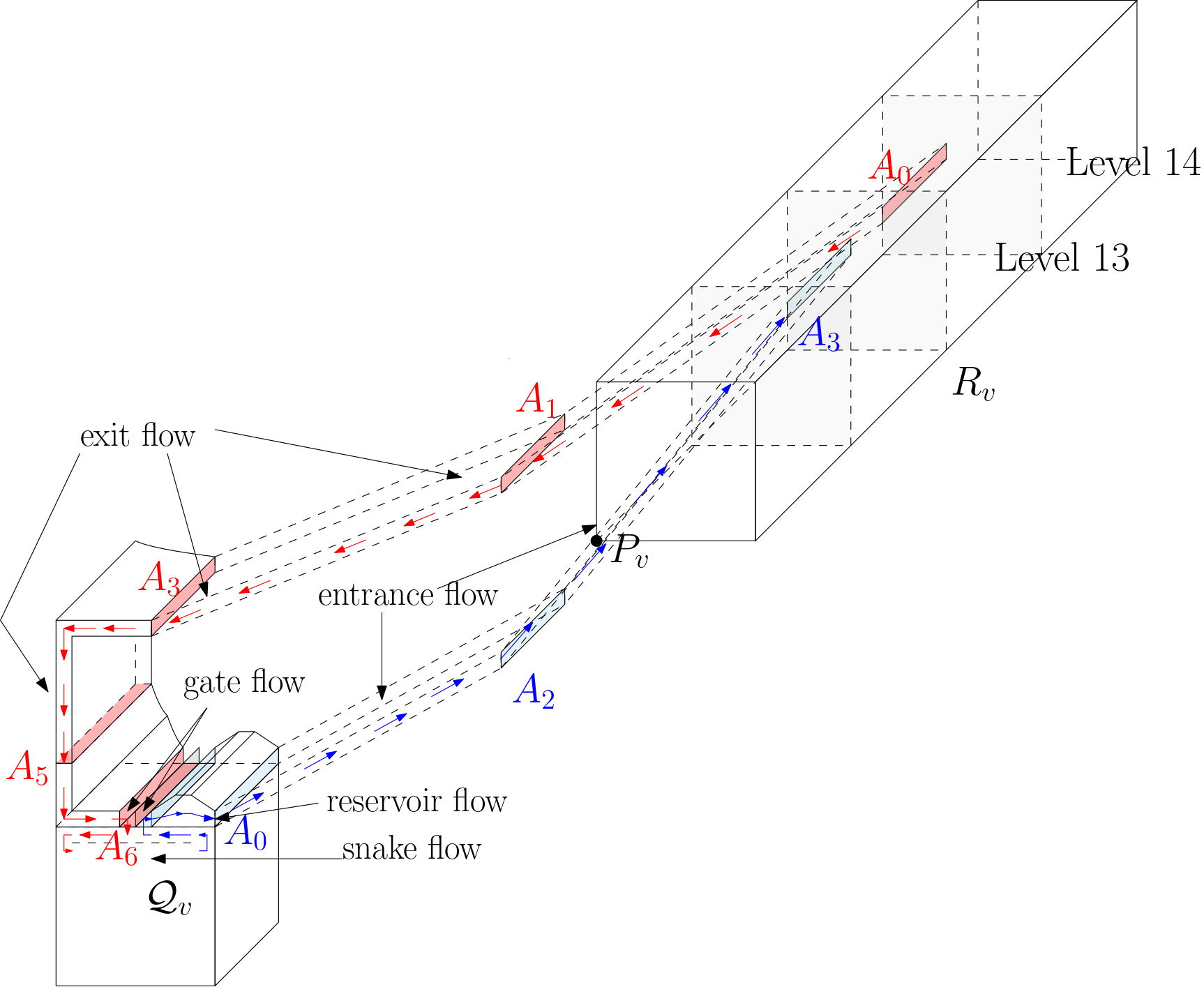}
     \caption{Slightly simplified (not all supplementary checkpoints are displayed) picture of the construction that connects the cube $\mathcal{Q}_v$ to the network.}
     \label{figure:levels}
    \end{figure}
\FloatBarrier
\begin{coro}
Let $u_{exit}$ be the vector field given by Definition \ref{def:exit:1}. Then it satisifies the following properties:
\begin{enumerate} [label=(E1.\alph*)]
    \item $\spt(u_{exit}) \subset \bigcup_{i=0}^2 \conv(A_i,A_{i+1})\cup \left( N^{-1}v_1+\ell,N^{-1}v_1+8\ell\right)\times\left(N^{-1}v_2,N^{-1}v_2+\frac{\ell}{2}-\frac{\lambda}{2}\right)\times\left(N^{-1}v_3,N^{-1}v_3+2\ell\right)$ and, in particular, $\spt(u_{entr})$ is disjoint from the support of any previously constructed flow;
    \item $\mathscr{L}^3(\spt(u_{exit})) \leq 2N^{-1} \ell^2 /\kappa$;
    \item $\Vert u_{exit} \Vert_{L^{\infty}} \leq (1+K) {\mu_0}$;
    \item $\Vert u_{exit} \Vert_{L^p}^p \leq 2 N^{-1} \ell^2 \kappa^{-1} (1+K)^p {\mu_0}^p$;
    \item The time $t$ that the vector flow takes from $A_0$ to $A_3$ is
    \[
    t_{exit} =\tfrac{P_2+7.5 \ell} {{\mu_0}}\leq  \tfrac{K}{8 \kappa}.
    \]
\end{enumerate}  
\end{coro}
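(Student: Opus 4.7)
The corollary is a direct verification that mirrors the proofs of Corollary \ref{coro:entrance:0} and Corollary \ref{coro:entrance2}. The plan is to decompose $u_{\mathrm{exit}}$ into the six consecutive source-sink pieces between the checkpoints $A_0,\ldots,A_6$ of Definition \ref{def:exit:1}, apply the appropriate elementary lemma to each piece, and then combine the bounds. Concretely, the transitions $A_0\to A_1$, $A_1\to A_2$, $A_2\to A_3$, $A_3\to A_4$ are all parallel transports (equal normals and velocities), so Lemma \ref{lem:parallel} applies directly; the transition $A_4\to A_5$ switches the normal from $-\un_2$ to $-\un_1$ so is handled by the banked corner flow of Lemma \ref{lemma:corner}; finally $A_5\to A_6$ turns from $-\un_1$ to $\un_2$ and is again an instance of Lemma \ref{lemma:corner} after a trivial reflection. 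Since the exit velocities match the entry velocities at every interior checkpoint (all equal to ${\mu_0}$), concatenation is permitted and the distributional divergence telescopes to $-\mu_0 \un_2\,\Haus^2\llcorner A_0+\mu_0(-\un_2)\Haus^2\llcorner G_{\mathrm{in}}$ as required.

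For item (E1.a) I would argue piece by piece: parallel pieces have support in the convex hull of their endpoints (Lemma \ref{lem:parallel}\ref{par1}), while the corner pieces have support contained in the thickened L-shape described in Lemma \ref{lemma:corner}\ref{corner:1}; taking the union and using the explicit coordinates from Definition \ref{def:exit:1} yields containment in the set displayed in the statement. For the disjointness from previously constructed flows, I would observe that the whole corridor used here lies in the slab $\{(P_v)_2-2\ell \leq x_2 \leq (P_v)_2\}\cup \{0\leq x_2\leq (P_v)_2\,,\,x_1\in [N^{-1}v_1+0.5\ell-\lambda/2, N^{-1}v_1+8\ell+\lambda/2]\}$, which by construction is disjoint from the snake/gate/reservoir supports inside $\q_v$ (these sit in $x_1\in [0,\ell]\cup [\ell,\ell+\lambda/2]$), from the highway supports (contained in $\conv(R_v,R_w)$ above level $\mathrm{Lev}(\gamma,v)$) and, crucially, from the entrance flow of Definition \ref{def:entrance:1}, which by symmetric design uses the corridor $x_1\in (N^{-1}v_1+\ell,N^{-1}v_1+\ell+\lambda/2)$ rather than $x_1\in (N^{-1}v_1+8\ell,N^{-1}v_1+8\ell+\lambda/2)$. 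The choice of the offset $8\ell$ in Definition \ref{def:exit:1} is precisely to separate exit and entrance corridors.

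Items (E1.b)--(E1.d) then reduce to bookkeeping. For (E1.b) each piece is contained in a box of cross section $\lambda/2\times \ell$ and length at most $N^{-1}$, so the total volume is bounded by a constant multiple of $N^{-1}\cdot (\lambda/2)\cdot \ell \leq 2N^{-1}\ell^2/\kappa$ after summation. For (E1.c) the parallel pieces give $\|u\|_{L^\infty}\leq {\mu_0}(1+D_1/D_3)$ by Lemma \ref{lem:parallel}, which is worst when the lateral shift of order $N^{-1}$ is compared with the axial length of order $\ell$, yielding an $L^\infty$ contribution of order $K{\mu_0}$; the corner pieces at $A_4\to A_5$ and $A_5\to A_6$ give $\|u\|_{L^\infty}\leq {\mu_0}(1+\tfrac{\lambda/2}{L}\e^{2L/(L-\lambda/2)})$ from Lemma \ref{lemma:corner}\ref{corner:2}, which is bounded by $C{\mu_0}$ since $\lambda\ll L\sim \ell$. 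Taking the maximum gives $(1+K){\mu_0}$. Item (E1.d) is the trivial combination $\|u_{\mathrm{exit}}\|_{L^p}^p\leq \mathscr{L}^3(\spt u_{\mathrm{exit}})\cdot \|u_{\mathrm{exit}}\|_{L^\infty}^p$.

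For (E1.e), I would add the times of the six pieces. The only piece whose length is not $O(\ell)$ is the long axial transport $A_1\to A_2$ (and, symmetrically, $A_5\to A_6$), whose axial length is bounded by $(P_v)_2+7.5\ell\leq P_2+7.5\ell+N^{-1}v_2\cdot 0$, and where the particle moves with velocity ${\mu_0}$; hence this piece alone consumes time $(P_2+7.5\ell)/{\mu_0}$. All other pieces have total time bounded by $C\ell/{\mu_0}$, which with the choice \eqref{choice:u0} of ${\mu_0}\in (8\ell\kappa,9\ell\kappa)$ is absorbed into the stated bound. Using $N^{-1}=K\ell$ and ${\mu_0}\geq 8\ell\kappa$, the claimed $t_{\mathrm{exit}}\leq K/(8\kappa)$ follows. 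The main technical obstacle throughout is the disjointness claim in (E1.a); modulo that, everything is an elementary consequence of Lemmas \ref{lem:parallel}, \ref{lemma:pipewidth}, and \ref{lemma:corner}.
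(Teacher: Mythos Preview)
Your approach is the same as the paper's (which gives no explicit proof here; the corollary is meant to be read as the exit-flow analogue of Corollary \ref{coro:entrance:0}, whose entire proof is ``direct consequence of Lemma \ref{lem:parallel}''). Decomposing into the six pieces and invoking Lemmas \ref{lem:parallel} and \ref{lemma:corner} is exactly right, and your bookkeeping for (E1.b)--(E1.e) is correct.

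One small mix-up: you have the corners in the wrong place. From the coordinates in Definition \ref{def:exit:1}, the surface $A_3$ has its second coordinate fixed (normal $\pm\un_2$) while $A_4$ and $A_5$ both have their first coordinate fixed (normal $\pm\un_1$). Hence $A_3\to A_4$ is a corner (not a parallel transport as you wrote), $A_4\to A_5$ is a parallel transport (not a corner), and $A_5\to A_6=G_{\mathrm{in}}$ is again a corner. This swap does not affect any of the estimates, since the bounds you quote from Lemmas \ref{lem:parallel} and \ref{lemma:corner} are of the same order, but you should correct the labeling before using it in the disjointness argument for (E1.a).
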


\FloatBarrier
\subsection{Summary of the previous flow \& Reversing in time} \label{sec:phase2}
Let us summarise the findings of the previous subsection. We define the vector field $u_1 \colon [0,1/2] \times [0,1]^{\nu}$ piecewise on the respective supports and constant-in-time according to the constructions undertaken previously.

\begin{thm}[The properties of the vector field $u$] \label{thm:prop:u}
The vector field $$u\in L^\infty\left(\left(0,\tfrac{1}{2}\right),\BV([0,1]^\nu)\right)$$ together with the induced flow $\Psi$ satisfies the following properties
\begin{enumerate} [label=(u\arabic*)]
    \item \label{u:1} $\diverg u=0$;
    \item \label{u:2} let $t\in\left(0,\frac{1}{4}\right)$, then if $z\in (G_v)_{in}$, $\Psi(t,y)=z$, then $\Psi(t+\frac{1}{4},y)=z+\lambda \un_2\in(G_v)_{out}$;
    \item  \label{u:3} if  
 $t\in\left(0,\frac{1}{4}\right)$, then for $z\in (G_v)_{out}$, $\Psi(t,y)=z$, we have $\Psi(t+\frac{1}{4},y)=z+N^{-1}(\sigma(v)-v)-\lambda \un_2\in(G_{\sigma(v)})_{in}$;
    \item \label{u:4} in particular, if $y\in \Qcal_v\cup \conv((G_v)_{in},(G_v)_{out})$, then $\Psi(\frac{1}{2},y)=y+N^{-1}(\sigma(v)-v)$ is the translation from $\Qcal_v$ to $\Qcal_{\sigma(v)}$.
    \item \label{u:5} $u$ is constant in time;
    \item \label{u:6} $\|u(t)\|_{L^\infty}\leq C\left(K,\kappa\right)N^{-1}\sup_{\gamma,e}\lbrace \rho(\gamma,e)\rbrace^{-1}\leq C\|\Id_M-\psi_\sigma\|_{L^\infty}$ for all $t \in (0,\tfrac{1}{2})$;
     \item \label{u:7} $\|u(t)\|_{L^p}\leq C\left(K,\kappa\right)N^{-\frac{3}{p}-1}\left(\sum_{\gamma,e}\rho(\gamma,e)^{-p+1}\right)^\frac{1}{p}\leq C\|\Id_M-\psi_\sigma\|_{L^p}$ for all $t \in (0,\tfrac{1}{2})$.
\end{enumerate}
\end{thm}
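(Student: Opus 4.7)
The plan is to define the vector field $u$ piecewise on $(0,1/2) \times [0,1]^\nu$ using the building blocks from Sections \ref{sec:snake}--\ref{sec:entrexit} and then to verify each listed property in turn. Concretely, for each $v \in V$ I set $u = u_{\mathrm{Sn}} + u_G$ on $\Qcal_v \cup \conv((G_v)_{in},(G_v)_{out})$, $u = u_{\mathrm{Re}}$ on the reservoir region behind $\Qcal_v$, and use the entrance, highway, interchange, and exit vector fields on the regions connecting $\Qcal_v$ through the discrete network $\{R_v\}_{v \in V}$ to $\Qcal_{\sigma(v)}$. The pairwise support-disjointness assertions collected in Corollaries \ref{cor:source:sink:highway}, \ref{coro:source:sink:interchange}, \ref{coro:source:sink:interchange:corner}, \ref{coro:entrance:0}, \ref{coro:entrance:1}, \ref{coro:entrance2} make this definition consistent, and since each building block is constant in time, \ref{u:5} is immediate.

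For \ref{u:1} I would observe that each building block has distributional divergence of the form $-\mu_A \Haus^2 \llcorner A \cdot \nu_A + \mu_B \Haus^2 \llcorner B \cdot \nu_B$ on its source $A$ and sink $B$. At every interface where the sink of one block is identified with the source of the next, the matched $\mu$ and $\nu$ parameters make the two surface contributions cancel distributionally. Because $\sigma$ is a permutation, every $(G_v)_{in}$ is the sink of the network chain starting at $(G_{\sigma^{-1}(v)})_{out}$ and simultaneously the source of the snake+gate circuit in $\Qcal_v$, and similarly every $(G_v)_{out}$ is the sink of snake+gate in $\Qcal_v$ and source of the next chain, so no `free' divergence survives anywhere and $\diverg u = 0$ globally.

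For \ref{u:2}--\ref{u:4} I trace trajectories of the time-independent vector field $u$. Corollary \ref{lucatoni} together with the choice \eqref{choice:u0} of $\mu_0$ gives \ref{u:2}: snake+gate is a source-sink flow from $(G_v)_{in}$ to $(G_v)_{out}$ with transit time exactly $1/4$. For \ref{u:3}, the second-phase transit time from $(G_v)_{out}$ through reservoir, entrance, highway, interchanges, and exit to $(G_{\sigma(v)})_{in}$ is a sum of the times from Corollaries \ref{cor:source:sink:highway}, \ref{coro:source:sink:interchange}, \ref{coro:source:sink:interchange:corner}, \ref{coro:entrance:0}, \ref{coro:entrance:1}, \ref{coro:entrance2}. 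By the discrete time constraint \eqref{eq:volume2} combined with the choices $\mu_0 \sim \ell\kappa$ and $\kappa > 16 N^{-1}\ell^{-1}$, this raw transit time is strictly below $1/4$ for \emph{every} path $\gamma$, so I tune the reservoir parameter $\tau \in [(\ell/2-\lambda/2)/\mu_0, 1/4]$ of Lemma \ref{lemma:reservoir} separately for each path to absorb the deficit. Property \ref{u:4} then follows from \ref{u:2}, \ref{u:3} and the translational invariance of the construction across cubes: by Lemma \ref{lemma:snake} \ref{sn:4} every $y \in \Qcal_v \cup \conv((G_v)_{in},(G_v)_{out})$ sits at some parameter $s \in [0,1/4]$ along a snake+gate streamline, and concatenating segments of length $1/4-s$, $1/4$, and $s$ lands the particle exactly at $y + N^{-1}(\sigma(v)-v)$ at time $1/2$.

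For the norm bounds \ref{u:6} and \ref{u:7} I sum the per-piece estimates. The dominant contributions come from the network pieces, where by Corollaries \ref{cor:source:sink:highway}--\ref{coro:entrance2} each pipe carrying the path $\gamma$ across edge $e$ contributes at most $C(K,\kappa)\mu_0 \rho(\gamma,e)^{-1}$ in $L^\infty$ and at most $C(K,\kappa,p) N^{-3} \mu_0^p \rho(\gamma,e)^{-p+1}$ to $\Vert u \Vert_{L^p}^p$. Summing, weighted by $\omega(\gamma)$, and adding the purely dimensional contributions from the snake, gate, reservoir, and short connector flows gives
\[
\Vert u(t,\cdot) \Vert_{L^p}^p \leq C(K,\kappa,p) N^{-3} \mu_0^p \sum_{\gamma \in \Gamma} \sum_{e \in E(\gamma)} \omega(\gamma) \rho(\gamma,e)^{-p+1} = C(K,\kappa,p) N^{-3}\mu_0^p\, c_p(\rho,\omega)^p.
\]
Theorem \ref{thm:multiD} bounds $c_p(\rho,\omega)$ by $\Vert \dist(\cdot,\sigma(\cdot))\Vert_{\ell^p(V)}$ up to a dimensional constant, and the norm equivalence \eqref{def:equiv} converts the right-hand side into $C \Vert \Id_M - \psi_\sigma \Vert_{L^p}$; \ref{u:6} is analogous using the $\ell^\infty$ version of Theorem \ref{thm:multiD}. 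The main technical obstacle in this plan is the joint bookkeeping of supports and times: one must verify that the level thickness $c_{th}$, the lexicographic ordering inside each level, and the scales $\ell, \lambda, \kappa, K$ are chosen compatibly so that no two support-disjointness claims are violated anywhere, and simultaneously that for every single path the raw second-phase transit time lies in the admissible reservoir window.
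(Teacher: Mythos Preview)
Your approach is essentially the same as the paper's: define $u$ as a patchwork of the building blocks, check divergence-free by cancellation of surface terms, trace transit times for \ref{u:2}--\ref{u:4}, and sum per-piece $L^p$ bounds for \ref{u:6}--\ref{u:7}. Two small points are worth noting.

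First, there is one genuine (if easily fixed) gap in your treatment of \ref{u:3}. You establish that the transit time from $(G_v)_{out}$ to $(G_{\sigma(v)})_{in}$ can be made exactly $1/4$ by tuning the reservoir, but \ref{u:3} asserts more: the map $z \mapsto \Psi(t+\tfrac14,y)$ is \emph{exactly} the translation $z + N^{-1}(\sigma(v)-v) - \lambda\un_2$, not just some bijection between the two gates. The paper closes this by observing that the full map is a composition of the individual source-sink maps $\Psi_j \colon A_j \to A_{j+1}$, each of which is affine, orientation-preserving, and orientation-preserving in the $x_3$-variable alone; an affine bijection between two congruent rectangles with these three properties is forced to be the translation. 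Without this step, \ref{u:4} does not follow either, since your concatenation argument there tacitly relies on the map in \ref{u:3} being the translation.

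Second, a contextual remark: Theorem \ref{thm:prop:u} sits in the simplified two-dimensional-in-3D setting of Lemma \ref{keylemma} (so $\sigma\in S^2$, one path per vertex, all $\omega(\gamma)=1$), hence the relevant discrete bound is Theorem \ref{thm:2D} rather than Theorem \ref{thm:multiD}. This is cosmetic, but your appeal to $\omega$-weighted sums and Theorem \ref{thm:multiD} really belongs to the later Section \ref{sec:higherD} version.
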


\begin{proof}
The property that $\diverg u=0$ follows by construction and summing up all distributional divergences of the source-sink-flow. Also \ref{u:5} is directly given due to the construction.

We already established the second assertion in Corollary \ref{lucatoni}. For the third assertion \ref{u:3}, we summarise the time values occupied by every single step/source-sink flow: 
\begin{itemize}
    \item For the reservoir flow the time $t_{res}$ may be chosen freely (at the end of this construction), cf. Lemma \ref{lemma:reservoir};
    \item The time from the cube to the network and from the network to the cube is (cf. Section \ref{sec:entrexit}
    \begin{equation*}
        t\leq 2\frac{K}{8\kappa} +4\ell\kappa\max\lbrace \rho(\gamma,e)K^{-1},\frac{2}{\kappa}\rbrace;
    \end{equation*}
    \item By the careful choice of the constants $\kappa$, $K$ and $\mu_0$, the time on the 'highway' of an edge $e$ is (cf. Corollary \ref{cor:source:sink:highway})
    \begin{equation*}
        t\leq \frac{1}{16}\rho(\gamma,e);
        \end{equation*}
    \item The time in the interchange at a vertex $v$ and edge $e=\{v,w\}, e'=\{v,w'\} \in E(\gamma)$ is
    \begin{equation*}
        t\leq \frac{1}{8K}\max\lbrace\rho(\gamma,e),\rho(\gamma,e')\rbrace.
    \end{equation*}
\end{itemize}
Thus, the time that a particle needs from $\Q_v$ to $\Qcal_v$ is bounded from above by
\begin{equation*}
    t_{res}+\frac{\kappa^{-1} N^{-1}}{4\ell}+8\ell +\left(\frac{4\ell^2\kappa}{N^{-1}}+\frac{1}{16}+\frac{\ell}{4N^{-1}}\right)\sum_{e\in E(\gamma)}\rho(\gamma,e),
\end{equation*}
we recall that $\sum_{e\in E(\gamma)}\rho(\gamma,e)\leq 1$, so with together with the choice of $\kappa$ and $K=N^{-1}/\ell$ we have
\begin{equation*}
    4 K \leq \kappa, \quad 4 \tfrac{\ell \kappa}{K} \leq \frac{1}{32}.
\end{equation*}
It follows that with the correct choice of $t_{res}>0$ the sum of all times equals $\tfrac{1}{4}$.

It remains to show that the map $z \mapsto \Psi(\tfrac14,z)$ is the translation on the gates. To this end observe that $\Psi$ is the concatenation of multiple source-sink maps $\Psi_j \colon A_{j} \to A_{j+1}$, that are affine, orientation preserving and orientation preserving regarding \emph{only} in $x_3$ direction (i.e. if $x,y \in A_j$ and $x_3<y_3$ then also $\Psi_j(x)_3 < \Psi_j(y)_3$). Consequently, also $\Psi$ has these three properties and therefore must be the translation.

The fourth assertion follows by the two previous ones. In particular, a particle $y \in \Qcal_v$ gets transported to some $z \in (G_v)_{out}$ in some time $t^{\ast}$, then to $(G_w)_{in}$ in time $t^{\ast}+1/4$ and then finally to the translated $\Psi(1/2,z)$ in the remaining time (the snake flows for $\Qcal_v$ and $\Qcal_w$ are the same up to translation).

We will prove the seventh assertion \ref{u:7}, the sixth follows likewise. So we summarise the $L^p$ bounds found in the previous sections, namely
\begin{itemize}
    \item The $L^p$ bound for the snake flow, gate flow and reservoir flow is
    \begin{equation*}
        \|u\|^p_{L^p}\lesssim \ell^{3+p}\kappa^p;
    \end{equation*}
    \item The $L^p$ bound for the entrance/exit flow is
    \begin{equation*}
        \|u\|^p_{L^p}\lesssim c(\kappa)\ell^{3+p}\kappa^p;
    \end{equation*}
    \item The $L^p$ bound for the highway/interchange flow is
    \begin{equation*}
        \|u\|^p_{L^p}\lesssim N^{-3}c(\kappa)^p(\ell \kappa)^p\rho(\gamma,e)^{-p+1}.
    \end{equation*}
\end{itemize}
Summing up all the contributions, one gets
\begin{equation*}
    \|u\|^p_{L^p}\lesssim \sum_{v:\sigma(v)\not=v}\ell^{3+p}\kappa^p + N^{-3}\sum_{e,\gamma}\rho(\gamma,e)^{-p+1}\ell^p\lesssim N^{-3-p}\sum_{e,\gamma}\rho(\gamma,e)^{-p+1},
\end{equation*}
where we used the rough bound $\ell\leq N^{-1}$ and
$\sum_{v:\sigma(v)\not=v}1\leq \sum_{e,\gamma}\rho(\gamma,e)^{-p+1}$ (cf. equation \eqref{eq:final:section}).
\end{proof}

Theorem \ref{thm:goal} now follows by reversing the flow inside of the tubes and by the considerations of the previous lemma: 
In the previous subsections we wrote down the flow that moved each subcube $\Qcal_v$ into $\sigma(\Qcal_v)$ in time $t=1/2$, but since we need to guarantee that the flow map $\Psi$ is the identity inside outside $\cup_v \Qcal_v$, in particular inside the tubes where the flows are performed, we need to reverse the flow. In particular, we reverse the following flows:
\begin{itemize}
    \item By concatenation of the reservoir, the entrance, the exit and the highway \& interchange flows/vector fields we have a source-sink flow from $(G_v)_{out}$ to $ (G_{\sigma(v)})_{in}$. We reverse it by changing the sign and denote by $u_{rev}$ the reversed flow.
    \item We need find a reverse flow from $(G_v)_{out}$ to $(G_{v})_{in}$. For this define on for $x \in \conv((G_v)_{in},(G_v)_{out})$
    \[
        u_{rev} (x):= - \un_2 {\mu_0}.
    \]
\end{itemize}
The reversed flow $\Psi_{rev}$ induced by $u_{rev}$ has very similar properties to the forward flow $\psi$ of Theorem \ref{thm:prop:u}. In particular,
\begin{enumerate} [label=(\roman*)]
    \item It is divergence-free, constant-in-time and also satisfies the bounds \ref{u:6} and \ref{u:7};
    \item If $z \in (G_v)_{out}$, then $\psi_{rev}(t^{\ast},z) = z - \lambda \un_2 \in (G_v)_{in}$, $t^{\ast} = \tfrac{\lambda}{{\mu_0}}$;
    \item If $z \in (G_{\sigma(v)})_{in}$, then we have $\Psi_{rev}(\tfrac{1}{4},z) = z + N^{-1}(v-\sigma(v)) + \lambda \un_2 \in (G_v)_{out}$;
    \item If $z \notin \Qcal_v$ for some cube $\Qcal_v$, then $\Psi_{rev}(t^{\ast}+\tfrac{1}{4}) \circ \Psi(\tfrac{1}{2})(z) = z$.
\end{enumerate}

After rescaling in time we obtain the result of Lemma \ref{keylemma}. Theorem \ref{thm:goal} is then achieved by rescaling time by factor $(N^{-1} \ell^{-1})^3$ and 
applying Lemma \ref{keylemma} appropriately $(N^{-1} \ell^{-1})^3$-times one obtains Theorem \ref{thm:goal}. Observe that the corresponding vector field $u$ is constant-in time on $2 (N^{-1} \ell^{-1})^3$ small intervals, hence obtaining an $L^1_t L^p_x$ norm is equivalent to any $L^q_t L^p_x$ norm.

Thus, we have proven Lemma \ref{keylemma} \& Theorem \ref{thm:goal}.

\section{Construction of the flow in higher dimensions} \label{sec:higherD}
In the previous subsection we constructed a vector field connecting the identity to a discrete configuration that was the identity in the third variable, i.e.
\[
\sigma(v_1,v_2,v_3) = (\tilde{\sigma}(v_1,v_2),v_3).
\]
Hence, we were able to use the solution to the 2-dimensional discrete problem, cf. Section \ref{sec:discrete:2D}.

The construction broadly remains unchanged for $3D$ and also higher dimensions. Therefore, we only outline the key changes to the construction in Section \ref{sec:4} that occur in the $3D$ case. Dimensions higher than $3$ may then be tackled with a very similar strategy.
\subsection{Main result}
We aim to prove Theorem \ref{thm:goal} in three dimensions. For this, we again subdivide any small cube $\tilde{\q}_v$ into $K^3$ small cubes and show a version of Lemma \ref{keylemma} \emph{without} the additional assumption that the permutation leaves the third coordinate unchanged.

\begin{lemma}[Partial construction of flows II] \label{keylemma:V2}
Let $\sigma \colon \{0,\ldots,N-1\}^{3} \to  \{0,\ldots,N-1\}^{3}$ be bijective. There exists a divergence-free vector field $u \in L^\infty([0,1];L^\infty(\Omega;\R^3))$ and a map $\psi \in W^{1,\infty}([0,1];L^\infty(\Omega;\Omega))$ such that 
    \begin{enumerate} [label=(\roman*)]
        \item \label{flow:1a:V2} $\partial_t \psi(t,x) = u(t,\psi(x))$;
        \item \label{flow:2a:V2} $\psi(0,\cdot) = \Id_{\Omega}$;
        \item \label{flow:3a:V2} $\psi(1,\cdot)$ = $\psi_{\sigma,\kappa}$;
        \item \label{flow:4a:V2} $\diverg_x u=0$ in the sense of distributions for a.e. time $t>0$;
    \end{enumerate}
with 
\begin{equation} \label{eq:Lpbound2:V2}
       \Vert u \Vert_{L^1([0,1];L^p(\Omega))} \leq C(p) \Vert \psi_{\sigma} - \Id_{\Omega} \Vert_{L^p(\Omega)}
   \end{equation}
and, for $\Qcal_v = N^{-1} v + [0,\ell)^3$, we have
\begin{equation} \label{eq:Qvkappa:V2}
\psi_{\sigma,\kappa} = \begin{cases}
    \id  & \text{if } x \in  Q_v \setminus \Qcal_{v}, \\
    \psi_{\sigma} & \text{if } x \in \Qcal_{v}.
\end{cases}
\end{equation}
\end{lemma}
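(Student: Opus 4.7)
The strategy is to mirror the construction of Sections \ref{sec:4} and \ref{sec:construction} almost verbatim, replacing the 2D discrete input from Section \ref{sec:discrete:2D} by the genuinely 3D discrete solution of Section \ref{sec:discrete:multiD}. Concretely, I would apply Theorem \ref{thm:multiD} to $\sigma$ to obtain, for each $v \in V$, a family of shortest paths $\Gamma_v$ from $v$ to $\sigma(v)$ equipped with weights $\omega(\gamma) = 1/\#\Gamma_v$ and thicknesses $\rho(e,\gamma)$ satisfying the admissibility conditions \eqref{eq:decomp}, \eqref{eq:volume2}, \eqref{eq:capacity2} and the $l^p$ bound $c_p(\rho,\omega) \leq C(\nu)\Vert\dist(\cdot,\sigma(\cdot))\Vert_{l^p(V)}$.

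The first genuine adaptation is that now a single cube $\Qcal_v$ must be subdivided into one sub-region per path $\gamma \in \Gamma_v$, with $\mathcal{L}^3$-measure proportional to $\omega(\gamma)$. Since $\#\Gamma_v \leq \prod_{j=1}^{\nu-2}(a_j+1)$ is bounded by a polynomial in $N$, this can be done by slicing $\Qcal_v$ along the $x_1$-axis into $\#\Gamma_v$ equal strips (or, equivalently, refining the ``snake flow'' of Section \ref{sec:snake} so that each strip produces its own $(G^\gamma_{in},G^\gamma_{out})$ pair, still with constant speed ${\mu_0}$ and disjoint supports). To each such strip I would attach a copy of the reservoir, gate, and entrance/exit constructions of Sections \ref{sec:gate}--\ref{sec:entrexit}, routing the corresponding fluid along the path $\gamma$.

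The second adaptation concerns the highway and interchange flows inside each parallelepiped $R_v$. In Section \ref{sec:highway} the ``level'' function from Definition \ref{def:level} had $14$ values, parametrising the $4\cdot 3=12$ possible incoming/outgoing direction pairs plus the two start/end cases. In three spatial dimensions there are now $6\cdot 5 = 30$ such configurations, so I would redefine $\mathrm{Lev}(\gamma,v) \in \{1,\ldots,32\}$ and enlarge $R_v$ accordingly (the thickness constant $c_{\mathrm{th}}$ is purely local and does not change). Within a single level, the straight interchange construction (Definition \ref{def:source:sink:interchange}) and the banked-corner construction (Definition \ref{def:source:sink:interchange:corner}, relying on Lemma \ref{lemma:corner}) already suffice: for corner turns in the $(x_1,x_3)$ or $(x_2,x_3)$ planes one simply relabels coordinates before applying Lemma \ref{lemma:corner}. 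Non-intersection of pipes belonging to different $\gamma$ sharing an edge is still enforced by placing them according to a lexicographic order $\prec_e$ compatible with the (now three-way) direction choices at $e$; the fact that $\gamma\prec_e\gamma' \Leftrightarrow \gamma\prec_{e'}\gamma'$ for consecutive edges $e,e'\in E(\gamma)\cap E(\gamma')$ continues to hold because the paths produced by Definition \ref{def:gammav} are coordinate-monotone on each of their segments.

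With these modifications the verbatim analogues of Corollaries \ref{cor:source:sink:highway}, \ref{coro:source:sink:interchange}, \ref{coro:source:sink:interchange:corner}, \ref{coro:entrance:0}, \ref{coro:entrance:1}, \ref{coro:entrance2} hold, with an extra $\omega(\gamma)$ factor appearing in the volume (and hence $L^p$) estimates; summing over $\gamma$ and $v$ yields
\[
\Vert u\Vert_{L^p}^p \lesssim N^{-3}\sum_{\gamma\in\Gamma}\sum_{e\in E(\gamma)}\omega(\gamma)\rho(e,\gamma)^{-p+1} = N^{-3}\,c_p(\rho,\omega)^p,
\]
which combined with Theorem \ref{thm:multiD} \ref{thm:multiD:3} and the norm equivalence \eqref{def:equiv} gives \eqref{eq:Lpbound2:V2}. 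The temporal coordination (reservoir flow of Lemma \ref{lemma:reservoir} absorbing the slack) and the reversal phase of Section \ref{sec:phase2} transfer without change, producing the desired flow on $[0,1]$. The chief obstacle is bookkeeping: one must verify that, even with many paths per source, the total collection of source-sink supports remains pairwise disjoint. This reduces to checking the capacity constraint \eqref{eq:capacity2}, which guarantees that the sum of thicknesses $\sum_{\gamma\ni e}\omega(\gamma)\rho(e,\gamma)$ along any edge fits inside a single ``floor'' of $R_v$ of cross-section $\sim\ell^2$, together with the freedom afforded by choosing $K$ and $\kappa$ large enough to accommodate all $\lesssim N^{\nu-1}$ paths crossing a given edge and to separate the $32$ levels.
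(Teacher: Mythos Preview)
Your overall plan is right: feed the 3D discrete solution of Theorem \ref{thm:multiD} into the pipe machinery of Sections \ref{sec:4}--\ref{sec:construction}, split the outflow of each $\Qcal_v$ according to the weights $\omega(\gamma)$, and sum the $L^p$ contributions to recover $c_p(\rho,\omega)$. However, the step ``for corner turns in the $(x_1,x_3)$ or $(x_2,x_3)$ planes one simply relabels coordinates before applying Lemma \ref{lemma:corner}'' does not work as stated, and this is exactly the point the paper singles out as the nontrivial adaptation.

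The reason is that in the construction of Section \ref{sec:interchange} the interchange region $R_v$ has its levels stacked in the $x_3$-direction (via $I^{\gamma,v}_3$), and the banked corner of Lemma \ref{lemma:corner} extends into that same $x_3$-direction (this is precisely why $c_{\mathrm{th}}=2(1+e^2)$ was chosen). A corner in the $(x_2,x_3)$-plane would require the bank to extend in $x_1$, but in $R_v$ the $x_1$-extent is only $\ell$ and is already partitioned among the different paths via $h(\gamma,e)$; there is no room. You cannot simultaneously use $x_3$ as the level coordinate and as a highway direction inside one region $R_v$. The paper resolves this by introducing a \emph{second} interchange region $\tilde R_v$, shifted along $\un_2$, in which the roles of $x_1$ and $x_3$ are swapped (so levels stack in $x_1$, highways run in $x_2,x_3$), together with a new ``rotation flow'' (Lemma \ref{lemma:transrot}) that connects $R_v$ to $\tilde R_v$ along the common $\un_2$-direction while rotating the pipe cross-sections. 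A turn from $\un_1$ to $\un_3$ then becomes: corner in $R_v$ from $\un_1$ to $\un_2$, rotation flow to $\tilde R_v$, corner in $\tilde R_v$ from $\un_2$ to $\un_3$. Your single-region proposal with $32$ levels (the paper needs $42=30+12$, distinguishing start/end by the adjacent edge) cannot accommodate this, and the non-intersection claim you make for the lexicographic order across all three edge types would also need justification once two interchange regions are in play.

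A smaller point: the paper does not slice the snake flow into $\#\Gamma_v$ strips as you suggest, but keeps the snake and gate flows intact and instead performs the splitting at the reservoir stage (Section \ref{sec:higherD}, adapted reservoir flow), partitioning $G_{out}$ into sub-rectangles of width $\tfrac{\lambda}{2}\omega(\gamma)$. Your alternative is plausible but would force you to redo the timing analysis of Corollary \ref{lucatoni} for variable-width snake channels.
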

The two main differences between above Lemma \ref{keylemma:V2} and Lemma \ref{keylemma} can be summarised as follows: \begin{itemize}
    \item Observe that in $3D$ the solution to the discretised problem in Section \ref{sec:3} is different and we need to split up the flow coming from the cube $\q_v$ into several parts that follow different paths $\gamma$.
    \item The snake flow, the gate flow, the entrance and exit flow and the flow along an edge may be left (almost) unchanged. However, observe that the construction of the interchanges needs to be approached with more care: The construction of Section \ref{sec:interchange} needs one dimension that plays the role of the level.
\end{itemize}
Without following the details of Section \ref{sec:4} gains, we shortly present the adaptations that are necessary to solve the problems outline above.
\subsection{Adaptation of the flow from the cube to the network}
There are not a lot of adaptations needed in this case: The flow from $G_{in}$ to $G_{out}$ (cf. Section \ref{sec:snake} \& \ref{sec:gate}) stays untouched.

As mentioned previously, however, there are multiple paths that start in $v$ and end in $\sigma(v)$ (i.e. are element of $\Gamma_v$ that have a weights $\omega(\gamma)$ that add up to one.
Equip those paths with an order to restore a lexicographic order as in Section \ref{sec:highway}. Let 
\[
h_{res}(\gamma) = \sum_{\gamma' \prec \gamma} \omega(\gamma').
\]
Let now $\gamma \in \Gamma_v$ be given. Then we define the new reservoir flow to the cubes $(0,\ell)^3$ as follows (also cf. Figure \ref{fig:reservoir:3D}).
\begin{definition}[Adapted reservoir flow
] Let $h \in (\lambda/2 \cdot \omega(\gamma),7 \ell \omega(\gamma))$. Define hypersurfaces $R_i = R_i[\gamma]$ as follows
\begin{itemize}
    \item $R_0 = \left(\ell + \lambda/2 h_{res}(\gamma), \lambda/2 (h_{res}(\gamma)+\omega(\gamma))\right) \times \{\ell/2+\lambda/2\} \times (0,\ell) \subset G_{out}$;
    \item $R_1 = (\ell +7 \ell h_{res}(\gamma),\ell +7 \ell h_{res}(\gamma) + \lambda/2 \omega(\gamma)) \times \{7\ell/12\} \times (0,\ell) $;
    \item $R_2 = (\ell +7 \ell h_{res}(\gamma) +7 \ell \omega(\gamma)) \times \{2\ell/3\} \times (0,\ell) $;
    \item $R_3 =  (\ell +7 \ell h_{res}(\gamma) +7 \ell \omega(\gamma)) \times \{5\ell/6\} \times (0,\ell) $;
   \item $R_4= (\ell +7 \ell h_{res}(\gamma),\ell +7 \ell h_{res}(\gamma) + \lambda/2 \omega(\gamma)) \times \{11\ell/12\} \times (0,\ell) $;
   \item $R_5=  (\ell + \lambda/2 h_{res}(\gamma), \lambda/2 (h_{res}(\gamma)+\omega(\gamma)) \times \{\ell\} \times (0,\ell)  \subset R_{out}$.
\end{itemize}
Define the vector fields $u_{Res,\gamma}$ to be the concatenation of parallel source-sink vector fields from Lemma \ref{lem:parallel} and \ref{lemma:pipewidth}.
\end{definition}
Following the lines of Lemma \ref{lemma:reservoir} we get appropriate counterparts of \ref{Re:1} -- \ref{Re:4}. In particular, up to constants we are able to get the same $L^{\infty}$ and $L^p$ bounds.

\medskip
The flow from the cube to the network remains relatively unchanged. The only issue difference is that the interchange region $R_v$ contains more levels (also cf. the following Section \ref{sec:adaptation:interchange}).

\FloatBarrier
\begin{figure}[!htbp]
     \centering \includegraphics[width=0.6
     \textwidth]{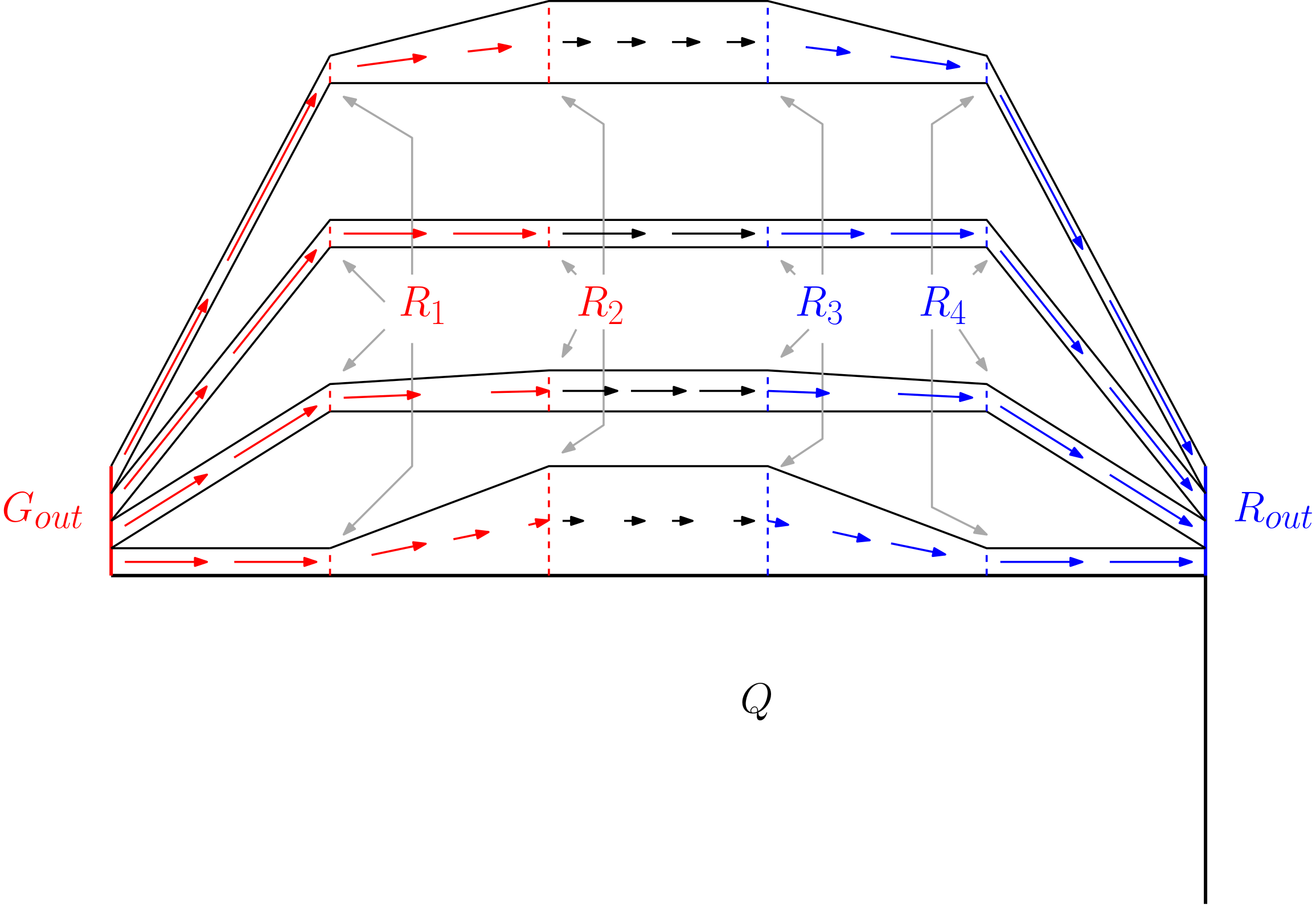}
     \caption{Top-down view of the construction of the reservoir flow, if particles move along different paths $\gamma_1$,..., $\gamma_4$: Each path is allocated enough space for a proper reservoir first (flow from $G_{out}$ to $R_1^{\gamma}$), and then we repeat the construction of the reservoir flow in the simple case (flow from $R_1^{\gamma}$ to $R_4^{\gamma}$ via $R_2^{\gamma}$ \& $R_3^{\gamma}$).}
     \label{fig:reservoir:3D}
    \end{figure} 
\FloatBarrier
\subsection{Adaptation of the grid and interchanges} \label{sec:adaptation:interchange}
\begin{figure}[!htbp]
     \centering \includegraphics[width=0.6
     \textwidth]{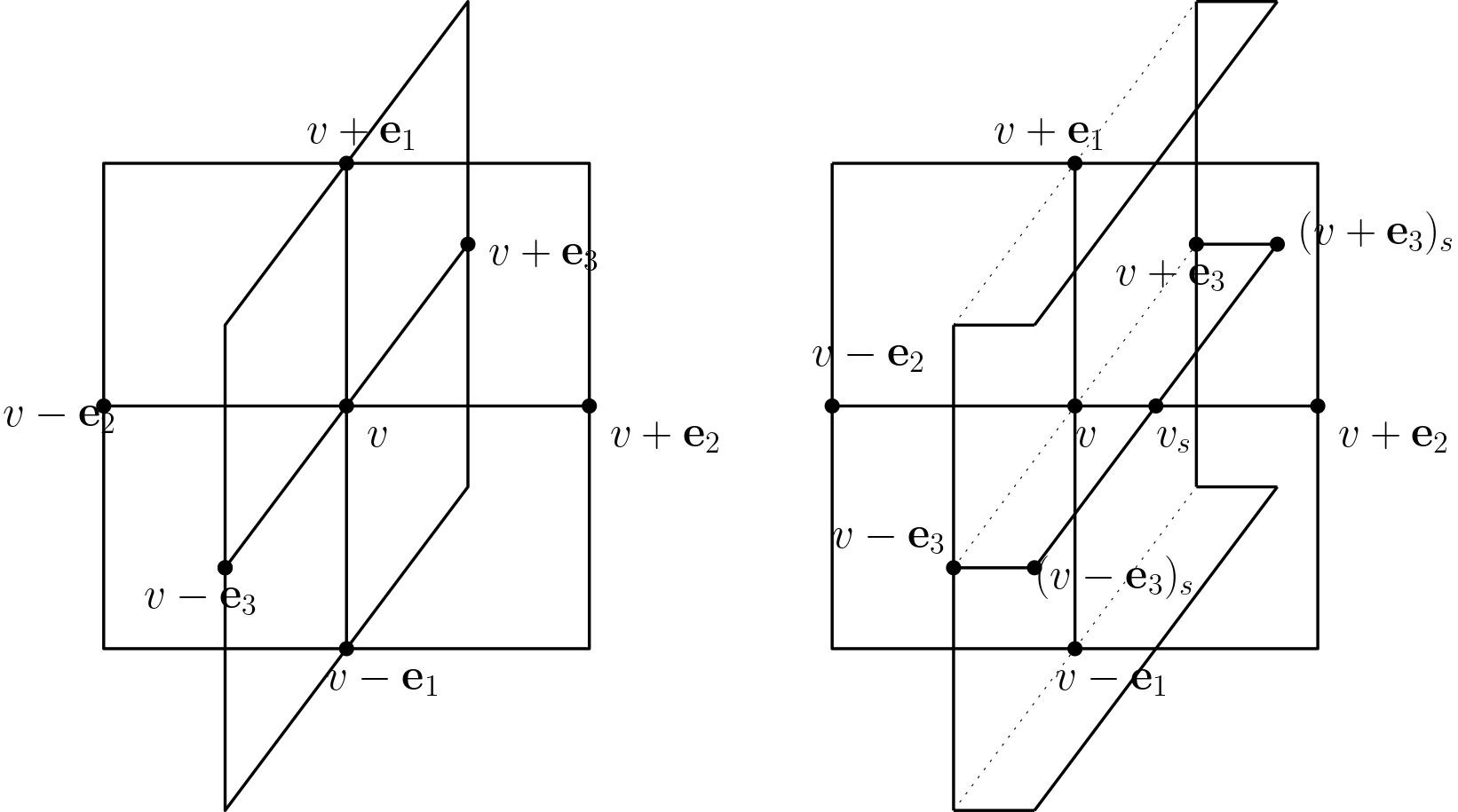}
     \caption{Visualisation of the standard grid vs a grid where the edges in $x_3$ direction are a little bit shifted. Note that at a vertex on the standard grid there are 6 directions in all 3 coordinate directions to choose from, which makes the construction of the interchange flow more complicated. In contrast, in slightly shifted grid both in $v$ and $v_s$ there are 4 possible directions in only two coordinate direction, which allows for a repeat of the construction of interchanges.} \label{network3D}
    \end{figure} 
After adjusting the flow from the cube to the network we see how we adjust the flow inside the network. As indicated by Figure \ref{network3D}, we adjust the constructed of interchanges (at grid points $P_v$) as follows: Instead of considering one interchange region $R_v$, we take the interchange region $R_v$ and an additional interchange region $\tilde{R}_v$, which is a little bit shifted by $s \un_2$.

Given a point $P \in [42c_{th} \ell, N^{-1} -42c_{th} \ell]^3$ and $v \in \{0,\ldots,N-1\}^3$, we therefore define
\begin{equation} \label{def:Pv:Pvtilde}
P_v = N^{-1} v + P, \quad \tilde{P}_v = N^{-1} v+ P + 4 \ell  \un_2, \quad \tilde{P}^2_v = N^{-1} v+ P + 8\ell \un_2
\end{equation}
and
\begin{equation} \label{def:Rv:Rvtilde}
    R_v = P_v + [0,\ell]^2 \times [0,42c_{th}\ell], \quad \tilde{R}_v =  \tilde{P}_v +  [0,42c_{th}\ell]  \times [0,\ell]^2, \quad   S_v = \tilde{P_v^2} + \{0\} \times [0,\ell]\times [0,42c_{th}\ell].
\end{equation}
Furthermore, for $\gamma \in \Gamma$ and $v \in V$ we can define a level map as in Definition \ref{def:level}
\[
Lev \colon \{(\gamma,v) \colon v \in V(\gamma) \} \to \{1,2,\ldots,42\}.
\]
Here we remark that similarly to the $2D$ case, the first $30=6 \cdot 5$ levels are labeled through the vertex that is visited by $\gamma$ before $v$ ($6$ possibilities) and after $v$ ($5$ possibilities). The other $12= 6+6$ levels are reserved for paths that either start or end in $v$ and visit some edge $e= \{v, w\}$, $w \in \{\{v \pm \un_1, v \pm \un_2, v \pm \un_3\}$ first or last, respectively.

\medskip

The flow in the network is then constructed keeping the following in mind (also cf. Figure \ref{3D:new}):
\begin{enumerate} [label=(Ad\arabic*)]
    \item \label{item1} If $e = \{v, v \pm \un_1\}$ we take a construction as in the previous section from $R_{v}$ to $R_{v \pm \un_1}$;
    \item \label{item2} if $e = \{ v, v \pm \un_3\}$ we take the construction from the previous subsection between $\tilde{R}_v$ and $\tilde{R}_{v \pm \un_3}$ with the roles of the coordinate directions $\un_1$ and $\un_3$ reversed.
    \item \label{item3} if $e = \{v, v +\un_2\}$ we slightly change the construction of the flow to get  from interchange $R_v$ to $\tilde R_v$ to $S_v$ and then, from $S_v$ onwards use the flow to $R_{v + \un_2}$
    \item \label{item4} if $\{v,v \pm \un_1\}, \{v, \pm \un_2\} \in E(\gamma)$, for the interchange we take the construction as in Section \ref{sec:interchange} in $R_v$.
    \item \label{item5} if $\{v,v \pm \un_3\}, \{v, \pm \un_2\} \in E(\gamma)$, for the interchange we take the construction as in Section \ref{sec:interchange} in $\tilde{R}_v$.
    \item \label{item6} if $\{v,v \pm \un_1\}, \{v, \pm \un_3\} \in E(\gamma)$ the interchange construction is as follows: We take an interchange in $R_v$ that changes onto $x_2$ direction, consider a flow from $R_v$ to $\tilde{R}_v$ as in \ref{item3}, and than change to $x_3$ direction (cf. Figure \ref{network3D} for visualisation on the grid).
\end{enumerate}

\begin{figure}[!htbp]
     \centering \includegraphics[width=0.6
     \textwidth]{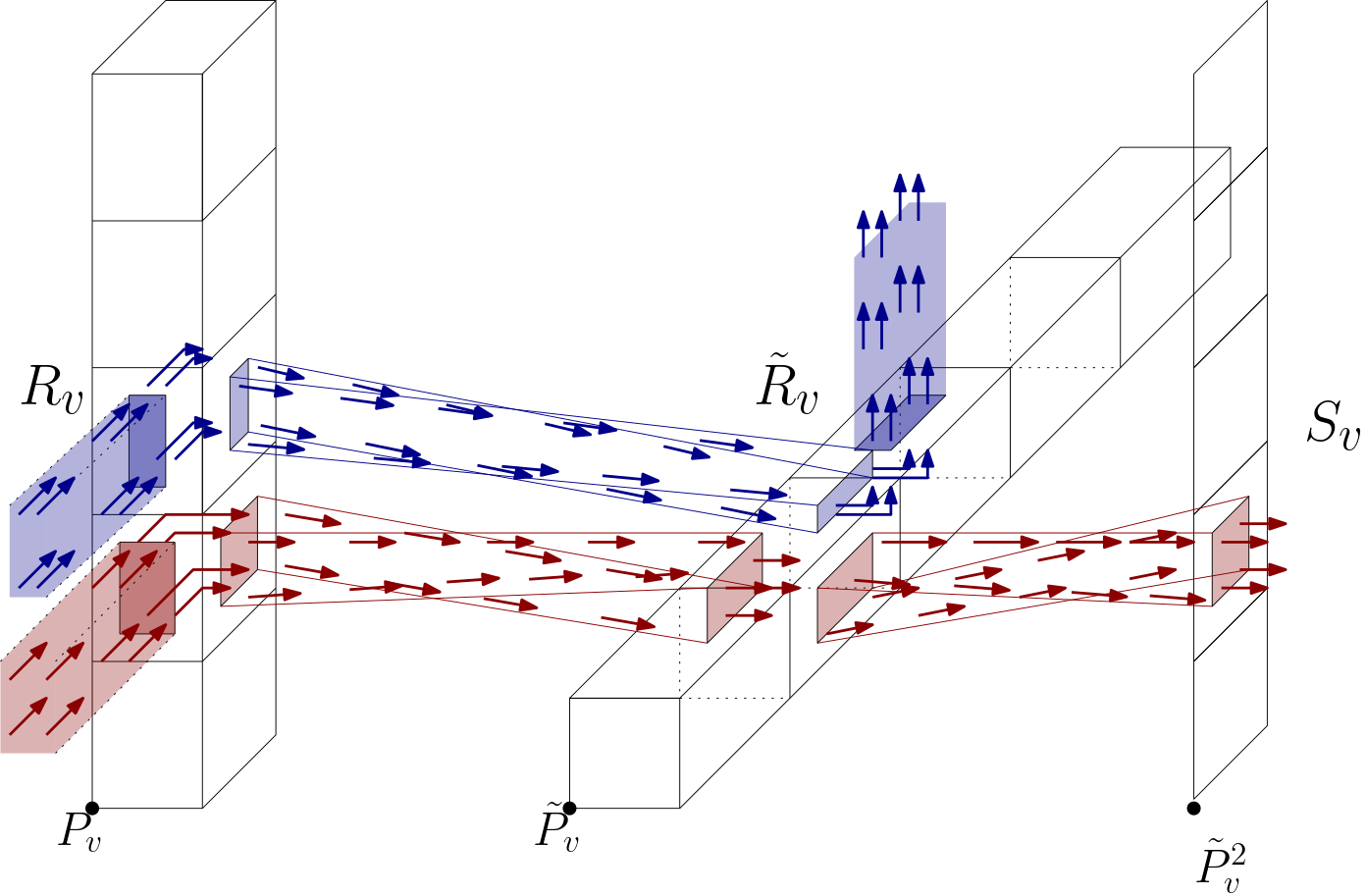}
     \caption{Schematic display of some flows in 3D. The red flow corresponds to \ref{item3}: We first flow into $R_v$, then into $\tilde{R}_v$ with additional rotation and rotate back to $S_v$. The blue flow corresponds to the flow of \ref{item6}: We first take the interchange in $R_v$ and then the interchange in $\tilde{R}_v$.}
     \label{3D:new}
    \end{figure} 

In particular, the construction of flows insider $R_v$ and $\tilde{R}_v$, i.e. \ref{item4}-\ref{item6},  might be copied from Section \ref{sec:interchange}. For \ref{item1} \& \ref{item2} take the flow/vector field from Section \ref{sec:highway}. 

We would like to point out that in all constructions, while the velocity keeps unchanged, the pipewidth is multiplied with the further weight of $\omega(\gamma,e)$ (the reader may also see this in the last construction undertaken below).
 It therefore remains to construct a suitable vector field in between $R_v$ and $\tilde{R}_v$ (for \ref{item3} and \ref{item6}) and $\tilde{R}_v$ and $S_v$ (for \ref{item3}).
\medskip

To this end, we deal with the case of \ref{item3} first (the latter is commented on in Remark \ref{rem:item6}). Suppose that $e = \{v, w\}$, $w= v + \un_2$ and define, as before, the interval

\[
I^{v,\gamma} = \left(c_{th}\ell(\mathrm{Lev}(\gamma,v)-1/2),c_{th}\ell(\mathrm{Lev}(\gamma,v)-1/2) + \ell \right).
\]
Let $\rho(\gamma,e)$ and the rescaled $\tilde{\rho}(\gamma,e) = \tfrac{\ell^2}{4N^{-1}} \rho(\gamma,e)$ be as before and let \[
h(\gamma,e) = \frac{3\ell}{8} + \sum_{\gamma' \prec_e \gamma} \omega(\gamma,e) \tilde{\rho}(\gamma,e).
\]
As before, with adjustment of the thickness let then
\[
A^{\gamma,e}_{in} = P_v + (h(\gamma,e),h(\gamma,e)+ \omega(\gamma,e) \tilde{\rho}(\gamma,e)) \times \{\ell \} \times I^{v,\gamma}
\]
and 
\[
A^{\gamma,e}_{out}= P_w + (h(\gamma,e),h(\gamma,e)+ \omega(\gamma,e) \tilde{\rho}(\gamma,e)) \times \{0 \}\times I^{w,\gamma}.
\]
We construct a flow via source-sink-flows and certain checkpoints from $A^{\gamma,e}_{in}$ to $A^{\gamma,e}_{in}$. In particular, those checkpoints are
\begin{align*}
\tilde{A}^{\gamma,e}_{out} &= P_v + I^{v,\gamma} \times \{4 \ell\} \times (h(\gamma,e),h(\gamma,e)+ \omega(\gamma,e) \tilde{\rho}(\gamma,e)) \subset \partial \tilde{R}_v, \\
\tilde{A}^{\gamma,e}_{in} & = P_v + (h(\gamma,e),h(\gamma,e)+ \omega(\gamma,e) \tilde{\rho}(\gamma,e)) \times \{8 \ell\} \times I^{v,\gamma}  \subset S_v.
\end{align*}

The definition of $\tilde{A}^{\gamma,e}_{out}$ reflects that in $\tilde{R}_v$ the roles of the $x_1$ and $x_3$ direction are reversed. To achieve nice flows between those checkpoints with disjoint trajectories we need to define another 'building block' source-sink flow.

\begin{lemma}[Rotation flow] \label{lemma:transrot}
Let $A \subset \R^3$, $A= A' \times \{0\}$ where $A' \subset B(0,r)$ is a rectangle. For an angle $\phi \in [0,2\pi)$ let $A'_{\phi}$ be the counter-clockwise rotation of $A'$ and define $B= A'_{\pi/2} \times \{1\}$. Let $\mu_0 \in \R_+$ such that $\nu_A = \nu_B = \un_3$ and $u_A=u_B= \un_3 \mu_0$. Then there exists a source-sink flow $\psi$ from $A$ to $B$ induced by a vector field $u$ such that 
\begin{enumerate} [label=(\roman*)]
\item $\Vert u \Vert_{L^{\infty}} \leq C(1+r) \mu_0$;
\item The flow transports particles in $A$ to $B$ in time $t^{\ast}= \tfrac{1}{u_0}$;
\item \label{transrot:3} $\spt(u) = \{(x,t) \in B(0,r) \times [0,1] \colon x \in A'_{t*(\pi/2)} \}$;
\item If $z=(z_1,z_2,0) \in A$ then $\psi(t^{\ast},z) = (-z_2,z_1,1)$.  
\end{enumerate}
\end{lemma}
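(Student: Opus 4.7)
The plan is to guess the velocity field directly as the sum of a constant vertical drift and a rigid rotation about the $x_3$-axis with angular speed $\tfrac{\pi \mu_0}{2}$. Concretely, set
\[
u(x_1,x_2,x_3) = \indicator_K(x_1,x_2,x_3) \bigl(-\tfrac{\pi \mu_0}{2}\,x_2,\ \tfrac{\pi \mu_0}{2}\,x_1,\ \mu_0\bigr)^T,
\]
where $K \subset \R^3$ will be the tubular region swept out by the trajectories of the ODE $\dot y = u(y)$ starting in $A$. This field is pointwise divergence-free on the interior of $K$. Because the first two coordinates satisfy a linear system with skew-symmetric generator and the third coordinate just grows linearly, the ODE integrates in closed form:
\[
\psi(t,z_1,z_2,0) = \bigl(z_1 \cos(\tfrac{\pi \mu_0 t}{2}) - z_2 \sin(\tfrac{\pi \mu_0 t}{2}),\ z_1 \sin(\tfrac{\pi \mu_0 t}{2}) + z_2 \cos(\tfrac{\pi \mu_0 t}{2}),\ \mu_0 t\bigr).
\]

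From this explicit formula, the remaining claims follow almost immediately. Evaluating at $t^* = 1/\mu_0$ gives $\psi(t^*,z_1,z_2,0) = (-z_2,z_1,1)$, which is precisely the rotation mapping $A$ bijectively onto $B = A'_{\pi/2}\times\{1\}$; this establishes the transport time and the final property. The cross-section of $K$ at height $x_3 \in [0,1]$ is the image of $A'$ under the rigid rotation of angle $(\pi/2)\,x_3$, so $K = \{(y,x_3) : y \in A'_{(\pi/2)x_3}\}$, giving \ref{transrot:3}. The $L^\infty$-bound is a direct computation: on $K \subset B(0,r)\times[0,1]$,
\[
|u(x)|^2 \le \bigl(\tfrac{\pi \mu_0}{2}\bigr)^2 (x_1^2+x_2^2) + \mu_0^2 \le \bigl(\tfrac{\pi r\mu_0}{2}\bigr)^2 + \mu_0^2 \le C(1+r)^2 \mu_0^2.
\]

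The only step that is not mere bookkeeping is the verification that $u$ is in fact a source-sink vector field in the sense of Definition \ref{def:sourcesink} — that is, that the distributional divergence of $u$ is concentrated exactly on $A$ and $B$. Since the interior divergence vanishes, the contributions come only from jumps of $u$ across $\partial K$. The key observation, and what I expect to be the main (though still mild) obstacle, is that the lateral boundary of $K$ is ruled by trajectories: any lateral boundary point lies on the trajectory of some $z \in \partial A'$, and at such a point the vector $u$ is exactly the tangent to that trajectory. Hence the normal component $u \cdot \nu$ vanishes on the lateral boundary, so no distributional divergence appears there. On the top and bottom faces $A$ and $B$ one has $u \cdot \nu = \mu_0$, producing precisely the prescribed $-\mathcal{H}^2 \llcorner A \cdot u_A + \mathcal{H}^2 \llcorner B \cdot u_B$. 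In contrast to Lemma \ref{lemma:corner}, no Lambert-$W$ reparametrisation is needed because the rotation here is uniform in the height variable.
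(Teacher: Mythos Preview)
Your proof is correct and follows essentially the same approach as the paper: both define $u$ as the restriction to the swept region of a vertical drift plus a rigid rotation about the $x_3$-axis, then verify all claims by directly integrating the resulting linear ODE. Your version is in fact slightly more careful --- you include the factor $\tfrac{\pi}{2}$ in the angular speed (the paper's displayed $\tilde u = \mu_0(-x_2,x_1,1)^T$ omits it, which would give a rotation by one radian rather than $\pi/2$ at time $t^\ast$), and you spell out the lateral-boundary divergence check that the paper leaves as ``explicit calculation.''
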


\begin{proof}
Consider the vector field $ \tilde{u}$ that is given by
\[
\tilde{u}(x) = \mu_0 \left( \begin{array}{c} -x_2 \\ x_1 \\ 1 \end{array} \right)
\]
and define $u = \chi_X \tilde{u}$, where $X$ is the set given as support in \ref{transrot:3}. All properties (including the fact that $u$ defines a source-sink flow, cf. Definition \ref{def:sourcesink}) can be now easily verified by explicit calculation.
\end{proof}
In particular observe that if $A^1$ and $A^2$ are two disjoint surfaces supported on $B(0,r)$, then (independently of the choice of the velocities $\mu_A$ and $\mu_B$), the supports of the trajectories are disjoint.

We are now ready to define the flows/vector fields for \ref{item3}.

\begin{definition} \label{def:highwaye2}
    Let $\gamma$, $v$, $v + \un_2$, $A^{\gamma,e}_{in}$ and $A^{\gamma,e}_{out}$ be as above. Define
    \begin{align*}
        A_0& = A^{\gamma,e}_{in} = P_v + (h(\gamma,e),h(\gamma,e)+ \omega(\gamma,e) \tilde{\rho}(\gamma,e)) \times \{\ell \} \times I^{v,\gamma}, \\
        A_2 &=  \tilde{A}^{\gamma,e}_{out} = P_v + I^{v,\gamma} \times \{4 \ell\} \times (h(\gamma,e),h(\gamma,e)+ \omega(\gamma,e) \tilde{\rho}(\gamma,e)) \subset \partial \tilde{R}_v, \\
        A_3 &= \tilde{A}^{\gamma,e}_{out} + \ell \un_2 \subset \partial \tilde{R}_v,\\
        A_4 &= A_0 + 7 \ell \un_2 = \tilde{A}^{\gamma,e}_{in}, \\
        A_5 &= A^{\gamma,e}_{out}.
    \end{align*}

    \FloatBarrier
\FloatBarrier
    Further set $\mu_{A_i} = \frac{2 u_0}{\kappa \rho(e)}$ (as in the definitions of \ref{sec:highway} \& \ref{sec:interchange}) and $\nu_{A_i} = \un_2$ for all $i=0,\ldots 4$. Then define $u$ to be the source-sink vector field that arises through the concatenation of source-sink vector fields from $A_i$ to $A_{i+1}$, $i=0,\ldots 3$, where we take
    \begin{itemize}
        \item the (appropriately rescaled and translated) vector fields from Lemma \ref{lemma:transrot} for the flows from $A_0$ to $A_1$ and $A_2$ to $A_3$, once with clockwise and once with counterclockwise rotation;
        \item the vector fields from Lemma \ref{lem:parallel} for the remaining source sink pairs $(A_i,A_{i+1})$.
    \end{itemize}
\end{definition}
With the same considerations as in the proof of Corollary \ref{cor:source:sink:highway} we then obtain.
\begin{coro}\label{coro:highwaye2}
Let $\tilde{u}(\gamma,e)$  be the vector field given in Definition \ref{def:highwaye2}. Then it satisfies the following properties
     \begin{enumerate} [label=(\roman*)]
         \item $\gamma\not=\gamma'\implies \spt(u(\gamma,e))\cap\spt(u(\gamma',e))=\emptyset$;
         \item $\mathscr{L}^3(\spt(u(\gamma,e)))=(N^{-1}-\ell)\ell\tilde\rho(\gamma,e)=(N^{-1}-\ell)\ell^2\frac{\rho(\gamma,e)}{4}\leq N^{-3}\rho(\gamma,e)$;
         \item $\|u(\gamma,e)\|_{L^\infty}\leq c(\kappa)u^0\rho(\gamma,e)^{-1}$;
          \item $\|u(\gamma,e)\|_{L^p}^p\leq N^{-3} c(\kappa)^p (u^0)^p \rho(\gamma,e)^{-p+1}$;
         \item the time $t_e=t_{A^{e,\gamma}_{\text{in}},A^{e,\gamma}_{\text{out}}}$ along the edge is
         \begin{equation*}
             t_e=\frac{(N^{-1}-\ell)\kappa \rho(\gamma,e)}{2u_0}.
         \end{equation*}
     \end{enumerate}
 \end{coro}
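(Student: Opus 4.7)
The plan is to follow the template of Corollary \ref{cor:source:sink:highway} for the 2D highway flow, applying the building-block estimates of Lemma \ref{lem:parallel} (parallel source-sink transport) and Lemma \ref{lemma:transrot} (rotation flow) to each of the four constituent source-sink flows in Definition \ref{def:highwaye2}, and then summing the contributions. The four subflows live in three pairwise disjoint parallelepipeds ($R_v$, $\tilde R_v$, $R_w$) plus short connecting tubes outside them, so each estimate reduces to a local computation inside one building block.

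For (i), I would first observe that since the subflows live in the disjoint regions $R_v$, $\tilde R_v$, $R_w$, it suffices to check non-intersection for trajectories of $\gamma\neq\gamma'$ within each building block. Inside $R_v$, the rotation flow of Lemma \ref{lemma:transrot} rotates cross-sections rigidly, so the relative ordering induced by the height $h(\gamma,e)$ from \eqref{def:height} is preserved, and the standard lexicographic argument used in the proof of Corollary \ref{cor:source:sink:highway} applies verbatim; inside $\tilde R_v$ the roles of $x_1$ and $x_3$ are swapped, but the checkpoints are parameterised by the same level $I^{v,\gamma}_3$ and the same height $h(\gamma,e)$, so the second rotation transports disjoint cross-sections to disjoint cross-sections. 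Property (ii) is a direct Cavalieri computation: up to $O(\ell)$ corrections coming from the rotation caps at either end, the support is a single pipe of cross-sectional area $\omega(\gamma)\tilde\rho(\gamma,e)\cdot\ell$ swept along a length $(N^{-1}-\ell)+O(\ell)$, and after substituting $\tilde\rho(\gamma,e)=\ell^2\rho(\gamma,e)/(4N^{-1})$ and using $\ell\le N^{-1}$ one obtains the stated bound $N^{-3}\rho(\gamma,e)$.

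For (iii), the velocity at every checkpoint is set to $\mu_{A_i}=2Ku_0/(\kappa\rho(\gamma,e))$, and the multiplicative factor $(1+r)$ produced by Lemma \ref{lemma:transrot} is harmless because the radius $r$ of the rotating cross-section is bounded by $\ell$; absorbing this into the constant $c(\kappa)$ yields the claim. The $L^p$ bound (iv) then follows by the crude estimate $\Vert u(\gamma,e)\Vert_{L^p}^p\le\mathcal L^3(\spt u(\gamma,e))\cdot\Vert u(\gamma,e)\Vert_{L^\infty}^p$ combined with (ii) and (iii); this is a slight loss compared with the sharper pipe-integration of Lemma \ref{lemma:pipewidth}, but still produces the required scaling $N^{-3}\rho(\gamma,e)^{-p+1}$. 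Finally (v) comes from summing four sub-times: the two rotations contribute $\Theta(1/\mu)=\Theta(\kappa\rho(\gamma,e)/(Ku_0))$ each by item (ii) of Lemma \ref{lemma:transrot}, while the long parallel transport inside $\tilde R_v$ covers the macroscopic distance $N^{-1}-\ell$ in time $(N^{-1}-\ell)/\mu=(N^{-1}-\ell)\kappa\rho(\gamma,e)/(2Ku_0)$, and the latter is the dominant term, giving the formula in (v).

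The main obstacle is the geometric bookkeeping for (i): one must verify once and for all that the ordering $\prec_e$ is preserved through \emph{both} rotations of Lemma \ref{lemma:transrot}, so that pipes belonging to distinct paths neither cross inside $R_v$ nor re-intertwine after being rotated out into $\tilde R_v$ and back into $S_v$; once this is verified, all remaining items reduce to the routine bookkeeping already performed in Corollary \ref{cor:source:sink:highway}, with Lemma \ref{lemma:transrot} slotted in wherever a rotation appears.
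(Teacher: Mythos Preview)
Your proposal is correct and follows essentially the same approach as the paper: disjointness via the lexicographic order together with the rotation-preserves-disjointness observation from Lemma~\ref{lemma:transrot}, Cavalieri for the volume, the building-block $L^\infty$ bounds for (iii), and the crude $\|u\|_{L^p}^p\le \mathcal L^3(\spt u)\cdot\|u\|_{L^\infty}^p$ for (iv). One small simplification for (v): rather than decomposing into sub-times and arguing that one dominates, the paper just observes that the $\un_2$-component of the velocity equals $\tfrac{2u_0}{\kappa\rho(\gamma,e)}$ throughout every building block, so $t_e$ is exactly the total $\un_2$-displacement $(N^{-1}-\ell)$ divided by that constant speed; this avoids having to identify which leg carries the macroscopic distance (which, incidentally, is the transport from $S_v$ to $R_{v+\un_2}$, not the short hop inside $\tilde R_v$).
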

\begin{proof}
The first assertion follows from the construction (as before for Corollary \ref{cor:source:sink:highway}) \emph{and} the observation for the rotation of Lemma \ref{lemma:transrot}. The $\mathcal{L}^3$ bound on the support can be calculated using Cavalieri's principle. The $L^{\infty}$ bound follows from the the $L^{\infty}$ bounds on the flows of Lemmas \ref{lem:parallel} \& \ref{lemma:transrot} and the $L^p$ bound by the $L^{\infty}$ bound and the measure of the support. Finally, the estimate for the time follows, as the velocity along $\un_2$-direction is always $\tfrac{u_0}{\kappa \rho(\gamma,e)}$.
\end{proof}

\begin{remark} \label{rem:item6}
We shortly describe adaptions to the case where $v \in V(\gamma)$ and $e',e'' \in E(\gamma)$ where $e' = \{ v, v \pm \un_1 \}, e'' = \{v , v \pm \un_3\}$. Suppose w.l.o.g that $e'$ is visited before $e''$. In the slightly shifted discrete network, our flow will then arrive at $R_v$, then we need to go a short distance along the edge $e = \{v, v \pm \un_2\}$ to arrive at $\tilde{R}_v$ and then use the interchange to flow along $e''$. Concretely, this means that at the interchange $R_v$ we need to connect $A^{e',\gamma}_{out}$ to 
\[
	A^{e,\gamma}_{in} =   P_v + (h(\gamma,e''),h(\gamma,e'')+ \omega(\gamma,e'') \tilde{\rho}(\gamma,e)) \times \{\ell \} \times I^{v,\gamma}
\]
then connect this to
\[
	\tilde{A}^{e,\gamma}_{out} =  P_v + I^{v,\gamma} \times \{\ell \} \times (h(\gamma,e''),h(\gamma,e'')+ \omega(\gamma,e'') \tilde{\rho}(\gamma,e))
\]
in the same fashion as before. Observe that this construction generates a vector field that is disjoint to  to the construction undertaken before, as we operate on different levels. Finally, then connect $\tilde{A}^{e,\gamma}_{out}$ via the interchange at $\tilde{R}_v$ to $A^{e'',\gamma}_{in}$.
\end{remark}

\section{Back from  discrete to the continuous configurations }\label{sect:from:discrete:to:continuous}
In this section we give the proof of Corollary \ref{coro:main}, that we again state below as a reminder.
\begin{coro}
    [Sharp Shnirelman's Inequality] 
    Let $\nu\geq 3$, then there exists $C>0$ such that, for every $f,g\in\mathcal{D}(M)$ it holds
\begin{equation}\label{eq:ineq:shnirelman:sharp:3}
        \text{dist}_{\mathcal{D}(M)}(f,g)\leq C\|f-g\|_{L^2(M)}.
    \end{equation}
\end{coro}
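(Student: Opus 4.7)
The plan is to reduce Corollary \ref{coro:main} to the discrete result of Theorem \ref{thm:main} by approximating $f$ by a permutation of subcubes, and to use the weaker Hölder-type inequality of Theorem \ref{thm:shnirelman} only to close a small residual gap. By right-invariance of $\text{dist}_{\mathcal{D}(M)}$ under composition with volume-preserving diffeomorphisms (one checks $\|\dot\varphi_t\|_{L^2}=\|(\dot\varphi_t)\circ h\|_{L^2}$ for volume-preserving $h$), one has $\text{dist}_{\mathcal{D}(M)}(f,g)=\text{dist}_{\mathcal{D}(M)}(f\circ g^{-1},\Id_M)$ and $\|f-g\|_{L^2}=\|f\circ g^{-1}-\Id_M\|_{L^2}$, so it suffices to take $g=\Id_M$. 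Write $\epsilon:=\|f-\Id_M\|_{L^2(M)}$ and $\alpha:=\tfrac{2}{\nu+4}$.

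First, for a large integer $N$ to be chosen below, I would construct a permutation $\sigma\in\mathfrak{S}_{N,\nu}$ with
\begin{equation*}
\|\psi_\sigma-f\|_{L^2(M)}\leq C\,\Lip(f)\,N^{-1}.
\end{equation*}
Such $\sigma$ exists by Hall's marriage theorem applied to the bipartite graph on $V\times V$ whose edges are $\{v,w\}$ with $\mathcal{L}^\nu(Q_v\cap f^{-1}(Q_w))>0$: the volume-preserving property of $f$ makes every subfamily satisfy Hall's condition, while Lipschitz regularity of $f$ forces any such edge to satisfy $|v-w|\lesssim \Lip(f)$, which yields the quantitative bound. Applying Theorem \ref{thm:main} with $p=2$, $q=1$ then produces a divergence-free vector field $u\in L^1([0,1];L^2(M))\cap L^1([0,1];\BV(M))$ whose time-$1$ Regular Lagrangian flow is $\psi_\sigma$ and which satisfies
\begin{equation*}
\|u\|_{L^1_tL^2_x}\leq C_{2,\nu}\,\|\psi_\sigma-\Id_M\|_{L^2}\leq C\bigl(\epsilon+\Lip(f)\,N^{-1}\bigr).
\end{equation*}

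Next, since $\psi_\sigma\notin\mathcal{D}(M)$, the flow of $u$ is not a path of diffeomorphisms, so I would mollify. Regularising $u$ in space at a scale $\delta>0$ -- for instance via a vector-potential representation that preserves the divergence-free condition, with an appropriate cutoff near $\partial M$ -- produces a smooth, divergence-free $u_\delta$ with $\|u_\delta\|_{L^1_tL^2_x}\leq(1+o_\delta(1))\|u\|_{L^1_tL^2_x}$, and its smooth flow $\phi_t^\delta$ is a genuine path in $\mathcal{D}(M)$ of length exactly $\|u_\delta\|_{L^1_tL^2_x}$. By the stability theorem for Regular Lagrangian Flows \cite{Ambrosio:BV}, for any $\eta>0$ one may choose $\delta$ small enough that $\tilde f:=\phi_1^\delta\in\mathcal{D}(M)$ obeys $\|\tilde f-\psi_\sigma\|_{L^2}\leq\eta$. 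Since $\tilde f,f\in\mathcal{D}(M)$, Theorem \ref{thm:shnirelman} then delivers
\begin{equation*}
\text{dist}_{\mathcal{D}(M)}(\tilde f,f)\leq C\,\|\tilde f-f\|_{L^2}^\alpha\leq C\bigl(\Lip(f)\,N^{-1}+\eta\bigr)^\alpha.
\end{equation*}

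Finally, I balance parameters by taking $N$ so large and $\eta$ so small that $\Lip(f)\,N^{-1}+\eta\leq\epsilon^{1/\alpha}$; this makes the Hölder contribution of size at most $C\epsilon$. Combined with the length-$\leq C\epsilon$ path from $\Id_M$ to $\tilde f$, the triangle inequality yields $\text{dist}_{\mathcal{D}(M)}(\Id_M,f)\leq C\epsilon$, which is \eqref{eq:ineq:shnirelman:sharp:smooth}. The dependence on $\Lip(f)$ enters only through the choice of $N$ and is absorbed, not the final constant. The main technical obstacle is the mollification step: one must smooth the only $\BV$-in-space, piecewise-constant-in-time vector field of Section \ref{sec:construction} into a genuinely smooth divergence-free field whose time-$1$ map is quantitatively close in $L^2$ to $\psi_\sigma$, while essentially preserving the $L^1_tL^2_x$ cost. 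The explicit source-sink structure of $u$ -- constant-in-time on finitely many intervals, piecewise polynomial on pipes and interchanges of controlled geometry, with $\BV$ jumps concentrated on a network of small Hausdorff measure -- should make such a controlled mollification feasible.
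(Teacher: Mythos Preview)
Your proposal is correct and follows essentially the same route as the paper's own proof: reduce to $g=\Id_M$, approximate $f$ by a permutation $\psi_\sigma$ (the paper invokes the Lax approximation Lemma~\ref{lem:approx:2}, which is exactly the Hall-marriage argument you sketch), apply Theorem~\ref{thm:main}, mollify using stability of Regular Lagrangian Flows, and absorb the residual $\|\tilde f-f\|_{L^2}$ via the old H\"older-type Theorem~\ref{thm:shnirelman} with parameters balanced so that this term is $O(\epsilon)$. The paper's presentation differs only in packaging (it cites Lemma~\ref{lem:approx:2} and Lemma~\ref{lem:approx:1} rather than spelling out Hall's theorem and the vector-potential mollification), not in substance.
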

Before proving Corollary \ref{coro:main}, we recall the two following approximation results. For the first one we refer to \cite{Shnirelman,BrenierGangbo} and for the second to \cite{Lax}. 
\begin{lemma}\label{lem:approx:1}
   Let $M'=(0,1)^\nu$ with $\nu\geq 3$ and let $f:M'\rightarrow M'$ such that $f_\sharp\mathcal{L}^\nu=\mathcal{L}^\nu$, and let $p\in[1,+\infty)$. Then for every $\epsilon>0$ there exists $\bar f\in\mathcal{D}(M)$ orientation-preserving such that
        \[
        \Vert f - \bar f \Vert_{L^p} \leq \epsilon.
        \]
        Moreover, $\bar f(x)=x$ in a neighborhood of $\partial M$. 
\end{lemma}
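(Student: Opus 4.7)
The plan is to combine a discrete approximation result of Lax type with a subsequent smoothing. Concretely, I would first approximate $f$ in $L^p$ by a cube permutation $\psi_\sigma \in \Dcal_N$ for some large $N$, then modify $\sigma$ so that $\psi_\sigma$ is the identity near $\partial M$, and finally smooth $\psi_\sigma$ into a genuine $C^\infty$ volume-preserving diffeomorphism, each step being controlled by $\epsilon/3$.

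For the discrete approximation, the density of $\bigcup_{N} \Dcal_N$ in the space of $\mathcal{L}^\nu$-preserving maps $M' \to M'$ (in the $L^p$-topology) follows from Birkhoff's theorem on doubly stochastic measures: the graph measure $(\id, f)_\sharp \mathcal{L}^\nu$ can be approximated arbitrarily well by convex combinations of permutation-type measures on the grid $\Rcal_N$, and a standard selection argument yields a single $\sigma \in \mathfrak S_{N,\nu}$ with $\|f - \psi_\sigma\|_{L^p} < \tfrac{\epsilon}{3}$ for $N$ large (cf.\ \cite{Lax, BrenierGangbo}). I would then pass to a finer tiling $\Rcal_{MN}$, viewing the same $\sigma$ as acting on the finer grid, and rearrange it so that all cubes $Q_v$ within grid-distance $1$ from $\partial V$ map to themselves. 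The $L^p$-cost of such a boundary rearrangement is controlled by the volume of the affected boundary layer, hence $< \tfrac{\epsilon}{3}$ for $M$ large, and the modified $\psi_\sigma$ equals $\id$ in an open neighborhood of $\partial M$.

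For the smoothing step I would use that $\psi_\sigma$ is the time-$1$ map of a divergence-free vector field $u \in L^1((0,1); \BV(M))$ whose support lies at positive distance from $\partial M$; such a $u$ is provided by Theorem \ref{thm:goal} of the present paper (or, more elementarily, by any swap-based flow construction). Convolving $u$ with a space-time mollifier that preserves $\divergence_x u = 0$ (a standard symmetric mollifier in $x$ commutes with the divergence) yields a smooth, compactly supported, divergence-free $u^\eta$; its classical flow $\bar f^\eta$ is a $C^\infty$ orientation-preserving volume-preserving diffeomorphism of $M$, equal to the identity near $\partial M$. The stability of regular Lagrangian flows for $\BV$ velocities (cf.\ \cite{Ambrosio:BV}) gives $\bar f^\eta \to \psi_\sigma$ in $L^p$ as $\eta \to 0$, so for $\eta$ small enough $\|\psi_\sigma - \bar f^\eta\|_{L^p} < \tfrac{\epsilon}{3}$, and summing the three errors yields the claim.

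The principal difficulty is the smoothing step: a naive spatial mollification of the map $\psi_\sigma$ itself does not preserve volume, so one must regularize at the level of the velocity field (where divergence-free mollification is immediate) rather than the map. An alternative, equivalent approach is to mollify $\psi_\sigma$ directly and then apply a Moser--Dacorogna correction \cite{Moser} on the small set where volume preservation is spoiled. Either way, the orientation-preserving property and the boundary identity condition are inherited from $\psi_\sigma$ provided the regularization parameter is small compared to the distance from $\spt u$ to $\partial M$.
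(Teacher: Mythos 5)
The paper does not actually prove Lemma~\ref{lem:approx:1}: it is cited directly from Shnirelman and Brenier--Gangbo, so there is no ``paper's own proof'' to compare against. Your outline follows the broad strategy one would expect from those references (discretise, fix the boundary, smooth), and the smoothing step is fine: mollifying a compactly supported divergence-free BV field commutes with the divergence, and Ambrosio's stability theorem for regular Lagrangian flows gives the $L^p$ convergence of the time-$1$ maps; flows of smooth fields are orientation-preserving and the boundary condition survives for small mollification parameter. Invoking Theorem~\ref{thm:goal} for the generating field is heavier machinery than needed --- a swap-based flow suffices, as you note --- but it is not circular.

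The genuine gap is in your first step. You claim that Birkhoff's characterisation of doubly stochastic measures together with ``a standard selection argument'' yields a single $\sigma\in\mathfrak S_{N,\nu}$ with $\|f-\psi_\sigma\|_{L^p}<\epsilon/3$. This is precisely the point where the cited works do real work, and it is not a routine selection. Weak-$\ast$ approximation of the graph measure $(\id,f)_\sharp\mathcal L^\nu$ by convex combinations of permutation-type measures does not, by itself, control the $L^p$ distance between the map $f$ and any single permutation map $\psi_\sigma$: a convex combination spreads mass over many permutations, none of which need individually be close to $f$, and the weak topology on measures does not see the ``concentration on a graph'' that is needed to recover an $L^p$ bound on the map. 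One must use, in an essential way, that the target measure is supported on a graph and that $f$ preserves Lebesgue measure, which is exactly what the quantitative rearrangement argument in Brenier--Gangbo (and, for smooth $f$, Lax's argument via the marriage theorem) provides. As written, this step is a citation, not a proof. The boundary-fixing step is similarly glossed: freezing boundary cubes forces a genuine re-routing of the permutation on the preimages and images of the boundary layer (a matching between $\sigma(B)\setminus B$ and $\sigma^{-1}(B)\setminus B$), and while the affected volume is indeed $O(1/(NM))$, you should say this explicitly rather than ``rearrange so that boundary cubes map to themselves'' as if that operation left the rest of $\sigma$ untouched.
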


\begin{lemma}[Approximation Lemma]\label{lem:approx:2}
   Let $f \in \Dcal(M)$, $M=[0,1]^{\nu}$ and $N \in \N$. Then there exists a permutation $\sigma$ such that for the flow map $\psi_{\sigma}$ on  the tiling $\Rcal_N$ we have for any $1 \leq p \leq \infty$
        \[
        \Vert f - \psi_{\sigma} \Vert_{L^p} \leq \sqrt{\nu}(1+\Vert f \Vert_{C^1}) N^{-1}.
        \]
    We can additionally choose $\sigma(v) = v$ for $v$ on the boundary if $1 \leq p < \infty$.     
\end{lemma}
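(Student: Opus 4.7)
The plan is to produce $\sigma$ as a perfect matching in a bipartite graph on $V = \{0,\dots,N-1\}^\nu$ whose weights encode the volume overlaps $f(Q_v) \cap Q_w$, and then convert a single shared point $x \in Q_v$ with $f(x) \in Q_{\sigma(v)}$ into a pointwise (hence $L^p$) estimate via the Lipschitz constant of $f$. The volume-preservation of $f$ is precisely what makes this matching problem solvable.

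Concretely, form the matrix $A = (a_{vw})_{v,w \in V}$ with
\[
a_{vw} := N^\nu \mathcal{L}^\nu\bigl(f(Q_v) \cap Q_w\bigr).
\]
Because $f_\sharp \mathcal{L}^\nu = \mathcal{L}^\nu$ and $\{Q_w\}_{w \in V}$ tiles $M$ up to a null set, every row and every column of $A$ sums to $1$, i.e.\ $A$ is doubly stochastic. The Birkhoff--von Neumann theorem then yields a permutation $\sigma \in \mathfrak{S}_{N,\nu}$ supported inside the positive entries of $A$. Equivalently, Hall's marriage theorem applied to the bipartite graph with edges $\{(v,w) : a_{vw} > 0\}$ does the job: for any $S \subset V$ with neighbor set $\mathcal{N}(S) := \{w : a_{vw} > 0 \text{ for some } v \in S\}$, double-counting gives
\[
|S| \;=\; \sum_{v \in S} \sum_{w \in \mathcal{N}(S)} a_{vw} \;\leq\; \sum_{w \in \mathcal{N}(S)} \sum_{v \in V} a_{vw} \;=\; |\mathcal{N}(S)|,
\]
verifying Hall's condition. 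Either route outputs $\sigma$ with $a_{v\sigma(v)} > 0$ for every $v \in V$.

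For this $\sigma$, positivity of $a_{v\sigma(v)}$ produces a point $x_v \in Q_v$ with $f(x_v) \in Q_{\sigma(v)}$. For any $y \in Q_v$ the map $\psi_\sigma$ is simply the translation carrying $Q_v$ onto $Q_{\sigma(v)}$, so $\psi_\sigma(y) = y + N^{-1}(\sigma(v) - v) \in Q_{\sigma(v)}$ as well. The triangle inequality then yields
\[
|f(y) - \psi_\sigma(y)| \;\leq\; |f(y) - f(x_v)| + |f(x_v) - \psi_\sigma(y)| \;\leq\; \|f\|_{C^1}\,|y - x_v| + \operatorname{diam}(Q_{\sigma(v)}) \;\leq\; \sqrt{\nu}\,(1 + \|f\|_{C^1})\,N^{-1}.
\]
Since $\mathcal{L}^\nu(M) = 1$, this $L^\infty$ bound immediately entails the stated $L^p$ bound for every $p \in [1,\infty]$.

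The main obstacle I expect is the boundary refinement $\sigma(v) = v$ on boundary cubes for $p < \infty$. The plan is to pre-commit $\sigma$ to the identity on the $O(N^{\nu-1})$ cubes abutting $\partial M$ and run the matching argument above only on the interior sub-lattice. After invoking the boundary-rigid reduction of Lemma \ref{lem:approx:1}, one may assume $f$ is the identity on a neighborhood of $\partial M$; then for $N$ large enough the boundary cubes lie inside this neighborhood, contribute zero pointwise error, and the interior submatrix of $A$ remains doubly stochastic so that the Birkhoff/Hall argument carries through verbatim on the interior. For a general $f \in \mathcal{D}(M)$ a rotation of $f$ along $\partial M$ forces an $O(1)$ pointwise discrepancy on the boundary strip of total volume $O(N^{-1})$; this discrepancy is absorbable in the $L^p$ norm for $p < \infty$ but not for $p = \infty$, which matches precisely the restriction in the statement.
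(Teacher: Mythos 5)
Your core construction is exactly Lax's marriage-theorem argument, which the paper itself does not reproduce but cites (\cite{Lax}), and that part is correct: the overlap matrix $a_{vw}=N^\nu\mathcal{L}^\nu\bigl(f(Q_v)\cap Q_w\bigr)$ is doubly stochastic precisely because $f$ is volume-preserving and $\{Q_w\}$ tiles $M$, the Hall/Birkhoff step produces $\sigma$ with $a_{v\sigma(v)}>0$, and the triangle inequality through the shared witness $x_v\in Q_v$ with $f(x_v)\in Q_{\sigma(v)}$ gives the pointwise bound $\sqrt{\nu}\bigl(1+\|f\|_{C^1}\bigr)N^{-1}$, which dominates every $L^p$ norm because $\mathcal{L}^\nu(M)=1$. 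Your Hall verification is also correct.

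The boundary refinement is not established. Your plan is to replace $f$ by $\bar f$ from Lemma~\ref{lem:approx:1}, which is the identity near $\partial M$, and run the matching on the interior sub-lattice. But Lemma~\ref{lem:approx:1} controls $\|f-\bar f\|_{L^p}$ only, not $\|\bar f\|_{C^1}$; what you obtain is
\[
\|f-\psi_\sigma\|_{L^p}\le\|f-\bar f\|_{L^p}+\sqrt{\nu}\bigl(1+\|\bar f\|_{C^1}\bigr)N^{-1},
\]
and $\|\bar f\|_{C^1}$ is in general not comparable to $\|f\|_{C^1}$: compressing the tangential boundary motion of $f$ into a thin interior collar inflates the Lipschitz constant of $\bar f$. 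So the stated $f$-dependent constant $\sqrt{\nu}\bigl(1+\|f\|_{C^1}\bigr)$ is lost. Moreover, ``for $N$ large enough'' is a restriction that the statement does not allow (it asserts the bound for every $N\in\N$). Finally, your closing remark that an $O(1)$ pointwise discrepancy on a boundary strip of volume $O(N^{-1})$ is ``absorbable in the $L^p$ norm for $p<\infty$'' is off as a justification of the claimed bound: that discrepancy contributes $O(N^{-1/p})$ to the $L^p$ norm, which for $1<p<\infty$ is much larger than the asserted $O(N^{-1})$; it explains why the refinement must fail at $p=\infty$, not why it holds for finite $p$. A genuine argument for the boundary refinement with the stated constant is still missing --- you would need to use that $f(\partial M)=\partial M$ together with the bound $\|f\|_{C^1}$ directly (to confine the images of boundary cubes to a strip of width comparable to $\|f\|_{C^1}N^{-1}$ and match near-boundary layers separately), rather than pass through the uncontrolled surrogate $\bar f$.
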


\begin{proof}[Proof of Corollary \ref{coro:main}]
    Without loss of generality, we can assume $g=\Id_M$, as $\text{dist}_{\mathcal{D}(M)}(f,g)=\text{dist}_{\mathcal{D}(M)}( f\circ g^{-1},\Id_M)$ for $f,g\in\mathcal{D}(M)$. By Lemma \ref{lem:approx:2}, for every $\eta>0$, there exists $N\in\N$ and a permutation $\psi_\sigma\in\Dcal_N$ such that $\|\psi_\sigma-f\|_{L^2}\leq\eta$. By Theorem \ref{thm:main}, there exists a divergence-free vector field $u\in L^1_tL^2_x\cap L^1_t\BV_x$ satisfying
    \begin{equation*}
        \| u\|_{L^1_tL^2_x}\lesssim \|\psi_\sigma-\Id_M\|_{L^2_x}.
    \end{equation*}
   Consider the mollified vector field $u_\epsilon= u\ast \rho_\epsilon$, where $\rho_\epsilon$ is a mollification kernel (in order to guarantee that $u_\epsilon$ is divergence-free we can use the approximation lemma \ref{lem:approx:1}).
  Since $u_\epsilon\rightarrow u$ strongly in $L^1_tL^2_x$, then, by the Stability Theorem for RLFs (see Theorem $6.3$ in \cite{Ambrosio:Luminy}), if we call $\Psi_{\epsilon}$ the induced flow of $u_\epsilon$, then $\int_M \sup_{t\in[0,1]} \left| \Psi_\epsilon(t,x)-\Psi(t,x)\right|^2dx\to 0$ as $\epsilon\to 0$. In particular, if we denote by $\Psi_\epsilon$ the time-1 map of $u_\epsilon$, it holds 
   \begin{equation*}
       \|\Psi_\epsilon-\psi_\sigma\|_{L^2}\to 0\quad\text{as }\epsilon\to 0. 
   \end{equation*}
   Observe that $\Psi_\epsilon\in\mathcal{D}(M)$. By the triangular inequality,
   \begin{align*}
       \text{dist}_{\mathcal{D}(M)}(f,Id_M)&\leq \text{dist}_{\mathcal{D}(M)}(f,\Psi_\epsilon)+\text{dist}_{\mathcal{D}(M)}(\Psi_\epsilon,Id_M) \overset{\text{Thm \ref{thm:shnirelman}}}{\lesssim} \|f-\Psi_\epsilon\|_{L^2}^\alpha +\|u_\epsilon\|_{L^1_tL^2_x}\\& \lesssim \left(\|f-\psi_\sigma\|_{L^2}+\|\Psi_\epsilon-\psi_\sigma\|_{L^2}\right)^\alpha +\|u\|_{L^1_tL^2_x} +\| u-u_\epsilon\|_{L^1_tL^2_x}\\&\lesssim \|f-\Id_M\|_{L^2}+\|\psi_\sigma-\Id_M\|_{L^2}\lesssim \|f-\Id_M\|_{L^2},
   \end{align*}
   where $\epsilon, \eta$ have been chosen accordingly. 
\end{proof}
\begin{remark}
First of all, we mention that in the proof of Corollary \ref{coro:main} we actually construct a flow from $f$ to $g$ that is smooth in space and piecewise smooth in time (for \emph{finitely} many intervals).

If we only ask for $\phi_t \in \Dcal(M)$ and $\dot{\psi} \in L^q_t L^p_x$ then this result can also be achieved without Shnirelman's smoothening procedure present in the original inequality (cf. \cite[Lemma IV.7.19.]{Arnold:khesin}). In particular, for $g= \Id$, using Lemma \ref{lem:approx:2} we can approximate $f$ nicely by an approximate smooth diffeomorphism $\psi_{\sigma}$ and get a smooth flow from $\id$ to $\psi_{\sigma}$ in time $t=1/2$. One then repeats the procedure with $\tilde{f} = f \circ \psi_{\sigma}^{-1}$ and time rescaled to get a smooth flow from $\id$ to some better approximation $\psi_{\sigma_2}$ in time $t=3/4$. After an inductive procedure one then arrives at $f$ at time $t=1$; time derivatives of the flow are, however, quite irregular close to $t=1$.
\end{remark}

\bibliography{biblio.bib}
\bibliographystyle{abbrv}

\end{document}